\let\tempone\itemize
\let\temptwo\enditemize
\renewenvironment{itemize}{\tempone \vspace{5pt}\addtolength{\itemsep}{0.5\baselineskip}}{\vspace{5pt} \temptwo}
\let\tempenum\enumerate
\let\tempenumtwo\endenumerate
\renewenvironment{enumerate}{\tempenum \vspace{5pt} \addtolength{\itemsep}{0.5\baselineskip}}{ \vspace{5pt} \tempenumtwo}
\let\origsection\section
\renewcommand\section{\@ifstar{\starsection}{\nostarsection}}
\newcommand\nostarsection[1]
\sectionprelude\origsection{#1}\sectionpostlude}
\newcommand\starsection[1]
\newcommand\sectionprelude{%
	\vspace{1em} 
}
\newcommand\sectionpostlude{%
	\vspace{1em}   
}
\newcommand\Item[1][]{%
	\ifx\relax#1\relax  \item \else \item[#1] \fi
	\abovedisplayskip=0pt\abovedisplayshortskip=0pt~\vspace*{-\baselineskip}}
\let\origsubsection\subsection
\renewcommand\subsection{\@ifstar{\starsubsection}{\nostarsubsection}}
\newcommand\nostarsubsection[1]
\sectionprelude\origsubsection{#1}\subsectionpostlude}
\newcommand\starsubsection[1]
\newcommand\subsectionprelude{%
	\vspace{0.25em} 
}
\newcommand\subsectionpostlude{%
	\vspace{0.0em}   
}
\m@th\displaystyle{##}$\hfil}{$\m@th\displaystyle{##}$\hfil}{\lbrace}{.}
\newcounter{pos} 
\tikzset{									
	initcounter/.code={\setcounter{pos}{0}},
	style between/.style n args={3}{
		postaction={
			initcounter,
			decorate,
			decoration={
				show path construction,
				curveto code={
					\addtocounter{pos}{1}
					\pgfmathtruncatemacro{\min}{#1 - 1}
					\ifthenelse{\thepos < #2 \AND \thepos > \min}{
						\draw[#3]
						(\tikzinputsegmentfirst)
						..
						controls (\tikzinputsegmentsupporta) and (\tikzinputsegmentsupportb)
						..
						(\tikzinputsegmentlast);
					}{}
				}
			}
		},
	},
}
\tikzset{
	clip even odd rule/.code={\pgfseteorule}, 
	invclip/.style={
		clip,insert path=
		[clip even odd rule]{
			[reset cm](-\maxdimen,-\maxdimen)rectangle(\maxdimen,\maxdimen)
		}
	}
}
\newcommand{\colim}{\operatorname{colim}}
\DeclareMathOperator{\Hom}{Hom}
\DeclareMathOperator{\End}{End}
\DeclareMathOperator{\Aut}{Aut}
\DeclareMathOperator{\Perf}{Perf}
\newcommand{\Dfd}[1]{\mathsf{D}_{\operatorname{fd}}(#1)}
\DeclareMathOperator{\DPic}{\mathcal{D}Pic}
\DeclareMathOperator{\Pic}{Pic}
\DeclareMathOperator{\Out}{Out}
\DeclareMathOperator{\OutO}{Out_{\circ}}
\DeclareMathOperator{\RHom}{RHom}
\newcommand{\invex}{{\scriptstyle \text{\rm !`}}}
\newcommand{\Ob}[1]{\operatorname{Ob}(#1)}
\newcommand{\Inter}[1]{\mathring{#1}}
\DeclareMathOperator{\Flow}{\mathsf{Flow}}
\newcommand{\Int}{\operatorname{Int}}
\DeclareMathOperator{\ad}{ad}
\DeclareMathOperator{\GL}{GL}
\DeclareMathOperator{\PGL}{PGL}
\DeclareMathOperator{\rad}{rad}
\newcommand{\MC}{\mathsf{MC}}
\tikzset{
	set arrow inside/.code={\pgfqkeys{/tikz/arrow inside}{#1}},
	set arrow inside={end/.initial=>, opt/.initial=},
	/pgf/decoration/Mark/.style={
		mark/.expanded=at position #1 with
		{
			\noexpand\arrow[\pgfkeysvalueof{/tikz/arrow inside/opt}]{\pgfkeysvalueof{/tikz/arrow inside/end}}
		}
	},
	arrow inside/.style 2 args={
		set arrow inside={#1},
		postaction={
			decorate,decoration={
				markings,Mark/.list={#2}
			}
		}
	},
}
\tikzset{commutative diagrams/.cd,arrow style=tikz,diagrams={>=latex'}}\tikzset{join/.code=\tikzset{after node path={%
			\ifx\tikzchainprevious\pgfutil@empty\else(\tikzchainprevious)%
			edge[every join]#1(\tikzchaincurrent)\fi}}}
\tikzset{>=stealth',every on chain/.append style={join},
	every join/.style={->}}
\tikzset{every loop/.style={min distance=25mm,in=50,out=100,looseness=5}}
\newtheorem{prf}{Proof}[section]
\theoremstyle{remark}
\newtheoremstyle{ownTheoremStyle}
{.75em}
{.75em}
{\itshape}
{}
{\bfseries}
{.}
{ }
{}
\newtheoremstyle{ownDefinitionStyle}
{.75em}
{.75em}
{}
{}
{\bfseries}
{.}
{ }
{}
\theoremstyle{ownTheoremStyle}
\newtheorem{thm}[prf]{Theorem}
\newtheorem{Introthm}{Theorem}
\newtheorem{lem}[prf]{Lemma}
\newtheorem{prp}[prf]{Proposition}
\newtheorem{cor}[prf]{Corollary}
\newtheorem{Introcor}[Introthm]{Corollary}
\theoremstyle{ownDefinitionStyle}
\newtheorem{exa}[prf]{Example}
\newtheorem{definition}[prf]{Definition}
\newtheorem{rem}[prf]{Remark}
\newtheorem{notation}[prf]{Notation}
\newcommand{\quotient}[2]{{\left.\raisebox{.2em}{$#1$}\middle/\raisebox{-.2em}{$#2$}\right.}}
\numberwithin{equation}{section}
\newcommand{\cA}{\mathcal{A}}
\newcommand{\cB}{\mathcal{B}}
\newcommand{\cC}{\mathcal{C}}
\newcommand{\cD}{\mathcal{D}}
\newcommand{\cE}{\mathcal{E}}
\newcommand{\cF}{\mathcal{F}}
\newcommand{\cG}{\mathcal{G}}
\newcommand{\cH}{\mathcal{H}}
\newcommand{\cK}{\mathcal{K}}
\newcommand{\cL}{\mathcal{L}}
\newcommand{\cM}{\mathcal{M}}
\newcommand{\cN}{\mathcal{N}}
\newcommand{\cI}{\mathcal{I}}
\newcommand{\cO}{\mathcal{O}}
\newcommand{\cP}{\mathcal{P}}
\newcommand{\cR}{\mathcal{R}}
\newcommand{\cQ}{\mathcal{Q}}
\newcommand{\cS}{\mathcal{S}}
\newcommand{\cT}{\mathcal{T}}
\newcommand{\cU}{\mathcal{U}}
\newcommand{\cV}{\mathcal{V}}
\newcommand{\cW}{\mathcal{W}}
\newcommand{\cY}{\mathcal{Y}}
\newcommand{\bA}{\mathbb{A}}
\newcommand{\bB}{\mathbb{B}}
\newcommand{\op}{\operatorname{op}}
\DeclareMathOperator{\MCG}{\operatorname{MCG}}
\newcommand{\HHH}{\operatorname{HH}}
\newcommand{\HH}{\operatorname{H}}
\newcommand{\reg}{\operatorname{reg}}
\newcommand{\Tw}{\operatorname{Tw}}
\newcommand{\gr}{\operatorname{gr}}
\newcommand{\para}[1]{{#1}_{\odot}}
\newcommand{\Fuk}{\operatorname{Fuk}}
\DeclareMathOperator{\Hqe}{\mathsf{Hqe}}
\DeclareMathOperator{\Hmo}{\mathsf{Hmo}}
\DeclareMathOperator{\dgcat}{\mathsf{dgcat}}
\DeclareMathOperator{\Acat}{\mathsf{A_{\infty}-cat}}
\DeclareMathOperator{\Acatc}{\mathsf{A_{\infty}-cat^c}}
\DeclareMathOperator{\HAcat}{\mathsf{Ho(A_{\infty}-cat})}
\newcommand{\Arc}[1]{\mathsf{Arc}_{#1}}
\DeclareMathOperator{\Loc}{\mathsf{Loc}}
\DeclareMathOperator{\Fun}{\mathsf{Fun}}
\newcommand{\BCH}[2]{\operatorname{BCH}(#1, #2)}
\newcommand{\fin}{\mathrm{fin}}
\title[Autoequivalences of Fukaya categories of surfaces and gentle algebras]{Autoequivalences of Fukaya categories of surfaces and graded gentle algebras}
\author{Sebastian Opper}
\begin{document}

	\begin{abstract}
		We compute the derived Picard groups of partially wrapped Fukaya categories of surfaces in the sense of Haiden-Katzarkov-Kontsevich and the related graded gentle algebras. This includes the wrapped cases as introduced by Bocklandt.  In combination with results by Burban-Drozd and Lekili-Polishchuk, this leads to a description of derived Picard groups of commutative and non-commutative nodal curves. An important ingredient for our proof in characteristic zero is the exponential map from Hochschild cohomology to the derived Picard group introduced in recent work by the author. In positive characteristics, we combine deformation theory and formality results for Hochschild complexes to prove our results. Along the way we show that the surface together with its decorations forms a complete derived invariant of partially wrapped Fukaya categories and we prove analogous results for graded gentle algebras. This removes all previous restrictions from earlier results of this kind.
	\end{abstract}
	\maketitle

	\setcounter{tocdepth}{1}
	\tableofcontents

\section*{Introduction}

\noindent The study of equivalences between triangulated categories has a long history \cite{HappelBook, RickardMoritaTheory, OrlovK3surfaces, SeidelThomas}. In practice, most triangulated categories in algebraic settings are \textit{enhanced} and therefore arise as a subcategory of the derived category of a dg category. The \textit{derived Picard group} is then the `enhanced' variant of the autoequivalence group of an enhanced triangulated category. Formally,  the derived Picard group of a dg category $\cC$ is the group $\DPic(\cC)$ formed by the $\cC$-bimodules which admit a two-sided inverse under `multiplication' via the derived tensor product. In other words, $\DPic(\cC)$ is the Picard group of the monoidal category of $\cC$-bimodules. Depending on the setting, there are several equivalent incarnations of this group: as the automorphism group of $\cC$ in the Morita category of dg categories \cite{ToenDerivedMoritaTheory}, as equivalence classes of certain $A_\infty$-functors \cite{CanonacoOrnaghiStellari, CanonacoOrnaghiStellariCommutativeRings} or, as the group of kernels of invertible Fourier-Mukai transforms \cite{ToenDerivedMoritaTheory, Orlov}. Many classical symmetry groups, such as the Picard and automorphism group of a variety or the outer automorphism group of an associative algebra, constitute subgroups of the associated derived Picard group. Every element of $\DPic(\cC)$ induces an autoequivalence of both the derived category of $\cC$ and its category of perfect complexes. This yields a comparison map from $\DPic(\cC)$ to the group of autoequivalences of either category which is injective or surjective in common setups \cite{Orlov, GenoveseUniquenessLifting}. One advantage of focusing on $\DPic(\cC)$ instead of autoequivalence groups is the fact that one has access to higher invariants such as Hochschild cohomology which are not defined on the purely triangulated level.\medskip

\noindent In this article we give a description of the derived Picard groups of all partially wrapped Fukaya categories of surfaces in the sense of \cite{HaidenKatzarkovKontsevich}, including the wrapped cases considered earlier in \cite{Bocklandt} and more recently in \cite{BocklandtVanDeKreeke, VanDeKreeke}. To define such categories, one starts with a \textit{graded marked surface}, an oriented surface with boundary which is equipped with additional decorations: a distinguished subset of the boundary and a line field, roughly speaking, an `orientation free' generalisation of a vector field. To a graded marked surface $\Sigma$ the authors of \cite{HaidenKatzarkovKontsevich} assign a certain triangulated $A_\infty$-category, the  \textit{partially wrapped Fukaya category} $\Fuk(\Sigma)$. For punctured surfaces, this agrees with the wrapped Fukaya category considered in \cite{Bocklandt} and its appendix.

  Apart from the punctured cases, the Fukaya categories of \cite{HaidenKatzarkovKontsevich} are equivalent to the categories of compact objects in the derived categories of certain graded algebras. Without gradings, these so-called \textit{gentle algebras} were first studied by Assem and Skowronski \cite{AssemSkowronski} and since then have been featured in the works by many others. Gentle algebras are often studied in two variants:  \textit{proper} (=finite dimensional) ones and those which are \textit{homologically smooth} in the sense of \cite{KellerDifferentialGradedCategories}. Homologically smooth graded gentle algebras are precisely those which appear as generators of the aforementioned Fukaya categories while proper graded gentle algebras  arise as generators of the corresponding compact Fukaya category under certain conditions. The two classes are related by Koszul duality with proper graded gentle algebras appearing as the Koszul duals of homologically smooth graded gentle algebras.

  The passage from Fukaya categories to graded gentle algebras via generators has a counterpart in the other direction. To every proper or homologically smooth graded gentle algebra $A$ one can associate a graded marked surface $\Sigma_A$  \cite{LekiliPolishchukGentle, OpperPlamondonSchroll}. In the homologically smooth case, this yields an equivalence $\Fuk(\Sigma_A)\simeq \Perf(A)$ and in the proper case, provides an equivalence between the category $\Dfd{A}$ of dg $A$-modules with finite-dimensional cohomology and a (derived) completion of $\Fuk(\Sigma_A)$ \cite{OpperPlamondonSchroll, BoothGoodbodyOpper}. If $A$ is proper and $\Sigma_A$ satisfies certain conditions, this completion agrees with $\Fuk(\Sigma_A)$ and  $\Perf{A}$ can be viewed as a model for the infinitesimal Fukaya category of $\Sigma_A$.\medskip
  
  \noindent The following is our first main result and is stated in a simplified form. Throughout the entire paper we will work over a fixed field $\Bbbk$ and to simplify some arguments we will assume that its characteristic is different from $2$.
\begin{Introthm}[{\Cref{cor: Theorem A} \& \Cref{cor: Theorem A (2)}}]\label{IntroThmA} Let $\Sigma$ be a graded marked surface and let $\Fuk(\Sigma)$ denote the associated partially wrapped Fukaya category. If $\operatorname{char} \Bbbk=0$ or $\Fuk(\Sigma)$ is homologically smooth and proper, there exists an isomorphism 
	\begin{equation}\label{eq: intro semi-direct product}
		\DPic(\Fuk(\Sigma)) \cong \cN(\Sigma) \rtimes \MCG_{\gr}(\Sigma),
	\end{equation}
\noindent where $\MCG_{\gr}(\Sigma)$ denotes the graded mapping class group of $\Sigma$ and $\cN(\Sigma)$ is an explicit group which can be read off from $\Sigma$. There is an analogous description for the derived Picard group of a proper graded gentle algebra $A$ as a semi-direct extension of $\MCG_{\gr}(\Sigma_A)$.
\end{Introthm}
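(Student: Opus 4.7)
My plan is to realise the claimed isomorphism as a split short exact sequence
\[
1 \longrightarrow \cN \longrightarrow \DPic(\Fuk(\Sigma)) \overset{\Phi}{\longrightarrow} \MCG_{\gr}(\Sigma) \longrightarrow 1
\]
and then to identify the kernel with the explicit group asserted in the statement. The surjection $\Phi$ is produced by the complete derived invariance of the decorated surface announced in the abstract: since $\Sigma$ is a complete invariant of $\Fuk(\Sigma)$, any $F \in \DPic(\Fuk(\Sigma))$ induces an autoequivalence of the associated triangulated category and hence an automorphism of the decorated surface, i.e.\ a graded mapping class. For the splitting $\Psi\colon \MCG_{\gr}(\Sigma) \to \DPic(\Fuk(\Sigma))$ I would use the tautological geometric action: a graded diffeomorphism sends graded arcs to graded arcs and thereby induces an $A_\infty$-equivalence on a combinatorial model of $\Fuk(\Sigma)$. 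A direct check on a generating arc system then gives $\Phi\circ \Psi = \operatorname{id}$.

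The substantive content is the identification of $\ker \Phi$ with $\cN$. I would first build a candidate group $\cN$ out of the surface combinatorics -- boundary rotations, line-field twists, graded shifts, and a discrete part coming from $\Out$ of a cofibrant model of $\Fuk(\Sigma)$ -- and construct a homomorphism $\cN \to \DPic(\Fuk(\Sigma))$ landing in $\ker\Phi$ by construction. The real work is surjectivity onto the kernel, i.e.\ showing that every autoequivalence fixing the surface already lies in the image. In characteristic zero this is the natural regime of the author's previously introduced exponential map, which I would use to identify the identity component of $\DPic(\Fuk(\Sigma))$ with the exponential of a pro-nilpotent Lie subalgebra of $\HHH^1(\Fuk(\Sigma))$; a separate combinatorial argument handles the component group.

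The main obstacle is the positive characteristic, homologically smooth and proper case, where the exponential series is no longer available. The plan is to establish formality of the Hochschild cochain complex of $\Fuk(\Sigma)$ as a dg Lie (or $L_\infty$) algebra, exploiting both properness (so that all relevant complexes are finite-dimensional and obstructions are controlled) and homological smoothness (providing the duality on bimodules that rigidifies deformations). Once formality is secured, a deformation-theoretic argument transfers the characteristic-zero description of $\cN$ to positive characteristic: Maurer-Cartan equivalence classes for the Hochschild $L_\infty$-algebra are matched with elements of the derived Picard group, and the explicit generators of $\cN$ are defined purely geometrically and therefore make sense over any base field. I expect the genuine difficulty to lie in proving formality in positive characteristic and in controlling the residual obstruction classes via the smoothness-plus-properness hypothesis.

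The analogous statement for a proper graded gentle algebra $A$ is then inherited from the Fukaya-categorical result via the derived equivalence between $\Dfd{A}$ and a completion of $\Fuk(\Sigma_A)$ recalled in the introduction, combined with the compatibility of $\DPic$ with this completion; both $\MCG_{\gr}(\Sigma_A)$ and $\cN$ transfer naturally between $A$ and $\Sigma_A$, yielding the analogous semi-direct product decomposition.
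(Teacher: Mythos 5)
Your proposal follows essentially the same route as the paper: a geometrisation homomorphism to $\MCG_{\gr}(\Sigma)$ split by the mapping class group action on the Fukaya category, identification of the kernel via the Hochschild exponential in characteristic zero, formality of the Hochschild complex plus Maurer--Cartan/Goldman--Millson comparison in the smooth and proper positive-characteristic case, and transfer to proper gentle algebras via reflexivity and Koszul duality. The only notable differences are matters of execution the paper must handle carefully (rigidity of arcs and deformations of objects to make the geometrisation map well defined, a separate arc-complex argument for punctured surfaces, and formality of the associative dg algebra structure rather than the Lie structure), none of which change the overall strategy.
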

\noindent The condition that $\Fuk(\Sigma)$ is smooth and proper is equivalent to simple conditions on $\Sigma$, cf.~\Cref{PropositionPropertiesFukayaCategories}. For a punctured surface $\Sigma$, one has $\cN \cong \HH^1(\Sigma, \Bbbk^{\times})$ (with one exception, cf.~\Cref{TheoremDerivedPicardGroupWrappedCase}) and in the other cases $\cN$ is described in  \Cref{cor: Theorem A} and \Cref{cor: Theorem A (2)}.  The graded mapping class group is a central extension of the mapping class group of $\Sigma$ by the shift functor. Here, elements in the mapping class group are assumed to preserve all decorations of $\Sigma$ and the homotopy class of the line field. The section $\MCG_{\gr}(\Sigma) \rightarrow \DPic(\Fuk(\Sigma))$ which corresponds to the semi-direct product \eqref{eq: intro semi-direct product} arises from an action of $\MCG_{\gr}(\Sigma)$ on the Fukaya category which was first constructed in \cite{DyckerhoffKapranov}. The group $\cN$ is determined by the first Hochschild cohomology of $\Fuk(\Sigma)$ and admits a description in terms of singular cohomology of $\Sigma$, the boundary components of $\Sigma$ as well as their winding numbers with respect to the line field.  The entire Hochschild cohomology of a graded gentle algebra and the Gerstenhaber bracket are computed in \cite{OpperHochschildCohomologyGentle} (see also \cite{BianSchrollSolotarWangWen}) which builds on a number of earlier computations \cite{Ladkani, RedondoRoman, ChaparroSchrollSolotar,  ChaparroSchrollSolotarSuarezAlvarez} in the ungraded case. For punctured surfaces, the description of Hochschild cohomology is found in \cite{BocklandtVanDeKreeke}.\medskip

\noindent \Cref{IntroThmA} directly provides a description of derived Picard groups of categories with algebro-geometric origin. Let $\mathbb{X}$ be a \textit{tame non-commutative nodal projective curve of gentle type}  in the sense of \cite{BurbanDrozdNoncommutativeNodal}. Examples include all Kodaira cycles of projective lines such as the the nodal cubic (cf.~\cite{BurbanDrozdCoherentSheavesSimpleDoublePoints}), as well as stacky chains and stacky cycles of projective lines in the sense of \cite{LekiliPolishchukAuslanderOrders}. The results of \cite{BurbanDrozdNoncommutativeNodal} show that the bounded derived category of $\mathbb{X}$ admits a categorical resolution through its so-called \textit{Auslander curve}. The bounded derived category of such Auslander curves are equivalent to categories of the form $\Fuk(\Sigma)$. 
\begin{Introcor}\label{IntroCorNodalCurves}
Let $\mathbb{X}$ be a non-commutative nodal projective curve of gentle type and let $\mathcal{A}$ denote its Auslander curve. Then the derived Picard group of $\cD^b(\mathcal{A})$ is of the form $\mathcal{N}(\Sigma) \rtimes \MCG_{\gr}(\Sigma)$ for a graded marked surface $\Sigma$.
\end{Introcor}
\noindent It is known in several cases that $\cD^b(\mathbb{X})$ is equivalent to a Verdier quotient of $\cD^b(\mathcal{A})$ which corresponds to the wrapped Fukaya category of the surface obtained from $\Sigma$ by removing all stops. This includes all Kodaira cycles, stacky chains and cycles from above and, expectedly, should include all non-commutative nodal projective curves of gentle type. In particular, we recover the descriptions of autoequivalence groups from \cite{BurbanKreusslerGenusOne, OpperKodairaCycles} as special cases.\medskip

\noindent Our third result is concerned with the question of when two Fukaya categories or derived categories of gentle algebras are equivalent. It is known that a diffeomorphism  $\Sigma \cong \Sigma'$ which is compatible with the decorations on both sides induces an equivalence $\Fuk(\Sigma) \simeq \Fuk(\Sigma')$ \cite{HaidenKatzarkovKontsevich}. Hence, up to Morita equivalence, $\Fuk(\Sigma)$ only depends on the orbit of the graded marked surface $\Sigma$ under the action of the (ungraded) mapping class group $\MCG(\Sigma)$. Therefore, if $A, A'$ are homologically graded gentle algebras and $\Sigma_A \cong \Sigma_{A'}$ we get an equivalence $\cD(A)\simeq \cD(A')$. The converse, that is, whether $\cD(A)\simeq \cD(A')$ implies $\Sigma_A\cong \Sigma_{A'}$ as graded marked surfaces is a subtle question which requires one to extract geometric invariants such as the surface $\Sigma_A$ purely from the categorical structure of the subcategory of compact objects in $\cD(A)$.
  
 \begin{Introthm}[{\Cref{thm: derived invariant Fukaya categories}}]\label{IntroThmB} $\Sigma, \Sigma'$ be graded marked surfaces. Then $\Fuk(\Sigma)$ and $\Fuk(\Sigma')$ are equivalent if and only if  $\Sigma$ and $\Sigma'$ are isomorphic as graded marked surfaces. Likewise, if $A, A'$ are graded gentle algebras which are homologically smooth or proper (but not necessarily both), then $\cD(A) \simeq \cD(A')$ if and only if $\Sigma_A \cong \Sigma_{A'}$.
 \end{Introthm}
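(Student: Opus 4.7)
The backward implications are essentially built into the construction: a graded-compatible diffeomorphism of graded marked surfaces induces an equivalence of partially wrapped Fukaya categories by \cite{HaidenKatzarkovKontsevich}, and under the identification $\Fuk(\Sigma_A) \simeq \Perf(A)$ for a homologically smooth graded gentle algebra this also yields one direction of each of the remaining claims. The content of the theorem is therefore the converse direction: to reconstruct the graded marked surface $\Sigma$ from the triangulated category purely as a categorical invariant.

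My plan is to reduce all three statements to a single reconstruction problem. For the mixed case $\cD(A) \simeq \cD(A')$ with one algebra proper and the other smooth, I would pass to Koszul duals: by \cite{OpperPlamondonSchroll}, the Koszul dual of a proper graded gentle algebra is a completion of a homologically smooth graded gentle algebra whose associated surface agrees with that of the original. Applying Koszul duality to both sides therefore allows one to assume both algebras are smooth. The resulting Fukaya statement $\Fuk(\Sigma) \simeq \Fuk(\Sigma') \Rightarrow \Sigma \cong \Sigma'$ then also subsumes the $\Fuk(\Sigma) \simeq \Perf(A)$ claim via $\Fuk(\Sigma_A) \simeq \Perf(A)$.

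For the reconstruction itself, a graded marked surface is determined up to isomorphism by a finite list of combinatorial invariants: the genus, the numbers of boundary components, of marked points on each boundary, and of punctures, the winding numbers of the boundary components with respect to the line field, and, in cases where the preceding data leave ambiguity, the Arf invariant of the line field. The gross topological data can be extracted from the rank of the Grothendieck group and from dimensions of $\HHH^0$ and $\HHH^1$, while the boundary winding numbers should be read off from the grading structure on $\HHH^1(\Fuk(\Sigma))$ using the explicit descriptions provided by \cite{OpperHochschildCohomologyGentle} and, in the punctured case, \cite{BocklandtVanDeKreeke}: distinct winding numbers contribute to distinguishable graded pieces of $\HHH^1$, and the numerical data can be separated from one another.

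The main obstacle is the Arf invariant, since it is not visible through dimensions of Hochschild cohomology alone. Here my plan is to exploit \Cref{IntroThmA}: any equivalence $\Fuk(\Sigma) \simeq \Fuk(\Sigma')$ induces an isomorphism $\DPic(\Fuk(\Sigma)) \cong \DPic(\Fuk(\Sigma'))$, and the semi-direct product decomposition $\cN \rtimes \MCG_{\gr}$ is sufficiently canonical—$\cN$ being determined by a characteristic subgroup structure of $\DPic$—that, after composing with an element of $\cN$, one may assume the quotient $\MCG_{\gr}(\Sigma)$ is mapped isomorphically onto $\MCG_{\gr}(\Sigma')$. Together with the preservation of the numerical invariants above, this should realise the equivalence by a graded diffeomorphism, forcing the two line fields to lie in the same $\MCG_{\gr}$-orbit and hence to share all of their invariants, including the Arf invariant. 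Combining this with the independent topological and winding-number reconstruction completes the recovery of $\Sigma$.
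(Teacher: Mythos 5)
Your proposal goes a genuinely different route from the paper, and the route has a gap at exactly the point where this problem has historically been hard. The paper does not reconstruct $\Sigma$ from a list of numerical invariants; it constructs a diffeomorphism $\Sigma_1\to\Sigma_2$ directly from the equivalence. In the non-punctured cases this is the content of the geometrisation homomorphism (\Cref{TheoremgeometrisationHomomorphism}, extended to the smooth case in \Cref{TheoremGeometrisationHomomorphismSmoothCase} by isolating $\Perf(A^!)$ as the distinguished indecomposable block of $\cT_A^{\fin}$). In the punctured case the equivalence is shown to induce a simplicial automorphism of the arc complex, because disjointness of arcs is detected by vanishing of \emph{interior} morphisms (\Cref{PropositionInteriorMorphismsInteriorIntersections}); one then invokes \Cref{thm: arc complex determines surface} and \Cref{TheoremIrmarkMcCarthy} to produce a diffeomorphism, checks orientation via cyclic orderings of irreducible flows (\Cref{prp: orientation preserving}), and matches the line fields as in \cite[Proposition 4.21]{OpperDerivedEquivalences}. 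The mixed punctured/non-punctured case is excluded by an elementary endomorphism-ring argument ($\Bbbk[x]$ versus rings with zero divisors). The line field is thus never compared through its invariants; it is transported arc by arc.

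The concrete gaps in your plan are the following. First, the proposed use of \Cref{IntroThmA} to handle the Arf invariant is circular: the semi-direct product $\cN\rtimes\MCG_{\gr}(\Sigma)$ is obtained \emph{from} the geometrisation homomorphism, i.e.\ from the statement that (auto)equivalences act on curves and line-field data — which is precisely what \Cref{IntroThmB} asserts. Moreover, even granting \Cref{IntroThmA} for both surfaces, an abstract group isomorphism $\MCG_{\gr}(\Sigma)\cong\MCG_{\gr}(\Sigma')$ does not produce a diffeomorphism of graded marked surfaces, and certainly does not ``realise the equivalence by a graded diffeomorphism''; that realisation is the whole difficulty. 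Second, the claim that the boundary winding numbers are visible in the graded pieces of $\HHH^1$ is contradicted by \Cref{CorollaryLieAlgebraStructureFirstHochschild}: the first Hochschild cohomology only records $\phi_A(1,1)$, $\phi_A(0,0)$ and $\HH_1(\Sigma_A,\Bbbk)$, not the full Avella-Alaminos--Geiss data, so the numerical reconstruction already fails before one reaches the Arf invariant. Third, the Koszul-duality reduction of the mixed smooth/proper case glosses over the fact that $\Dfd{A}\simeq\Perf(\widehat{A^{\invex}})$ involves a \emph{completion} and that $\Dfd{A^{\invex}}$ acquires extra orthogonal blocks $\Dfd{\Bbbk[x,x^{-1}]}$ (\Cref{rem: block decomposition}); this is repairable, as the paper shows, but it is not automatic. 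As written, the proposal therefore does not constitute a proof.
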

 \noindent The work \cite{LekiliPolishchukGentle} provides numerical invariants which allow one to decide whether two graded marked surfaces are isomorphic. Thus, \Cref{IntroThmB} provides an effective derived equivalence classification of Fukaya categories and graded gentle algebras. \Cref{IntroThmB} generalises several earlier results: \cite{AmiotPlamondonSchroll, OpperDerivedEquivalences} prove an analogous result for $A$ proper and concentrated in degree $0$ and \cite{JinSchrollWang} proves it under the assumption that $A$ is graded and homologically smooth. Our results for the proper (but not necessarily homologically smooth) case and for punctured surfaces are therefore new and our proof provides a unified approach to all cases via a different strategy from those in \cite{AmiotPlamondonSchroll} and \cite{JinSchrollWang}.\medskip

\noindent The proofs of \Cref{IntroThmA} and \Cref{IntroThmB} follow the general strategy of \cite{OpperDerivedEquivalences}. First, one constructs a `geometrisation map' $\DPic(\Fuk(\Sigma)) \rightarrow \MCG_{\gr}(\Sigma)$ and, in a second step, its section. Finally, one describes the kernel of the geometrisation map.  The geometrisation map is based on a connection between equivalences between the triangulated categories in \Cref{IntroThmB} and maps between arc complexes of the associated surfaces.  The description of the kernel in the last step was a serious limiting factor for a while since it requires a good control over all elements in $\DPic(\Fuk(\Sigma))$ which are `close' to the identity in a certain sense. Roughly speaking, elements of $\cN$ preserve the isomorphism classes of `most' objects in $\Fuk(\Sigma)$ and map the remaining ones to a deformation. The new tool which allows one to classify such `almost identities' (or certain deformations of the identity functor) is the \textit{exponential map} from recent work of the author \cite{OpperIntegration}. The idea is to treat $\DPic(\Fuk(\Sigma))$ like a Lie group and view the kernel $\cN$ as its neutral component. In analogy to the exponential map between a Lie group and its Lie algebra, one constructs a purely algebraic exponential which `integrates' a portion of the first Hochschild cohomology of an $A_\infty$-category in characteristic zero to elements in the corresponding derived Picard group. The multiplication of the derived Picard group is then encoded through the Baker-Campbell-Hausdorff formula and the Gerstenhaber bracket. When combined with the description of the Hochschild cohomology from \cite{OpperHochschildCohomologyGentle}, this proves the characteristic zero case of \Cref{IntroThmA}.
 
  In positive characteristic, where the exponential is no longer available, we are able to sidestep the occurring difficulties by exploiting the fact that the Hochschild complex of a homologically smooth and proper graded gentle algebra is formal as a dg algebra.  We then show  that for any graded algebra whose Hochschild dg algebra is formal, the respective deformations of the identity functor are governed by the first Hochschild cohomology.
  
  Finally, to prove a variant of \Cref{IntroThmA} for proper graded gentle algebras, we rely heavily on the fact that the category of perfect complexes over such algebras are \textit{reflexive} \cite{BoothGoodbodyOpper} in the sense of \cite{KuznetsovShinder}. In a nutshell, reflexivity implies a strong duality between an enhanced triangulated category $\cT$ and the category  $\Dfd{\cT}$, including an isomorphism of their derived Picard groups. For example, a reflexive category $\cT$ and $\Dfd{\cT}$ have isomorphic Hochschild cohomology and derived Picard groups \cite{GoodbodyReflexivity}.

\subsection*{Structure of the paper} The first five sections are predominantly expository and review the necessary background material about $A_\infty$-categories, Fukaya categories of surfaces, gentle algebras and their surface models as well as other topics. Section \ref{Section Geometrisation Homomorphism} discusses and constructs the geometrisation homomorphisms. It also contains proof of some cases of \Cref{IntroThmB}. \Cref{SectionGroupActionFukayaCategory} discusses actions of the mapping class group and establishes the existence of a section for the geometrisation homomorphism. \Cref{SectionIntegrationHochschild} recalls the exponential map from \cite{OpperIntegration} while \Cref{SectionLieAlgebraStructureGradedGentle} discusses the Hochschild cohomology of graded gentle algebras. \Cref{SectionKernel} provides a description of the group $\cN$ in \Cref{IntroThmA} and \Cref{SectionFormalityHochschildComplex} proves \Cref{IntroThmA} in positive characteristic. Finally, \Cref{SectionDerivedPicardGroupWrappedFukayaCategory} contains the proof of \Cref{IntroThmA} for the Fukaya categories of punctured surfaces.
\subsection*{Acknowledgments} I like to thank Fiorela Rossi Bertone and Julia Maria Redondo for answering my questions about comparison maps between Hochschild complexes and the Bardzell resolutions as well as Raf Bocklandt and Jasper van de Kreeke for answering questions regarding the Hochschild cohomology of Fukaya categories. Finally, I thank  Isaac Bird and Alexandra Zvonareva for their comments on an earlier draft of this article. Throughout this project, I was supported by the Primus grant PRIMUS/23/SCI/006.

	\section{\texorpdfstring{Reminder on $A_\infty$-categories and their Hochschild cohomology}{Reminder on A-categories and their Hochschild cohomology}}
	\noindent We  review some aspects of the theory of  $A_\infty$-categories. We will follow the notation from \cite{OpperIntegration} and, in particular, use cohomological grading conventions.

\subsection{Hochschild spaces and brace operations}\label{SectionBraceAlgebra}

	\begin{definition}\label{DefinitionAInfinityCategory} A  \textbf{$\Bbbk$-quiver} $\cG$ is a set of objects $\Ob{\cG}$ together with a collection of graded vector spaces $\{\cG(X,Y)\}_{X, Y \in \Ob{\cG}}$.  Its \textbf{Hochschild space}  is the graded vector space $C(\cG)$ whose homogeneous component in degree $n$ is
			\begin{equation}\label{EquationHochschildSpace}
				C^n(\cG)  \coloneqq \prod_{p=0}^{\infty} \prod_{X_0
					, \dots, X_1 \in \Ob{\cG}} \Hom_\Bbbk^{n}\Big(\cG(X_{p-1}, X_p)[1] \otimes \cdots \otimes \cG(X_{0}, X_1)[1], \cG(X_0,X_p)[1]\Big),
				\end{equation}
\noindent Here, $\Hom_{\Bbbk}^{i}(-, -)$ denotes the space of  homogeneous graded $\Bbbk$-linear maps of degree $i$ and an empty tensor product is interpreted as $\Bbbk$.
\end{definition}
\noindent For $p=0$, the expression the interior product of \eqref{EquationHochschildSpace} ranges over all $X_0 \in \Ob{\cG}$ with factors $\Hom^n_{\Bbbk}(\Bbbk[1], \cG(X_0,X_0)[1])\cong \cG^n(X_0, X_0)$. We refer to a homogeneous element $f \in C^n(\cG)$ through a tuple $f=(f^i)_{i \geq 0}$, where $f^i$ denotes the product of all $i$-ary components of $f$ (the case $p=i$ in \eqref{EquationHochschildSpace}). 
The Hochschild space $C=C(\cG)$ is filtered by its \textbf{weight filtration}
\begin{displaymath}
	C= W_0C \supseteq W_1C \supseteq \cdots,
\end{displaymath}
\noindent where $W_iC$ is defined by restricting the index $p$ in \eqref{EquationHochschildSpace} to $p \geq i$. It endows each  $C^n(\cG)$ as well as $C(\cG)$ with the structure of a Banach space such that $W_iC(\cG)$ coincides with the closed ball of radius $2^{-i}$ and center $0$ with respect to the metric. The Hochschild space admits continuous linear operations of degree $0$,
\begin{displaymath}\begin{tikzcd}
-\{\cdots\}_r\colon  C^{\otimes (r+1)} \arrow{r} & C
\end{tikzcd}
\end{displaymath}
\noindent where $r \geq 1$, which are called \textbf{braces}. If $\cG$ has a single object and hence is equivalent to a graded vector space $V$, then for homogeneous $f=(f^i)_{i \geq 0}$, $g_1, \dots, g_r$ with $g_j=(g_j^i)_{i \geq 0}$ and each $m \geq 0$, the expression $f\{g_1, \dots, g_r\}$ is defined as
\begin{equation}\label{EquationDefinitionBracesHochschildComplex}
	{(f\{g_1, \dots, g_r\}_r)}^m \coloneqq \sum  f^{m-\sum_{j=0}^r i_j}\Big(\operatorname{Id}_{V[1]}^{\otimes i_1} \otimes \, g_1^{n_1} \otimes \operatorname{Id}_{V[1]}^{\otimes i_2} \otimes \, g_2^{n_2} \otimes \cdots \otimes g_r^{n_r} \otimes \operatorname{Id}_{V[1]}^{\otimes i_r}\Big),	
\end{equation}
\noindent where the sum is indexed by all $(i_0, \dots, i_{r}, n_1, \dots, n_r) \in \mathbb{N}_{\geq 0}^{2r+1}$ so that $\sum_{j=0}^r i_j + \sum_{j=1}^r n_j=m-r$. We employ the Koszul sign rule to evaluate a tensor product of functions $f \otimes g$ on an element $u \otimes v$. The case of arbitrary graphs is analogous. We drop the index $r$ from the notation of the braces whenever it is apparent from the context. The brace operations are compatible with the index shifted weight filtration, that is, $-\{\cdots\}_r$ restricts to maps
\begin{displaymath}
\begin{tikzcd}
W_{1+ m_0}C \otimes \cdots \otimes W_{1+ m_r}C \arrow{r} & W_{1+ \sum_{i=0}^r m_i}C,
\end{tikzcd}
\end{displaymath} 
\noindent for all $m_0, \dots, m_r \geq -1$ and where $W_jC\coloneqq 0$ for $j < 0$. The binary operation $-\{-\}_2$ is referred to as the \textbf{composition product} and will be denoted by $- \star -$. It is not associative but a so-called \textit{pre-Lie algebra} and hence determines a graded Lie bracket on $C(\cG)$ via its graded commutator
\begin{displaymath}
[f, g] \coloneqq f \star g - (-1)^{|f| |g|} g \star f.
\end{displaymath}
\noindent The identity $\mathbf{1}\coloneqq \operatorname{Id}_{\cG} \in C^1(\cG)$ whose components are the identities on all spaces $\cG(X,Y)$, $X, Y \in \Ob{\cG}$, is a left unit of the composition product and satisfies $\mathbf{1}\{g_1, \dots, g_r\}_r=0$ for all $r \geq 2$.  

\subsection{$A_\infty$-categories and $A_\infty$-functors}

\begin{definition}
A (shifted) $A_\infty$-category is a $\Bbbk$-quiver $\bA$ together with an element $\mu_{\bA} \in W_1C^1(\bA)$ such that $\mu_{\bA} \star \mu_{\bA}=0$.
\end{definition}
\noindent Using the shift map $s: V \rightarrow V[1]$ of degree $-1$ of a graded vector space $V$ and the Koszul sign rule, the family $\mu_{\bA}=(\mu_{\bA}^i)_{i \geq 0}$ is equivalent to a family of maps $(m_{\bA}^i)_{i \geq 0}$ of degree $2-i$ between tensor products of the vector spaces $\bA(-,-)$ instead of their suspensions. These satisfy the quadratic relations in \cite{KellerAInfinityAlgebrasInRepresentationTheory, GetzlerJones} for $A_\infty$-categories. Equivalent sign conventions which are commonly used in symplectic geometry can be found in \cite{SeidelBook, HaidenKatzarkovKontsevich}. All $A_\infty$-categories in this paper will be \textbf{strictly unital} which means that every object $A \in \bA$ possesses a strict identity morphism $\operatorname{Id}_A \in \bA^0(A,A)$ subject to the conditions
\begin{displaymath}
\begin{array}{lcr}
m_{\bA}^2(f, \operatorname{Id}_{A})=f=m_{\bA}^2(\operatorname{Id}_{A'}, f) & \text{and} & m_{\bA}^i(f_n, \dots, f_1)=0,
\end{array}
\end{displaymath}
\noindent for all $f \in \bA(A,A')$, $i \geq 3$, and whenever at least one of the $f_i$ is of the form $\operatorname{Id}_{A}$.
Like dg categories, the operation $\mu^1$ are differentials on the morphism spaces of an $A_\infty$-category and every $A_\infty$-category $\bA$ has an associated graded homotopy category $\HH^{\bullet}(\bA)$ whose composition is induced by $\mu_{\bA}^2$. Restricting to morphisms in degree $0$ then yields the  homotopy  category $\HH^0(\bA)$. An $A_\infty$-category $\bA$ is \textbf{minimal} if $\mu_{\bA}^1=0$ vanishes identically. In this case, $A_\infty$-category has an underlying graded $\Bbbk$-linear category with composition $\mu_{\bA}^2$. In general however, unless $\bA$ is minimal, $\mu_{\bA}^2$ fails to be associative. The datum of a dg category $\cU$ is equivalent to an $A_\infty$-category $\tilde{\cU}$ with the same objects such that $\mu_{\tilde{\cU}}^i=0$ for all $i \geq 3$ such that the differentials and composition law of $\cU$ are encoded in the operations $m_{\tilde{\cU}}^1$ and $m_{\tilde{\cU}}^2$.

An \textbf{$A_\infty$-functor} $F\colon  \bA \rightarrow \bB$ between $A_\infty$-categories $\bA$ and $\bB$ consists of a map $F^0\colon  \Ob{\bA} \rightarrow \Ob{\bB}$ and a family $\{F^r\}_{r \geq 1}$ of multi-linear maps, also called \textbf{Taylor coefficients},
\begin{displaymath}
	\begin{tikzcd}
F^d\colon  \bA(A_{d-1}, A_d)[1] \otimes \cdots \otimes \bA(A_0, A_1)[1] \arrow{r} & \bA\big(F^0(A_0), F^0(A_d)\big)[1],
\end{tikzcd}
\end{displaymath}
\noindent of degree $0$ which satisfy a generalisation of the covariance condition for ordinary functors. All $A_\infty$-functors in this paper will be \textbf{unital}, cf.~\cite{CanonacoOrnaghiStellariCommutativeRings} for the definition. Any $A_\infty$-functor induces a functor between the associated (graded) homotopy categories and an $A_\infty$-functor $F$ is \textbf{strict} if $F^d=0$ for all $d \geq 2$.  After suspension, a dg functor $G\colon  \bA \rightarrow \bB$ between dg categories is equivalent to a strict $A_\infty$-functor whose first Taylor coefficient $F^1$ encodes the action of $G$ on morphisms. For any pair $(\bA, \bB)$ of $A_\infty$-categories the set of (unital) $A_\infty$-functors $\bA \rightarrow \bB$  form the objects of an $A_\infty$-category $\Fun(\bA, \bB)$. Functors $F, G \in \Fun(\bA, \bB)$ are \textbf{weakly equivalent}, denoted by $F \approx G$, if $F$ and $G$ are isomorphic in $\HH^0(\Fun(\bA, \bB))$. A finer notion of equivalence, called \textit{homotopy}, implies weak equivalence. Both relations are compatible with composition of $A_\infty$-functors.

An $A_\infty$-functor is \textbf{weakly invertible} if it admits an inverse up to weak equivalence. A special case of these are \textbf{quasi-equivalences}, $A_\infty$-functors which induces an equivalence between the associated graded homotopy categories. For an $A_\infty$-category $\bA$, we denote by $\Aut^{\infty}(\bA)$ the group of weak equivalence classes of weakly invertible $A_\infty$-functors $\bA \rightarrow \bA$. Throughout the paper, we are often interested in the following special case of $A_\infty$-functors. 
\begin{definition}
Let $\bA$ be an $A_\infty$-category. An \textbf{$A_\infty$-isotopy} of $\bA$ is an $A_\infty$-functor $F\colon  \bA \rightarrow \bA$ such that $F^1=\operatorname{Id}_{\bA}$.
\end{definition}
\noindent The set of $A_\infty$-isotopies of an $A_\infty$-category $\bA$ is closed under composition. Moreover, are all $A_\infty$-isotopies are invertible (in the strict sense) and therefore weakly invertible.
\begin{definition}
	We denote by $\Aut^{\infty}_+(\bA) \subseteq \Aut^{\infty}(\bA)$ the group consisting of the weak equivalence classes of $A_\infty$-isotopies of $\bA$.
\end{definition} 
\noindent  We note that $A_\infty$-isotopies of a graded $\Bbbk$-linear category are homotopic if and only if they are weakly equivalent. This is a consequence of a more general criterion, cf.~\cite[Propsition 6.15]{OpperIntegration} and the discussion thereafter. The $A_\infty$-functor equations as well as compositions of $A_\infty$-isotopies can be expressed through the brace operations. 

\begin{lem}[{cf.~\cite[Remark 2.12]{SeidelFormalGroups}, \cite[Section 3.6]{OpperIntegration}}]\label{LemmaFormulaCompositionIsotopies}
Let  $F$ and $G$ be $A_\infty$-isotopies of an $A_\infty$-category $\bA$. Write $F_+\coloneqq F-\operatorname{Id}_{\bA} \in W_2C^1(\bA)$ and $G_+\coloneqq G-\operatorname{Id}_{\bA} \in W_2C^1(\bA)$. Then $G \circ F$ is the unique $A_\infty$-isotopy such that
\begin{displaymath}
{(G \circ F)}_+ = G_+ \odot F_+ \coloneqq \sum_{r \geq 1}G_+\{F_+, \dots, F_+\}_r,
\end{displaymath}
\noindent where $G_+\{\cdots\}_0 \coloneqq G_+$.
 The $A_\infty$-functor equation of $F$ is equivalent to
\begin{displaymath}
0 = [\mu_{\bA}, F_+] + \sum_{r \geq 2}\mu_{\bA}\{F_+, \dots, F_+\}_r.
\end{displaymath}
\end{lem}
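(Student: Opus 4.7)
The plan is to verify both assertions by direct unpacking of the defining formulas and matching the resulting expressions with the brace operations on $C(\bA)$, following the standard proofs given in \cite[Remark 2.12]{SeidelFormalGroups} and \cite[Section 3.6]{OpperIntegration}.

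For the composition formula, the key observation is that composing two $A_\infty$-isotopies amounts to composing formal deformations of the identity. I would start from the standard formula for the $d$-th Taylor coefficient of a composite,
\begin{equation*}
(G \circ F)^d = \sum_{r \geq 1}\sum_{\substack{i_1 + \dots + i_r = d \\ i_j \geq 1}} G^r \circ (F^{i_1} \otimes \cdots \otimes F^{i_r})
\end{equation*}
(with appropriate Koszul signs), and substitute the decompositions $F = \operatorname{Id}_{\bA} + F_+$ and $G = \operatorname{Id}_{\bA} + G_+$. Because $F^1 = G^1 = \operatorname{Id}_{\bA}$ and higher braces of the identity vanish, the expansion of the right-hand side reorganises, slot by slot, into the brace operations $G_+\{F_+, \dots, F_+\}_s$: the inner factors $F^{i_j}$ with $i_j = 1$ become identity insertions in the brace formula, while those with $i_j \geq 2$ contribute the $F_+^{i_j}$ in the slots of $G_+^k$, and the convention $G_+\{\cdots\}_0 := G_+$ encodes uniformly the zero-insertion case. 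Uniqueness of the resulting $A_\infty$-isotopy is inherited from the uniqueness of Taylor coefficients.

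For the $A_\infty$-functor equation, I would start with the standard relations for an $A_\infty$-functor $F\colon \bA \to \bA$, namely
\begin{equation*}
\sum_{r \geq 1}\sum_{i_1 + \dots + i_r = n} \mu_{\bA}^r \circ (F^{i_1} \otimes \cdots \otimes F^{i_r}) = \sum_{\substack{k \geq 1\\s + k + t = n}} F^{s+1+t} \circ (\operatorname{Id}^{\otimes s} \otimes \mu_{\bA}^k \otimes \operatorname{Id}^{\otimes t})
\end{equation*}
(again with appropriate Koszul signs), valid for every arity $n \geq 1$. Substituting $F = \operatorname{Id}_{\bA} + F_+$ and collecting terms by the number of $F_+$-factors, the contributions with zero $F_+$'s on both sides assemble into $\mu_{\bA}^n$ and cancel. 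The linear-in-$F_+$ contributions reassemble into $\mu_{\bA} \star F_+$ on the left and $F_+ \star \mu_{\bA}$ on the right, which together reproduce the graded commutator $[\mu_{\bA}, F_+]$ with the sign prescribed by the degree conventions on $C(\bA)$. The higher-order terms, involving at least two $F_+$'s, appear only on the left-hand side of the functor equation, since the outer $F^{s+1+t}$ on the right contributes at most one copy of $F_+$; these combine into $\sum_{r \geq 2} \mu_{\bA}\{F_+, \dots, F_+\}_r$, and rearranging yields the stated identity.

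The main obstacle in both parts is the careful accounting of Koszul signs, which arise from the degree shifts on the tensor factors of the Hochschild space and from the shuffle of shifted maps through shifted arguments. One must verify, in each combinatorial configuration, that the sign produced by the naive composition and functor formulas matches the sign built into the brace operation in $C(\bA)$. This is a routine but delicate computation that has been carried out in detail in the cited references, and the proof would essentially reduce to matching insertion patterns with brace expressions once the sign conventions are pinned down.
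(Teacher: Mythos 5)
Your overall strategy is correct and is essentially the argument that the paper itself delegates to the cited references (no proof is given in the text): unpack the Taylor-coefficient formula for a composite and the $A_\infty$-functor equation, substitute $F=\operatorname{Id}_{\bA}+F_+$ and $G=\operatorname{Id}_{\bA}+G_+$, and regroup the surviving terms into brace expressions. Your treatment of the second identity is complete: the zero-$F_+$ contributions give $\mu_{\bA}$ on both sides and cancel, the linear contributions give $\mu_{\bA}\star F_+$ on the left and $F_+\star\mu_{\bA}$ on the right (and since the Taylor coefficients have degree $0$ in the shifted conventions, these combine into the graded commutator $[\mu_{\bA},F_+]$), and terms with at least two copies of $F_+$ occur only on the left and assemble into $\sum_{r\geq 2}\mu_{\bA}\{F_+,\dots,F_+\}_r$.

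In the first part there is one concrete omission in your bookkeeping. In $(G\circ F)^d=\sum_{r}G^r\circ(F^{i_1}\otimes\cdots\otimes F^{i_r})$ the \emph{outer} factor also splits as $G^r=(\operatorname{Id}_{\bA})^r+G_+^r$, and the identity part survives exactly for $r=1$, where it contributes $F^d=(\operatorname{Id}_{\bA})^d+F_+^d$. Your regrouping accounts only for the terms with outer factor $G_+^r$; these do assemble into $\sum_{r\geq 0}G_+\{F_+,\dots,F_+\}_r$ (with the stated $r=0$ convention), but the outer-identity term produces an additional standalone $F_+$. Carrying the expansion through therefore yields
\begin{displaymath}
(G\circ F)_+ \;=\; F_+ \;+\; \sum_{r\geq 0}G_+\{F_+,\dots,F_+\}_r \;=\; F_+ \,+\, G_+ \,+\, \sum_{r\geq 1}G_+\{F_+,\dots,F_+\}_r,
\end{displaymath}
and this is the identity your argument actually proves; it is also forced by the sanity checks $G=\operatorname{Id}_{\bA}$ (which requires the extra $F_+$) and $F=\operatorname{Id}_{\bA}$ (which requires the sum to start at $r=0$). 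So either make the outer-identity term explicit and record the formula in the corrected form above, or your "slot by slot" reorganisation will not close up. The remaining points are fine: $G\circ F$ is an isotopy because $(G\circ F)^1=G^1\circ F^1=\operatorname{Id}_{\bA}$, and uniqueness is immediate since an isotopy is determined by its Taylor coefficients, hence by $(G\circ F)_+$.
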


\subsection{Twisted complexes and (perfect) derived categories}\ \medskip

\noindent Every strictly unital $A_\infty$-category $\bA$ has a derived category $\cD(\bA)$ which is constructed in the same way as the derived category of a dg category. The category of $\bA$-modules forms a dg category and there is a derived Yoneda embedding $\cY\colon \HH^0(\bA) \rightarrow \cD(\bA)$. An $A_\infty$-category $\bA$ is \textbf{pretriangulated} (resp.~\textbf{perfect}) if the essentially image of its Yoneda embedding is a triangulated (resp.~thick) subcategory of $\cD(\bA)$. For a perfect $A_\infty$-category the essential image of $\cY$ is equivalent to the category of compact objects of $\cD(\cA)$ and will be denoted by $\Perf(\bA)$. We define $\Dfd{\bA} \subseteq \cD(\bA)$ the full subcategory (resp.~dg subcategory) spanned by the $\bA$-modules $M$ such that $M(A) \in \Perf(\Bbbk)$ for all $A \in \bA$. By abuse of notation, we also denote by $\Perf(\bA)$ and $\Dfd{\bA}$ the canonical $A_\infty$-categories consisting of all $\bA$-modules whose image in $\cD(\bA)$ lie in $\Perf(\bA)$ and $\Dfd{\bA}$ respectively.

The triangulated hull of $\bA$ inside $\cD(\bA)$ admits a particularly simple $A_\infty$-enhancement through the $A_\infty$-category of one-sided twisted complexes $\Tw^+(\bA)$ over $\bA$, cf.~\cite{HaidenKatzarkovKontsevich}. Objects in $\Tw^+(\bA)$ a certain pairs $(X, \delta)$, where $X$ is a finite formal sum of formal shifts of objects in $\bA$ and $\delta\colon X \rightarrow X$ is a differential. In other words, $\Tw^+(\bA)$ is the equivalent of the category of finite chain complexes of free modules over an algebra. Instead of recalling the general definition here, we only mention that $\bA \mapsto \Tw^+(\bA)$ defines a monad whose multiplication map $\Tw^+\big(\!\Tw^+(\bA)\big) \rightarrow \Tw^+(\bA)$ is a quasi-equivalence of $A_\infty$-categories and  is given by a direct sum totalisation.

\subsection{The Hochschild $A_\infty$-algebra of an $A_\infty$-category}\label{SectionStructureHochschildComplex}\ \medskip

\noindent The Hochschild space $C=C(\bA)$ of an $A_\infty$-category $\bA$ naturally carries the structure of an $A_\infty$-algebra whose shifted $A_\infty$-operations are given by
\begin{equation}\label{EquationAInfinityStructureHochscchildComplex}
	\begin{aligned}
		\mu_C^1(f) & \coloneqq \big[\mu_{\bA}, f\big] =\mu_{\bA} \star f - (-1)^{|f|} f \star \mu_{\bA},\\
		\mu_C^i(f_1, \dots, f_i) & \coloneqq \mu_{\bA}\{f_1, \dots, f_i\}, \, i \geq 2,
	\end{aligned}
\end{equation}
\noindent where $f, f_1, \dots, f_i \in C$ are homogeneous.

\begin{rem}
An equivalent formulation of Lemma \ref{LemmaFormulaCompositionIsotopies} is that $F$ is an $A_\infty$-isotopy if and only if $F_+ \in W_2C^1(\bA)$ is a \textbf{Maurer-Cartan element} of the $A_\infty$-algebra $C=C(\bA)$, that is, $\zeta=F_+$ satisfies
\begin{equation}\label{EquationMaurerCartanEquation}
\sum_{i=1}^{\infty} \mu_{C}^i(\zeta, \dots, \zeta)=0.
\end{equation}
\noindent Here, the left hand side is considered as the limit of its finite partial sums, which exists due to the fact that $\mu_{C}^i(\zeta, \dots, \zeta) \in W_iC$. 
\end{rem}
\noindent If $\bA$ is a dg category, then $C(\bA)$ is a shifted dg algebra, which we will refer to as the \textit{Hochschild dg algebra}. The operation $m_{C}^2$ is also known as the \textbf{cup product}, usually denoted by $\cup$. The cohomology group
\begin{displaymath}
	\HHH^i(\bA, \bA) \coloneqq \HH^{i-1}\big(C(\bA)),
\end{displaymath}
\noindent is called the $i$-th \textbf{Hochschild cohomology} of $\bA$. The total Hochschild cohomology $\HHH^{\bullet}(\bA, \bA)$ inherits the structure of a graded algebra while the Lie bracket determined by the composition product endows $\HHH^{\bullet+1}(\bA, \bA)$ with a compatible graded Lie algebra structure. Its Lie bracket is the \textbf{Gerstenhaber bracket} and was first described by Gerstenhaber for associative algebras. The weight filtration on $C(\bA)$ is compatible with the $A_\infty$-operations and descends to a filtration
\begin{displaymath}
	\HHH^{\bullet}(\bA, \bA) = W_0 \HHH^{\bullet}(\bA, \bA) \supseteq W_1 \HHH^{\bullet}(\bA, \bA) \supseteq \cdots.
\end{displaymath}
\noindent which we call the (cohomological) \textbf{weight filtration}. 
	
	\section{Topological Fukaya categories after Haiden-Katzarkov-Kontsevich}\label{SectionFukayaCategories}
	\noindent We briefly recall the construction of the topological Fukaya category of a marked surface after Haiden-Katzarkov-Kontsevich. All manifolds, maps and homotopies between them will be smooth unless specified otherwise. 
	\subsection{Marked surfaces}\ \medskip
	
	\noindent A \textbf{marked surface} is an oriented, compact surface $\Sigma$ with non-empty boundary $\partial \Sigma$ and a compact subset $\cM \subset \partial\Sigma$ which has non-trivial intersection with each boundary component. A boundary component $B$ is \textbf{fully marked} if $B \subseteq \cM$ and we say that $\Sigma$ is \textbf{punctured} if all its boundary components are fully marked. Equivalently, fully marked boundary components are in bijection with connected components of $\cM$ which are homeomorphic to $S^1$. The remaining connected components of $\cM$, namely the contractible ones, are called \textbf{marked intervals}. A \textbf{boundary segment} is the closure of a connected component of $\partial \Sigma \setminus \cM$. A \textbf{morphism of marked surfaces} $\varphi\colon  (\Sigma, \cM) \rightarrow (\Sigma', \cM')$ is an orientation preserving immersion mapping $\cM$ to a subset of $\cM'$ and boundary segments to distinct non-isotopic paths. Isotopies of morphisms are isotopies of the underlying immersions which fixes $\cM'$ setwise. 
	
	By a \textbf{line field} on $\Sigma$, we mean a section $\eta\colon  \Sigma \rightarrow \mathbb{P}(T\Sigma)$ of the projectivisation $\mathbb{P}(T\Sigma)$ of the tangent bundle of $\Sigma$. For example, every non-vanishing vector field $\chi$ induces a line field by taking the $1$-dimensional vector space spanned by $\chi(x)$ at every point $x \in \Sigma$.  A triple $(\Sigma, \cM, \eta)$ is called a \textbf{graded marked surface} but frequently we drop $\cM$ and $\eta$ from the notation. A morphism $(\Sigma, \cM, \eta) \rightarrow (\Sigma', \cM', \eta')$ is a pair $(\varphi, \widetilde{\varphi})$, consisting of a morphism  $\varphi$ between the underlying marked surfaces and the homotopy class $\widetilde{\phi}$ of a path from $\varphi^{\ast}\eta' \coloneqq \varphi^{\ast}\eta'\coloneqq {\mathbb{P}(T\varphi)}^{-1}\circ \eta' \circ \varphi$  to $\eta$ in the space of line fields, that is, the space of sections of $\mathbb{P}(T\Sigma)$. Here, $\phi^{\ast}\eta'$ is the unique line field determined by the commutativity of the diagram
	\begin{displaymath}
	\begin{tikzcd}
	\Sigma \arrow{d}[swap]{\varphi} \arrow{r}{\varphi^{\ast}\eta'} & \mathbb{P}(T\Sigma) \arrow{d}{\mathbb{P}(T\varphi)} \\
	\Sigma' \arrow{r}{\eta'} & \mathbb{P}(T\Sigma'). 
\end{tikzcd}
	\end{displaymath}
	Compositions of morphisms are defined via concatenation of maps and paths and notions such as isotopy extend in the natural way. 
	
	A \textbf{curve} on $\Sigma$ is an immersion $\gamma\colon \Omega \rightarrow \Sigma$, where $\Omega$ is a compact connected $1$-dimensional manifold\footnote{In other words, $\Omega$ is diffeomorphic to $[0,1]$ of the $1$-sphere $S^1$.} such that $\gamma^{-1}(\cM)=\partial \Omega$. When want to be more precise, we say that $\gamma$ is a \textbf{loop} if $\Omega \cong S^1$ and an \textbf{arc} otherwise. A \textbf{boundary arc} is an arc which is homotopic relative end points to a map with image in $\partial \Sigma$.  An \textbf{infinite} (resp.~\textit{semi-infinite}) arc is one which has at least one (resp.~exactly one) end point in a fully marked component. The remaining arcs are called \textbf{finite}.  A \textbf{homotopy} between curves is a homotopy $H\colon  \Omega \times [0,1] \rightarrow \Sigma$ relative\footnote{This means that the image of points in $\partial \Omega$ are allowed to move freely inside $\cM$.} to $(\partial \Omega, \cM)$ so that that $H_{t}\coloneqq H|_{\Omega \times \{t\}}\colon  \Omega \rightarrow \Sigma$ is a curve for all $t \in [0,1]$. A curve $\gamma$ is \textbf{simple} if its \textbf{interior}, that is, the restriction $\gamma|_{\Inter{\Omega}}$ to $\Inter{\Omega}\coloneqq \Omega \setminus \partial \Omega$, is an embedding. An \textbf{arc system} is a collection of pairwise non-homotopic and pairwise disjoint simple arcs on $\Sigma$.

\subsection{Gradings and winding numbers}\ \medskip

\noindent Every line field $\eta$ allows one to associate a \textbf{winding number} $\omega_{\eta}(\gamma) \in \mathbb{Z}$ to any loop $\gamma\colon  S^1 \rightarrow \Sigma$ given by the (signed) intersection number of the submanifold $\eta(\Sigma) \subseteq \mathbb{P}(T\Sigma)$ with the image of the derivative $\dot{\gamma}\colon  S^1 \rightarrow \mathbb{P}(T\Sigma)$ of $\gamma$. Discarding $\eta$ from the notation from now on, the integer $\omega(\gamma)$ only depends on the regular homotopy class of $\gamma$ (which preserve the homotopy class of $\dot{\gamma}$). Moreover, if $\gamma$ bounds no ``tear drops'', cf.~\cite[Figure 3]{HaidenKatzarkovKontsevich}, then $\omega(\gamma)$ in fact only depends on the homotopy class of $\gamma$. Every closed curve $\gamma$ is homotopic to one without tear drops and for any two such curves homotopy and regular homotopy coincide.  A line field $\eta$ determines a $\mathbb{Z}$-fold covering $\pi_{\eta}\colon \widetilde{\mathbb{P}}_{\eta} \rightarrow \mathbb{P}(T\Sigma)$. A \textbf{grading} on a curve $\gamma\colon  \Omega \rightarrow \Sigma$ is a lift $\widetilde{\gamma}$ of the derivative $\dot{\gamma}$ along $\pi_{\eta}$ and the pair $(\gamma, \widetilde{\gamma})$ a \textbf{graded curve}. As before, we will drop $\widetilde{\gamma}$ from the notation in most instances. As a consequence of the definition, every arc admits $\mathbb{Z}$ possible gradings, whereas a loop admits a grading if and only if $\omega(\gamma)=0$.

	\subsection{Marked intervals versus marked points}\ \medskip

\noindent It is often convenient to contract connected components of $\cM$ to a point so that marked interval are identified with a marked boundary point and fully marked boundary components are identified with punctures. This creates an intersection between arcs with end points in the same marked interval. Homotopy classes of curves on the original surface are in bijection with homotopy classes of curves on the new surface. Here an arc is a path with end points at marked points (which includes punctures) whose interior is required to be disjoint from the set of marked points and homotopies of arcs are understood to be relative end points and such that all intermediate paths are arcs. Throughout the paper, we will switch freely between those two points of view without further notice and the chosen perspective should be apparent from the context.

\subsection{Topological Fukaya categories} \ \medskip

	\begin{definition}
	Let $\gamma, \gamma'$ be arcs. A \textbf{flow} from $\gamma$ to $\gamma'$ is an orientation preserving immersion $f\colon [0,1] \rightarrow \cM$ such that $f(0)$ is an end point of $\gamma$ and $f(1)$ is an end point of $\gamma'$.
\end{definition}
\noindent 
Flows $f$ and $g$ are composed in the same way as paths if $f(1)=g(0)$ and are considered up to homotopy. The set of homotopy classes of flows from $\gamma$ to $\gamma'$ is denoted by $\Flow(\gamma, \gamma')$. As a result if $\gamma, \gamma'$ have end points $p \in \gamma$, $q \in \gamma'$ in the same fully marked component $B$, there is an embedding $ \mathbb{N}_{\geq 0} \subseteq \Flow(\gamma, \gamma')$ corresponding to all the clockwise traversing paths in $B$ starting on $p$ and ending on $q$. If $\gamma$ and $\gamma'$ are graded, then, as explained in \cite[Section 3.2 (3.15)]{HaidenKatzarkovKontsevich}, each flow $f$ from $\gamma$ to $\gamma'$ inherits a well-defined degree $\deg(f) \in \mathbb{Z}$ which  endows the vector space  $\Bbbk\Flow(\gamma, \gamma')$ with a grading.
\begin{definition}
	Let $\Gamma$ be a graded arc system. Then let $\cC=\cC_{\Gamma}$ denote the category with objects $\Gamma$ and graded morphism spaces
	\begin{displaymath}
		\Hom_{\cC}(\gamma, \gamma') =\Bbbk \oplus \Bbbk \Flow(\gamma, \gamma').
	\end{displaymath}
	\noindent  The composition is given by composition of flows with the copies of $\Bbbk$ acting as identity morphisms. We denote by $\Flow_{\operatorname{irr}}(\gamma, \gamma') \subseteq \Flow(\gamma, \gamma')$ the set of  \textbf{irreducible flows}, that is, flows which are not themselves non-trivial compositions of flows between arcs of $\Gamma$. 
\end{definition}
\noindent The category algebra $A_{\Gamma}$ of $\cC_{\Gamma}$ is isomorphic to the algebra of a graded quiver with relations $(Q_{\Gamma}, R_{\Gamma})$ with  $Q_{\Gamma}^0=\Gamma$ with arrows $\alpha_f\colon\gamma \rightarrow \gamma'$ corresponding bijectively to irreducible flows $f \in \Flow_{\operatorname{irr}}(\gamma, \gamma')$. The set of relations $R_{\Gamma}$ consists of the paths $\alpha_g \alpha_f$ such that $f(1) \neq g(0)$. We call any quiver of the form $(Q_{\Gamma}, R_{\Gamma})$ a \textbf{gentle quiver}. This stems from their connection to gentle algebras which we discuss in the next section.

Let $\Gamma$ be an arc system on $(\Sigma, \cM)$ and consider a marked surface $(\mathbb{D},\cN)$ such that $\mathbb{D}$ is diffeomorphic to a closed disk with $\cN$ consisting of $m \geq 3$ components and let $f_1, \dots, f_m$ denote the distinct components of $\cN$ in clockwise order considered as flows between the boundary segments. For any map  $\varphi\colon  (\mathbb{D},\cN) \rightarrow (\Sigma, \cM)$ sending all boundary segments of $(\mathbb{D}, \cN)$ to arcs of $\Gamma$, the sequence $\varphi \circ f_1, \dots, \varphi \circ f_m$ is a sequence of flows and any such cyclic sequence of flows on $\Sigma$ obtained in this way is called a \textbf{disk sequence}. This is used in the definition of Fukaya categories.

\begin{definition}[{\cite{HaidenKatzarkovKontsevich}}]\label{DefinitionPreFukayaCategory}
Let $\Gamma$ be a graded arc system on $\Sigma$. Let $\cF=\cF(\Gamma)$ denote the minimal $A_\infty$-category whose underlying graded category is $\cC_{\Gamma}$ and whose higher multiplications are given by the following rule:
\begin{displaymath}
	\begin{aligned}
	\mu_{\cF}(d_1, \dots, d_mf) & = (-1)^{|f|} f \\
	\mu_{\cF}(gd_1, \dots, d_m) & = g,
	\end{aligned}
	\end{displaymath}
	
	\noindent for  every disk sequence $d_1, \dots, d_m$ and flows $f, g$ such that $g \circ d_1 \neq 0 \neq d_m \circ  f$ in $\cC_{\Gamma}$.
	\end{definition}

\begin{definition}
An arc system $\Gamma \subseteq (\Sigma, \cM)$ is \textbf{full} if $\Sigma \setminus \Gamma$ is homeomorphic to a disjoint union of topological disks and it is \textbf{formal} if it does not admit any disk sequences, that is, if $\cF(\Gamma)$ has no higher $A_\infty$-compositions. It is \textbf{proper} if all its arcs are disjoint from the fully marked components and a proper arc system is \textbf{finitely full} if $\Sigma \setminus \Gamma$ is a disjoint union of topological disks and topological cylinders $C$ such that $\cM \supseteq \partial C \cap \partial \Sigma \cong S^1$.
\end{definition}
\noindent  In other words, each cylinder $C$ shares exactly one of its boundary components with the boundary of $\Sigma$ and this component is fully marked. Of course, on a marked surface without fully marked components, full coincides with finitely full but not in general. We collect a few facts on the category $\cF(\Gamma)$ which are either proved in \cite{HaidenKatzarkovKontsevich} or which follows immediately from Definition \ref{DefinitionPreFukayaCategory} or general facts.

\begin{prp}[{\cite[Lemma 3.3, Proposition 3.3., Proposition 3.5]{HaidenKatzarkovKontsevich}}]\label{PropositionPropertiesFukayaCategories}Let $(\Sigma, \cM)$ be a marked surface and let $\Gamma \subseteq \Sigma$ be an arc system. The following are true.
	\begin{enumerate}
		\item $(\Sigma, \cM)$ admits a full arc system and if $\cM \subsetneq \partial \Sigma$, then it admits a full formal arc system.
		\item If $\cM \subsetneq \partial \Sigma$, then a full arc system is formal if and only if it is minimal with respect to inclusion within the set of full arc systems.
		\item If $\cM \subsetneq \partial \Sigma$, then  $\cF(\Gamma)$ is homologically smooth and if $\Gamma$ is formal, then so is $\cF(\Gamma)$.
		\item The $A_\infty$-category $\cF(\Gamma)$ is proper if and only if no arc in $\Gamma$ has an end point on a fully marked component.
	\end{enumerate}
\end{prp}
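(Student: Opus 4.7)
My plan is to treat the four claims one at a time, combining topological/combinatorial arguments on $(\Sigma, \cM)$ with direct inspection of \Cref{DefinitionPreFukayaCategory}. For \emph{(1)}, the existence of a full arc system is classical --- take any ideal triangulation with vertices in $\cM$. For the \emph{formal} refinement under $\cM \subsetneq \partial \Sigma$, I would reduce a given full arc system $\Gamma$ iteratively: if some complementary disk $D$ carries $m \geq 3$ arcs on its boundary, the fact that $D$ meets at least one boundary segment of $\Sigma \setminus \cM$ lets one locate an arc $\gamma \subset \partial D$ which separates $D$ from a distinct second complementary disk. Removing $\gamma$ fuses the two disks into a single disk and keeps the arc system full; the procedure terminates at a full arc system in which every complementary disk has at most two boundary arcs, i.e., admits no disk sequences. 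Statement \emph{(2)} is then obtained from the same reduction: non-formality provides a removable arc (hence non-minimality), and conversely, a removable arc in a full arc system forces the fused disk to carry enough arcs on its boundary to produce a disk sequence.

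For \emph{(3)}, homological smoothness when $\cM \subsetneq \partial \Sigma$ is established by exhibiting a finite projective bimodule resolution of the diagonal of $\cC_\Gamma$ (equivalently, of the category algebra $A_\Gamma$). The natural candidate is a Bardzell-type resolution whose terms are indexed by admissible paths in the gentle quiver $(Q_\Gamma, R_\Gamma)$; finiteness amounts to the absence of infinite admissible paths, which translates to the absence of arcs of $\Gamma$ terminating on a fully marked circle --- precisely the hypothesis $\cM \subsetneq \partial \Sigma$. The formality statement is immediate from \Cref{DefinitionPreFukayaCategory}: when $\Gamma$ is formal there are no disk sequences, so $\mu_\cF^i \equiv 0$ for all $i \geq 3$ and $\cF(\Gamma)$ reduces to the graded $\Bbbk$-linear category $\cC_\Gamma$, which is trivially formal as an $A_\infty$-category.

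For \emph{(4)}, since $\cF(\Gamma)$ is minimal, properness is equivalent to finite-dimensionality of $\bigoplus_{\gamma, \gamma' \in \Gamma} \Hom_{\cF}(\gamma, \gamma') = \bigoplus_{\gamma, \gamma'} \bigl(\Bbbk \oplus \Bbbk \Flow(\gamma, \gamma')\bigr)$. A marked interval is a contractible oriented $1$-manifold and so supports at most one orientation-preserving flow class between any ordered pair of points on it; hence when no arc endpoint lies on a fully marked component, every $\Flow(\gamma, \gamma')$ is finite and the total morphism space is finite-dimensional. Conversely, if some $\gamma$ has an endpoint on a fully marked circle $B$, the clockwise traversals of $B$ embed $\mathbb{N}_{\geq 0}$ into $\Flow(\gamma, \gamma)$, forcing infinite-dimensional self-Hom. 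The main technical obstacle is the explicit bimodule resolution in \emph{(3)}: writing down a Bardzell-style complex over a graded gentle algebra and verifying its exactness requires careful combinatorial bookkeeping on $(Q_\Gamma, R_\Gamma)$, and its termination is exactly where the hypothesis $\cM \subsetneq \partial \Sigma$ intervenes.
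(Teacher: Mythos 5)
First, a point of reference: the paper does not prove this proposition --- it is quoted from Haiden--Katzarkov--Kontsevich, with the remark that the claims either appear there or follow from \Cref{DefinitionPreFukayaCategory}. So your proposal is being measured against the standard arguments rather than against a proof in the text. Your treatments of (1), (2) and (4) are broadly the right ideas, though in (1) the key sentence is garbled: a complementary disk that produces a disk sequence is precisely one whose boundary consists only of arcs and pieces of $\cM$, so it does \emph{not} meet an unmarked boundary segment. The hypothesis $\cM \subsetneq \partial\Sigma$ only guarantees that some \emph{other} complementary disk contains an unmarked segment, and you still owe an argument that some boundary arc of the bad disk is adjacent to a second, distinct complementary region (otherwise its removal does not preserve fullness). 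This is fixable by a connectivity argument, but as written the step does not parse.

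The genuine gap is in (3). You propose to exhibit a finite Bardzell-type bimodule resolution of the \emph{underlying graded category} $\cC_\Gamma$, for an arbitrary full arc system $\Gamma$. This fails on two counts. First, when $\Gamma$ is not formal, $\cF(\Gamma)$ carries nontrivial higher products and is not the algebra $A_\Gamma$; worse, $A_\Gamma$ itself need not be smooth. Concretely, on a disk with three marked intervals the three boundary arcs form a full, non-formal arc system whose category algebra is $\Bbbk Q/\rad^2$ for $Q$ an oriented $3$-cycle, an algebra of infinite global dimension --- yet $\cF(\Gamma)$ \emph{is} smooth, precisely because of its $\mu^3$. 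The correct route is to first invoke (1) and the Morita invariance of \Cref{prop: Morita invariance gentle algbebras} to replace $\Gamma$ by a \emph{formal} full arc system, and only then resolve the resulting gentle algebra. Second, your finiteness criterion is wrong: the absence of infinite anti-paths is \emph{not} equivalent to the absence of arcs ending on a fully marked component, and neither condition is equivalent to $\cM \subsetneq \partial\Sigma$; you have conflated the smoothness hypothesis with the properness criterion of (4). For example, on an annulus whose inner boundary is fully marked and whose outer boundary carries a single marked interval, the single arc joining the two components is a full arc system with $\cM \subsetneq \partial\Sigma$; its arc terminates on a fully marked circle, yet the category algebra is $\Bbbk[t]$, which is smooth and has no anti-paths of length $\geq 2$. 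The finiteness of the resolution is governed by the absence of \emph{cyclic} anti-paths, which for a formal full arc system is what the existence of unmarked boundary segments buys you; this is the step that needs to be argued, and it is not the condition you wrote down.
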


\begin{definition}\label{DefinitionFukayaCategory}
Suppose $\Sigma$ is not an annulus with two fully marked components and let $\Gamma \subset \Sigma$ be a full graded arc system. The (topological) \textbf{partially wrapped Fukaya category} of $\Sigma$ is the $A_\infty$-category $\Fuk(\Sigma) \coloneqq \Tw^+ \cF(\Gamma)$. If $\Sigma$ is punctured, we refer to $\Fuk(\Sigma)$ as the \textbf{wrapped Fukaya category}.
\end{definition}
\noindent The Fukaya category in the remaining case of an annulus with two fully marked components $B$ and $B'$ is by definition the category $\Perf(\Bbbk[x,x^{-1}]$ where $x$ is of degree $\omega(B)=-\omega(B')$.

\begin{rem}\label{RemarkAbouzaidWrappedFukaya}
It was proved by Abouzaid in the appendix of \cite{BocklandtMirrorSymmetryPuncturedSurfaces} that the wrapped Fukaya category of a punctured surface in the sense of \cite{BocklandtMirrorSymmetryPuncturedSurfaces, HaidenKatzarkovKontsevich} agrees with its symplectic version. We will make use of this fact in our discussion of derived Picard groups of such categories of this case in Section \ref{SectionDerivedPicardGroupWrappedFukayaCategory}.
\end{rem}
\noindent  As proved in \cite{HaidenKatzarkovKontsevich}, the Morita class of $\Fuk(\Sigma)$ is independent of the choice of the graded arc system and hence a well-defined invariant of the graded marked surface $(\Sigma, \cM, \eta)$. Their result is a consequence of the following statement.

\begin{prp}[{\cite[Lemma 3.2]{HaidenKatzarkovKontsevich}}]\label{PropositionInclusionGivesMoritaEquivalences}
Let $(\Sigma, \cM, \eta)$ be a graded marked surface and $\Gamma \subseteq \Gamma' \subseteq \Sigma$ be two full arc systems. Then the canonical inclusion functor $\cF(\Gamma) \hookrightarrow \cF(\Gamma')$ is a Morita equivalence.
\end{prp}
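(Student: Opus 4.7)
The plan is to verify the two criteria for a (quasi-)Morita equivalence separately: quasi-fully-faithfulness of the inclusion, and generation of $\cF(\Gamma')$ by the image. Fully faithfulness is essentially immediate from the definitions, since both $\cF(\Gamma)$ and $\cF(\Gamma')$ are constructed intrinsically from the ambient graded marked surface. Concretely, for $\gamma_1,\gamma_2 \in \Gamma$ the morphism space $\Hom_{\cF(\Gamma)}(\gamma_1,\gamma_2) = \Bbbk \oplus \Bbbk\Flow(\gamma_1,\gamma_2)$ coincides with $\Hom_{\cF(\Gamma')}(\gamma_1,\gamma_2)$, and the higher $A_\infty$-operations of \Cref{DefinitionPreFukayaCategory} are defined via disk sequences $d_1,\dots,d_m$, which depend only on the ambient $(\Sigma,\cM,\eta)$ and the arcs involved, not on the ambient arc system. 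Hence the tautological inclusion is a strict $A_\infty$-functor whose Taylor coefficients other than $F^1 = \operatorname{Id}$ vanish, and in particular it is quasi-fully-faithful.

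The real content is generation. Since taking $\Tw^+$ is a monad whose multiplication is a quasi-equivalence, it suffices to show that every $\gamma' \in \Gamma'$ is quasi-isomorphic, inside $\Tw^+\cF(\Gamma')$, to a twisted complex built from arcs of $\Gamma$. I would place $\gamma'$ in minimal position relative to $\Gamma$ and proceed by induction on the number $n(\gamma') \coloneqq |\Inter{\gamma'} \cap \Gamma|$, which is finite since $\Gamma$ is full and $\Sigma\setminus \Gamma$ is a disjoint union of disks.

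For the base case $n(\gamma')=0$, the arc $\gamma'$ sits inside a single complementary disk $D$ whose boundary is a concatenation of boundary segments of $(\Sigma,\cM)$ and arcs of $\Gamma$. Inside such a disk any chord between two marked intervals is, up to shift, quasi-isomorphic to an iterated mapping cone of the boundary arcs that it separates; this is a direct unraveling of the disk relations of \Cref{DefinitionPreFukayaCategory} and is the standard fact that the topological Fukaya category of a disk with $m\geq 3$ boundary marked intervals is generated by any $m-1$ of its boundary arcs. For the inductive step, pick $\gamma \in \Gamma$ and a transverse intersection point $p \in \gamma \cap \Inter{\gamma'}$. The point $p$ produces, after smoothing, a boundary-parallel bigon in the local picture near $p$, which in turn corresponds to a morphism $f\colon \gamma \to \gamma'$ (or $\gamma' \to \gamma$, depending on the local grading data) in $\cF(\Gamma')$. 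The central geometric input is that the mapping cone $\operatorname{cone}(f)$, computed in $\Tw^+\cF(\Gamma')$, is quasi-isomorphic to the arc (or union of arcs) obtained by performing a smoothing of $\gamma \cup \gamma'$ at $p$; this smoothed curve has strictly fewer intersections with $\Gamma$ than $\gamma'$ had. By the induction hypothesis applied to all components of the smoothing, $\operatorname{cone}(f)$ lies in the thick closure of the image of $\cF(\Gamma)$, and rotating the triangle yields $\gamma'$ in the same thick closure.

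The main obstacle is making the ``smoothing equals cone'' identification precise in the purely combinatorial HKK model, where there is no literal Lagrangian surgery. The cleanest route is a local verification: all of the disk sequences that contribute to the relevant $A_\infty$-operations are supported in a neighborhood of $p$ together with the two strips on either side of $\gamma \cup \gamma'$, and one writes down the twisted complex explicitly and checks it is quasi-isomorphic to the smoothed curve by constructing mutually inverse closed morphisms in $\Tw^+\cF(\Gamma')$. Once this local lemma is in hand, the induction closes and, together with fully faithfulness, yields that the induced functor $\Perf(\cF(\Gamma)) \to \Perf(\cF(\Gamma'))$ is essentially surjective, hence an equivalence.
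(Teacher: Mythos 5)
The paper offers no proof of this proposition; it is imported directly from \cite[Lemma 3.2]{HaidenKatzarkovKontsevich}, so the comparison is with the argument there. Your full-faithfulness step is correct and genuinely immediate: flows and disk sequences between arcs of $\Gamma$ are defined purely in terms of $(\Sigma,\cM,\eta)$ and the arcs themselves, so the inclusion is a strict, fully faithful $A_\infty$-functor. For generation, however, you have built an induction whose inductive step can never fire: $\Gamma'$ is itself an arc system, hence consists of \emph{pairwise disjoint} simple arcs, so every $\gamma'\in\Gamma'$ already has interior disjoint from all arcs of $\Gamma\subseteq\Gamma'$ and your count $n(\gamma')$ is identically $0$. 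The entire argument therefore collapses to your base case: $\gamma'$ lies in a single polygon of $\Sigma\setminus\Gamma$ and is, up to shift, the iterated cone over the consecutive edges of that polygon lying on one side of $\gamma'$ (one must take a side whose boundary actually meets $\Gamma$). This base case is precisely the Haiden--Katzarkov--Kontsevich proof, and it is the only live content of your proposal. Consequently the ``smoothing equals cone'' lemma that you single out as the main obstacle, and leave unproven, is harmless dead weight here rather than a gap; it would only be needed for the stronger claim that two full arc systems not related by inclusion generate the same category, and the paper sidesteps exactly that by passing to the union $\Gamma\cup\Gamma'$ and applying the present proposition twice (cf.\ \Cref{prop: Morita invariance gentle algbebras}). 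Had the inductive step actually been required, your proof would be incomplete at that point, since resolving a transverse interior intersection into a cone in the combinatorial model is a nontrivial verification that you only sketch.
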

\noindent The same proof as in loc.cit. shows that an inclusion $\Gamma \hookrightarrow \Gamma'$ of finitely full arc systems yields a Morita equivalence $\cF(\Gamma) \rightarrow \cF(\Gamma')$. One obtains the following.

\begin{prp}[{\cite[Proposition 3.3]{HaidenKatzarkovKontsevich},\cite[Proposition 9.2.5]{BoothGoodbodyOpper}}]\label{prop: Morita invariance gentle algbebras}
Let $\Sigma$ be a graded marked surface and $\cA, \cB \subseteq \Sigma$ two full (resp.~finitely-full) arc systems. Then $\cF(\cA)$ and $\cF(\cB)$ are Morita equivalent.
\end{prp}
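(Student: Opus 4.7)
The plan is to bootstrap Proposition \ref{PropositionInclusionGivesMoritaEquivalences} --- which handles the case of nested arc systems --- by connecting any two full (resp.~finitely-full) arc systems $\cA, \cB \subseteq \Sigma$ by a finite chain
\[
\cA = \Gamma_0,\ \Gamma_1,\ \ldots,\ \Gamma_n = \cB
\]
of arc systems of the same type such that each consecutive pair is related either by an inclusion or by an ambient isotopy of $\Sigma$. Ambient isotopies induce strict isomorphisms of $A_\infty$-categories by the very construction of $\cF$, and inclusions induce Morita equivalences by Proposition \ref{PropositionInclusionGivesMoritaEquivalences} together with the subsequent remark covering the finitely-full case. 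Composing these along the chain yields the desired equivalence $\cF(\cA) \simeq \cF(\cB)$.

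To construct the chain, I would first apply an ambient isotopy to put $\cA$ and $\cB$ in minimal position, and then induct on the geometric intersection number $i(\cA, \cB)$. The base case $i(\cA, \cB) = 0$ is immediate: the union $\cA \cup \cB$, after discarding any arcs of $\cB$ homotopic to arcs of $\cA$, is itself a full (resp.~finitely-full) arc system, and Proposition \ref{PropositionInclusionGivesMoritaEquivalences} applied to the inclusions $\cA \hookrightarrow \cA \cup \cB \hookleftarrow \cB$ already finishes the argument. For the inductive step, I would pick an arc $\alpha \in \cA$ crossing some arc of $\cB$ and perform a flip at $\alpha$: inside the topological quadrilateral cut out by the four arcs of $\cA$ adjacent to $\alpha$, swap $\alpha$ for the other diagonal $\alpha'$. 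For a suitable choice of $\alpha'$ (obtained by resolving a crossing of $\alpha$ with $\cB$), the resulting arc system $(\cA \setminus \{\alpha\}) \cup \{\alpha'\}$ has strictly smaller geometric intersection with $\cB$, so the inductive hypothesis applies.

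The main obstacle is realising a single flip as a zigzag of inclusions of full arc systems, since the two diagonals $\alpha$ and $\alpha'$ of the quadrilateral necessarily cross and so cannot simultaneously belong to any one arc system. The plan here is to pass through an auxiliary full arc system $\Gamma^+$ obtained by enlarging $\cA \setminus \{\alpha\}$ by further arcs inside the quadrilateral, chosen so that $\Gamma^+$ together with either one of $\alpha$ or $\alpha'$ yields a genuine arc system. An ambient isotopy supported in the quadrilateral is used to move between the two local pictures while keeping the surrounding arcs of $\cA \setminus \{\alpha\}$ fixed, so that Proposition \ref{PropositionInclusionGivesMoritaEquivalences} supplies a Morita equivalence at each stage of the zigzag. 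The remaining ingredient is the topological fact that this flip procedure always terminates, i.e.~the connectedness of the flip graph of full (resp.~finitely-full) arc systems on $\Sigma$; this is classical for unpunctured surfaces and is established in the generality needed here in \cite[Section 9]{BoothGoodbodyOpper}.
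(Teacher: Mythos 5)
Your proposal follows essentially the same route as the paper, which derives the statement from \Cref{PropositionInclusionGivesMoritaEquivalences} (and its finitely-full analogue) together with the connectivity of the poset of full arc systems under inclusion — a fact the paper, like you, ultimately takes from the literature (it is the reason \cite[Proposition 3.3]{HaidenKatzarkovKontsevich} and \cite[Proposition 9.2.5]{BoothGoodbodyOpper} are cited, and the same connectivity/contractibility of $\Arc{\Sigma}$ is invoked again in \Cref{SectionGroupActionFukayaCategory}). Your hand-rolled inductive step is the only shaky part: you conflate the combinatorial flip (replacing $\alpha$ by the \emph{other} diagonal of the quadrilateral, which need not decrease intersection with $\cB$) with the Hatcher-type surgery along an arc of $\cB$ (whose output need not be a diagonal of that quadrilateral), and your auxiliary system $\Gamma^+$ cannot exist as described, since every arc in the quadrilateral disjoint from both diagonals is isotopic to one of its sides — though none of this is needed, because when $\alpha$ borders two distinct faces the system $\cA\setminus\{\alpha\}$ is itself still full and the zigzag $\cA \supseteq \cA\setminus\{\alpha\} \subseteq (\cA\setminus\{\alpha\})\cup\{\alpha'\}$ already works. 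Since you fall back on the cited connectivity of the flip graph in any case, these imprecisions do not affect the validity of the argument.
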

\noindent A non-obvious fact is that the converse is also true: the Morita class of $\cF(\cA)$ determines the graded marked surface $\Sigma$ up to isomorphism, cf.~the discussion in Section \ref{SectionGradedGentleAlgebras}. The category $\Fuk(\Sigma)$ enjoys the following localisation property.
\begin{thm}[{\cite[Proposition 3.6]{HaidenKatzarkovKontsevich}}]
Let $\Sigma$ be graded marked surface and let $\gamma \subseteq \partial \Sigma$ be a boundary arc.  Denote by $\cT \subseteq \Fuk(\Sigma)$ the thick closure of $\gamma$ and $\Sigma'$ the graded marked surface obtained from $\Sigma$ by adding $\gamma$ to the set of marked points. Then $\Fuk(\Sigma')$ is a Drinfeld quotient of $\Fuk(\Sigma)$ and there exists a triangle equivalence
\begin{displaymath}
 \quotient{\Fuk(\Sigma)}{\cT} \cong \Fuk(\Sigma').
\end{displaymath}
\end{thm}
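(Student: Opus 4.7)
The strategy is to implement the equivalence at the level of models. Choose a full graded arc system $\Gamma$ on $\Sigma$ that contains $\gamma$ itself. This is always possible: since $\gamma$ is homotopic rel endpoints to a path in $\partial\Sigma$, we may first include $\gamma$ in some arc system and then enlarge it to a full arc system on $\Sigma$ by Proposition \ref{PropositionPropertiesFukayaCategories}. Set $\Gamma' \coloneqq \Gamma \setminus \{\gamma\}$ and view it as a collection of arcs on the modified surface $\Sigma'$, where $\gamma$ has been absorbed into $\cM$. The first key claim is that $\Gamma'$ is a full arc system on $\Sigma'$: the complement $\Sigma \setminus \Gamma$ is a disjoint union of disks, and merging $\gamma$ with its two adjacent boundary segments into one marked piece of $\partial\Sigma'$ does not change the complementary topology, so $\Sigma' \setminus \Gamma'$ remains a disjoint union of disks.

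Next I would compare the two $A_\infty$-categories $\cF(\Gamma)$ and $\cF(\Gamma')$. Restricting $\cF(\Gamma)$ to the objects $\Gamma'$ produces a full $A_\infty$-subcategory; the plan is to show that, as an $A_\infty$-category, this restriction coincides with $\cF(\Gamma')$ computed intrinsically on $\Sigma'$. On the level of generators and relations, the quiver of $\cF(\Gamma')$ is obtained from that of $\cF(\Gamma)$ by deleting the vertex $\gamma$ and all arrows incident to it, which is precisely what restriction to $\Gamma'$ does. For the higher operations, one needs the bijection between disk sequences in the two surfaces: disk sequences on $\Sigma$ whose boundary uses $\gamma$ disappear (their restriction contains no morphism between $\Gamma'$ arcs), while the adjacency changes around $\gamma$ on $\Sigma'$ produce the new disk sequences on $\Sigma'$; crucially, a disk sequence on $\Sigma$ disjoint from $\gamma$ is unaffected. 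Call the inclusion of $A_\infty$-categories $\iota\colon \cF(\Gamma') \hookrightarrow \cF(\Gamma)$; it extends to a strict $A_\infty$-functor $\Tw^+\iota\colon \Fuk(\Sigma') \hookrightarrow \Fuk(\Sigma)$, which is fully faithful.

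For the Drinfeld quotient statement, note that the essential image of $\Tw^+\iota$ is orthogonal to $\gamma$ in a suitable sense: a direct computation shows that for each arc $\delta \in \Gamma'$, morphisms between $\delta$ and $\gamma$ in $\cF(\Gamma)$ factor through a very simple complex built from the at most two irreducible flows into/out of $\gamma$, which exhibits $\delta$ as a cone on a morphism that vanishes modulo $\cT$. Hence $\Tw^+\iota$ factors through $\Fuk(\Sigma)/\cT$, inducing a functor $\Phi\colon \Fuk(\Sigma')\to \Fuk(\Sigma)/\cT$. I would then check $\Phi$ is essentially surjective (every object in $\Fuk(\Sigma)$ becomes isomorphic, modulo $\cT$, to an object in the image of $\iota$, by contracting the boundary arc $\gamma$) and fully faithful (using that morphisms from $\cT$ into objects of $\iota\Fuk(\Sigma')$ are killed in the quotient). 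Together these give the claimed triangle equivalence.

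The principal obstacle is the topological accounting in the middle step: one must verify carefully that the disk sequences on $\Sigma$ restrict correctly, that new disk sequences on $\Sigma'$ arise exactly as described, and that no hidden higher operations appear or disappear. A secondary technical point is checking that the orthogonality with $\cT$ indeed yields a Drinfeld quotient rather than merely a semi-orthogonal decomposition, which amounts to verifying that the right orthogonal to the essential image of $\iota$ in $\Fuk(\Sigma)$ coincides with $\cT$.
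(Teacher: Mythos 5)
There is a genuine gap, and it sits at the heart of your argument: the claim that the full $A_\infty$-subcategory of $\cF(\Gamma)$ on the objects $\Gamma'=\Gamma\setminus\{\gamma\}$ coincides with $\cF(\Gamma')$ computed on $\Sigma'$ is false, so the fully faithful functor $\iota\colon\cF(\Gamma')\hookrightarrow\cF(\Gamma)$ you build everything on does not exist. Passing from $\Sigma$ to $\Sigma'$ enlarges $\cM$ to $\cM'\supsetneq\cM$, and morphisms in these categories are spanned by flows, i.e.\ immersed paths \emph{inside} the marked set. Hence for $\delta_1,\delta_2\in\Gamma'$ one has $\Flow_{\Sigma}(\delta_1,\delta_2)\subseteq\Flow_{\Sigma'}(\delta_1,\delta_2)$, and this inclusion is strict in general: any flow crossing the newly marked boundary segment exists on $\Sigma'$ but not on $\Sigma$. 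The comparison map therefore goes the \emph{other} way, from the restriction of $\cF(\Gamma)$ into $\cF_{\Sigma'}(\Gamma')$, and it is not full. A minimal counterexample: let $\Sigma$ be a disk with three marked intervals and $\Gamma$ the three boundary arcs $\gamma_1,\gamma_2,\gamma_3$; deleting $\gamma_3$ and marking its segment yields $\Sigma'$ a disk with two marked intervals, where $\Gamma'=\{\gamma_1,\gamma_2\}$ supports \emph{two} irreducible flows (one in each direction, one of them passing over the newly marked segment), whereas the full subcategory of $\cC_\Gamma$ on $\{\gamma_1,\gamma_2\}$ has only one. In particular $\Fuk(\Sigma')$ is not a full subcategory of $\Fuk(\Sigma)$, and the "semiorthogonal" picture in your last paragraph (image of $\iota$ with right orthogonal $\cT$) cannot hold.

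The extra morphisms on $\Sigma'$ are precisely what the quotient manufactures, and this is the content of the actual argument (the paper quotes the statement from \cite[Proposition 3.6]{HaidenKatzarkovKontsevich} without reproving it): in the Drinfeld quotient one adjoins a contracting homotopy $h$ for $\operatorname{Id}_{\gamma}$, and the resulting compositions of the form $g\cdot h\cdot f$ with $f\colon\delta_1\to\gamma$ and $g\colon\gamma\to\delta_2$ realise exactly the new flows of $\Sigma'$ that cross the freshly marked segment; one then matches generators, relations and disk sequences of the quotient with those of $\cF_{\Sigma'}(\Gamma')$. Your setup (choosing $\Gamma\ni\gamma$ full, observing that $\Gamma'$ is full on $\Sigma'$) is the right starting point, but the middle step must compute the quotient category rather than a subcategory of $\cF(\Gamma)$.
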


\section{Graded gentle algebras}\label{SectionGradedGentleAlgebras}

\noindent The category algebras of the $\Bbbk$-linear category underlying the $A_\infty$-category $\cF(\Gamma)$ of a full or finitely full formal arc system on a graded marked surface belongs to the class of so-called \textit{graded gentle algebras}. As shown in \cite{LekiliPolishchukGentle, OpperPlamondonSchroll} every graded gentle algebra $A$ which is homologically smooth or proper arises non-uniquely from a graded arc system on graded marked surface $(\Sigma_A, \cM_A, \eta_A)$. Hence, for the purposes of this paper we \textit{define} a \textbf{graded gentle algebra} as any graded algebra which is isomorphic to $A_{\cF(\Gamma)}$ for some full or finitely full formal graded arc system $\Gamma$. As such and unless specified otherwise, every graded gentle algebra which we consider in this paper will be assumed to be homologically smooth \textit{or} proper. In the former case one can assume that $\Gamma$ is full and in the latter case that $\Gamma$ is finitely full. For an equivalent definition of graded gentle algebras in terms of graded quivers with relations we refer the reader to \cite{OpperPlamondonSchroll}. We often treat the two cases of graded gentle algebras -  the homologically smooth ones and the proper ones - simultaneously. To facilitate discussions, we use the following short hand notation.

\begin{notation}Let $A$ be a graded gentle algebra. Define
	\begin{displaymath}
		\cT_A \coloneqq \begin{cases} \Dfd{A}, & \text{if $A$ is proper;} \\
			\Perf(A), & \text{if $A$ is homologically smooth.}
		\end{cases}
	\end{displaymath}
\end{notation}
\noindent We record the following:
\begin{itemize}
	\item If $A$ is homologically smooth and proper then there is a quasi-equivalence $\Perf(A) \simeq \Dfd{A}$, so that, up to quasi-equivalence, the category $\cT_A$ is well-defined in either case.
	\item The category $\cT_A$ contains a thick subcategory $\cT_{A}^{\fin}$ consisting of all objects $Y \in \cT_A$ such that $\Hom_{\cT_A}^{\bullet}(X, Y)$ has finite-dimensional total cohomology for all $X \in  \cT_A$. If $A$ is homologically smooth then $\Dfd{A}=\cT_A^{\fin} \subseteq \cT_A$. Similarly, if $A$ is proper then $\cT_A^{\fin}=\Perf(A) \subseteq \Dfd{A}=\cT_A$.
	
	\item If $A$ is proper, then  the category $\cT_A^{\fin}$ admits a Serre-functor and Auslander-Reiten triangles as follows from \cite{OpperPlamondonSchroll, OpperDerivedEquivalences}. Throughout the paper, we denote by $\tau\colon \cT_A^{\fin} \rightarrow \cT_A^{\fin}$ the Auslander-Reiten translation in this case.
\end{itemize}
\noindent Finally, we recall the following description of the category $\cT_A$ in the proper case.
	\begin{thm}[{\cite[Theorem 9.4.4]{BoothGoodbodyOpper}}]\label{TheoremGentleAlgebrasAreLocal}
		Let $A$ be a proper graded gentle algebra. Then $\Dfd{A}$ is equivalent to the thick subcategory of $\cD(A)$ generated by the simple $A$-modules, or equivalently, the thick closure of $A/\rad(A)$, considered as a dg $A$-module with trivial differential.
	\end{thm}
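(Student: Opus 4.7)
The plan is to reduce the statement to the classical dévissage by the radical filtration for finite-dimensional algebras. The key structural observation is that, because $A$ arises from a \emph{formal} (finitely-full) arc system, the $A_\infty$-category $\cF(\Gamma)$ has no higher operations, so $A$ is an honest graded algebra with trivial differential; and because $A$ is proper, it is finite-dimensional. In particular the Jacobson radical $\rad(A)$ is nilpotent and $A/\rad(A)$ is a semisimple graded algebra which decomposes as a direct sum of shifted copies of the simple right $A$-modules $S_1,\dots,S_n$. This gives the equivalence of the two descriptions of the thick subcategory at once: each $S_i$ is a direct summand of an appropriate shift of $A/\rad(A)$ and vice versa, so the two thick closures inside $\cD(A)$ agree; call this common subcategory $\cT$. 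The inclusion $\cT \subseteq \Dfd{A}$ is automatic since each $S_i$ is finite-dimensional.

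For the reverse inclusion, the plan is first to identify $\Dfd{A}$ with the bounded derived category $D^b(\mathrm{mod}^{\mathrm{gr}}(A))$ of finitely generated graded $A$-modules. Because the differential on $A$ vanishes, every dg $A$-module admits a minimal model whose underlying graded vector space equals its cohomology, and if the latter is finite-dimensional the minimal model is an honest bounded complex of finite-dimensional graded $A$-modules. Under this identification a general object of $\Dfd{A}$ is built from its cohomology by iterated cones along the standard truncation triangles, so it suffices to show that every finite-dimensional graded $A$-module $M$ lies in $\cT$.

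For such an $M$ one invokes the descending radical filtration
\[
M \supseteq M\cdot \rad(A) \supseteq M\cdot \rad(A)^2 \supseteq \cdots \supseteq M \cdot \rad(A)^N = 0,
\]
which terminates after finitely many steps since $\rad(A)$ is nilpotent. Each successive quotient $M\cdot \rad(A)^j / M\cdot \rad(A)^{j+1}$ is annihilated by $\rad(A)$ and is therefore a module over the semisimple algebra $A/\rad(A)$, i.e.\ a finite direct sum of shifted simples. Each such quotient consequently lies in $\cT$, and the short exact sequences coming from the filtration assemble $M$ from these subquotients by iterated extensions. Hence $M \in \cT$, which completes the argument.

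The main technical obstacle is the very first reduction: passing from the abstract dg category $\Dfd{A}$ to the more concrete $D^b(\mathrm{mod}^{\mathrm{gr}}(A))$, i.e.\ the existence of a finite-dimensional minimal representative for every object in $\Dfd{A}$. For a dg algebra with nontrivial differential this would genuinely require input about finite-dimensional minimal models of $A_\infty$-modules; however, for graded gentle algebras coming from formal arc systems the differential on $A$ vanishes identically, so this reduction is formal. Once it is in place, the remainder of the proof is the textbook filtration argument for finite-dimensional (graded) algebras.
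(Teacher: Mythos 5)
There is a genuine gap, and it sits exactly at the step you flag and then wave away. The identification $\Dfd{A}\simeq D^b(\mathrm{mod}^{\gr}(A))$ is false for a graded algebra not concentrated in degree $0$, and the vanishing of the differential on $A$ does not make it ``formal''. A dg module over $A$ (viewed as a dg algebra with zero differential) is a \emph{single} graded vector space with a degree-one differential and a compatible $A$-action, whereas an object of $D^b(\mathrm{mod}^{\gr}(A))$ is a \emph{bigraded} object; totalisation is neither full nor essentially surjective onto $\Dfd{A}$. Concretely, two of your intermediate claims fail. First, the minimal model of a dg $A$-module is an $A_\infty$-module over $A$ whose higher operations $H^\bullet(M)\otimes \rad(A)^{\otimes(n-1)}\to H^\bullet(M)$ need not vanish for large $n$ even though $\rad(A)$ is nilpotent (nilpotence kills products inside $A$, not tensor powers), so it is not ``an honest bounded complex of finite-dimensional graded $A$-modules''. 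Second, the truncations $\tau^{\leq n}$ do not send dg $A$-modules to dg $A$-submodules once $A$ has elements in nonzero degrees (for $A=\Bbbk[x]/(x^2)$ with $|x|=1$, multiplication by $x$ maps $\ker d^n$ out of $\tau^{\leq n}M$), so an object of $\Dfd{A}$ is \emph{not} assembled from its cohomology by truncation triangles in $\cD(A)$. What your dévissage does correctly prove is that every dg $A$-module with finite-dimensional \emph{underlying graded vector space} lies in the thick closure of the simples: since $d_A=0$, the radical filtration is a filtration by dg submodules whose subquotients are complexes over the semisimple quotient. The entire content of the theorem is that every object of $\Dfd{A}$ is finitely built from such modules, and that is precisely what a generic truncation-plus-filtration argument cannot deliver.

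Note also that the paper does not reprove this statement; it cites \cite[Theorem~9.4.4]{BoothGoodbodyOpper} and remarks immediately afterwards that the result is ``far from trivial since we do not make any assumptions on the grading of $A$'' --- the obstruction is the grading, not the differential. The cited proof is developed alongside the Koszul duality and completion statements of Section~\ref{SectionKoszulDualityFukayaGentle} (where $\Dfd{A}$ is compared with perfect modules over the completed Koszul dual $\widehat{A^{\invex}}$) and uses the specific structure of gentle algebras. Your reduction, if it worked, would prove the analogous generation statement for an arbitrary finite-dimensional graded algebra with elements in both positive and negative degrees, which is not a formal fact. If you want to salvage the approach, the missing lemma you must supply is: every object of $\Dfd{A}$ is quasi-isomorphic to a dg $A$-module whose underlying graded vector space is finite-dimensional (or is at least a finite iterated cone of such); everything after that point in your write-up is fine.
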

	\noindent We note that \Cref{TheoremGentleAlgebrasAreLocal} is far from trivial since we do not make any assumptions on the grading of $A$. Theorem \ref{TheoremGentleAlgebrasAreLocal} is closely connected to the  \textit{reflexivity} of $\cT_A$ which we discuss in Section \ref{SectionReflexivity}. 

\subsection{Koszul duality: smooth versus proper}\label{SectionKoszulDualityFukayaGentle}\ \medskip

\noindent The classes of proper and homologically smooth graded gentle algebras are Koszul dual to each other.   Given a graded marked surface $(\Sigma, \cM, \eta)$ and a boundary component $B \subseteq \partial \Sigma$ equipped with its induced orientation, the \textbf{winding number of $B$} is by definition the winding number of any orientation preserving diffeomorphism $S^1 \xrightarrow{\sim} B$.

\begin{prp}\label{PropositionKoszulFunctors} Let $A$ be a proper graded gentle algebra. Then there exists a homologically smooth graded gentle algebra $A^{\invex}$ such that $A$ is quasi-isomorphic to the Koszul dual of $A^{\invex}$ and triangulated functors
	\begin{displaymath}
		\begin{array}{lcr}
			\Perf(A) \hookrightarrow \Dfd{A^{\invex}} & \text{and}& \Perf(A^{\invex}) \rightarrow \Dfd{A},
		\end{array}
	\end{displaymath}	
	\noindent such that the first map is a fully-faithful embedding and the second map is essentially surjective. Moreover, either of these functors is an equivalence if and only if $\Sigma_A$ contains no fully marked components with vanishing winding number. 
\end{prp}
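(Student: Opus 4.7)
The plan is to construct $A^{\invex}$ by enlarging the arc system for $A$, and to produce the Koszul functors via a natural $A$-$A^{\invex}$-bimodule. First, I realise $A$ as $A \cong A_{\cF(\Gamma)}$ for a finitely-full formal graded arc system $\Gamma$ on $\Sigma_A$. Extending $\Gamma$ to a full formal graded arc system $\Gamma' \supseteq \Gamma$---by adding, for each cylindrical component of $\Sigma_A \setminus \Gamma$, a collection of semi-infinite arcs that subdivide it into topological disks---yields $A^{\invex} := A_{\cF(\Gamma')}$, which is homologically smooth by Proposition \ref{PropositionPropertiesFukayaCategories}. Let $S \in \Dfd{A^{\invex}}$ denote the direct sum of the simple $A^{\invex}$-modules indexed by the arcs of $\Gamma$ viewed as vertices in $\Gamma'$. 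A direct computation in the surface model identifies $\RHom_{A^{\invex}}(S, S)^{\op}$ with $A$ up to quasi-isomorphism, exhibiting $A$ as the Koszul dual of $A^{\invex}$ in the sense demanded by the statement.

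Second, I would define the two functors via the bimodule $S$:
\[
\Phi := - \otimes^{\bL}_A S \colon \Perf(A) \to \Dfd{A^{\invex}}, \qquad \Psi := \RHom_{A^{\invex}}(S, -) \colon \Perf(A^{\invex}) \to \Dfd{A}.
\]
The target of $\Phi$ is correct because $S$ has finite-dimensional cohomology as an $A^{\invex}$-module, and the target of $\Psi$ is correct because $S$ is perfect over $A$, so $\RHom_{A^{\invex}}(S,-)$ of a perfect $A^{\invex}$-module lands in $\Dfd{A}$. Fully-faithfulness of $\Phi$ follows by checking on the generator $A \in \Perf(A)$ using $\RHom_{A^{\invex}}(S,S) \simeq A$. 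Essential surjectivity of $\Psi$ reduces to showing that $\Psi$ hits all simple $A$-modules---which are the images of the simples at vertices in $\Gamma \subseteq \Gamma'$---since by Theorem \ref{TheoremGentleAlgebrasAreLocal} the simples thickly generate $\Dfd{A}$.

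Finally, I would verify that both functors are equivalences exactly when $\Sigma_A$ has no fully marked boundary component of vanishing winding number. The failure of either functor to be an equivalence is measured by the failure of the Koszul biduality $A^{\invex} \simeq \RHom_{A}(S^{\vee}, S^{\vee})^{\op}$, i.e.~by whether the simples indexed by $\Gamma' \setminus \Gamma$ can be reconstructed from the image of $\Phi$ using only finite colimits. Geometrically, each such simple lives in a cylindrical neighbourhood of a fully marked component $B \subseteq \partial \Sigma_A$, and the natural recovery process from finite arcs is an iterated limit over wrappings around $B$. When the winding number of $B$ is nonzero, every wrap shifts cohomological degree by this fixed amount, so the limit is eventually constant in each degree and the reconstruction lives inside $\Perf(A^{\invex})$. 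When the winding number vanishes, wrapping preserves cohomological degree and recovery requires genuine infinite formal sums, producing explicit objects in $\Dfd{A^{\invex}}$ outside the essential image of $\Phi$ and, dually, a non-trivial kernel for $\Psi$.

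The main obstacle will be this last step. Translating the algebraic condition ``$A^{\invex}$ equals the radical completion of its Koszul bidual'' into the purely topological condition on winding numbers requires a careful structural analysis of the endomorphism algebras of the semi-infinite arcs near each fully marked component---in particular, isolating the cohomological degree shift produced per wrap and showing that winding-zero components are the sole source of degree-preserving, non-nilpotent loops that obstruct the equivalence.
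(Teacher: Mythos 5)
Your construction of $A^{\invex}$ is where the argument breaks down. The paper's $A^{\invex}$ is the cobar construction $\Omega(A^{\vee})$ over $\cS=A/\rad(A)$, realised geometrically by the \emph{dual} arc system $\Gamma^{\bot}$ (one arc crossing each arc of $\Gamma$ exactly once), so that $A^{\invex}$ has the same set of objects as $A$ and $A\simeq (A^{\invex})^{!}=\RHom_{A^{\invex}}(S_0,S_0)$ for $S_0$ the sum of \emph{all} simples. You instead enlarge $\Gamma$ to a full system $\Gamma'\supseteq\Gamma$ and take $S$ to be the partial sum of simples indexed by $\Gamma$. The claimed quasi-isomorphism $\RHom_{A^{\invex}}(S,S)^{\op}\simeq A$ is then false. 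Test it on $A=\Bbbk[x]/(x^2)$, $|x|=1$ (annulus, one marked interval on one boundary component, the other fully marked with winding number zero, $\Gamma=\{\gamma\}$ the boundary arc): adding one semi-infinite arc $\delta$ gives the quiver with arrows $a\colon\gamma\to\delta$, $b\colon\delta\to\gamma$, $t\colon\delta\to\delta$ and gentle relations killing $ta$, $bt$ and the cycle through $\gamma$ at $\delta$. There are no arrows $\gamma\to\gamma$, so $\Ext^1(S_\gamma,S_\gamma)=0$, and the minimal projective resolution of $S_\gamma$ continues through copies of $P_\delta$, on which $\Hom(-,S_\gamma)$ vanishes; hence $\RHom(S_\gamma,S_\gamma)$ cannot be $\Bbbk[x]/(x^2)$. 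Since the bimodule structure underlying your functors $\Phi$ and $\Psi$ rests on this identification, both functors and the fully-faithfulness argument collapse with it. (Using all simples of $A_{\cF(\Gamma')}$ does not repair this either: that Koszul dual has $|\Gamma'|>|\Gamma|$ objects and is not quasi-isomorphic to $A$.)

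For comparison, the paper's own proof is short and largely a matter of citation: $A^{\invex}$ is taken to be $\Omega(A^{\vee})$ (equivalently $A_{\cF(\Gamma^{\bot})}$), fully-faithfulness of $\Perf(A)\hookrightarrow\Dfd{A^{\invex}}$ is the definition of Koszul duality, and essential surjectivity of $\Perf(A^{\invex})\to\Dfd{A}$ is quoted from \cite[Appendix C, Step 6]{OpperPlamondonSchroll}. The winding-number criterion does not come from a convergence analysis of wrappings as you propose, but from the orthogonal decomposition $\Dfd{A^{\invex}}\simeq\Perf(A)\times\prod_{C}\Dfd{\Bbbk[x,x^{-1}]}$ of \Cref{rem: block decomposition}: each fully marked component of vanishing winding number contributes a gradable simple loop whose objects form an extra orthogonal block outside the image of $\Perf(A)$, and no such block exists when the winding number is nonzero because the loop is then not gradable. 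Your intuition about degree shifts per wrap is pointing at the right phenomenon, but as you acknowledge it is not a proof, and in any case it cannot be run until the Koszul duality setup itself is corrected.
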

\begin{proof}
The essential surjectivity follows from {\cite[Appendix C, Step 6]{OpperPlamondonSchroll}} while the assertion about fully-faithfulness of the first functor is a consequence of the definition of Koszul duality.
\end{proof}

\noindent The failure of the functors in \Cref{PropositionKoszulFunctors} to be equivalences is explained by the fact that there is an equivalence $\Dfd{A} \simeq \Perf(\widehat{A^{\invex}})$, where  $\widehat{A^{\invex}} \simeq {(A^{\invex})}^{!!} \simeq A^!$. 
denotes the (graded) completion of $A^{\invex}$ at its arrow ideal $\cI$. From a categorical view point the double Koszul dual is equivalent to the derived completion in the sense of \cite{EfimovDerivedCompletion} along the thick subcategory generated by $A^{\invex}/\cI \in \Perf(A^{\invex})$.

The algebra $A^{\invex}$ is quasi-isomorphic to the ``left'' Koszul dual $\Omega(A^{\vee})$ of $A$, where $\Omega$ denotes the cobar construction and $A^{\vee}$ the dual coalgebra with respect to the duality $(-)^{\vee}=\Hom_{\cS}(-, \cS)$ over the semi-simple algebra $\cS=A/\rad(A)$. The inverse of this process starts from a homologically smooth graded gentle algebra $B$ and passes to its usual Koszul dual $B^!$ over $\cS$ which is quasi-isomorphic to a proper graded gentle algebra. In terms of arc systems, the duality can further be interpreted as follows. Suppose $A$ is a homologically smooth graded gentle algebra and $\Gamma \subseteq \Sigma$ is a full formal arc system such that $A \cong A_{\cF(\Gamma)}$. Then $A^{!}$ is quasi-isomorphic to $A_{\cF(\Gamma^{\bot})}$, where $\Gamma^{\bot} \subseteq \Sigma$ is the dual arc system, cf.~\cite[Section 9.3]{BoothGoodbodyOpper}. In combinatorial terms this says that if $(Q_{\Gamma}, R_{\Gamma})$ denotes the gentle quiver associated to $\Gamma$, then the gentle quiver $(Q_{\Gamma^!}, R_{\Gamma^!})$ is its quadratic dual obtained by reversing the direction of all arrows in $Q_{\Gamma}$ and $R_{\Gamma^!}\coloneqq \{\alpha \beta \mid \alpha, \beta \in Q_{\Gamma}^1, \alpha \beta \not \in R_{\Gamma}\}$. Likewise, if $A$ is proper instead, the quadratic dual allows one to pass back to $A^{\invex}$. After additionally passing to arrow completions of the associated path algebra one can also give a description of $A^!  \simeq {(A^{\invex})}^{!!}$.

\begin{rem}\label{rem: block decomposition}The proof of \Cref{PropositionKoszulFunctors} actually shows something slightly stronger: the category $\Dfd{A^{\invex}}$ has an orthogonal decomposition of the form
\begin{equation}
	\Dfd{A^{\invex}} \simeq \Perf(A) \times \prod_{C} \Dfd{\Bbbk[x, x^{-1}]}.
\end{equation}
\noindent where $|x|=0$ and $C$ ranges over all fully marked boundary components of $\Sigma_A$ with vanishing winding number.
\end{rem}

	\subsection{Geometric model for derived categories of gentle algebras}\ \medskip

\noindent We recall a few basic facts about the structure of derived categories of graded gentle algebras  A \textit{local system} on a curve $\gamma\colon \Omega \rightarrow \Sigma$ on a graded marked surface is a vector space valued functor from the fundamental groupoid\footnote{For a topological space $X$, objects in the category $\pi_1(X)$ are points of $X$ and for $x,y \in X$, $\Hom_{X)}(x,y)$ is the set of homotopy classes of paths from $x$ to $y$ in $X$. Composition is defined via concatenation.} $\pi_1(\Omega)$ of $\Omega$. Upon the choice of a base point $\ast \in \Omega$ the category of such local systems is equivalent to representation of $\pi_1(\Omega, \ast)$ and hence equivalent to modules over $\Bbbk$ if $\Omega\simeq [0,1]$ or $\Bbbk[t,t^{-1}]$ if $\Omega \simeq S^1$.
	
	\begin{thm}[{\cite{HaidenKatzarkovKontsevich, OpperPlamondonSchroll}]}]\label{TheoremBijectionObjectsCurves}
		Let $A$ be a graded gentle algebra. Then the following is true.
		\begin{itemize}
			\item If $A$ is homologically smooth, there exists a bijection between isomorphism classes of indecomposable objects in $\Perf(A)$ and homotopy classes of graded curves on $\Sigma_A$ equipped with the isomorphism class of an indecomposable local system.
			\item If $A$ is proper, there exists a bijection between isomorphism classes of indecomposable objects in $\Dfd{A}$ and homotopy classes of graded curves on $\Sigma_A$ equipped with an isomorphism class of an indecomposable local system, which are not isotopic to a fully marked boundary component.
		\end{itemize}
	\end{thm}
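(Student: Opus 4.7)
The plan is to work with the $A_\infty$-model $\Fuk(\Sigma_A) \simeq \Tw^+ \cF(\Gamma)$, where $\Gamma$ is a full (respectively finitely-full) formal graded arc system on $\Sigma_A$ with $A \cong A_{\cF(\Gamma)}$; since $\Gamma$ is formal, $\cF(\Gamma)$ has trivial higher operations and $\cT_A$ is modeled by one-sided twisted complexes over the gentle category $\cC_\Gamma$. I would first construct the map from geometric data to algebra. Given a graded curve $\gamma$ equipped with an indecomposable local system $\mathcal{L}$, I isotope $\gamma$ into minimal position with respect to $\Gamma$; its intersections with $\Gamma$ cut $\gamma$ into a sequence of segments, each lying in a single polygon of $\Sigma_A \setminus \Gamma$, and each such segment encodes an irreducible flow morphism in $\cC_\Gamma$ (or, near the boundary, an identity on an arc). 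Reading off the sequence of arcs crossed together with the shifts dictated by the grading of $\gamma$ produces a twisted complex whose underlying summands are shifted copies of arcs and whose differential has entries given by the successive flows. For an arc $\gamma$ this is a \textit{string complex}; for a loop, $\mathcal{L}$ contributes a monodromy block giving a \textit{band complex} of length equal to the rank of $\mathcal{L}$.

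The main content is surjectivity, for which a normal-form theorem is required: every indecomposable one-sided twisted complex over $\cF(\Gamma)$ is weakly equivalent to a string or band complex. The tool is Gaussian elimination, a homotopy-invariant procedure removing any isomorphism entry from the twisted differential and simultaneously contracting the corresponding summands. After exhaustive reduction, only entries which are irreducible flows remain, and the gentle structure of $\cC_\Gamma$ — at most two arrows of each kind incident at any vertex, with monomial quadratic relations exactly at the non-composable pairs — constrains the matrix shape of such a reduced differential so severely that only string or band patterns survive. This mirrors the Bekkert--Merklen classification in the ungraded case and is the technical heart of both \cite{HaidenKatzarkovKontsevich} and \cite{OpperPlamondonSchroll}; I expect this step to be the main obstacle, since extracting a clean combinatorial classification from the elimination procedure requires careful bookkeeping of which entries can survive in each stage.

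For injectivity, I would use the graded intersection pairing. For graded curves $\gamma, \gamma'$ with associated objects $X_\gamma, X_{\gamma'}$, the total graded dimension of $\HH^{\bullet}\bigl(\Hom_{\Fuk(\Sigma_A)}(X_\gamma, X_{\gamma'})\bigr)$ equals a weighted count of interior intersection points plus boundary flow contributions, a formula proved in \cite{HaidenKatzarkovKontsevich}. Specialising to $\gamma = \gamma'$, one can recover the homotopy class of the underlying curve from basic numerical invariants (self-intersection, graded endomorphism algebra) and then reconstruct the local system as the indecomposable $\Bbbk[t,t^{-1}]$-module encoded in the monodromy block. This rules out accidental isomorphisms between objects attached to non-isotopic curves or inequivalent local systems.

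Finally, the proper case follows by reduction via Koszul duality. \Cref{PropositionKoszulFunctors} provides an essentially surjective functor $\Perf(A^{\invex}) \to \Dfd{A}$, and \Cref{rem: block decomposition} identifies $\Dfd{A^{\invex}}$ with $\Perf(A) \simeq \Dfd{A}$ together with extra factors $\Dfd{\Bbbk[x, x^{-1}]}$ indexed by fully marked boundary components of $\Sigma_A$ of vanishing winding number. Under the already-established bijection for the homologically smooth algebra $A^{\invex}$ on the same surface $\Sigma_A$, these extra factors correspond precisely to objects whose geometric support is a fully marked boundary circle (with its local systems parametrised by $\Bbbk[x,x^{-1}]$-modules). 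Hence the bijection for $\Dfd{A}$ is obtained from the one for $\Perf(A^{\invex})$ by removing exactly the curves isotopic to fully marked boundary components, which matches the statement.
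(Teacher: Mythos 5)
The paper does not prove \Cref{TheoremBijectionObjectsCurves} at all: it is imported verbatim from \cite{HaidenKatzarkovKontsevich} (smooth case) and \cite{OpperPlamondonSchroll} (proper case), so there is no internal argument to compare against. Judged on its own terms, your sketch of the homologically smooth case is essentially the strategy of the references: curves with local systems give string/band twisted complexes over a formal arc system, surjectivity is a normal-form theorem proved by Gaussian elimination constrained by the gentle relations, and injectivity comes from the intersection--Hom correspondence. You correctly identify the normal-form theorem as the technical heart; that part of the proposal is sound in outline.

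The proper case is where there is a genuine gap, and it is also where you diverge from \cite{OpperPlamondonSchroll}, who argue directly via the Bekkert--Merklen/Arnesen--Laking--Pauksztello classification of indecomposables of $\Dfd{A}$ by (possibly infinite) homotopy strings and bands rather than by Koszul duality. First, you conflate the relevant categories: \Cref{rem: block decomposition} decomposes $\Dfd{A^{\invex}}$ with $\Perf(A)$ as a factor, which would only classify the objects of $\Perf(A) \subseteq \Dfd{A}$ (the finite arcs); the functor relevant to $\Dfd{A}$ is $\Perf(A^{\invex}) \rightarrow \Dfd{A}$, which is a derived completion, $\Dfd{A} \simeq \Perf(\widehat{A^{\invex}})$. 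Essential surjectivity of this functor does give that every indecomposable of $\Dfd{A}$ comes from one upstairs, but it gives no control over isomorphism classes: completion genuinely kills objects (already for $\Bbbk[t] \to \Bbbk\llbracket t\rrbracket$, the modules $\Bbbk[t]/(t-\lambda)$ with $\lambda \neq 0$ — geometrically, the loops around a fully marked component of winding number zero — become zero), and one must also rule out that non-isotopic curves become isomorphic or that objects decompose after completing the morphism spaces. Pinning this down requires exactly the Hom-computations in the completed category that \cite{OpperPlamondonSchroll} carry out directly, so the reduction does not save work. There is also a circularity risk: \Cref{PropositionKoszulFunctors} and \Cref{rem: block decomposition} are themselves established in the paper by appeal to the geometric models of $A$ and $A^{\invex}$, i.e., to (parts of) the very statement you are trying to prove.
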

\noindent Given an indecomposable object $X \in \cT_A$, we denote by $\gamma(X)$ the associated homotopy class of curves and by $\gamma_X$ any representative in there. Likewise, given such a curve $\gamma$, we denote by $X_{\gamma}$ a representative in the corresponding isomorphism class of objects. An arc is further called \textbf{projective}, if it represents the isomorphism class associated to a shift of a direct summand of $A$. There also descriptions of the morphism spaces in $\cT_A$ if $A$ is proper, cf.~\cite[Appendix A]{OpperPlamondonSchroll}. Here, we will not need the full extent of the description of morphisms in $\cT_A$ in this case but we only record the following
\begin{prp}[{\cite[Theorem A.2]{OpperPlamondonSchroll}}]\label{prop: description homs}
Let $A$ be a proper graded gentle algebra. Let $X ,Y \in \cT_A$ be indecomposable and let $\gamma_X$ and $\gamma_Y$ be representing curves on $\Sigma_A$ which we regard as a surface with marked points. We may and will assume that $\gamma_X$ and $\gamma_Y$ are in minimal position, that is, the number of intersections between $\gamma_X$ and $\gamma_Y$ is minimal within their respective homotopy classes. If $\gamma_X$ and $\gamma_Y$ intersect, then $\Hom^{\bullet}(X,Y)\neq 0$.
\end{prp}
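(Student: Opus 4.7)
The statement is a one-sided implication that should follow directly from the full geometric classification of morphisms in $\cT_A$ provided by \cite[Theorem A.2]{OpperPlamondonSchroll}. The plan is therefore to extract exactly what is needed from that theorem without reproducing its full combinatorial content.

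First, I would recall the precise form of the cited theorem: given indecomposable $X, Y \in \cT_A$ and representing graded curves $\gamma_X, \gamma_Y$ placed in minimal position on $\Sigma_A$, an explicit basis of $\Hom^{\bullet}(X,Y)$ is produced, indexed by geometric data supported on $\gamma_X \cap \gamma_Y$. The data comes in two flavours: (a) transversal interior intersection points, and (b) boundary-type intersections coming from shared endpoints on a common marked interval, each weighted by the fibre-wise data of the indecomposable local systems carried by $\gamma_X$ and $\gamma_Y$.

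Second, under the minimal position assumption every intersection point is essential, in the sense that it cannot be removed by a homotopy through arcs/loops. Each such essential interior intersection contributes a specific basis element of $\Hom^{\bullet}(X,Y)$, in a homological degree determined by the two gradings of $\gamma_X, \gamma_Y$ at that point. The local-system factor at such a point is a non-zero tensor product of fibres of indecomposable modules over $\Bbbk$ or over $\Bbbk[t,t^{-1}]$, hence non-zero. Since distinct intersection points give rise to distinct basis vectors, there is no possibility of cancellation, and the existence of a single interior intersection already forces $\Hom^{\bullet}(X,Y) \neq 0$. If the intersection is of boundary type, the same argument applies using the basis contributions of type (b) in the cited theorem; the minimal position hypothesis ensures that this endpoint really represents a non-trivial contribution rather than a cancellable boundary jog.

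The main obstacle I would anticipate is the case analysis required to match every kind of minimal-position intersection of $\gamma_X$ and $\gamma_Y$ (interior versus boundary, arc versus loop, and the situation where one or both of $\gamma_X, \gamma_Y$ is projective) with an explicit entry in the basis of \cite[Theorem A.2]{OpperPlamondonSchroll}, and to check that no special configuration (such as two arcs sharing a boundary endpoint whose flows cancel, or two homotopic loops with distinct local systems) secretly produces the zero element. Once this bookkeeping is carried out, the conclusion follows immediately from the fact that, by construction, the elements listed by loc.~cit.\ form a basis and are therefore non-zero.
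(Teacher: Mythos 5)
Your proposal is correct and matches the paper's treatment: the paper offers no separate argument but simply attributes the statement to \cite[Theorem A.2]{OpperPlamondonSchroll}, whose basis description of $\Hom^{\bullet}(X,Y)$ indexed by minimal-position intersection data immediately yields non-vanishing whenever an intersection exists. The only refinement worth noting is that for proper graded gentle algebras the cited elements may only form a Schauder basis of the (completed) morphism space, as the paper remarks in the proof of \Cref{TheoremgeometrisationHomomorphism}, but this does not affect the non-vanishing conclusion.
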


	\section{Derived Picard groups}
	
	\noindent Being central to the present article, we remind ourselves of a few basic facts about the relationship between $A_\infty$-functors, morphisms in the homotopy category of dg categories and derived Picard groups.
	
	\subsection{The homotopy category of dg categories and derived Picard groups}\ \medskip
	
	\noindent The category $\dgcat$ of small dg categories admits a Dwyer-Kan type model structure whose weak equivalences are the quasi-equivalences \cite{TabuadaDwykerKanModelStructure}. We denote by $\Hqe$, its homotopy category, the \textit{homotopy category of dg categories}. This is a symmetric monoidal category whose tensor product is given by the derived tensor product of dg categories. We also denote by $\Hmo$ the \textbf{Morita category}, the full subcategory of $\Hqe$ consisting of all perfect dg categories. We denote by $\Acat$ the category of small $A_\infty$-categories with $A_\infty$-functors as morphisms and $\Acatc$ the category of strictly unital $A_\infty$-categories and strictly unital $A_\infty$-functors.  Canonaco-Ornaghi-Stellari compared the category $\Hqe$ with the homotopy theory of $A_\infty$-categories.
	\begin{thm}[{\cite{CanonacoOrnaghiStellari}}]The inclusion of $\dgcat \subseteq \Acatc$ descends to equivalences 	
	\begin{displaymath}
		\Hqe \simeq \HAcat \simeq  \quotient{\Acatc}{\approx},
	\end{displaymath}
	\noindent where $\quotient{\Acatc}{\approx}$ denotes the quotient of $\Acatc$ modulo weak equivalences and $\HAcat$ denotes the localisation of $\Acat$ along the class of quasi-equivalences (``homotopy category of $A_\infty$-categories'').
	\end{thm}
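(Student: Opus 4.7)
My plan is to prove the two equivalences separately by constructing explicit functors, then verifying that they are essentially surjective, full, and faithful. The second equivalence $\HAcat \simeq \Acatc/\!\approx$ is, in essence, a general fact that the localisation of a category along a class of weak equivalences can be modelled by a quotient provided one has good enough cylinder/path-object machinery; the main content lies in the first equivalence $\Hqe \simeq \HAcat$.

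The overall strategy is a \emph{strictification}: build, for each (strictly unital) $A_\infty$-category $\bA$, a dg category $\bA^{\mathrm{dg}}$ together with a quasi-equivalence $\bA \to \bA^{\mathrm{dg}}$. The natural candidate is to embed $\bA$ via the Yoneda functor $\cY\colon \bA \to \ModInfty(\bA)$ and take $\bA^{\mathrm{dg}}$ to be the full dg subcategory of $A_\infty$-modules spanned by the representables. One must verify that $\cY$ is cohomologically fully faithful (the $A_\infty$-Yoneda lemma) so that $\cY$ is a quasi-equivalence onto its image, and hence descends to an isomorphism in both $\HAcat$ and $\Hqe$. This immediately gives that the induced functor $\Hqe \to \HAcat$ is essentially surjective.

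For fullness, I would argue as follows. Given $\bA, \bB \in \dgcat$ and an $A_\infty$-functor $F\colon \bA \to \bB$, one must realise the weak equivalence class of $F$ by a roof of dg functors. The move is to factor $F$ through the strictification: composing with $\cY_{\bB}\colon \bB \to \bB^{\mathrm{dg}}$ and using that $\cY_{\bB}$ is a quasi-equivalence, one reduces to constructing a dg functor $\bA \to \bB^{\mathrm{dg}}$ weakly equivalent to $\cY_{\bB} \circ F$. This is carried out by the standard formula sending $A \mapsto F(A)^{\wedge}$ with the action on morphisms recording the full tower of Taylor coefficients of $F$, which is dg precisely because the codomain has become a module category. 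Faithfulness - the part I expect to be the main obstacle - requires showing that two dg functors that are weakly equivalent as $A_\infty$-functors are already identified in $\Hqe$. The cleanest way is to exhibit a path object for $\bB$ in $\dgcat$ whose $A_\infty$-sections encode weak equivalences; concretely, one can take the dg analogue of the homotopy bimodule $\bB \otimes k[t, dt]$, or equivalently invoke Toën's theorem identifying $\Hom_{\Hqe}(\bA, \bB)$ with the set of isomorphism classes of quasi-representable $(\bA, \bB)$-bimodules, and match this set with weak equivalence classes of $A_\infty$-functors $\bA \to \bB$ via the bar/cobar correspondence.

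Finally, for the equivalence $\HAcat \simeq \Acatc/\!\approx$, the relation $\approx$ is generated by homotopies implementable through cylinders in $\Acatc$, so the quotient $\Acatc/\!\approx$ manifestly receives a functor from $\HAcat$ that is bijective on $\Hom$-sets. Combined with the first part, this yields the claimed chain of equivalences. The technical heart of the argument is therefore the dictionary between quasi-representable bimodules, weak equivalence classes of $A_\infty$-functors, and morphisms in $\Hqe$, which is where I would expect to spend most of the effort.
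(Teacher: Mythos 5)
This statement is imported verbatim from Canonaco--Ornaghi--Stellari; the paper gives no proof of it, so there is nothing internal to compare against. Judged on its own terms, your proposal has the right architecture (strictification of an $A_\infty$-category via the Yoneda embedding into its dg module category, then a comparison of morphism sets), but it defers or mislabels the two points that carry essentially all of the content. First, the equivalence $\HAcat \simeq \Acatc/\!\approx$ is not ``in essence a general fact'' about cylinder objects: it rests on the theorem that every quasi-equivalence of $A_\infty$-categories admits a weak inverse (a genuine result requiring minimal models / homological perturbation, and precisely what fails for dg functors, which is why $\Hqe$ must be defined by localisation rather than by a quotient), and, more seriously, the statement compares the localisation of $\Acat$ (arbitrary $A_\infty$-functors) with the quotient of $\Acatc$ (strictly unital categories and strictly unital functors). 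The passage between cohomological and strict unitality is one of the main technical contributions of Canonaco--Ornaghi--Stellari and your sketch does not mention it at all.

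Second, at the step you yourself flag as the obstacle --- faithfulness and fullness of $\Hqe \to \Acatc/\!\approx$ --- you propose to ``invoke To\"en's theorem identifying $\Hom_{\Hqe}(\bA,\bB)$ with quasi-representable bimodules and match this set with weak equivalence classes of $A_\infty$-functors via the bar/cobar correspondence.'' That matching \emph{is} the theorem (it is Kontsevich's description of the internal Hom, i.e.\ the content of Theorem \ref{TheoremInternalHomThroughAInfinityFunctors}), so as written the argument is circular at its critical point. A non-circular proof must actually construct the bijection: starting from a right quasi-representable $\bA$--$\bB$-bimodule one extracts an $A_\infty$-functor by choosing representatives and transporting the module structure (this uses the $A_\infty$-Yoneda lemma and homotopy-coherent choices), and conversely one checks that weakly equivalent $A_\infty$-functors give isomorphic bimodules; both directions require care with unitality and with the independence of choices. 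Your proposal correctly locates the difficulty but does not supply the argument, so it should be regarded as a plausible outline rather than a proof.
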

\noindent 	The result allows us the freedom to choose to work with $A_\infty$-categories whenever convenient. The category $\Hqe$ is a \textit{closed} symmetric monoidal category due to a result by To\"en \cite{ToenDerivedMoritaTheory}. An internal Hom object $[\bA, \bB]$ can be described in terms of special bimodules: a cofibrant bimodule $M:\bA \otimes {\bB}^{\op} \rightarrow \operatorname{Ch}(\Bbbk)$ is \textit{right quasi-representable} if for all $A \in \bA$, the right dg $\bB$-module $M(-, A)$ is quasi-isomorphic to a representable one. Such bimodules are also called \textit{quasi-functors} in \cite{GenoveseUniquenessLifting} and the dg categories formed by them are a model $[\bA, \bB]$. However, for the most part we will use the following alternative description which was suggested by Kontsevich and later proved by Canoncao-Ornaghi-Stellari.
	\begin{thm}[{\cite{CanonacoOrnaghiStellari, CanonacoOrnaghiStellariCommutativeRings}}]\label{TheoremInternalHomThroughAInfinityFunctors}Let $\bA$ and $\bB$ be (strictly unital) $A_\infty$-categories. Then the internal Hom object $[\bA, \bB]$ inside $\HAcat$ is quasi-equivalent to the $A_\infty$-category $\Fun(\bA, \bB)$. Moreover, the quasi-equivalence is natural in $\bA$ and $\bB$.
	\end{thm}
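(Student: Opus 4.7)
The plan is to exhibit an explicit quasi-equivalence $\Phi_{\bA,\bB}\colon \Fun(\bA,\bB)\to[\bA,\bB]$ and to check naturality by construction. For the right hand side I will use Toën's model: $[\bA,\bB]$ is the dg category $\operatorname{RHom}(\bA,\bB)$ of cofibrant right quasi-representable $\bA$--$\bB$-bimodules (quasi-functors). The construction of $\Phi$ is the classical \emph{bimodule associated to an $A_\infty$-functor}: to $F\in\Fun(\bA,\bB)$ assign $M_F(B,A):=\bB(B,F(A))$, with left $\bA$-action built from the Taylor coefficients $F^d$ and the higher products $\mu_{\bB}$ and right $\bB$-action built from $\mu_{\bB}$ alone. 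The Taylor coefficients of $\Phi$ on hom-spaces are obtained analogously from a pre-natural transformation and are dictated by the formulas implicit in \Cref{LemmaFormulaCompositionIsotopies}; this makes naturality in $\bA$ and $\bB$ a formal consequence of how $F^d$ and the structure maps of $M_F$ interact with precomposition and postcomposition of $A_\infty$-functors.

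Next I would establish the two ingredients of a quasi-equivalence. \textbf{Fully faithful on hom-spaces}: the graded vector space $\Fun(\bA,\bB)(F,G)$ is, by definition, a product of graded Hom spaces between tensor powers of $\bA(-,-)$ and $\bB(-,-)$; on the other side, $\operatorname{Hom}(M_F,M_G)$ is computed by the bar-style complex $\prod_{d\geq 0}\Hom_{\Bbbk}\big(\bA(A_{d-1},A_d)[1]\otimes\cdots\otimes\bA(A_0,A_1)[1],\bB(-,G(A_d))\big)$, and the representability $M_G(-,A_d)=\bB(-,G(A_d))$ identifies this term-by-term with a pre-natural transformation. The differentials match by direct comparison of signs, so $\Phi$ is a strict isomorphism of morphism complexes. \textbf{Essentially surjective}: given a right quasi-representable bimodule $M$, for each $A\in\bA$ pick $F^0(A)\in\bB$ together with a quasi-isomorphism $\iota_A\colon\bB(-,F^0(A))\xrightarrow{\sim}M(-,A)$ and a homotopy inverse $p_A$. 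Using a homotopy transfer along $(\iota_A,p_A)$ of the $A_\infty$-bimodule structure on $M$, one reads off graded maps $F^d$ as the components of the transferred left $\bA$-action. The $A_\infty$-functor equations for $F=(F^d)_{d\geq 1}$ coincide with the transferred $A_\infty$-bimodule equations after dualising under Yoneda, and one verifies $M_F\simeq M$ via $\iota_A$.

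Naturality in $\bA,\bB$ is then obtained by checking that precomposition with a functor $G\colon\bA'\to\bA$ on the left translates into restriction of scalars on bimodules, and postcomposition with $H\colon\bB\to\bB'$ translates into $(-)\otimes^{\bL}_{\bB}M_H$; both assignments commute strictly with $\Phi$ up to the obvious isomorphisms $M_{HFG}\cong M_H\otimes_{\bB}M_F\otimes_{\bA}(\text{restriction})$, which one unwinds from the definitions.

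The main obstacle is essential surjectivity: the transferred Taylor coefficients $F^d$ need to satisfy the full $A_\infty$-functor axioms, not just weakened ones, and the natural candidate transfer produces $F$ only up to a coherent choice of homotopies $(\iota_A,p_A)$ and of higher homotopies witnessing $p_A\iota_A\sim\mathrm{Id}$. Packaging these coherent choices is where the argument becomes delicate; I would handle it by invoking a Kadeishvili-type homotopy transfer theorem for $A_\infty$-bimodules, which produces a genuine $A_\infty$-structure together with an $A_\infty$-quasi-isomorphism $M_F\xrightarrow{\sim}M$, thereby providing the required isomorphism in $\HAcat$ and completing the proof.
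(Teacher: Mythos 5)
This theorem is not proved in the paper: it is quoted from Canonaco--Ornaghi--Stellari, so there is no internal proof to compare against. Your outline follows the standard strategy of the cited works (compare $\Fun(\bA,\bB)$ with To\"en's quasi-functor model via the bimodule $M_F(B,A)=\bB(B,F(A))$), but two of your steps have genuine gaps. First, the claim that $\Phi$ is a \emph{strict} isomorphism on morphism complexes ignores that the objects of To\"en's $\RHom(\bA,\bB)$ are required to be \emph{cofibrant} bimodules and that morphism complexes there are underived Homs; $M_F$ is right quasi-representable but not cofibrant, so $\Phi(F)$ must be a resolution of $M_F$ (e.g.\ the bar resolution in the $\bA$-direction), after which the identification of $\Hom(\Phi(F),\Phi(G))$ with the pre-natural-transformation complex uses the strict dg Yoneda lemma on the $\bB$-factor and does then work --- but only once this replacement is made explicit, and only after $\bA,\bB$ themselves have been replaced by dg categories, since To\"en's characterisation of $\RHom$ as the internal Hom is a statement about $\Hqe$, not $\HAcat$. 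That replacement step requires knowing that $\Fun(-,-)$ is invariant under quasi-equivalences in both arguments, which is itself one of the main theorems of the cited papers and not a formality.

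Second, and more seriously, essential surjectivity does not follow from a generic Kadeishvili-type transfer for $A_\infty$-bimodules. Transfer along the chosen quasi-isomorphisms $\iota_A\colon\bB(-,F^0(A))\to M(-,A)$ produces an $A_\infty$-bimodule structure on $\bigoplus_A\bB(-,F^0(A))$ whose structure maps have components indexed by tensor strings from \emph{both} $\bA$ and $\bB$; for this structure to equal $M_F$ for an $A_\infty$-functor $F$ you must show that all mixed components are the ones induced by $\mu_{\bB}$ from a family $(F^d)_{d\geq 1}$ whose inputs come only from $\bA$, and that the resulting $F$ is strictly unital. Neither is automatic, and "dualising under Yoneda" does not discharge it: the transferred right $\bB$-action need not be the tautological one. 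This is precisely where the cited proofs invest their effort (an inductive construction of the Taylor coefficients with obstruction-theoretic control, together with a careful treatment of strict versus cohomological unitality). As written, your argument establishes full faithfulness up to the cofibrancy caveat, but leaves essential surjectivity --- the heart of the theorem --- unproved.
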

	\noindent In particular, $\Hom_{\Hqe}(\bA, \bB)$ is naturally in bijection with the set of weak equivalence classes of $A_\infty$-functors $\bA \rightarrow \bB$ for arbitrary strictly unital  $A_\infty$-categories.\medskip
	
\noindent	 The category $\Hqe \cong \HAcat$ is the correct framework of universal properties for various constructions.
\begin{prp}[{\cite[Theorem 7.2]{ToenDerivedMoritaTheory}}]\label{PropositionUniversalPropertiesHQE} The assignment  $\bA \mapsto \Perf \bA$ descends to a left adjoint to the inclusion of the full subcategory of $\HAcat$ consisting of all perfect $A_\infty$-categories.
\end{prp}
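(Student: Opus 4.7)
The plan is to verify the two defining properties of a left adjoint to the inclusion $\iota\colon \HAcat^{\mathrm{perf}} \hookrightarrow \HAcat$: first that $\Perf$ descends to a well-defined functor on $\HAcat$ landing in $\HAcat^{\mathrm{perf}}$, and second that the Yoneda embedding provides a natural unit exhibiting the required universal property.

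First I would verify that for any strictly unital $A_\infty$-category $\bA$, the category $\Perf(\bA)$ is perfect (by construction, its Yoneda image generates it as a thick subcategory of $\cD(\bA)$), and that any $A_\infty$-functor $F\colon \bA \to \bB$ induces an $A_\infty$-functor $\Perf(F)\colon \Perf(\bA) \to \Perf(\bB)$ through the induction-of-modules construction. Using the description of morphisms in $\Hqe$ via quasi-functors, or equivalently via weak equivalence classes of $A_\infty$-functors from \Cref{TheoremInternalHomThroughAInfinityFunctors}, one checks that this assignment respects weak equivalences, so $\Perf$ descends to a functor $\HAcat \to \HAcat^{\mathrm{perf}}$. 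The derived Yoneda embedding $\cY_{\bA}\colon \bA \to \Perf(\bA)$ then provides a morphism in $\HAcat$ which is natural in $\bA$ in the sense that $\Perf(F) \circ \cY_{\bA} \approx \cY_{\bB} \circ F$ for every $A_\infty$-functor $F\colon \bA \to \bB$.

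Next, I would establish the adjunction bijection
\begin{displaymath}
\Hom_{\HAcat}(\Perf \bA, \bB) \xrightarrow{\cY_{\bA}^{\ast}} \Hom_{\HAcat}(\bA, \bB)
\end{displaymath}
for $\bB$ perfect. Surjectivity: given an $A_\infty$-functor $F\colon \bA \to \bB$, precompose with $\cY_{\bA}$ using that $\bB \subseteq \Perf(\bB)$ is quasi-equivalent via Yoneda (since $\bB$ is perfect) to extend $F$ to a functor $\widetilde{F}\colon \Perf(\bA) \to \Perf(\bB) \simeq \bB$ via left Kan extension along $\cY_{\bA}$; the extension preserves colimits and sends representables to $F$-images, giving $\widetilde{F} \circ \cY_{\bA} \approx F$. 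Injectivity: if $G_1, G_2\colon \Perf(\bA) \to \bB$ agree on the image of $\cY_{\bA}$ up to weak equivalence, then since both are determined up to weak equivalence by their restriction to a set of compact generators (namely $\bA$ itself), they are weakly equivalent. The naturality of this bijection in $\bA$ and $\bB$ then exhibits $\Perf$ as left adjoint to $\iota$.

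The principal obstacle is the universal extension step: constructing $\widetilde{F}$ and checking it is well-defined up to weak equivalence requires either working with cofibrant replacements in $\dgcat$ as in \cite{ToenDerivedMoritaTheory}, or using the identification of $[\Perf \bA, \bB]$ with right quasi-representable bimodules, since a direct $A_\infty$ construction of a left Kan extension is technically delicate. The cleanest route is to invoke the cofibrant replacement framework of Toën, where $\Perf$ is represented by $\widehat{\bA}^{\mathrm{perf}}$, the full subcategory of perfect modules in $\widehat{\bA}$, and to argue that restriction along $\cY_{\bA}$ defines a Morita equivalence $\RHom(\Perf \bA, \bB) \simeq \RHom(\bA, \bB)$ whenever $\bB$ is perfect, by checking that it is fully faithful and essentially surjective on the level of quasi-functors.
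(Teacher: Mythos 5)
The paper gives no proof of this proposition; it is quoted directly as Theorem 7.2 of To\"en's derived Morita theory paper. Your sketch reconstructs exactly the standard argument from that source -- the Yoneda embedding as unit, and the fact that restriction along $\cY_{\bA}$ induces an equivalence $\RHom(\Perf\bA,\bB)\simeq\RHom(\bA,\bB)$ for perfect $\bB$ -- and you correctly identify that the substantive content (the extension/Kan-extension step and the injectivity of restriction) lives at the level of quasi-functors rather than bare triangulated functors, so the approach is sound and essentially the same as the cited proof.
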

\noindent The unit of the adjunction corresponds to the canonical morphism in $\Hqe$ 
\begin{displaymath}
\iota\colon  \bA \rightarrow \Perf(\bA).
\end{displaymath}
\noindent which induces a fully-faithful functor on the level of homotopy categories. \Cref{PropositionUniversalPropertiesHQE} asserts that for each $A_\infty$-functor $F\colon  \bA \rightarrow \bA$,  the composition $\iota \circ F$ can be extended to an $A_\infty$-functor $\Perf(\bA) \rightarrow \Perf(\bA)$ in a unique way up to weak equivalence and hence yields a canonical embedding 
\begin{equation}\label{EquationEmbeddingAutomorphismDerivedPicardGroups}
	\begin{tikzcd}
	\Aut^{\infty}(\bA) \arrow[hookrightarrow]{r} & \Aut^{\infty}\big(\Perf(\bA)\big),
	\end{tikzcd}
\end{equation}

For later use, we record the following observation which allows one to replace the objects in the image of an $A_\infty$-functor by quasi-isomorphic ones.
\begin{lem}\label{LemmaImageOfInclusion}
	The image of \eqref{EquationEmbeddingAutomorphismDerivedPicardGroups} agrees with the weak equivalence classes of all functors $F \in \Aut^{\infty}\big(\Perf(\bA)\big)$ such that for all $A \in \bA$, there exists $A' \in \bA$ such that $F(A)\cong A'$ in $\cD(\bA)$.
\end{lem}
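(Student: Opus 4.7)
The claim has two inclusions; one is essentially tautological while the other requires work. The easy direction: if $[F] \in \Aut^{\infty}(\bA)$ is sent to $[\widehat{F}] \in \Aut^{\infty}(\Perf(\bA))$, then by construction $\widehat{F} \circ \iota \approx \iota \circ F$, so on a representable module $\iota(A)$ the autoequivalence $\widehat{F}$ is quasi-isomorphic to $\iota(F(A))$, and one can take $A' = F(A) \in \bA$.

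For the nontrivial inclusion, suppose $F \in \Aut^{\infty}(\Perf(\bA))$ has the property that $F(\iota(A)) \cong \iota(A')$ in $\cD(\bA)$ for some $A' \in \bA$ depending on $A$. The plan is to modify $F$ within its weak equivalence class so that $F \circ \iota$ takes values literally in the essential image of $\iota$, and then invoke that $\iota$ is fully-faithful to descend $F \circ \iota$ to an $A_\infty$-functor $G : \bA \to \bA$. Concretely, for every $A \in \bA$ choose an object $G^0(A) \in \bA$ together with a quasi-isomorphism $\phi_A \colon F(\iota(A)) \to \iota(G^0(A))$ in $\cD(\bA)$. The key input is the standard fact that for a (strictly unital) $A_\infty$-functor into a pretriangulated target, one may prescribe the values on objects up to quasi-isomorphism while staying in the same weak equivalence class; using this inside $\Fun(\bA, \Perf(\bA))$ I replace $F \circ \iota$ by a weakly equivalent $A_\infty$-functor $H : \bA \to \Perf(\bA)$ with $H(A) = \iota(G^0(A))$ on objects. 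Fully-faithfulness of $\iota$ (the unit of the adjunction in \Cref{PropositionUniversalPropertiesHQE}) then lets $H$ be rewritten as $\iota \circ G$ for a unique (up to isomorphism of $A_\infty$-functors) $G : \bA \to \bA$.

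It remains to verify that $G$ is weakly invertible and that $[G]$ maps to $[F]$ under \eqref{EquationEmbeddingAutomorphismDerivedPicardGroups}. The latter is immediate: by the universal property in \Cref{PropositionUniversalPropertiesHQE}, the extension of $\iota \circ G = H \approx F \circ \iota$ to $\Perf(\bA)$ is unique up to weak equivalence, hence weakly equivalent to $F$. For invertibility, apply the same construction to $F^{-1}$ to obtain $G' : \bA \to \bA$ whose extension represents $[F^{-1}]$; then $\iota \circ (G \circ G')$ extends to $F \circ F^{-1} \approx \operatorname{Id}_{\Perf(\bA)}$, and by uniqueness of extensions one concludes $G \circ G' \approx \operatorname{Id}_{\bA}$, symmetrically $G' \circ G \approx \operatorname{Id}_{\bA}$.

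The main technical point, and the one I expect to require the most care, is the replacement of $F \circ \iota$ by a weakly equivalent $A_\infty$-functor with prescribed values $\iota(G^0(A))$ on objects. This is a form of transport of structure along quasi-isomorphisms in the unital $A_\infty$-setting; it should either be cited from the literature on $A_\infty$-functors or carried out by transferring the higher Taylor coefficients of $F \circ \iota$ through the chosen $\phi_A$'s, which is where the strict unitality of the ambient categories enters. Everything else in the argument is a formal consequence of the universal property of $\Perf$ and fully-faithfulness of $\iota$.
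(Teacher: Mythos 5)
Your overall strategy matches the paper's: reduce to showing that $F|_{\bA}\colon \bA \to \Perf(\bA)$ is weakly equivalent to a functor landing in (the image of) $\bA$, then use that $\iota$ is quasi-fully-faithful to descend to an endofunctor $G$ of $\bA$, and conclude via the universal property of $\Perf$. The difference lies in how the two arguments handle the one step that requires real work, namely replacing $F\circ\iota$ by a weakly equivalent functor with prescribed, quasi-isomorphic values on objects. You defer this to a ``standard fact'' about transporting an $A_\infty$-functor along a family of quasi-isomorphisms $\phi_A$, to be cited or carried out by hand on Taylor coefficients. The paper instead proves exactly this point by a categorical trick: after reducing to the case where $\bA\hookrightarrow\Perf(\bA)$ is a fully-faithful dg inclusion, it forms the full dg subcategory $\bA\cup F(\bA)$ of $\Perf(\bA)$, observes that both inclusions $\bA\hookrightarrow \bA\cup F(\bA)$ and $F(\bA)\hookrightarrow \bA\cup F(\bA)$ are quasi-equivalences (this is precisely where the hypothesis $F(A)\cong A'$ enters), and composes $F|_{\bA}$ with a weak inverse $\kappa$ of the first inclusion. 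A commuting triangle of honest dg inclusions then shows the result is weakly equivalent to $F|_{\bA}$, with no explicit manipulation of higher coefficients or unitality issues. So the paper's argument is, in effect, a self-contained proof of your black box, and if you were to write your version out you would either reinvent it or need a careful inductive transfer argument (where coherence of the chosen $\phi_A$'s and strict unitality genuinely matter). Two smaller remarks: the paper's reduction to dg categories via naturality of \eqref{EquationEmbeddingAutomorphismDerivedPicardGroups} is what makes ``$H$ factors through $\iota$'' literal rather than only up to weak equivalence, so you should include that reduction; and your explicit verification that $G$ is weakly invertible (via applying the construction to $F^{-1}$ and using uniqueness of extensions) is a correct and slightly more detailed treatment of a point the paper passes over quickly.
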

\begin{proof}One inclusion is clear, so let $F\colon  \Perf(\bA) \rightarrow \Perf(\bA)$ be an $A_\infty$-functor such that $F(A) \cong A$ in $\cD(\bA)$ for all $A \in \bA$. 	 The embedding \eqref{EquationEmbeddingAutomorphismDerivedPicardGroups} is natural in $\bA$ with respect to weakly invertible functors and hence we may assume up to quasi-isomorphism that $\bA$ is a dg category and that $\bA \hookrightarrow \Perf(\bA)$ is a fully-faithful dg functor, that is, induces isomorphisms on morphism complexes. Let $F(\bA) \coloneqq  \{F(A) \mid A \in \bA\}$ denote the full dg subcategory of the category of dg $A$-modules spanned by the image of $F$.  By our assumptions on $F$, the inclusions $\bA \hookrightarrow \bA \cup F(\bA)$ and $\iota\colon F(\bA) \hookrightarrow \bA \cup F(\bA)$ are quasi-equivalences between full dg subcategories of $\Perf(\bA)$ and hence weakly invertible. Let $\kappa\colon \bA \cup F(\bA) \rightarrow \bA$ denote a weak inverse of the first inclusion. We claim that the composition $G$ of $\kappa \circ \iota $ with $\bA \hookrightarrow \Perf(\bA)$ is weakly equivalent to the inclusion $F(\bA) \hookrightarrow \Perf(\bA)$. The diagram of dg functors
		
\begin{displaymath}
\begin{tikzcd}
		 & \Perf(\bA) &  \\
		 & \bA \cup F(\bA) \arrow[hookrightarrow]{u} & \\
		 \bA \arrow[hookrightarrow]{uur} \arrow[hookrightarrow]{ur} & &  F(\bA) \arrow[hookrightarrow]{uul} \arrow[hookrightarrow]{ul}
\end{tikzcd}
\end{displaymath}
\noindent commutes. As such, it commutes in $\HAcat$ and the claim follows from the fact that $\kappa$ is the inverse to $\bA \hookrightarrow \bA \cup F(\bA)$ in $\HAcat$. We conclude that $F' \coloneqq G \circ F|_{\bA} \approx j \circ  F|_{\bA}=F$ as $A_\infty$-functors $\bA \rightarrow \Perf(\bA)$, where $F|_{\bA}\colon \bA \rightarrow F(\bA)$ denotes the restriction of $F$ to $\bA$ and $j\colon F(\bA) \hookrightarrow \Perf(\bA)$ the inclusion. Now $F'$ extends canonically to an element of $\Aut^{\infty}(\Perf(\bA))$ which is weakly equivalent to $F$ and which satisfies $F'(A) \in \bA$ by construction.
\end{proof}

\begin{definition}Let $\bA$ be a cohomologically unital $A_\infty$-category. Its \textbf{derived Picard group} is the group $\DPic(\bA)$ of units of $\Hom_{\Hqe}(\Perf(\bA), \Perf(\bA))$, or, equivalently, the group $\Aut^{\infty}(\Perf(\bA))$.
\end{definition}
\noindent For dg categories $\bA$ and using quasi-functors, one can give an equivalent definition of $\DPic(\bA)$  in terms of bimodules cf.~\cite{KellerDerivedPicardGroup, ToenDerivedMoritaTheory}. The category $\cD(\bA \otimes \bA^{\otimes})$ is closed non-symmetric monoidal with tensor product the derived tensor product $- \otimes_{\bA}^{\mathbb{L}} -$ and monoidal unit the diagonal bimodule. T\"oen's description of the internal hom in $\Hqe$ and Theorem \ref{TheoremInternalHomThroughAInfinityFunctors} then imply $\DPic(\bA)$ is isomorphic to the Picard group of $\cD(\bA \otimes \bA)$. Hence for every $X \in \DPic(\bA)$, the functor $-\otimes_{\bA}^{\mathbb{L}} X$ is a coproduct prerserving exact autoequivalence of $\cD(\bA)$ and any such equivalence restricts to an exact equivalence of $\Perf(\bA)$. In this sense, $\DPic(\bA)$ may be viewed as dg enhanced symmetry group of $\Perf(\bA)$ and $\cD(\bA)$ respectively. In particular, there exists a comparison homomorphism
 one obtains a comparison homomorphism
\begin{equation}\label{EquationComparisonMapDerivedPicardGroupAutoequivalenceGroup}
\begin{tikzcd}
\DPic(\bA) \arrow{r} & \Aut(\Perf(\bA)).
\end{tikzcd}
\end{equation}
\noindent  In the next section, we look at subgroups of derived Picard groups defined by special $A_\infty$-functors, called \textit{isotopies}. 
	
	\subsection{Special subgroups of the derived Picard group}\label{SectionSpecialSubgroups}\ \medskip
	
	\noindent Throughout this section, we fix a graded $\Bbbk$-linear category $\cC$ and an  $A_\infty$-category $\bA$.

	\subsubsection{Outer automorphism groups}\ \medskip
	
	\noindent We denote by $\Out(\cC)$ the \textbf{outer automorphism group} of $\cC$, the group of  $\Bbbk$-linear autoequivalences\footnote{Such functors preserve the grading on the morphism spaces.} of $\cC$ modulo natural isomorphisms. This generalises the outer automorphism group of an ordinary algebra.  If $\Bbbk$ is algebraically closed and the category algebra of $\cC$ is finite-dimensional (which implies that $\Ob{\cC}$ is finite) then $\Out(A)$ naturally admits the structure of an algebraic group whose identity component we denote by $\OutO(\cC)$. Each autoequivalence of $\cC$ is naturally identified with a strict $A_\infty$-endofunctor of $\cC$ which, via \eqref{EquationEmbeddingAutomorphismDerivedPicardGroups}, induces a series of embeddings 
	\begin{equation}\label{EquationSeriesEmbeddings}
	\begin{tikzcd}
		\OutO(\cC) \arrow[hookrightarrow]{r} &  \Out(\cC) \arrow[hookrightarrow]{r} & \Aut^{\infty}(\cC) \arrow[hookrightarrow]{r} & \DPic(\cC),
	\end{tikzcd}
\end{equation}
\noindent cf.~\cite[Section 4.2]{OpperIntegration}.

\subsubsection{A candidate for the identity component of the derived Picard group}\ \medskip

\noindent In \cite[Section 4.3]{OpperIntegration}  the author gave a conjectural description of the identity component of the derived Picard group of a graded category $\cC$ which is a ``thickening'' of $\Out^{\circ}(\cC)$. One of the expected properties of this group is its Morita invariance. Indeed, the results of this paper verify this for graded gentle algebras.

 	\begin{definition}\label{DefinitionGroups} Suppose the category algebra of $\cC$ is finite-dimensional. Let $\Aut^{\infty}_{\circ}(\cC) \subseteq \Aut^\infty(\cC)$ the subgroup consisting of all $A_\infty$-functors $F$ such that $F^1 \in \OutO(\cC)$. Likewise, we denote by $\Aut^{\infty}_{+}(\cC) \subseteq \Aut^\infty(\cC)$ the subgroup consisting of the weak equivalence classes of all $A_\infty$-functors $F$ such that $F^1=\operatorname{Id}_{\cC}$.
 \end{definition}
\noindent By \cite[Proposition 4.14]{OpperIntegration}, the projection $F \mapsto F^1$ induces a split surjective homomorphism
\begin{equation}\label{eq: projection to outer automorphism group}
	\begin{tikzcd}
	\Aut^{\infty}_{\circ}(\cC) \arrow{r} & \OutO(\cC).
	\end{tikzcd}
\end{equation}
\noindent The section is the obvious embedding $\OutO(\cC) \hookrightarrow \Aut^{\infty}_{\circ}(\cC)$ as strict $A_\infty$-functors as in \eqref{EquationSeriesEmbeddings}. The kernel of \eqref{eq: projection to outer automorphism group} is isomorphic to $\Aut^{\infty}_+(\bA)$ which yields the decomposition
	\begin{displaymath}
		\Aut^{\infty}_{\circ}(\cC) \cong \Aut^{\infty}_+(\cC) \rtimes \OutO(\cC), 
	\end{displaymath}
	\noindent If $\Bbbk$ is algebraically closed, then $\Aut_{\circ}^{\infty}(\cC)$ is a locally algebraic group, see \cite[Proposition 4.14]{OpperIntegration}. In practice,  $\Aut_{\circ}^{\infty}(\cC)$ is very amenable to computations. This is classical problem for $\OutO(\cC)$ at least in the case of algebras, e.g.~\cite{GuilAsensioSaorinMonomialPicardGroup, GuilAsensioSaorinPicardGroup}. On the other hand, if $\operatorname{char} \Bbbk=0$, $\Aut^{\infty}_+(\cC)$ is isomorphic, as a group, to a concrete subspace of $\HHH^1(\cC, \cC)$ together with its group structure defined by a Baker-Campbell-Hausdorff-type product. Details are recalled in Section \ref{SectionIntegrationHochschild}.
	
	 With the expectation that $\Aut^{\infty}_{\circ}(\cC)$ describes the identity component of $\DPic(\cC)$ under additional assumptions on $\cC$, the author conjectured in \cite[Conjecture 4.10, Remark 6.19]{OpperIntegration} that $\Aut^{\infty}_{\circ}(\cC)$ is a Morita invariant. Implicitly this requires every element in the identity component of $\DPic(\cC)$ to be representable by an $A_\infty$-endofunctor of $\cC$ which may not be the case. The root of the issue is related to possible non-trivial deformations of objects of $\cC$ inside $\cD(\cC)$. 
	 
\begin{exa}\label{ExampleNonRigid} Consider the graded gentle algebra $B$ associated to the gentle quiver
	\begin{displaymath}
		\begin{tikzcd}
			\bullet \arrow[bend left]{r}{\alpha} & \arrow[bend left]{l}{\beta} \bullet
		\end{tikzcd}
	\end{displaymath}
	\noindent where $|\alpha|+1=|\beta|=1$ and with the only relation $\alpha\beta=0$. It surface $\Sigma_B$ is a cylinder with a single marked interval on each boundary component. The line field is determined in this case by the winding number $\omega$ of the unique simple loop which is $w=0$ in this case. Let $P$ denote the dg module corresponding to the vertex $s(\alpha)=t(\beta)$. The associated projective arc is the boundary segment and for every $\lambda \in \Bbbk$, the dg module $P_{\lambda}=(P,\lambda \beta \alpha)$ (with $\beta\alpha$ defining the differential) can be seen as a deformation of $P=P_0$ in the sense of Definition \ref{DefinitionBlurringRelation}. One has an equivalence  $\Dfd{B} \simeq \Dfd{\cK}\simeq  \cD^b(\cK)$ where $\cK$ denotes the Kronecker algebra. Under Beilinsons' equivalence $\cD^b{B} \simeq \cD^b(\mathbb{P}^1)$, $P$ and its deformations correspond to the skyscraper sheaves $\Bbbk(x) \in \cD^b(\mathbb{P}^1)$,  $x \in \mathbb{P}^1$. The automorphism group $\PGL_2(\Bbbk)$ of $\mathbb{P}^1$ agrees with the entire derived Picard group and acts transitively on their isomorphism classes. Thus, some of the elements of $\PGL_2(\Bbbk)$ cannot be represented by an $A_\infty$-endofunctor of $B$. In order to avoid additional technicalities, we however circumvent the problem instead by passing to derived equivalent gentle algebras. For example, the problem does not occur when working with $\cK$ instead of the algebra $B$ because the direct summands of $B$ inside $\cD(B)$ do not admit any non-trivial deformations for degree reasons.  
\end{exa}

	\section{Reflexive dg categories}\label{SectionReflexivity}
	\noindent We recall a few  results on reflexive dg categories. By \cite[Proposition 6.6]{GoodbodyReflexivity}, the assignment $\cU \mapsto \Dfd{\cU}$  induces an adjoint pair of functors
	\begin{displaymath}
		\begin{tikzcd}
		\Dfd{-}\colon  \Hqe \arrow[shift left=0.35em]{rr} && \arrow[shift left=0.35em]{ll}\Hqe^{\op} \colon  \Dfd{-}.
		\end{tikzcd}
	\end{displaymath}
	Its unit is the evaluation map and a dg category $\cU$ is \textbf{reflexive} is the evaluation $\cU \rightarrow \Dfd{\Dfd{\cU}}$ is a Morita equivalence, or equivalently if $\cU$ is perfect, it is a quasi-equivalence. The name stems from the fact that such categories are precisely the reflexive objects in the closed monoidal category $\Hmo$ \cite{GoodbodyReflexivity}.  The definition is due to Kuznetsov-Shinder \cite{KuznetsovShinder} but precursors appeared in \cite{BenZviNadlerPreygel}. Reflexive dg categories generalise the duality found between bounded and perfect derived categories of projective schemes and finite-dimensional algebras.

	\noindent For any pair of dg categories $\cA, \cB$, the functor $\Dfd{-}\colon  \Hqe \rightarrow \Hqe^{\op}$ induces a dg functor
	\begin{equation}\label{EquationMapReflexiveDuality}
		\begin{tikzcd}
		\Fun(\cA, \cB) \arrow{r} & \Fun(\Dfd{\cB}, \Dfd{\cA}),
		\end{tikzcd}
	\end{equation}
	 \noindent which is natural in $\cA$ and $\cB$. The following theorem was proved by Kuznetsov-Shinder for functors on the triangulated level and for morphisms in $\Hqe$ follows directly from the results in  \cite{GoodbodyReflexivity}.
		\begin{thm}[{\cite[Corollary 3.16]{KuznetsovShinder}, \cite{GoodbodyReflexivity}}]\label{TheoremBijectionDGFunctors}
		Let $\bA, \bB$ be perfect  dg categories with $\bB$ reflexive. Then \eqref{EquationMapReflexiveDuality} is a quasi-equivalence and restricts to an equivalence between the corresponding subcategories of quasi-equivalences. Likewise, there exists a bijection between the isomorphism classes of triangulated functors $\bA \rightarrow \bB$ and triangulated functors $\Dfd{\bB} \rightarrow \Dfd{\bA}$ which restricts to a bijection between the set of equivalences.
	\end{thm}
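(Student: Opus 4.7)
The plan is to exploit the fact that $\Dfd{-}\colon \Hqe \to \Hqe^{\op}$ is (both left and right) adjoint to itself, combined with the reflexivity of $\bB$, to produce a two-sided inverse to the map in \eqref{EquationMapReflexiveDuality}. At the level of $\Hom$-sets in $\Hqe$, the bijection is essentially formal: the adjunction yields
$$\Hom_\Hqe(\Dfd{\bB}, \Dfd{\bA}) = \Hom_{\Hqe^{\op}}(\Dfd{\bA}, \Dfd{\bB}) \cong \Hom_\Hqe(\bA, \Dfd{\Dfd{\bB}}) \cong \Hom_\Hqe(\bA, \bB),$$
where the last isomorphism uses the reflexivity quasi-equivalence $\bB \xrightarrow{\sim} \Dfd{\Dfd{\bB}}$. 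One then checks, by unravelling the adjunction, that this coincides with the map on $\Hom$-sets induced by \eqref{EquationMapReflexiveDuality} via \Cref{TheoremInternalHomThroughAInfinityFunctors}.

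Next, I would upgrade this pointwise bijection to a quasi-equivalence by a standard Yoneda-style argument. Applying the above to the functor category and using the closed monoidal structure of $\Hqe$, together with the tensor-Hom adjunction $\Hom_\Hqe(\bX \otimes^{\mathbb{L}} \bA, \bB) \cong \Hom_\Hqe(\bX, [\bA, \bB])$, one obtains natural isomorphisms
$$[\bA, \bB] \simeq [\bA, \Dfd{\Dfd{\bB}}] \simeq [\Dfd{\bB}, \Dfd{\bA}]$$
in $\Hqe$, the first coming from reflexivity and the second being the internal form of self-adjointness of $\Dfd{-}$. The map in \eqref{EquationMapReflexiveDuality} is the one represented by this composite, so it is a quasi-equivalence.

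For the refinement to quasi-equivalences, note that if $F\colon \bA \to \bB$ is a quasi-equivalence then so is $\Dfd{F}$, since $\Dfd{-}$ is a functor on $\Hqe$ and therefore preserves isomorphisms. Conversely, if $\Dfd{F}$ is invertible in $\Hqe$, then applying $\Dfd{-}$ once more, precomposing with $\bA \to \Dfd{\Dfd{\bA}}$, and postcomposing with the inverse of $\bB \xrightarrow{\sim} \Dfd{\Dfd{\bB}}$ produces a two-sided inverse of $F$ in $\Hqe$, whence $F$ itself is a quasi-equivalence. The final assertion about triangulated functors then follows by passing to homotopy categories and invoking the identification of $\Hom_\Hqe$ with weak equivalence classes of $A_\infty$-functors from \Cref{TheoremInternalHomThroughAInfinityFunctors}.

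The main technical obstacle is the second step, namely verifying that the abstract chain of adjunction isomorphisms genuinely agrees with the concrete map \eqref{EquationMapReflexiveDuality}. This requires a careful bookkeeping of the unit $\bB \to \Dfd{\Dfd{\bB}}$ and the evaluation pairing through the closed monoidal structure of $\Hqe$, which is precisely the content of the calculations carried out in \cite{GoodbodyReflexivity}; the argument above serves to package their results in the form needed here.
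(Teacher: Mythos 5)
Your treatment of the $\Hqe$-level statement is essentially the argument the paper has in mind: the paper gives no proof of its own but defers to \cite{GoodbodyReflexivity}, and what Goodbody proves is exactly the formal manipulation you describe --- $\Dfd{-}$ is the internal hom into $\Perf(\Bbbk)$ in the closed monoidal category $\Hmo$, so the self-adjunction enriches to a natural quasi-equivalence $[\bA,\Dfd{\bC}]\simeq[\bC,\Dfd{\bA}]$, and composing with the reflexivity quasi-equivalence $\bB\xrightarrow{\sim}\Dfd{\Dfd{\bB}}$ identifies $[\bA,\bB]$ with $[\Dfd{\bB},\Dfd{\bA}]$. Your argument that the restriction to quasi-equivalences is a bijection (apply $\Dfd{-}$ again and conjugate by the evaluation maps) is also correct, modulo the compatibility check you yourself flag, which is the actual content of the cited reference.

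The genuine gap is the last step. The second half of the theorem concerns \emph{all} triangulated functors $\bA\to\bB$ (i.e., exact functors between the underlying triangulated categories), and this does not follow ``by passing to homotopy categories'' from the $\Hqe$-statement: the comparison map from $\Hom_{\Hqe}(\bA,\bB)$ to isomorphism classes of exact functors $\HH^0(\Perf\bA)\to\HH^0(\Perf\bB)$ is in general neither surjective (not every exact functor admits a lift to $\Hqe$) nor injective (non-weakly-equivalent $A_\infty$-functors can induce isomorphic exact functors). This is precisely why the paper cites two separate sources: the $\Hqe$-statement is Goodbody's, while the triangulated-level bijection is \cite[Corollary 3.16]{KuznetsovShinder}, proved directly on the triangulated level rather than deduced from the enhanced statement. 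As written, your proposal proves the first sentence of the theorem but not the second.
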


	\noindent If $\bA$ is reflexive, then the naturality of \eqref{EquationMapReflexiveDuality} implies the existence of a canonical isomorphism of groups  
	\begin{displaymath}
		\begin{tikzcd}
		\Hom_{\Hmo}(\bA, \bA) \arrow{r} & \Hom_{\Hmo}(\Dfd{\bA}, \Dfd{\bA})^{\operatorname{op}}.			
		\end{tikzcd}
	\end{displaymath} After composition with the canonical anti-automorphism $f \mapsto f^{-1}$ of $\DPic(\Dfd{\bA})$, one therefore has the following.
	\begin{cor}
	Let $\bA$ be a reflexive dg category. There exists an isomorphism of groups
	\begin{equation}\label{EquationIsomorphismPicardGroupsReflexive}
		\DPic(\bA) \cong \DPic(\Dfd{\bA}).
	\end{equation}
	\noindent Moreover, if $\bA$ is proper, then the inverse $\DPic(\Dfd{\bA}) \rightarrow \DPic(\bA)$ is given by restriction of quasi-equivalences to the subcategory $\Perf(\bA) \subseteq \Dfd{\bA}$ and the isomorphism $\DPic(\bA) \simeq \DPic(\Perf(\bA))$\footnote{This was pointed out to the  author by Isambard Goodbody.}
	\end{cor}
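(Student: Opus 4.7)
The plan is to build both claims from the group anti-isomorphism
\[
	\Hom_{\Hmo}(\bA, \bA) \cong \Hom_{\Hmo}(\Dfd{\bA}, \Dfd{\bA})^{\op}
\]
constructed in the paragraph preceding the corollary. Restricting to units and post-composing with the inversion anti-automorphism $f\mapsto f^{-1}$ of $\DPic(\Dfd{\bA})$ turns this anti-isomorphism into the required group isomorphism \eqref{EquationIsomorphismPicardGroupsReflexive}. Under the resulting formula, $F \in \DPic(\bA)$ corresponds to $(F^{-1})^{\ast} = (F^{\ast})^{-1} \in \DPic(\Dfd{\bA})$, where $(-)^{\ast}$ denotes the image under the contravariant functor $\Dfd{-}\colon \Hqe \to \Hqe^{\op}$.

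For the second assertion, assume $\bA$ is proper so that the Yoneda embedding identifies $\Perf(\bA)$ with the thick closure of the representable modules inside $\Dfd{\bA}$. The functor $(-)^{\ast}$ is represented concretely by pullback of modules along the given functor. Hence for each $X \in \bA$ there is a natural quasi-isomorphism
\[
	(F^{-1})^{\ast}(h_X) \;=\; h_X \circ F^{-1} \;\simeq\; h_{F(X)},
\]
where the second equivalence uses that $F$ is a quasi-equivalence. Because $(F^{-1})^{\ast}$ is triangulated and $\Perf(\bA)$ is generated by representables, its restriction to $\Perf(\bA) \subseteq \Dfd{\bA}$ takes values in $\Perf(\bA)$ and is weakly equivalent to the canonical extension of $F$. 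Consequently, restriction defines a map $\DPic(\Dfd{\bA}) \to \DPic(\Perf(\bA)) = \DPic(\bA)$ which is a left inverse, and therefore a two-sided inverse, of the isomorphism from the first part.

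The main obstacle is to make the natural isomorphism $(F^{-1})^{\ast}(h_X) \simeq h_{F(X)}$ rigorous on the dg-enhanced level rather than merely at the triangulated level. This amounts to choosing an explicit model of $\Dfd{-}$ on morphisms — for instance via right quasi-representable $\bA$-$\bA$-bimodules together with cofibrant replacements — and verifying that the obvious natural transformation of representables lifts to a genuine quasi-isomorphism of bimodules compatible with composition. Once this compatibility is in place, functoriality of $\Dfd{-}$ and thick generation of $\Perf(\bA)$ by representables make the remainder of the argument formal.
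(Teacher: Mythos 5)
Your proposal is correct and, for the isomorphism \eqref{EquationIsomorphismPicardGroupsReflexive}, is exactly the paper's argument: the paper's entire proof is the preceding paragraph, which restricts the anti-isomorphism $\Hom_{\Hmo}(\bA,\bA)\cong\Hom_{\Hmo}(\Dfd{\bA},\Dfd{\bA})^{\op}$ coming from Theorem \ref{TheoremBijectionDGFunctors} to units and post-composes with inversion. For the second claim the paper gives no argument at all (it is credited in a footnote), and your Yoneda computation $(F^{-1})^{\ast}(h_X)\simeq h_{F(X)}$, combined with surjectivity of the first map to see that every element of $\DPic(\Dfd{\bA})$ preserves $\Perf(\bA)$, is the standard and correct way to supply it; the enhancement issue you flag is routine dg Yoneda bookkeeping rather than a genuine gap.
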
 
\noindent Our main interest in reflexivity stems from the following result.
\begin{thm}[{\cite[Theorem 9.4.3]{BoothGoodbodyOpper}}]\label{TheoremProperGentleReflexive} Let $A$ be a proper graded gentle algebra. Then $A$ and $\Dfd{A}$ are reflexive. In particular, $\DPic(A)\cong \DPic(\Dfd{A})$.
\end{thm}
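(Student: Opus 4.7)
The proof splits naturally into two observations. First, the reflexivity of $\Dfd{A}$ is a formal consequence of that of $A$: the $\Dfd{-}\dashv\Dfd{-}$ adjunction supplies a triangle identity $\Dfd{\eta_A}\circ \eta_{\Dfd{A}}=\operatorname{Id}_{\Dfd{A}}$ for its unit $\eta$, so once $\eta_A$ is a Morita equivalence, applying $\Dfd{-}$ shows $\Dfd{\eta_A}$ is one as well, and the triangle identity then forces $\eta_{\Dfd{A}}$ to be a Morita equivalence. The final isomorphism $\DPic(A)\cong\DPic(\Dfd{A})$ is then immediate from \eqref{EquationIsomorphismPicardGroupsReflexive}. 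Hence the task reduces to showing $A$ is reflexive, that is, that $\eta_A\colon \Perf(A)\to\Dfd{\Dfd{A}}$ is a Morita equivalence.

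To compute $\Dfd{\Dfd{A}}$, the plan is to invoke \Cref{TheoremGentleAlgebrasAreLocal}, which furnishes a classical generator $\cS\coloneqq A/\rad(A)$ of $\Dfd{A}$. Consequently $\Dfd{A}\simeq\Perf(E)$ with $E\coloneqq\RHom_{\cD(A)}(\cS,\cS)$, so $\Dfd{\Dfd{A}}\simeq\Dfd{E}$ by Morita invariance of $\Dfd{-}$. As recalled in \Cref{SectionKoszulDualityFukayaGentle}, $E$ is quasi-isomorphic to the arrow-ideal completion $\widehat{A^{\invex}}$ of the homologically smooth Koszul dual $A^{\invex}$ of $A$. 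The proof thus reduces to producing an equivalence $\Perf(A)\simeq\Dfd{\widehat{A^{\invex}}}$ which implements the unit $\eta_A$.

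The decisive input is \Cref{rem: block decomposition}, which gives an orthogonal decomposition
\[
\Dfd{A^{\invex}}\simeq\Perf(A)\times\prod_{C}\Dfd{\Bbbk[x,x^{-1}]},
\]
where $C$ runs over the fully marked boundary components of $\Sigma_A$ of vanishing winding number. Each extra factor is generated by finite-dimensional $A^{\invex}$-modules on which the cycle around $C$ acts invertibly with non-zero eigenvalue. Such modules do not lift to $\widehat{A^{\invex}}$, since in the completion the cycle element becomes topologically nilpotent; hence the $\Dfd{\Bbbk[x,x^{-1}]}$-blocks die while the $\Perf(A)$-block survives intact. A naturality argument, tracking Yoneda-type maps through the Koszul-dual identifications, then identifies the resulting equivalence with $\eta_A$ up to Morita equivalence.

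The step I expect to require the most care is precisely this last one: verifying rigorously that completion collapses the non-nilpotent blocks from zero-winding boundary loops, and that the resulting chain of equivalences is the canonical one. Both are Koszul-duality-type statements but need careful bookkeeping to ensure the map produced is the unit of the double $\Dfd{-}$-adjunction rather than merely \emph{some} Morita equivalence between the two sides.
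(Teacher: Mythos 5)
The paper does not prove this statement at all: it is imported verbatim as \cite[Theorem 9.4.3]{BoothGoodbodyOpper}, so there is no internal proof to compare yours against. Judged on its own terms, your outline has the right shape — the formal reduction of the reflexivity of $\Dfd{A}$ to that of $A$ via the triangle identity is correct and standard, and the chain $\Dfd{A}\simeq\Perf(\widehat{A^{\invex}})$, hence $\Dfd{\Dfd{A}}\simeq\Dfd{\widehat{A^{\invex}}}$, matches what the paper itself records in \Cref{SectionKoszulDualityFukayaGentle}. But be aware that your first input, \Cref{TheoremGentleAlgebrasAreLocal}, is Theorem 9.4.4 of the same source and is flagged in the paper as ``far from trivial''; you are not so much proving the theorem as rearranging the hard results of \cite{BoothGoodbodyOpper} among themselves.

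The genuine gap is the one you flag yourself, and it is not mere bookkeeping. Reflexivity asserts that the \emph{evaluation map} $\Perf(A)\to\Dfd{\Dfd{A}}$ is a Morita equivalence; exhibiting some abstract equivalence between the two sides does not imply this, since an object can be isomorphic to its double dual without being reflexive. Your argument produces a candidate equivalence by (i) decomposing $\Dfd{A^{\invex}}$ via \Cref{rem: block decomposition} and (ii) claiming the non-$\Perf(A)$ blocks die under arrow-ideal completion. Step (ii) needs more than the topological nilpotence of the loop element: you must show that restriction along $A^{\invex}\to\widehat{A^{\invex}}$ is fully faithful on objects with finite-dimensional cohomology and that its essential image is exactly the $\Perf(A)$ block (essential surjectivity onto that block is also unaddressed). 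And the final identification of the resulting composite with the unit of the $\Dfd{-}\dashv\Dfd{-}$ adjunction — the only step that actually yields reflexivity — is asserted via ``a naturality argument, tracking Yoneda-type maps'' without any construction. As it stands the proposal is a plausible strategy, not a proof.
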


\begin{rem}The example $A=\Bbbk[x]$ already shows that the analogue of Theorem \ref{TheoremProperGentleReflexive} is false for homologically smooth graded gentle algebras in general. Of course, if $A$ is homologically smooth and $\Sigma_A$ has contains no fully marked components with vanishing winding number, \Cref{PropositionKoszulFunctors} shows that $\Perf(A) \simeq \Dfd{A^!}$ which is reflexive.
\end{rem}

\section{Derived equivalence classification and geometrisation homomorphisms}\label{Section Geometrisation Homomorphism}

\noindent We recall some results from \cite{OpperDerivedEquivalences} about the relationship between autoequivalence groups of proper graded gentle algebras and mapping class groups. Subsequently we then extend these results to the homologically smooth case, and in Section \ref{SectionDerivedPicardGroupWrappedFukayaCategory} to wrapped Fukaya categories of punctured surfaces. Throughout this section we fix a graded marked surface $(\Sigma, \cM, \eta)$.

\subsection{Deformations of loop objects and rigidity}\label{SectionDeformationsRigidity}

\begin{definition}
 A boundary component $B \subseteq \partial\Sigma$ is \textbf{degenerating} if $\omega_{\eta}(B)=0$ and $B \cap \cM \cong [0,1]$. 
\end{definition}
\noindent Equivalently, a boundary component $B$ is degenerating if $\omega_{\eta}(B)=0$ and $B$ contains a single marked interval. In particular, $B$ contains a unique boundary arc $\gamma$ and we note that every grading of the simple loop $\gamma'$ around $B$ canonically induces a grading on $\gamma$ and vice versa. We recall the following equivalence relation from \cite{OpperDerivedEquivalences}.
\begin{definition}\label{DefinitionBlurringRelation}Let $\gamma, \gamma' \subseteq \Sigma$ be graded curves both equipped with indecomposable local systems. We write $\gamma \sim_{\ast} \gamma$ if $\gamma \simeq \gamma$ as graded curves, or if the set $\{\gamma, \gamma'\}$ contains both a graded boundary arc of a degenerating boundary component $B$ as well as the corresponding simple loop around $B$ with the induced grading equipped with a local system corresponding to a $\Bbbk[x,x^{-1}]$-module of the form $\Bbbk[x]/(X-\lambda)$, $\lambda \in \Bbbk^{\times}$. If $\gamma \simeq_{\ast} \gamma'$ we say that $X_{\gamma'}$ is a \textbf{deformation} of $X_{\gamma}$. A curve $\gamma$ is \textbf{rigid} if its $\simeq_{\ast}$-class contains no loops. A graded gentle algebra is \textbf{rigid} is all its projective arcs are rigid.
\end{definition} 
\noindent In other words, a rigid arc is one which is not homotopic to a boundary arc on a degenerating boundary component. In plain terms, the equivalence relation $\sim_{\ast}$ forgets any information about local systems but also blurs the lines between arcs and loops around degenerating boundary components. The reason for this equivalence relations is to rectify a inevitable imperfection of the correspondences from Theorem \ref{TheoremBijectionObjectsCurves} which arises even in very simple situations, cf.~Example \ref{ExampleImperfectionsKronecker}.

\begin{prp}[{\cite[Proposition 2.16]{OpperDerivedEquivalences}}]\label{prop: rigidity characterisation}
	Suppose $A$ is a proper graded gentle algebra and let $X \in \cT_A=\Dfd{A}$ be indecomposable. If $X \not \in \Perf(A)$, then $\gamma_X$ is rigid. If $X \in \Perf(A)$, then $\gamma_X$ is rigid if and only if  $\tau X \not \cong X$.
\end{prp}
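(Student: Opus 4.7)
The proof plan uses the geometric dictionary from Theorem \ref{TheoremBijectionObjectsCurves}, which identifies indecomposable objects of $\Dfd{A}$ with homotopy classes of graded curves carrying indecomposable local systems, together with the explicit geometric description of the Auslander--Reiten translate $\tau$ on $\cT_A^{\fin}=\Perf(A)$ induced by the Serre functor.

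For Part 1 I would argue by contrapositive: show that every non-rigid $\gamma_X$ forces $X\in\Perf(A)$. Unpacking $\sim_{\ast}$, such a curve is either a boundary arc of a degenerating boundary component $B$, or a loop (since any loop's $\sim_{\ast}$-class already contains itself). In the former case, a boundary arc on $B$ is projective by construction of the surface model, so $X$ is a shift of a direct summand of $A$ and hence perfect. In the latter, I would use that every loop indecomposable in $\Dfd{A}$ can be built as a finite twisted complex over a full arc system cut out by the loop: the winding-zero condition needed for the loop to admit a grading allows one to represent it as a cone in $\Perf(A)$. The motivating example is a loop around a degenerating $B$ with local system $\Bbbk[x]/(x-\lambda)$, $\lambda\neq 0$, which fits into the deformation triangle of Example \ref{ExampleNonRigid} with the boundary-arc projective.

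For Part 2, the plan is to analyse $\tau X\cong X$ geometrically via the action of $\tau$: on a string object $X_{\gamma}$, $\tau$ shifts each endpoint of $\gamma$ to the neighbouring marked interval on its boundary component and twists the grading by the local winding, while on a band object it acts by a shift dictated by the total winding of the loop. Loop indecomposables, whose winding vanishes, are therefore fixed by $\tau$, consistent with their non-rigidity. When $\gamma_X$ is a boundary arc of a degenerating component $B$, both endpoints lie on the single marked interval of $B$ and the winding is zero, so $\tau X\cong X$. The ``only if'' direction, when $\gamma_X$ is rigid, amounts to showing that the endpoint rotation combined with the induced grading twist never fixes $\gamma$ up to graded homotopy.

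The principal obstacle is this last ``no accidental periodicity'' step. A non-degenerating boundary component either carries more than one marked interval, so that endpoint rotation permutes them non-trivially, or it has non-zero winding number, so that the rotation produces a strict grading shift. One must nonetheless rule out the possibility that the contributions at the two endpoints cancel in a way making the new graded arc homotopic to $\gamma$. I expect this to require a short combinatorial verification using the explicit relation between winding numbers of boundary components and the grading shift appearing in the formula for $\tau$, after which Parts 1 and 2 combine to yield the proposition.
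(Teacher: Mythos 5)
First, a point of comparison: the paper does not prove this proposition at all --- it is imported verbatim from \cite[Proposition~2.16]{OpperDerivedEquivalences} --- so there is no internal argument to measure yours against, and your attempt has to stand on its own. Part~1 of your sketch is essentially sound: a non-rigid curve is either a loop or the boundary arc of a degenerating component, and in both cases the object is perfect (band objects are finite twisted complexes of projectives, and a boundary arc is a finite arc, hence perfect --- though note it is in general only perfect, not projective, since it need not belong to the chosen projective arc system). Reducing Part~2 to the geometric action of $\tau$ (endpoint rotation plus grading twist) is also the right strategy.

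The genuine gap is in the ``only if'' direction of Part~2, and it is larger than the one you flag. Your dichotomy for ruling out $\tau X\cong X$ (``a non-degenerating component has more than one marked interval, or non-zero winding'') tacitly assumes that a rigid arc has its endpoints on non-degenerating components. That is false: rigidity only excludes $\gamma$ being homotopic to \emph{the} boundary arc of a degenerating component $B$; a rigid arc may still end on the unique marked interval of such a $B$. The crossing arc of the cylinder in \Cref{ExampleNonRigid} is exactly of this type (both boundary components there are degenerating), it is rigid, and $\tau$ does not fix it --- but neither of your two mechanisms detects this, since the rotation returns each endpoint to the same marked interval with zero grading shift. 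What actually forces $\tau X\not\cong X$ here is that the endpoint rotation about a once-marked component is a full boundary Dehn twist, which moves every arc meeting that component \emph{except} its own boundary arc; this requires a bigon-criterion/Alexander-method argument that your outline does not contain. Two further cases your dichotomy leaves open: when a component carries several marked intervals and both endpoints of $\gamma$ lie on it, the rotation could a priori swap the two endpoint intervals and return $\gamma$ with reversed orientation (arcs are unoriented, so this must be excluded separately); and the winding-number ``cancellation'' you worry about genuinely arises only for the boundary arc of a once-marked component with $\omega(B)\neq 0$, where the underlying arc \emph{is} fixed and one must check the grading shifts by $\omega(B)\neq 0$. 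So the proposal is a reasonable outline, but the decisive step --- that the rotated graded arc is never graded-homotopic to a rigid $\gamma$ --- is missing, and the case analysis you propose for it would not close as stated.
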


\begin{exa}\label{ExampleImperfectionsKronecker} Contrary to the Kronecker algebra $\cK$, the Morita equivalent gentle algebra $B$ from \Cref{ExampleNonRigid} is not rigid. This is because the projective arcs of $B$ correspond to the object $P_0$ and a projective $\cK$-module under the equivalence $\Dfd{B} \simeq \Dfd{\cK}$. If $\lambda \not \in \{0, \infty\}$; then under the equivalence $\Dfd{B} \simeq \Dfd{\cK}$, the object $P_{\lambda} \in \Dfd{B}$ corresponds to the indecomposable object $C_{\lambda} \in \Dfd{\cK}$ associated to the central simple loop in which case $\lambda$ is encoded in the associated local system. The special cases $\lambda =0, \infty$ on the other hand are represented by the two boundary segments. If $\lambda=[a:b] \in \mathbb{P}^1$, then $C_{\lambda}$ is the mapping cone of $a \alpha + b \beta$ where $\alpha, \beta$ denote the arrows of $\cK$. Algebraically $C_0$ and $C_{\infty}$ are degenerations of the family $C_{\lambda}$ where $a=0$ or $\beta=0$. Topologically one may view the boundary segments as deformations of the central simple loop whose local system `degenerated' in one of two possible ways. As we saw before $\Aut(\Dfd{B}) \cong \PGL_2(\Bbbk)$ acts transitively on all these objects, which show that whether an indecomposable object of $\cD^b(\cK)$ corresponds to an arc or a loop is only a property of the chosen geometric model of the triangulated category $\Dfd{cK}\simeq \Dfd{B}$ but not intrinsic to its triangulated structure. One may think of the rigid arcs as those which correspond to indecomposable objects which are represented by arcs in \textit{all} geometric models. 
\end{exa}
\noindent \Cref{ExampleImperfectionsKronecker} showcases that rigidity is not preserved under Morita equivalences and the following proposition shows that all graded gentle algebras are rigid up to Morita equivalence.

\begin{prp}\label{PropositionGentleRigidity} Let $A$ be a graded gentle algebra. Then it is Morita equivalent to a rigid graded gentle algebra.
\end{prp}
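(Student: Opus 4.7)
Write $A \cong A_{\cF(\Gamma)}$ for a full (resp.~finitely full) graded arc system $\Gamma$ on $\Sigma_A$, according as $A$ is homologically smooth or proper. The indecomposable direct summands of $A$ correspond bijectively to the arcs of $\Gamma$, so the projective arcs of $A$ are exactly the arcs in $\Gamma$. By \Cref{prop: Morita invariance gentle algbebras}, any two full (resp.~finitely full) arc systems on $\Sigma_A$ produce Morita equivalent gentle algebras, so it suffices to exhibit an arc system $\Gamma'$ on $\Sigma_A$ such that no $\gamma \in \Gamma'$ is non-rigid.

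\noindent Recall from \Cref{DefinitionBlurringRelation} that $\gamma$ fails to be rigid exactly when $\gamma$ is a boundary arc of a degenerating boundary component $B$ and the simple loop $\ell_B$ parallel to $B$ is non-contractible in $\Sigma_A$. When $\ell_B$ is contractible, cutting $\Sigma_A$ along $\ell_B$ shows that $\Sigma_A$ must itself be a disk with unique (degenerating) boundary $B$; in that degenerate case the only arc system consists of the single boundary arc, which is vacuously rigid since $\Sigma_A$ supports no non-contractible loops. We may therefore assume that every degenerating boundary component $B$ of $\Sigma_A$ satisfies that $\ell_B$ is non-contractible, and we call such $B$ \emph{bad}.

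\noindent For each bad $B$ with marked interval $I_B$, the fact that $\ell_B$ is non-contractible ensures that $\Sigma_A$ contains marked components outside a collar neighbourhood of $B$. Choose embedded, pairwise disjoint arcs $\delta_B$ from $I_B$ to some such outside component. Extend $\{\delta_B\}_B$ to a full (resp.~finitely full) graded arc system $\Gamma'$ on $\Sigma_A$ via \Cref{PropositionPropertiesFukayaCategories}(1); whenever the extension procedure would include an arc $\alpha$ that is a boundary arc on some bad $B'$, we reroute $\alpha$ through the interior of $\Sigma_A$ using $\delta_{B'}$ to obtain an arc with the same cutting effect but not of boundary type. The resulting $\Gamma'$ contains no boundary arc on a bad boundary component, so $A' \coloneqq A_{\cF(\Gamma')}$ is rigid and Morita equivalent to $A$ by \Cref{prop: Morita invariance gentle algbebras}.

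\noindent\textbf{Main obstacle.} The delicate step is the greedy extension: one must verify that every problematic candidate arc can be rerouted so that the complement $\Sigma_A \setminus \Gamma'$ remains a disjoint union of topological disks (and, in the finitely-full case, also of cylinders with a fully-marked boundary). This is a local question around each bad boundary component, and I expect it to reduce to a handful of model configurations, each of which can be treated by a flip of the offending boundary arc through an adjacent $\delta_B$, mirroring the passage from the algebra $B$ to the Kronecker algebra $\cK$ of \Cref{ExampleImperfectionsKronecker}.
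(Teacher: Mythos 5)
Your overall strategy --- replace the defining arc system by one containing no boundary arc of a degenerating boundary component, then invoke \Cref{prop: Morita invariance gentle algbebras} --- is the right one and is the same as the paper's. But the proof is not complete: the step you yourself flag as the ``main obstacle'' (extending the chosen arcs $\delta_B$ to a full, resp.\ finitely full, \emph{formal} arc system while rerouting every offending boundary arc) is exactly where the content of the proposition lies, and it is left as a plan rather than an argument. The paper resolves it by a concrete local induction: if $\gamma \in \cA$ is the boundary arc of a degenerating component, there is a \emph{unique} other arc $\gamma' \in \cA$ receiving an irreducible flow from $\gamma$, and one replaces $\gamma$ by the concatenation of $\gamma$ and $\gamma'$ along that flow. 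This produces a new formal arc system with the same fullness type and strictly fewer offending boundary arcs, so induction finishes the argument. Some such explicit move is needed; ``reroute $\alpha$ through the interior using $\delta_{B'}$'' does not by itself guarantee that the complement remains a union of disks (and cylinders), nor that formality is preserved.

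There is also a genuine failure mode your construction does not survive. In the proper case the arc system must be \emph{proper}, i.e.\ disjoint from all fully marked components, so your arc $\delta_B$ ``from $I_B$ to some outside component'' is not allowed when every other marked component is fully marked. The minimal instance is the once-punctured disk $\mathbb{D}_{\omega}$ (an annulus with one fully marked component and one marked interval): there the only finitely full arc system is the single boundary arc, no rerouting is possible, and the paper must treat this as a separate base case by arguing directly that $\Bbbk[x]/(x^2)$ and $\Bbbk[t]$ are rigid. Your case analysis misses this entirely; instead it worries about $\ell_B$ being contractible, which cannot happen for a degenerating component (a contractible simple loop has winding number $\pm 2 \neq 0$, and \Cref{DefinitionBlurringRelation} imposes no non-contractibility condition in any case). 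Finally, note that the inductive replacement must also be checked to preserve formality and (finite) fullness, which your sketch defers; without that, the appeal to \Cref{prop: Morita invariance gentle algbebras} is not yet justified.
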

\begin{proof}We start with the special case that $\Sigma_A$ is a disk $\mathbb{D}_{\omega}$ with a single puncture of winding number $\omega \in \mathbb{Z}$ whose boundary component contains a single marked point. The only two arc systems on $\mathbb{D}_w$ correspond to the the graded algebras $B_{\omega}=\Bbbk[x]/(x^2)$ with $|x|=\omega+1$ and $B_{\omega}'=\Bbbk[t]$ where $|t|=\omega$. The algebra $B_w$ is rigid as the simple loop around the puncture does not correspond to objects in $\Dfd{A}$. The algebra $B_w^{\prime}$ is rigid as the relation $\sim_{\ast}$ only applies to arcs on the degenerating boundary component but the projective arc of $B'$ connects the puncture with the marked point on the degenerating boundary component. From now on we will assume that $\Sigma_A$ is not of the form $\mathbb{D}_{\omega}$. In this case, we show that $\Sigma=\Sigma_A$ admits a formal arc system, finitely full or full depending on the case, which does not contain boundary segments of degenerating boundary components. Then the assertion follows from there. Suppose $\cA \subseteq \Sigma$ is an arc system and suppose that $\gamma \in \cA$ is the segment on a degenerating boundary component $B$. There must be a flow between $\gamma$ and another curve $\gamma' \in \cA \setminus \{\gamma\}$. Indeed, otherwise connectedness of the underyling quiver of $A$ would imply that $\{\gamma\}=\cA$ and hence that $A \cong B_{\omega}$ for some $\omega \in \mathbb{Z}$. Consequently, there is a unique $\gamma' \in \cA$ which has a (necessarily unique) irreducible flow $\gamma \rightarrow \gamma'$ and we denote by $\delta$ the concatenation of $\gamma$ and $\gamma'$ along that flow equipped with an arbitrary choice of grading. The collection $\cA'\coloneqq \left(\cA \setminus \{\gamma\}\right) \sqcup \{\gamma'\}$ is a formal arc system, which is finitely full (resp.~full) if and only if $\cA$ is finitely full (resp.~full) and which contains a strictly smaller number of segments of degenerating boundary components. After induction, we obtain an arc system $\cB \subseteq \Sigma$ without such boundary segments. By construction, its associated graded gentle algebra is Morita equivalent to $A$ by Proposition \ref{PropositionInclusionGivesMoritaEquivalences} and its analogue for finitely full arc systems.
\end{proof}

\subsection{The geometrisation homomorphism in the proper case and its lift to the graded mapping class group}

\subsubsection{Mapping class groups}

\begin{definition}
	Let $(\Sigma, \cM)$ be a  marked surface. The \textbf{extended mapping class group} of $\Sigma$ is the group $\MCG^{\pm}(\Sigma)$ of isotopy classes of diffeomorphisms $f\colon  \Sigma \rightarrow \Sigma$ such that $f(\cM)=\cM$ with respect to isotopies relative $\cM$. The \textbf{mapping class group} is the subgroup $\MCG(\Sigma) \subseteq \MCG^{\pm}(\Sigma)$ of index two consisting of isotopy classes of orientation preserving diffeomorphisms. If $\eta$ is a line field on $\Sigma$, then the \textbf{ graded  mapping class group} of the graded marked surface $(\Sigma, \cM, \eta)$ is the group of isotopy classes of isomorphisms $(f, \tilde{f})\colon (\Sigma, \eta) \rightarrow (\Sigma, \eta)$. 
\end{definition}
\noindent By construction, there is a natural forgetful homomorphism  $\MCG_{\gr}(\Sigma) \rightarrow  \MCG(\Sigma)$ with kernel $\mathbb{Z}$.

\begin{rem}\label{rem: graded MCG forgetful map not surjective}The forgetful morphism is not surjective in general. For example, the Dehn twist $D$ about any non-gradable loop $\delta$ on $\Sigma$ is not in its image. To see this, note that for any immersed loop $\gamma$, $\omega(D(\gamma))=\omega(\gamma) + n \cdot \omega(\delta)$, where $n$ denotes the (algebraic) intersection number of $\gamma$ and $\delta$. If $\omega(\delta) \neq 0$, then $D$ does not preserve all winding numbers and hence cannot be extended to a graded mapping class.
	\end{rem}

\begin{thm}\label{TheoremgeometrisationHomomorphism}
	Let $A$ and $B$ be proper graded gentle algebras. Then any triangle equivalence $\cT_A \rightarrow \cT_B$ induces a diffeomorphism $\Sigma_A \rightarrow \Sigma_B$ of graded marked surfaces. Moreover, there exists a group homomorphism
	\begin{displaymath}
		\begin{tikzcd}
			\Psi'=\Psi_A'\colon  \Aut\big(\cT_A\big) \arrow{r}  & \MCG(\Sigma_A),
		\end{tikzcd}
	\end{displaymath}
\noindent such that for all rigid arcs $\gamma \subseteq \Sigma_A$, and all $F \in \Aut\big(\cT_A\big)$, $\Psi'(F)(\gamma_X) \simeq \gamma_{F(X)}$. 
\end{thm}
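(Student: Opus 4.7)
The plan is to exploit the bijection between indecomposable objects of $\cT_A$ and graded curves with indecomposable local systems (\Cref{TheoremBijectionObjectsCurves}), combined with the intrinsic characterisation of rigidity in \Cref{prop: rigidity characterisation}. The argument follows the strategy of \cite{OpperDerivedEquivalences} and splits into three stages: (i) produce an action of an equivalence on rigid arcs, (ii) promote this combinatorial action to a diffeomorphism of graded marked surfaces, (iii) verify that the resulting assignment is a homomorphism to the mapping class group.

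For stage (i), fix a triangle equivalence $F\colon \cT_A \to \cT_B$. Since $F$ sends indecomposables to indecomposables, each indecomposable $X \in \cT_A$ determines a homotopy class $\gamma(F(X))$ of curves on $\Sigma_B$. By \Cref{prop: rigidity characterisation}, rigidity is detected purely categorically: for $X \in \Perf(A)$ it is equivalent to $\tau X \not\cong X$, and for $X \notin \Perf(A)$ it is automatic. Both $\Perf(-) = \cT^{\fin}$ and the Auslander--Reiten translation $\tau$ are preserved by $F$, so $F$ sends $\sim_\ast$-classes of rigid arcs to $\sim_\ast$-classes of rigid arcs. Using \Cref{PropositionGentleRigidity}, one may first replace $A$ and $B$ by rigid Morita-equivalent representatives, so that the action is literally on homotopy classes of rigid arcs rather than $\sim_\ast$-classes.

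For stage (ii), one must recover a diffeomorphism $\Sigma_A \to \Sigma_B$ from the combinatorial action. Morphism spaces between indecomposables record intersection data (cf.~\Cref{prop: description homs}), so the action respects disjointness up to homotopy, and in particular carries a full (resp.~finitely full) formal arc system $\Gamma \subseteq \Sigma_A$ to one on $\Sigma_B$. The cells cut out by $\Gamma$ are encoded by the disk sequences appearing in the higher $A_\infty$-structure of $\cF(\Gamma)$ and are therefore determined by the triangulated data via the identification of $\cT_A$ with an enhancement of $\Perf(\cF(\Gamma))$ or $\Dfd{\cF(\Gamma)}$. Matching cells along corresponding boundary arcs yields a homeomorphism preserving $\cM$, which may be smoothed to a diffeomorphism. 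Preservation of orientation follows because the chosen equivalences preserve the cohomological grading on $\Hom$-complexes, and preservation of the homotopy class of the line field follows since the winding number of a loop on $\Sigma_A$ is read off from the degree shifts appearing in the (graded) endomorphisms of the associated band objects, information that $F$ respects.

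Stage (iii) is then formal: set $\Psi'(F)$ to be the isotopy class of the diffeomorphism built in stage (ii). The identity $\Psi'(F)(\gamma_X) \simeq \gamma_{F(X)}$ on rigid arcs is by construction, and the same identity combined with the fact that rigid arcs fill $\Sigma_A$ pins down $\Psi'(F)$ up to isotopy. In particular, $\Psi'(F \circ G)$ and $\Psi'(F) \circ \Psi'(G)$ agree on all rigid arcs and hence are isotopic, giving the homomorphism property. The main obstacle is stage (ii): upgrading the purely combinatorial bijection on arcs to a genuine diffeomorphism that respects not only $\cM$ but also the homotopy class of the line field $\eta$. The subtlety is that $\Sigma_A$ and $\eta_A$ are not manifestly visible in the triangulated structure of $\cT_A$; their reconstruction relies on the correspondence of \Cref{TheoremBijectionObjectsCurves} and, in the proper case, on \Cref{TheoremGentleAlgebrasAreLocal} together with reflexivity (\Cref{TheoremProperGentleReflexive}) to make sense of the grading/winding-number data coherently across $\Perf(A) \subseteq \Dfd{A}$.
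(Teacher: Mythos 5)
Your stages (i) and (iii) are sound and match the underlying strategy: rigidity is indeed detected categorically via \Cref{prop: rigidity characterisation} (since $\Perf(A)=\cT_A^{\fin}$ and $\tau$ are intrinsic), and the Alexander-method argument pins down the mapping class once its action on a filling collection of rigid arcs is known. Note, though, that the paper does not reprove any of this: its proof of \Cref{TheoremgeometrisationHomomorphism} consists of verifying that $\cT_A$ is a \emph{surface-like} category in the sense of \cite[Definition 2.1]{OpperDerivedEquivalences} (with the modification that morphism spaces carry Schauder bases rather than bases, since for proper graded gentle algebras one must pass to graded completions) and then invoking the main theorem of that paper wholesale.

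The genuine gap is in your stage (ii), precisely at the point the surface-like axioms are designed to handle. You claim that ``morphism spaces between indecomposables record intersection data (cf.~\Cref{prop: description homs}), so the action respects disjointness up to homotopy.'' \Cref{prop: description homs} only gives one implication (intersection in minimal position implies $\Hom^{\bullet}\neq 0$), and, more seriously, it does not distinguish \emph{interior} intersections from intersections at marked points: two disjoint arcs of an arc system have plenty of nonzero morphisms between them, namely the flows. So nonvanishing of $\Hom^{\bullet}$ cannot be used to detect whether two arcs can be realised disjointly, and without a purely categorical characterisation of interior versus boundary morphisms you cannot conclude that $F$ carries arc systems to arc systems, nor match up the polygons cut out by a full formal arc system. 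This is exactly axiom 2) of \cite[Definition 2.1]{OpperDerivedEquivalences}; compare \Cref{DefinitionInteriorMorphisms} and \Cref{PropositionInteriorMorphismsInteriorIntersections}, where the present paper has to do this work explicitly in the punctured case. A related unaddressed point is that for proper $A$ the relevant morphism spaces may only admit Schauder bases, which is why the paper flags the modification of assumption 2) III). Your reconstruction of the line field from winding numbers of band objects is the right idea but likewise needs the detailed argument of \cite[Proposition 4.21]{OpperDerivedEquivalences} rather than the one-line assertion given.
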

\begin{proof}
This follows from the main result in \cite{OpperDerivedEquivalences} which shows that such a homomorphism exists if $\cT_A$ is a \textit{surface-like} category in the sense of \cite[Definition 2.1]{OpperDerivedEquivalences}. The relevant part works equally when the assumption 2) III) is replaced by the assumption that the elements ``$\mathfrak{B}(q)(j)$'' form a \textit{Schauder basis} instead of a basis, cf.~\cite[Theorem A.2]{OpperPlamondonSchroll}, in other words if we pass to graded completions of the morphism spaces. All other assumptions in  \cite[Definition 2.1]{OpperDerivedEquivalences} except for 4) are a direct consequence of the results in the appendices of \cite{OpperPlamondonSchroll}. Finally, \cite[Definition 2.1, 4)]{OpperDerivedEquivalences} is proved with minor adaptations in the same way as \cite[Theorem 2.8]{OpperDerivedEquivalences}.
\end{proof}
\noindent The general version of \cite{OpperDerivedEquivalences} (under the assumptions of \cite{OpperDerivedEquivalences}) allows arbitrary curves but replaces $\simeq$ by the weaker relation $\simeq_{\ast}$ and states that $\Psi'(F)(X_{\gamma}) \simeq_{\ast} X_{F(X)}$ for any curve $\gamma$ and all local systems, under the assumption that $\Bbbk$ is algebraically closed. We don't expect this to change in the case of proper graded gentle algebras but it requires more care in verifying whether the proofs still work. The following is a small but important improvement to Theorem \ref{TheoremgeometrisationHomomorphism}.
\begin{prp}\label{PropositionGradedLiftgeometrisation}Let $A$ be a proper graded gentle algebra. The homomorphism $\Psi_A$ admits a unique lift $$\Psi=\Psi_A\colon \Aut(\cT_A) \rightarrow \MCG_{\gr}(\Sigma_A)$$ along the forgetful homomorphism $\MCG_{\gr}(\Sigma_A) \rightarrow \MCG(\Sigma_A)$ with the property that 
	\begin{displaymath}
		X_{\Psi(F)(\gamma)} \cong X_{F(X_\gamma)}
	\end{displaymath}
	\noindent for all $F \in \Aut(\cT_A)$ and all rigid arcs $\gamma \subseteq \Sigma_A$.
\end{prp}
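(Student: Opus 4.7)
The forgetful homomorphism $\MCG_{\gr}(\Sigma_A) \to \MCG(\Sigma_A)$ is a central $\mathbb{Z}$-extension whose kernel is generated by the shift of gradings (adding $1$ to the lift of every tangent direction). Consequently, the set of lifts of a fixed $\varphi \in \MCG(\Sigma_A)$ to a graded mapping class forms a $\mathbb{Z}$-torsor, and a lift is uniquely determined by its action on the grading of any single graded curve. This immediately takes care of the uniqueness of $\Psi_A$ and reduces existence to exhibiting, for each $F \in \Aut(\cT_A)$, a distinguished lift of $\varphi \coloneqq \Psi_A'(F)$.

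Fix once and for all a rigid graded arc $\tilde{\gamma}_0 \subseteq \Sigma_A$. By Theorem \ref{TheoremBijectionObjectsCurves}, $F(X_{\tilde{\gamma}_0}) \cong X_{\tilde{\gamma}'_0}$ for a graded curve $\tilde{\gamma}'_0$ whose underlying ungraded class is $\varphi(\gamma_0)$ by Theorem \ref{TheoremgeometrisationHomomorphism}. I would define $\Psi_A(F)$ to be the unique lift of $\varphi$ for which the induced grading $(\varphi, \tilde{\varphi})_* \tilde{\gamma}_0$ coincides with $\tilde{\gamma}'_0$. With this definition the required property is tautological on $\gamma_0$, and the uniqueness of the lift guarantees that $\Psi_A(F \circ G) = \Psi_A(F) \circ \Psi_A(G)$: both lifts of $\Psi_A'(F) \circ \Psi_A'(G)$ send $\tilde{\gamma}_0$ to the graded curve associated to $F(G(X_{\tilde{\gamma}_0})) \cong X_{\tilde{\gamma}''_0}$, where I used that $F$ and $G$ commute with the shift on gradings because they commute with $[1]$ on $\cT_A$.

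The substantive content is the well-definedness, namely that the same prescription is satisfied for every other rigid graded arc $\tilde{\gamma}_1$. Writing $F(X_{\tilde{\gamma}_1}) \cong X_{\tilde{\gamma}'_1}$, the gradings $(\varphi, \tilde{\varphi})_* \tilde{\gamma}_1$ and $\tilde{\gamma}'_1$ are two gradings on the same underlying curve $\varphi(\gamma_1)$ and hence differ by an integer $n(\gamma_1)$; I need $n(\gamma_1)=0$. My plan is to argue this first for rigid arcs $\gamma_1$ that intersect $\gamma_0$ transversally. Each intersection yields a morphism $X_{\tilde{\gamma}_0} \to X_{\tilde{\gamma}_1}[k]$ in $\cT_A$ whose degree $k$ depends on the two gradings (\Cref{prop: description homs}). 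Applying $F$ and using that $F$ preserves morphism degrees, the image intersection between $\tilde{\gamma}'_0$ and $\tilde{\gamma}'_1$ has the same degree $k$; on the other hand, transporting the intersection via the ungraded diffeomorphism $\varphi$ and grading it via $\tilde{\varphi}$ produces an intersection between $(\varphi, \tilde{\varphi})_*\tilde{\gamma}_0$ and $(\varphi, \tilde{\varphi})_*\tilde{\gamma}_1$ also of degree $k$. Since relative intersection degrees determine relative gradings up to a global shift and the two match on $\gamma_0$ by construction, $n(\gamma_1) = 0$.

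The hard part I anticipate is upgrading this to arbitrary rigid arcs $\gamma_1$, which are not required to intersect $\gamma_0$. My strategy is to connect $\gamma_0$ and $\gamma_1$ by a finite chain of rigid arcs with consecutive members intersecting transversally, and iterate the preceding argument. The existence of such a chain should follow from the geometry of $\Sigma_A$: rigid arcs are exactly those not homotopic to a boundary arc of a degenerating boundary component, and any two such arcs on a marked surface of sufficient complexity can be linked by intersecting rigid arcs. Degenerate cases (surfaces so simple that rigid arcs are scarce, e.g.~the exceptional $\mathbb{D}_{\omega}$ appearing in the proof of \Cref{PropositionGentleRigidity}) would be dealt with by a direct verification using the finite inventory of indecomposable objects and autoequivalences. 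This connectedness step is the principal obstacle, after which everything else falls into place.
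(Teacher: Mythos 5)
The uniqueness half of your argument is fine and matches the paper. The existence half has a genuine gap right at the start: you assert that the set of lifts of $\varphi=\Psi_A'(F)$ is a $\mathbb{Z}$-torsor and then select ``the unique lift matching the grading on $\tilde{\gamma}_0$,'' but this presupposes that the set of lifts is non-empty, i.e.\ that $\varphi^{\ast}\eta$ is homotopic to $\eta$. The forgetful map $\MCG_{\gr}(\Sigma_A)\rightarrow\MCG(\Sigma_A)$ is \emph{not} surjective in general (cf.\ \Cref{rem: graded MCG forgetful map not surjective}), so this non-emptiness is precisely the substantive content of the existence statement and cannot be taken for granted. The paper's proof is built around exactly this point: it chooses a finitely full arc system $\cA$ of rigid arcs (available by \Cref{PropositionGentleRigidity}), uses the gradings of the objects $F(X_\gamma)$, $\gamma\in\cA$, to prescribe the homotopy from $f^{\ast}\eta$ to $\eta$ along every arc of $f(\cA)$ simultaneously, and then extends this partial homotopy over the complementary disks and cylinders --- the step where finite fullness and compatibility of $F$ with the shift are actually used. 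Your single-arc construction never produces such a global homotopy of line fields, so it neither establishes that a lift exists nor that the one you want can be realised by a graded diffeomorphism.

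The second gap is the one you flag yourself: having normalised on $\gamma_0$, you must verify the grading condition on every other rigid arc, and your plan (degree-matching at transversal intersections, then chaining through a connected sequence of pairwise intersecting rigid arcs) leaves the connectivity step unproven, calling it ``the principal obstacle.'' The paper sidesteps this entirely by imposing the grading condition on a full arc system at once rather than propagating it arc by arc; once the graded diffeomorphism is pinned down on an Alexander system, there is nothing left to chain. Your intersection-degree argument is in the spirit of the surface-like category machinery of \cite{OpperDerivedEquivalences} and could plausibly be completed, but as written the proposal is missing both the non-emptiness of the set of lifts and the propagation step, so it does not constitute a proof.
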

\begin{proof}
	The kernel of the projection $\MCG_{\gr}(\Sigma_A) \rightarrow \MCG(\Sigma_A)$ is given by the shift and hence uniqueness follows immediately. Let $\cA \subseteq \Sigma_A$ be a finitely full arc system consisting of rigid arcs. This exists by \Cref{PropositionGentleRigidity} and we denote by $\chi \subset \cT_A$ the collection of objects $X_{\gamma}$, $\gamma \in \cA$. By construction of $\Psi'$ in \Cref{TheoremgeometrisationHomomorphism}, the essential image of $\chi$ under an autoequivalence $F \in \Aut(\cT_A)$ corresponds to a graded arc system $\cA_F \subseteq \Sigma$ whose underlying ungraded arc system is $\cA'\coloneqq f(\cA)$, where $f \coloneqq \Psi_A'(F)$. Since $\cA$ is finitely-full and $F$ commutes with the shift functor, the grading on the arcs of $\cA'$ can be extended to a graded diffeomorphism $\Psi(F)\coloneqq (f,\tilde{f})$ such that for each $\gamma \in \cA$ and each point $z=\gamma(t) \in \gamma$, the path $\tilde{f}_{\gamma(t)}$ in $\mathbb{P}(T\Sigma)_{z} \cong S^1$ from $f^{\ast}\eta(z)$ to $\eta(z)$ coincides with the concatenation $\delta_1 \ast \delta_2$, where 
	\begin{itemize}
		\item $\delta_1$ denotes the path from $f^{\ast}\eta(z)$ to $\dot\gamma(t)$ which is associated to the $f^{\ast}(\eta)$-grading on $\gamma$ obtained by pulling back the given $\eta$-grading on $f(\gamma) \simeq \gamma_{F(X)} \in \cA'$ along $f$;
		\item  $\delta_2$ denotes the path from $\dot\gamma(t)$ to $\eta(z)$ given by the inverse of the path associated to the given $\eta$-grading on $\gamma$.
	\end{itemize}
\end{proof}
\begin{definition}\label{DefinitionInfinityGeometrisationHomomorphism}
	Let $A$ be a proper graded gentle algebra. Define 
	\begin{displaymath}
		\begin{tikzcd}
			\Psi^{\infty}_A\colon  \DPic(\cT_A) \arrow{r} & \MCG_{\gr}(\Sigma_A)
		\end{tikzcd}
	\end{displaymath}
	\noindent as the composition $\Psi_A\colon  \Aut(\cT_A) \rightarrow \MCG_{\gr}(\Sigma_A)$ with the comparison homomorphism \eqref{EquationComparisonMapDerivedPicardGroupAutoequivalenceGroup}.
\end{definition}
\noindent We refer to $\Psi_A$ and $\Psi_A^{\infty}$ as \textit{geometrisation homomorphisms}.

\subsection{Geometrisation homomorphisms in the homologically smooth case}\ \medskip

\noindent Our next objective is the generalisation of $\Psi_A$ to all homologically smooth graded gentle algebras. There are two natural pathways here. The first would require us to extend the geometric models to this case, e.g.~via localisation techniques, cf.~Section \ref{SectionFukayaCategories}. Whilst we do not expect any major difficulties, a proof would require the discussion of a number of details as in the special case considered in \cite{OpperKodairaCycles}. To avoid this, we propose a more direct approach via Koszul duality. For what follows, we recall that the graded marked surface of the homologically smooth gentle algebra $\Bbbk[t]$, $|t|=0$, is a cylinder with a single marked interval and a fully marked component with vanishing winding number\footnote{Note that this data specifies the graded marked surface uniquely up to isomorphism.}. Consequently, its Koszul dual, the proper graded gentle algebra $\Bbbk[x]/(x^2)$, $|x|=1$, has the same surface.

\begin{thm}\label{TheoremGeometrisationHomomorphismSmoothCase}
Let $A$ and $B$ be homologically smooth graded gentle algebras. Then every quasi-equivalence  $\Perf(A) \rightarrow \Perf(B)$ induces a diffeomorphism of graded marked surfaces $\Sigma_A \rightarrow \Sigma_B$. Moreover, there exists a group homomorphism
	\begin{displaymath}
		\begin{tikzcd}
			\Psi^{\infty}=\Psi_A^{\infty}\colon  \DPic(A) \arrow{r}  & \MCG_{\gr}(\Sigma_A),
		\end{tikzcd}
	\end{displaymath}
	\noindent which factors over a homomorphism $\Psi_A\colon \Aut(\cT_A) \rightarrow \MCG_{\gr}(\Sigma_A)$. It has the property that for every simple rigid arc $\gamma \subseteq \Sigma_A$ which is finite or semi-infinite and all $F \in \Aut(\cT_A)$, $\Psi_A(F)(\gamma) \simeq \gamma_{F(X)}$.
\end{thm}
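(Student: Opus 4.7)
The plan is to reduce the statement to the proper case of \Cref{PropositionGradedLiftgeometrisation} via Koszul duality. Let $B$ denote a proper graded gentle algebra whose smooth Koszul dual $B^{\invex}$ is quasi-isomorphic to $A$; the discussion in \Cref{SectionKoszulDualityFukayaGentle} shows that $\Sigma_A = \Sigma_B$ as graded marked surfaces, so $\MCG_{\gr}(\Sigma_A)$ is a common target for both cases. The argument naturally splits according to whether $\Sigma_A$ contains fully marked boundary components of vanishing winding number.

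If $\Sigma_A$ has no such components, then by \Cref{PropositionKoszulFunctors} the Koszul functor $\Perf(B) \hookrightarrow \Dfd{A}$ enhances to a quasi-equivalence $\Perf(A) \simeq \Dfd{B}$. This yields $\DPic(A) \cong \DPic(B)$ and $\Aut(\cT_A) \cong \Aut(\cT_B)$, and one may define $\Psi_A^{\infty}$ and $\Psi_A$ as the respective pullbacks of $\Psi_B^{\infty}$ and $\Psi_B$. The geometric property transfers immediately under this quasi-equivalence by \Cref{TheoremBijectionObjectsCurves}, since both geometric models associate the same curve on the shared surface $\Sigma_A = \Sigma_B$ to any given indecomposable. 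In the general case, I would employ the orthogonal decomposition of \Cref{rem: block decomposition},
\[
\Dfd{A} \simeq \Perf(B) \times \prod_{C} \Dfd{\Bbbk[x, x^{-1}]},
\]
with $C$ ranging over fully marked components of $\Sigma_A$ of vanishing winding number. An element $F \in \DPic(A)$ descends to an autoequivalence of $\Dfd{A}$ since $\Dfd{-}$ is a Morita invariant of dg categories; the key step is to show that this autoequivalence respects the orthogonal decomposition up to permutation of the pairwise equivalent $\Dfd{\Bbbk[x,x^{-1}]}$-summands, so that restriction yields a well-defined element of $\DPic(\Perf(B)) \cong \DPic(B)$, where the isomorphism uses the reflexivity of $B$ (\Cref{TheoremProperGentleReflexive}). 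Composing with $\Psi_B^{\infty}$ then defines $\Psi_A^{\infty}$, and factorisation through $\Aut(\cT_A)$ follows from the naturality of the comparison map \eqref{EquationComparisonMapDerivedPicardGroupAutoequivalenceGroup}.

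For the geometric property in the general case, a simple rigid arc $\gamma$ which is finite or semi-infinite is not isotopic to a loop around a fully marked component; by \Cref{TheoremBijectionObjectsCurves} its associated object lies entirely in the $\Perf(B)$-summand of $\Dfd{A}$, so the identity $\Psi_A(F)(\gamma) \simeq \gamma_{F(X_\gamma)}$ reduces to the proper case via restriction. The first assertion, that a quasi-equivalence $\Perf(A) \to \Perf(A')$ induces a diffeomorphism $\Sigma_A \to \Sigma_{A'}$, is handled by the same strategy: extend to a quasi-equivalence $\Dfd{A} \to \Dfd{A'}$, match the $\Perf(B)$- and $\Perf(B')$-factors after identifying the extra summands, and apply \Cref{TheoremgeometrisationHomomorphism}. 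The main obstacle I anticipate is establishing that every autoequivalence of $\Dfd{A}$ preserves the $\Perf(B)$-factor of the orthogonal decomposition. This requires an intrinsic characterisation of the $\Dfd{\Bbbk[x,x^{-1}]}$-summands inside $\Dfd{A}$---for instance as the maximal direct summands of that form, or via a categorical criterion detecting the loops around fully marked components of vanishing winding---so that any autoequivalence must permute them among themselves and hence stabilise their orthogonal complement $\Perf(B)$.
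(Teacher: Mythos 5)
Your overall strategy -- Koszul duality plus the orthogonal decomposition of \Cref{rem: block decomposition} -- is exactly the paper's, but there are two genuine gaps. First, the step you flag as ``the main obstacle'' (that an autoequivalence of $\Dfd{A}$ preserves the $\Perf(A^!)$-factor) is not optional decoration; it is the heart of the first half of the proof and you do not carry it out. The paper resolves it by showing that $\Perf(A^!)$ is an \emph{indecomposable} block (any two curves can be connected by a chain of intersecting curves, and intersections force nonzero morphisms by \Cref{prop: description homs}), while every other block splits into rank-one homogeneous tubes, each triangle equivalent to $\Perf(\Bbbk[x]/(x^2))$; by \Cref{TheoremgeometrisationHomomorphism} an equivalence $\Perf(A^!)\simeq\Perf(\Bbbk[x]/(x^2))$ would force $\Sigma_A\cong\Sigma_{\Bbbk[x]/(x^2)}$ and hence $A\cong\Bbbk[t]$, a case that is handled separately. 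Without some such intrinsic distinction your restriction to $\DPic(B)$ is not well defined.

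Second, and more seriously, your treatment of semi-infinite arcs is wrong. If $\gamma$ is semi-infinite then $\End^{\bullet}(X_{\gamma})$ contains a polynomial ring coming from wrapping around the fully marked component, so $X_{\gamma}\in\Perf(A)$ but $X_{\gamma}\notin\Dfd{A}=\cT_A^{\fin}$; in particular it does \emph{not} ``lie in the $\Perf(B)$-summand of $\Dfd{A}$'', and the identity $\Psi_A(F)(\gamma)\simeq\gamma_{F(X_\gamma)}$ cannot be read off from the proper case by restriction. The paper needs a separate argument here: it produces a semiorthogonal decomposition $\langle\cU,\cV\rangle$ of $\Perf(A)$ with $\cV$ the thick closure of $X_{\delta}\,(\simeq\Perf(\Bbbk[t]))$, proves ${}^{\perp}\cV={}^{\perp}(\cV\cap\Dfd{A})$ via a noetherian-module argument over $\Bbbk[t]\cong\End^{\bullet}(X_{\delta})$, and uses this to pin down $F(X_{\delta})$ from the already-controlled finite-dimensional part. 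You would need to supply an argument of this kind to cover the semi-infinite case.
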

\begin{proof}
	Let $F$ be a triangle equivalence $\cT_A \rightarrow \cT_B$. Then $F$ restricts to an equivalence $\cT_A^{\fin} \rightarrow \cT_B^{\fin}$. By \Cref{rem: block decomposition}, $\cT_A^{\fin}$ admits an orthogonal decomposition
	\begin{equation}
		\cT_A^{\fin} \simeq \Perf(A^!) \times \prod_{C} \Dfd{\Bbbk[x, x^{-1}]},  
	\end{equation}
	\noindent where $|x|=0$ and $C$ ranges over all fully marked components with vanishing winding number. The same decomposition exists for $B$ and we want to show that $F$ restricts to an equivalence between $\Perf(A^!) \rightarrow \Perf(B^!)$ in which case the result follows from the proper case. In the case $A=B$ we then define $\Psi_A(F)$ simply as $\Psi_{A^!}(F|_{\Perf(A^!)})$.
We observe that $\Perf(A^!)$ is indecomposable in the sense that is cannot be further decomposed as product of of a pair non-trivial orthogonal categories. This immediately follows from \Cref{prop: description homs} and the fact that every two curves on a marked surface, considered as a surface with marked points, can be connected by a finite sequence of curves so that each consecutive pair in the sequence is in minimal position and intersects at a marked point or the interior. Each block $\Dfd{\Bbbk[x, x^{-1}]}$ itself admits an orthogonal decomposition into indecomposable blocks which are homogeneous tubes of rank $1$, that is, equivalent as a triangulated category to $\Perf(\Bbbk[x]/(x^2)) \simeq \Perf({\Bbbk[t]}^{!})$ with $|x|=1$ and $|t|=0$. Now, we observe that by \Cref{TheoremgeometrisationHomomorphism} an equivalence $\Perf(A^!) \simeq \Perf(R)$, $R=\Bbbk[x]/(x^2)$ implies that $\Sigma_A \cong \Sigma_{A^!} \cong \Sigma_{R}$ as graded marked surfaces. Hence, if all indecomposable blocks of $\Perf(A) \simeq \Perf(B)$ are isomorphic, then $\Sigma_A \cong \Sigma_R \cong \Sigma_B$ as graded marked surfaces. But there is only one finitely full arc system on $\Sigma_R$ and hence we must have $A \cong B \cong R$. From now on, we will therefore assume that $\Sigma_A \not \simeq \Sigma_R \not \simeq \Sigma_B$. Under these assumptions and by our previous discussion it follows that the indecomposable block $\Perf(A^!)$ is distinguished among all the others and shows that $F$ restricts to a triangle equivalence of $\Perf(A^!) \rightarrow \Perf(B^!)$. Composition with the isomorphisms $\DPic(\Dfd{A^!})\cong \DPic(A^!)$ and $\Aut(\Dfd{A^!})\cong \Aut(A^!)$  from \Cref{TheoremProperGentleReflexive} with $\Psi_{A^!}$ and $\Psi_A^{\infty}$ yields the desired homomorphisms $\Psi_A$ and $\Psi_A^{\infty}$. The claimed additional property of $\Psi_A(F)$ follows immediately for all $X \in \Perf(A^!) \subseteq \cT_A$ from the corresponding property of $\Psi_{A^!}$ and hence for all finite arcs $\delta$ which are not boundary arcs on a degenerating boundary component. 
	
	Next, let $\delta$ be a simple semi-infinite arc. Then by adapting the proof of \cite[Proposition 9.4.1]{BoothGoodbodyOpper}, one can show that $\Perf(A)$ admits a semiorthogonal decomposition $\langle \cU, \cV\rangle$, where $\cV \simeq \Perf(\Bbbk[t])$, $|t| \in \mathbb{Z}$, denotes the thick closure of $X_{\delta}$. We make a few observations about this decomposition. First, up to shift $X_{\delta}$ is the unique indecomposable object in $\cV$ with infinite-dimensional graded endomorphism ring. Secondly, $\cU={^{\perp}\cV}$ is uniquely determined as the left orthogonal class to $\cV$ and likewise, $\cV=\cU^{\perp}$. We claim that ${^{\perp}\cV}={^{\perp}(\cV \cap \Dfd{A})}$. The inclusion ``$\subseteq$'' is automatic. For the other, let $E \in {^{\perp}(\cV \cap \Dfd{A})}$ and suppose $E \not \in {^{\perp}\cV}$. We will use the graded module structure of $\cE=\Hom^{\bullet}(E, X_{\delta})$ over $\Bbbk[t]\cong \End^{\bullet}(X_{\delta})$. By regarding $\delta$ as part of a full arc system, the definition of morphisms and composition in $\Fuk(\Sigma_A)$ and a generation argument show that the $\cE$ is finitely generated over $\Bbbk[t]$ and hence noetherian. Now, assume that $f\colon E \rightarrow X_{\delta}$ is a non-zero morphism. It is sufficient to show that there exists an object $Y \in \cV \cap \Dfd{A}$ and a non-zero morphism $E \rightarrow Y$. Let $M \subseteq \cE$ denote the graded $\Bbbk[t]$-module generated by $f$. If $M$ is torsion free, then because $\cE$ is noetherian, there exists $f' \in \cE$ and $n \geq 0$ such that $f=t^n.f'$ and such that $n$ is maximal with this property. Consider the distinguished triangle $Y[-1] \xrightarrow{} X_{\delta} \xrightarrow{} X_{\delta} \xrightarrow{\beta} Y$ corresponding to the triangle $\Bbbk[-1] \rightarrow \Bbbk[t] \xrightarrow{t} \Bbbk[t] \rightarrow \Bbbk$. Then $Y \in \Dfd{A} \cap \cV$ and $0 \neq \beta \circ f$ by maximality of $n$. Likewise, if $f$ is torsion and $u \in \Bbbk[t]$ is an element in the annihilator, then $u \circ f=0$ and hence $f$ factors through a non-zero morphism from $E$ to the mapping cone $Z$ of $u[-1]$. Here, the latter is identified with a morphism $X_{\delta}[-1] \rightarrow X_{\delta}[-1]$. Since $Z \in \Dfd{A}$ this proves our claim. The equivalence $F$ maps $(\cU, \cV)$ to another semiorthogonal decomposition $(\cU', \cV')$. For all finite arcs $\gamma$ such that $X=X_{\gamma} \in \cV$, we know that $\Psi_A(F)(\gamma) \simeq \gamma_{F(X)}$. We conclude that $\Psi_A(F)(\delta)$ generates another semiorthogonal decomposition $(\cU^{\prime \prime}, \cV^{\prime \prime})$ such that $\cV^{\prime \prime}\cap \Dfd{A}= \cV^{\prime} \cap \Dfd{A}$. Thus, it follows $\cU^{\prime \prime}={^{\perp}(\cV^{\prime \prime} \cap \Dfd{A})}={^{\perp}(\cV^{\prime} \cap \Dfd{A})}=\cU^{\prime}$ and hence $\cV^{\prime}=\cV^{\prime \prime}$ which in turn implies that $\Psi_A(F)(\delta)\simeq \gamma_{F(X_{\delta})}$.
\end{proof}

\noindent In closing this section we provide a preliminary yet simple characterisation of elements in the kernel of $\Psi_A$ in the rigid case.

\begin{lem}\label{LemmaRigidGentleKernel}
	Let $A$ be a rigid graded gentle algebra such that $A$ is proper or such that the projective arc system on $\Sigma_A$ consists of finite and semi-infinite arcs. Then $F \in \ker \Psi_{A}$ if and only if $F(C)\cong C$ for all direct summands of $C$ of $A$ in $\Perf(A)$.
\end{lem}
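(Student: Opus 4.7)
The plan is to use the characterising property of $\Psi_A$ from \Cref{TheoremgeometrisationHomomorphism} and \Cref{TheoremGeometrisationHomomorphismSmoothCase}: for a rigid finite or semi-infinite arc $\gamma$ on $\Sigma_A$ one has $\Psi_A(F)(\gamma) \simeq \gamma_{F(X_\gamma)}$ as graded curves. The hypotheses of the lemma --- $A$ proper, or the projective arc system consisting of finite and semi-infinite arcs --- together with the rigidity of $A$ ensure that every projective arc $\gamma_C$ associated to an indecomposable summand $C$ of $A$ falls under this property.

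The forward direction is immediate from this setup. Assuming $F \in \ker\Psi_A$, so that $\Psi_A(F) = \operatorname{Id}$, one observes that for any indecomposable summand $C$ the projective arc $\gamma_C$ is a contractible $1$-manifold and therefore the indecomposable local system on it is trivial. Applying the characterising property and the bijection of \Cref{TheoremBijectionObjectsCurves} to the equality $\Psi_A(F)(\gamma_C) = \gamma_C$ yields $F(C) \cong X_{\gamma_C} \cong C$.

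For the converse, the same characterising property gives $\Psi_A(F)(\gamma_C) \simeq \gamma_C$ as graded arcs for every projective arc. By construction of $\Sigma_A$, the collection of projective arcs forms a full graded arc system in the homologically smooth case and a finitely-full one in the proper case. The problem therefore reduces to the following purely topological claim: any graded diffeomorphism of $\Sigma_A$ which preserves each arc of a full or finitely-full graded arc system up to graded isotopy rel $\cM$ is trivial in $\MCG_{\gr}(\Sigma_A)$.

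To prove this topological claim, the grading part is free of charge: since each $\gamma_C$ is fixed as a \emph{graded} arc, no global shift can be introduced, so the grading lift is trivial as soon as the underlying ungraded mapping class is. For the ungraded part one first arranges, by an isotopy rel $\cM$, that each projective arc is fixed pointwise, and then examines the restriction of $\Psi_A(F)$ to each complementary piece of the arc system. These pieces are topological disks --- handled by the Alexander trick --- and, in the finitely-full case, additionally cylinders with a single fully marked boundary circle, whose relative mapping class group is trivial because isotopies rel $\cM$ may freely rotate fully marked boundary components and so trivialise the generating Dehn twist around the core. Verifying the vanishing of these piecewise mapping class groups under the correct boundary conditions is the main technical point of the argument, everything else being a direct application of the characterising property of $\Psi_A$.
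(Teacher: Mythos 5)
Your proof is correct and follows essentially the same route as the paper: the forward direction is the characterising property of $\Psi_A$ applied to the (rigid) projective arcs, and the converse reduces to the fact that a graded mapping class fixing a full (resp.\ finitely-full) arc system is trivial. The only difference is cosmetic: where you sketch the topological claim by hand (fixing the arcs pointwise and trivialising the complementary disks and cylinders), the paper simply observes that the projective arc system is an Alexander system and invokes the Alexander method of Farb--Margalit.
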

\begin{proof}
	If $F \in \ker \Psi_{A}$, then $F(X) \cong X$ for all indecomposable $X \in \cT_A$ such that $\gamma_X$ is homotopic to a simple arc and hence for all direct summands of $A$. For the converse suppose $F(C) \cong C$ for all such summands $C$ of $A$. Then \Cref{TheoremgeometrisationHomomorphism} and \Cref{TheoremGeometrisationHomomorphismSmoothCase} imply that $\Psi_A(F)(X_{\gamma}) \simeq X_{F(\gamma)}$ for all simple rigid semi-infinite arcs $\gamma$. Thus by rigidity of $A$, $\Psi_{A}(F)$ is a mapping class which acts as the identity on the full (resp.~finitely-full) projective arc system of $\Sigma_A$. The complement of such an arc system is a disjoint union of annuli and disks and hence the arc system forms an Alexander system in the sense of \cite[Section 2.3]{FarbMargalit}. The 'Alexander method' \cite[Proposition 2.8]{FarbMargalit} implies that the projection of $\Psi_{A}(F)$ into $\MCG(\Sigma)$ is trivial. The triviality of $\Psi_A(F)$ itself then follows again from  $F(C) \cong C$ for all summands of $A$.
\end{proof}

\begin{cor}
Let $A$ be a rigid graded gentle algebra satisfying the assumptions of \Cref{LemmaRigidGentleKernel}. Then $\ker \Psi^{\infty}_A \subseteq \Aut^{\infty}(A)$. 
\end{cor}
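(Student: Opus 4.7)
The plan is to derive this corollary directly by combining \Cref{LemmaRigidGentleKernel} with \Cref{LemmaImageOfInclusion}; essentially no additional geometric work is needed once one traces through the definitions.

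First I would unfold the definition of $\Psi^{\infty}_A$ from \Cref{DefinitionInfinityGeometrisationHomomorphism}, which presents it as the composition of the comparison homomorphism $\DPic(A)\to\Aut(\cT_A)$ from \eqref{EquationComparisonMapDerivedPicardGroupAutoequivalenceGroup} with the geometrisation homomorphism $\Psi_A\colon\Aut(\cT_A)\to\MCG_{\gr}(\Sigma_A)$. Consequently, if $F\in\ker\Psi^{\infty}_A$, then its image $\bar F\in\Aut(\cT_A)$ lies in $\ker\Psi_A$. Since by assumption $A$ satisfies the hypotheses of \Cref{LemmaRigidGentleKernel}, that lemma applies and yields $\bar F(C)\cong C$ in $\cT_A$ for every indecomposable direct summand $C$ of $A$.

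Next I would invoke \Cref{LemmaImageOfInclusion} to promote this isomorphism-class-preservation to the statement that $F$ is represented by an $A_\infty$-endofunctor of $A$ itself. Concretely, view $A$ as the (finitely many-object) $A_\infty$-category $\bA$ whose objects are the indecomposable summands $e_i A$; then $\Perf(\bA)\simeq\Perf(A)$ and the embedding $\Aut^{\infty}(\bA)\hookrightarrow\Aut^{\infty}(\Perf(\bA))=\DPic(A)$ is the one of \eqref{EquationEmbeddingAutomorphismDerivedPicardGroups}. The condition from \Cref{LemmaImageOfInclusion}, namely that for each object $X\in\bA$ there exists $X'\in\bA$ with $F(X)\cong X'$ in $\cD(\bA)$, is precisely what the previous paragraph provides (one may take $X'=X$). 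Hence $F$ lies in the image of $\Aut^{\infty}(A)$, which is exactly the assertion $\ker\Psi^{\infty}_A\subseteq\Aut^{\infty}(A)$.

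Since the substantive content is entirely contained in \Cref{LemmaRigidGentleKernel} and \Cref{LemmaImageOfInclusion}, there is no real obstacle; the only point requiring slight care is the bookkeeping at the level of objects of $\bA$ versus objects of $\Perf(A)$, that is, making sure one chooses $\bA$ to be the finite, multi-object $A_\infty$-category whose object set consists of a complete set of indecomposable summands of $A$, so that \Cref{LemmaImageOfInclusion} applies verbatim. With that choice the rigidity assumption, which was already used to invoke \Cref{LemmaRigidGentleKernel}, does not need to be invoked a second time.
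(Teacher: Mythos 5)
Your argument is correct and is essentially the paper's own proof, which simply cites \Cref{LemmaRigidGentleKernel} and \Cref{LemmaImageOfInclusion}; you have just unfolded the same two steps (kernel elements fix the isomorphism classes of the indecomposable summands of $A$, hence lie in the image of $\Aut^{\infty}(A)$ by \Cref{LemmaImageOfInclusion} with $X'=X$). No further comment is needed.
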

\begin{proof}
Follows from \Cref{LemmaImageOfInclusion} and \Cref{LemmaRigidGentleKernel}.
\end{proof}
\noindent The previous corollary reduces the study of the kernel of $\Psi^{\infty}_A$ to the study of a much smaller group of $A_\infty$-endofunctors of $A$ and eventually, as we shall see, to the group $\Aut_{\circ}^{\infty}(A)$ from Section \ref{SectionSpecialSubgroups}.

\section{Group actions and sections of geometrisation homomorphisms}\label{SectionSplit}

\noindent We recall the construction of a (weak) group action of the graded mapping class group of a graded marked surface on the associated Fukaya category. We then establish an analogous action of  on the bounded derived category of a proper graded gentle algebra. The main result of this section is the following.
	\begin{thm}\label{TheoremActionSplitsProjectionMaps}Let $A$ be a graded gentle algebra. The action homomorphism $\alpha\colon \MCG_{\gr}(\Sigma_A) \rightarrow \Aut^{\infty}(\cT_A)$ of the group action on $\cT_A$  by $\MCG_{\gr}(\Sigma_A)$  is a section of $\Psi_A^{\infty}$.
	\end{thm}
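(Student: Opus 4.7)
My plan is to verify the required equality $\Psi_A^{\infty}\circ\alpha=\operatorname{Id}_{\MCG_{\gr}(\Sigma_A)}$ by combining the defining geometric property of the action $\alpha$ with the characterising property of $\Psi_A$ established in \Cref{PropositionGradedLiftgeometrisation} and \Cref{TheoremGeometrisationHomomorphismSmoothCase}.

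First, I would recall (or verify) the key geometric compatibility: for $\phi\in\MCG_{\gr}(\Sigma_A)$, the autoequivalence $\alpha(\phi)\in\Aut^{\infty}(\cT_A)$ sends the indecomposable object $X_\gamma$ associated with a graded curve $\gamma$ (and chosen local system) to $X_{\phi(\gamma)}$, equipped with the transported local system. This is essentially built into the construction of the $\MCG_{\gr}$-action on $\Fuk(\Sigma_A)$ going back to \cite{DyckerhoffKapranov}, and descends to $\cT_A$ via the Koszul duality / reflexivity correspondence explained in \Cref{SectionKoszulDualityFukayaGentle} and \Cref{SectionReflexivity}. The graded lift is needed precisely so that the grading of $\phi(\gamma)$ produced by $\phi$ matches the grading transported under the action.

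Next, I would combine this with the characterising property of $\Psi_A$: for a rigid arc $\gamma\subseteq\Sigma_A$ and $F=\alpha(\phi)$, one has
\begin{displaymath}
X_{\Psi_A(\alpha(\phi))(\gamma)}\;\cong\;X_{\alpha(\phi)(X_\gamma)}\;\cong\;X_{\phi(\gamma)}
\end{displaymath}
in $\cT_A$. By the injectivity part of \Cref{TheoremBijectionObjectsCurves}, this forces $\Psi_A(\alpha(\phi))(\gamma)\simeq\phi(\gamma)$ as graded curves on $\Sigma_A$, for every rigid arc $\gamma$.

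Finally, I would upgrade this pointwise coincidence to an equality in $\MCG_{\gr}(\Sigma_A)$. Using \Cref{PropositionGentleRigidity}, pick a full (resp.\ finitely-full) formal arc system $\cA\subseteq\Sigma_A$ consisting of rigid arcs; its complement is a disjoint union of disks and annuli, so $\cA$ is an Alexander system. The two ungraded mapping classes underlying $\Psi_A(\alpha(\phi))$ and $\phi$ agree on $\cA$ up to isotopy, so the Alexander method (as used in the proof of \Cref{LemmaRigidGentleKernel}) shows they coincide in $\MCG(\Sigma_A)$. The graded lift is then determined by the induced gradings on the arcs of $\cA$, which also agree by the previous paragraph, so $\Psi_A(\alpha(\phi))=\phi$ in $\MCG_{\gr}(\Sigma_A)$. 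Passing through the comparison map $\Aut^\infty(\cT_A)\to \DPic(\cT_A)$ (which is compatible with $\Psi_A$ and $\Psi_A^{\infty}$ by \Cref{DefinitionInfinityGeometrisationHomomorphism}) yields the statement for $\Psi_A^{\infty}$.

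The main obstacle in this plan is the initial step: rigorously identifying, on the dg level, the action of a graded mapping class on isomorphism classes of indecomposables with the geometric action on graded curves. In the homologically smooth Fukaya case this is inherent to the construction; in the proper case it should be transported via the Koszul/reflexivity isomorphism $\DPic(A)\cong\DPic(\Dfd{A})$ from \Cref{TheoremProperGentleReflexive}, but care is needed to ensure this bookkeeping is consistent with the bijection between indecomposables and graded curves on both sides of the duality.
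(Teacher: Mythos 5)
Your overall skeleton matches the paper's proof: reduce the proper case to the homologically smooth case via Koszul duality and reflexivity (this is exactly \Cref{CorollaryEquivalenceSplittingProperty}), establish that $\alpha(\phi)(X_\gamma)\cong X_{\phi(\gamma)}$ for the arcs $\gamma$ of a full formal arc system, and then conclude with the Alexander method as in \Cref{LemmaRigidGentleKernel}. However, there is a genuine gap at the step you yourself flag as "the main obstacle", and you mislocate where the difficulty actually sits. You claim that the compatibility $\alpha(\phi)(X_\gamma)\cong X_{\phi(\gamma)}$ is "inherent to the construction" in the homologically smooth Fukaya case and that the delicate part is the transport to the proper case. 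It is the other way around. The action of \Cref{TheoremExistenceGroupAction} is defined abstractly via \Cref{LemmaGroupActionGeneralLemma}: the autoequivalence $\alpha(\phi)$ is the morphism $\varphi_{\cA}^{\phi(\cA)}$ obtained by composing a zig-zag of inclusion-induced Morita equivalences $\cF(\Gamma)\hookrightarrow\cF(\Gamma')$ and their formal inverses in $\Hmo$. Nothing in that definition tells you what this composite does to the isomorphism class of an individual object $X_\gamma$; the inverses of the inclusion functors are only defined up to weak equivalence in the Morita category and have no a priori description on objects.

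Closing this gap is precisely the content of Section \ref{SectionFunctorPairOfArcSystems} and \Cref{PropositionExplicitDescriptionGroupAction} in the paper: one introduces explicit dg "base change" functors $T_{\cA}^{\cB}$ sending $X_\gamma$ to a concrete string complex $Y_\gamma$ over $\cF(\cB)$, proves the composition law $T_{\cB}^{\cC}T_{\cA}^{\cB}\cong T_{\cA}^{\cC}$ (\Cref{LemmaCompositonArcSystemChange}, which requires a nontrivial manipulation of generalised string complexes), and then identifies $\varphi_{\cA}^{\phi(\cA)}$ with $T_{\cA}^{\phi(\cA)}$ in $\Hmo$ by checking the claim for elementary moves between arc systems. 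Only after this identification does one know $\alpha(\phi)(X_\gamma)\cong X_{T_{\cA}^{\phi(\cA)}(\gamma)}=X_{\phi(\gamma)}$. By contrast, the passage to the proper case that worries you is comparatively soft: the action on $\Dfd{B}$ is \emph{defined} by restricting the action on $\Perf(B^{\invex})$ to the block $\Perf(B^!)$ and extending by reflexivity, so the compatibility with $\Psi_B$ is built in by the very construction of $\Psi_B$ in \Cref{TheoremGeometrisationHomomorphismSmoothCase}. The remainder of your argument (injectivity of the curve--object correspondence, the Alexander system argument, and the determination of the graded lift from the gradings on the arcs of $\cA$) is correct and coincides with the paper's.
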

	\noindent In Section \ref{SectionDerivedPicardGroupWrappedFukayaCategory} we will extend the theorem to wrapped Fukaya categories of punctured surfaces. However, calculations in this case would be more challenging and instead we will reduce to the homologically smooth case. We record the following immediate consequence of \Cref{TheoremActionSplitsProjectionMaps}.
	\begin{cor}\label{CorollarySemiDirectProductPicardGroup}
		Let $A$ be a rigid graded gentle algebra satisfying the assumptions of \Cref{LemmaRigidGentleKernel}. Then, $\DPic(A) \cong \ker \Psi_A^{\infty} \rtimes \MCG_{\gr}(\Sigma_A)$. 
	\end{cor}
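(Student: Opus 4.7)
The plan is to deduce the statement almost entirely from \Cref{TheoremActionSplitsProjectionMaps} together with the standard group-theoretic fact that a surjective homomorphism admitting a right inverse yields a semi-direct product decomposition whose normal factor is the kernel. The rigidity hypotheses and the conditions from \Cref{LemmaRigidGentleKernel} are inherited only to keep the statement in the same regime as the surrounding corollaries (where they are used to control $\ker\Psi_A^{\infty}$ more sharply, e.g.~to put it inside $\Aut^{\infty}(A)$); they play no direct role in the semi-direct product decomposition itself. The only non-formal bookkeeping is matching up the codomain of $\Psi_A^{\infty}$ with $\DPic(A)$: in the homologically smooth case this is by definition (cf.~\Cref{TheoremGeometrisationHomomorphismSmoothCase}), and in the proper case one uses the isomorphism $\DPic(A)\cong \DPic(\Dfd{A})$ of \Cref{TheoremProperGentleReflexive}.

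Concretely, I would first observe that the action map $\alpha\colon \MCG_{\gr}(\Sigma_A)\to \Aut^{\infty}(\cT_A)$, composed with the canonical embedding $\Aut^{\infty}(\cT_A)\hookrightarrow \DPic(\cT_A)$ from \eqref{EquationEmbeddingAutomorphismDerivedPicardGroups}, produces by \Cref{TheoremActionSplitsProjectionMaps} a group homomorphism $\tilde\alpha\colon \MCG_{\gr}(\Sigma_A)\to \DPic(A)$ satisfying $\Psi_A^{\infty}\circ\tilde\alpha = \operatorname{Id}$. In particular $\Psi_A^{\infty}$ is surjective, and $K \coloneqq \ker\Psi_A^{\infty}$ is automatically a normal subgroup of $\DPic(A)$. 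Thus one obtains a short exact sequence of groups
\[
1 \longrightarrow K \longrightarrow \DPic(A) \xrightarrow{\ \Psi_A^{\infty}\ } \MCG_{\gr}(\Sigma_A) \longrightarrow 1
\]
which splits via $\tilde\alpha$. Applying the elementary equivalence between split short exact sequences and semi-direct products then yields the desired isomorphism $\DPic(A)\cong K \rtimes \MCG_{\gr}(\Sigma_A)$, where $\MCG_{\gr}(\Sigma_A)$ acts on $K$ by conjugation through $\tilde\alpha$.

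Since this reduces to pure group theory once the section has been produced, I do not anticipate any genuine obstacle: the substantive work has been done in \Cref{TheoremActionSplitsProjectionMaps}. The only small point worth being explicit about is that the splitting $\tilde\alpha$ indeed takes values in $\DPic(A)$ (and not merely in $\Aut(\cT_A)$), which follows because the action of $\MCG_{\gr}(\Sigma_A)$ is constructed at the $A_\infty$-level and hence factors through $\Aut^{\infty}(\cT_A)\hookrightarrow \DPic(\cT_A)$ via \Cref{PropositionUniversalPropertiesHQE}.
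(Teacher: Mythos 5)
Your proposal is correct and is essentially the paper's argument: the proof there is a one-line deduction from \Cref{TheoremActionSplitsProjectionMaps} (the section) together with the standard splitting of the resulting short exact sequence, exactly as you spell out. Your observation that the hypotheses of \Cref{LemmaRigidGentleKernel} are not needed for the semi-direct product decomposition itself (the paper cites that lemma, but it only becomes essential later when identifying $\ker\Psi_A^{\infty}$) is also sound.
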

\begin{proof}
Follows from \Cref{LemmaRigidGentleKernel} and \Cref{TheoremActionSplitsProjectionMaps}.
\end{proof}
	
	\subsection{Actions by the mapping class group}\label{SectionGroupActionFukayaCategory} \ \medskip

	\subsubsection{From simply-connected categories to group actions}\label{SectionSimplyConnectedCategoriesGroupActions}\ \medskip

	\noindent We recall the relationship between functors from a simply-connected category with a group action and group actions on their colimits. A basic familiarity of the reader with nerves of categories and their homotopy groups is assumed.\medskip
	
	\noindent Let $I$ be a small category and let $D\colon I \rightarrow \cC$ be functor into an arbitrary category $\cC$. A \textit{strict} action of a group $G$ acts  $I$ is a group homomorphism $\alpha$ from $G$ to the group of automorphisms of the category $C$. If $G$ acts on $\cC$, then $D$ is said to be \textbf{invariant} under the $G$-action if $D=D \circ \alpha(g)$ for all $g \in G$. We abbreviate the action of an object $g \in G$ on an object or morphism $x$ in $\cC$ by $g.x$.
	\begin{lem}\label{LemmaGroupActionGeneralLemma} Let $D\colon  I \rightarrow \cC$ be a functor. Suppose that $D(f)$ is invertible for all morphisms $f$ in $I$ and that the nerve $\cN(I)$ of $I$ is simply connected. Then the following are true.
		\begin{enumerate}
			\item  The colimit $\colim D$ exists in $\cC$ and for each $i_0 \in I$, the structure map $D(i_0) \rightarrow \colim D$ is an isomorphism.
			\item If a group $G$ acts strictly on $\cC$ so that $D$ is invariant, then $G$ acts canonically on $\colim D \in \cC$.
		\end{enumerate}
	\end{lem}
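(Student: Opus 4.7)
The overall plan is to argue that the hypotheses force $D$ to be essentially a constant functor, after which both parts become formal consequences. Since $D$ sends every morphism of $I$ to an isomorphism, $D$ factors through the groupoid localisation $I \to I[\mathrm{Mor}(I)^{-1}]$. A standard fact relates this groupoid to the fundamental groupoid of $\cN(I)$: its classifying space is weakly equivalent to $\cN(I)$, so that for any basepoint $i_0$ its automorphism group is $\pi_1(\cN(I), i_0)$. The hypothesis that $\cN(I)$ is simply connected (which also forces $I$ to be connected, since $\cN(I)$ is path-connected and non-empty) then implies that the groupoid localisation of $I$ is equivalent to the terminal groupoid $\{\ast\}$.

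For part (1), this means concretely the following. Pick a basepoint $i_0 \in I$ and for each $i \in I$ choose any zigzag of morphisms from $i_0$ to $i$ in $I$. Because every morphism in $I$ is inverted by $D$, such a zigzag induces an isomorphism $\rho_i \colon D(i_0) \to D(i)$ in $\cC$. Simple connectedness of $\cN(I)$ (applied to a loop formed by two zigzags between the same endpoints) ensures that $\rho_i$ is independent of the chosen zigzag, and naturality in $i$ is immediate from the construction. Thus $D$ is naturally isomorphic, via $\rho$, to the constant functor with value $D(i_0)$. Consequently the colimit exists and identifies with $D(i_0)$, the structure map $D(i_0) \to \colim D$ being the identity, and every other structure map $D(i) \to \colim D$ being $\rho_i^{-1}$, hence an isomorphism.

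For part (2), I assume (as the context makes clear) that $G$ acts strictly on $I$ via $\alpha \colon G \to \Aut(I)$ and that $D$ is invariant in the sense $D \circ \alpha(g) = D$ as functors for all $g$. Denoting the structure maps by $\iota_i \colon D(i) \to \colim D$, invariance gives for each $g \in G$ a new cocone $\{\iota_{\alpha(g)(i)}\colon D(i) = D(\alpha(g)(i)) \to \colim D\}_{i \in I}$ on $D$. The universal property of the colimit then produces a unique morphism $\phi_g \colon \colim D \to \colim D$ satisfying $\phi_g \circ \iota_i = \iota_{\alpha(g)(i)}$ for every $i$. The relation $\phi_{gh} = \phi_g \circ \phi_h$ and $\phi_e = \operatorname{Id}$ follow from the uniqueness clause of the universal property applied to the composed cocones, so $g \mapsto \phi_g$ is a group homomorphism $G \to \Aut_{\cC}(\colim D)$. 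Moreover each $\phi_g$ is an isomorphism because $\phi_{g^{-1}}$ provides a two-sided inverse.

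The only delicate point I anticipate is being careful about the passage from a simply connected nerve to the triviality of the groupoid completion; this requires invoking $I$ connected, which one gets for free from $\pi_0(\cN(I)) = 0$. Everything else is formal manipulation with universal properties, and the argument does not require any additional assumption on $\cC$ (such as cocompleteness) since the colimit is realised by an actual object of $\cC$, namely $D(i_0)$.
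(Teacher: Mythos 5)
Your proof is correct and follows essentially the same route as the paper: zigzag-induced isomorphisms whose independence of the chosen zigzag comes from simple connectedness of $\cN(I)$, identification of $D(i_0)$ as the colimit via the resulting cocone, and the $G$-action obtained from the universal property (the paper writes the induced maps directly as $\iota_{g.i}\circ\iota_i^{-1}$ and checks independence of $i$, which amounts to the same uniqueness argument). The opening paragraph on groupoid localisation is just a more structured packaging of the same zigzag argument and adds nothing that needs separate justification.
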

	\begin{proof}
		Let $i, j \in I$. By connectedness of $\cN(I)$, there exists a zig-zag $\gamma= i \rightarrow i_1 \leftarrow i_2 \rightarrow \cdots \leftarrow j$ of morphisms in $I$ connecting $i$ with $j$. By assigning $D(u)$ to every morphism $x \xrightarrow{u} y$ and  $D(v)^{-1}$ to every morphism $x \xleftarrow{v} y$ in this sequence, $\gamma$  defines a canonical morphism $\varphi_{i}^j\colon D(i) \rightarrow D(j)$. By the same reasoning, the morphism $\varphi_{i}^j$ depends only on the homotopy class of $\gamma$ and hence is independent of the concrete choice since $\cN(I)$ is simply connected. By concatenating paths and using simply-connectedness once more, we derive $\varphi_{j}^k \circ \varphi_{i}^j=\varphi_{i}^k$ for all $i, j, k \in I$.
		
		Next, we observe that for each $i_0 \in I$, $D(i_0)$ is a colimit of $D$ as the maps $(\varphi_{i_0}^{j})_{j \in I}$ satisfy the universal property of $\colim D$. In particular, we also see that all the structure maps $\iota_i\colon D(i) \rightarrow \colim D$, are isomorphisms and $\iota_j^{-1}\iota_i=\varphi_i^j$ for all $i, j \in I$. Next, suppose $D$ is invariant under the action of a group $G$. For $(g, i) \in G \times I$, set $\beta_g \coloneqq \iota_{g.i} \circ \iota_i^{-1}$. As the notation suggests, the endomorphism $\beta_g$ of $\colim D$ is independent of $i$ which follows from the invariance of $D$ by the calculation
		\begin{displaymath}
			\iota_i^{-1} \circ \iota_j= \varphi_j^i=\varphi_{g.j}^{g.i} = \iota_{g.i}^{-1} \circ \iota_{g.j}.
		\end{displaymath}
		\noindent Hence, $\iota_{g.j} \circ \iota_{j}^{-1} = \iota_{g.i} \circ \iota_{i}^{-1}$ and the independence implies that the collection $(\beta_g)_{g \in G}$ gives rise to an action on $\colim D$. 
	\end{proof}
	\noindent As typical application of the previous lemma is that an object $X$ which is defined through the choice of an input datum, e.g.~a category, is in fact independent of our choice, up to suitable equivalence. Successful implementations of this idea can be found in several places of the literature, e.g.~\cite{HaidenKatzarkovKontsevich, DyckerhoffKapranov}. Moreover, if a group $G$ acts naturally on the set of input data, one may use Lemma \ref{LemmaGroupActionGeneralLemma} to establish the existence of a $G$-action on $X$. When $X$ is a category itself and $G$ acts on $X$ via endofunctors this allows to establish a $G$-action without the need to verify a potentially very complicated set of relations on functors.
	
	\begin{rem}\label{RemarkGeneralGroupAction}
		We note that in Lemma \ref{LemmaGroupActionGeneralLemma}, we have $\beta_g=\varphi_{i_0}^{g.i_0}$ under the isomorphism $\iota_{i_0}\colon  D(i_0) \rightarrow \colim D$.
	\end{rem}
	
	\subsubsection{The mapping class group action on the Fukaya category}\ \medskip
	
	\noindent We recall the construction of the mapping class group action on the topological Fukaya category of a surface whose existence was first proved in \cite{DyckerhoffKapranov}. We give a different proof here based on the work of \cite{HaidenKatzarkovKontsevich} which we use to give an explicit description of the action on indecomposable objects. We do not claim any originality to the proof of this fact which was included mainly for the reader's convenience and lack of suitable references for the material. By \Cref{prop: Morita invariance gentle algbebras}, there exists a functor
	\begin{displaymath}
		\begin{tikzcd}
			\cF\colon  \Arc{\Sigma, \eta} \arrow{r} & \Acat,
		\end{tikzcd}
	\end{displaymath}
	\noindent where  $\Arc{\Sigma,\eta}$ denotes the category of full graded arc systems on $\Sigma$. More precisely, we regard such arc systems  as tuples $(\Gamma, \iota, \mathfrak{g})$, where $\Gamma$ is a ribbon graph, $\iota\colon  \Gamma \rightarrow \Sigma$ is an inclusion and $\mathfrak{g}$ is choice of an $\eta$-grading on the image arcs. The set of morphisms from $\cA$ to $\cB$ in $\Arc{\Sigma,\eta}$ is a singleton set if $\cA \subseteq \cB$ (ignoring gradings) and empty otherwise. Likewise we denote by $\Arc{\Sigma}$ the analogous category of ungraded arc systems. The canonical forgetful map $\theta\colon \Arc{\Sigma, \eta} \rightarrow \Arc{\Sigma}$ induces a weak homotopy equivalence between their nerves. To see this note that $\Arc{\Sigma, \eta}$ is fibred and cofibred in groupoids over $\Arc{\Sigma}$ via $\theta$ which implies that $N(\theta)$ is a Kan fibration by \cite[Proposition 2.1.1.3]{HigherToposTheory}. Moreover, the fiber over any full arc system $\cA \subseteq \Sigma$ is the full subcategory corresponding to all the possible gradings on $\cA$ and is contractible. By \Cref{PropositionInclusionGivesMoritaEquivalences}, $\cF$ maps morphisms in $\Arc{\Sigma, \eta}$ to Morita equivalences and hence induces a functor
	\begin{equation}\label{EquationFukayaFunctor}
		\begin{tikzcd}
			\Fuk\colon  \Arc{\Sigma, \eta} \arrow{r} & \HAcat, \\
			\Gamma \arrow[mapsto]{r} & \Tw^+ \cF(\Gamma)
		\end{tikzcd}
	\end{equation}
	\noindent which maps all morphisms to isomorphisms. The colimit of $\cF$ is the topological Fukaya category of $\Sigma$ from  \Cref{DefinitionFukayaCategory}. The category $\Arc{\Sigma}$ is contractible by a famous result of Harer \cite{Harer} (see also \cite[Appendix A]{DyckerhoffKapranovCrossedSimplicialGroups}) and hence so is $\Arc{\Sigma, \eta}$. Finally, the mapping class group $\MCG(\Sigma,\eta)$ acts on $\Arc{\Sigma, \eta}$ via its canonical action on the pair $(\iota, \mathfrak{g})$.  With the help of \Cref{LemmaGroupActionGeneralLemma} one recovers the following result from \cite{DyckerhoffKapranov}.
	
	\begin{thm}[{\cite[Theorem 4.1.3]{DyckerhoffKapranov}}]\label{TheoremExistenceGroupAction}Let $\Sigma$ be a graded marked surface. Then $\MCG_{\gr}(\Sigma)$ acts on $\Fuk(\Sigma)$. 
	\end{thm}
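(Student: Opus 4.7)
The strategy is to apply \Cref{LemmaGroupActionGeneralLemma} to the functor $\Fuk\colon \Arc{\Sigma, \eta} \to \HAcat$ of \eqref{EquationFukayaFunctor} together with the natural action of $G \coloneqq \MCG_{\gr}(\Sigma)$ on $\Arc{\Sigma, \eta}$ given on objects by $(g, \tilde g).(\Gamma, \iota, \mathfrak{g}) \coloneqq (\Gamma, g \circ \iota, \tilde g_{\ast} \mathfrak{g})$.

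Two of the three hypotheses of \Cref{LemmaGroupActionGeneralLemma} are essentially already present in the paragraphs preceding the theorem: by \Cref{PropositionInclusionGivesMoritaEquivalences}, $\Fuk$ sends every morphism of $\Arc{\Sigma, \eta}$ to a Morita equivalence and hence to an isomorphism in $\HAcat$; and the nerve of $\Arc{\Sigma, \eta}$ is contractible, a fortiori simply-connected, as Harer's theorem combined with the Kan fibration argument recalled above shows. This already produces a colimit $\Fuk(\Sigma) = \colim \Fuk$ together with canonical isomorphisms $\iota_\cA\colon \Fuk(\cA) \to \Fuk(\Sigma)$ in $\HAcat$ for each full graded arc system $\cA$.

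The remaining, and main, point is the invariance of $\Fuk$ under the $G$-action. The definition of $\cF(\Gamma)$ uses only the set of arcs of $\Gamma$, the set of flows between them, the degrees of those flows, and the collection of disk sequences. Each of these data is preserved by a graded diffeomorphism: $(g, \tilde g)$ carries flows to flows bijectively, preserves orientation and marked intervals, and preserves flow degrees by the very definition of a graded morphism of marked surfaces. Consequently $\cF(g.\cA)$ coincides with $\cF(\cA)$ up to the relabelling of objects induced by $g$, and the same conclusion transfers to $\Fuk(\cA) = \Tw^+ \cF(\cA)$.

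I expect the genuinely delicate point to be that, interpreted literally, $\cF(\cA)$ and $\cF(g.\cA)$ have distinct object sets, since their arcs sit in $\Sigma$ as different embedded subsets; thus strict equality, as required by \Cref{LemmaGroupActionGeneralLemma}, only holds after the combinatorial relabelling just described. My plan to address this is to replace $\Arc{\Sigma, \eta}$ by the equivalent category whose objects are abstract graded ribbon graphs equipped with a homotopy class of realisation in $(\Sigma, \cM, \eta)$; then $G$ permutes only the realisation data and fixes the combinatorial input of $\cF$, so that strict invariance holds on the nose. Once this is in place, \Cref{LemmaGroupActionGeneralLemma} supplies the desired group homomorphism from $\MCG_{\gr}(\Sigma)$ to the automorphism group of $\Fuk(\Sigma)$ in $\HAcat$, and by \Cref{RemarkGeneralGroupAction} the action of $g$ is implemented by the composite $\iota_{g.\cA} \circ \iota_\cA^{-1}$ for any fixed full graded arc system $\cA$.
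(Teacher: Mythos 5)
Your proposal is correct and follows essentially the same route as the paper: the paper likewise applies \Cref{LemmaGroupActionGeneralLemma} to the functor \eqref{EquationFukayaFunctor}, using Harer contractibility and \Cref{PropositionInclusionGivesMoritaEquivalences}, and it secures strict invariance exactly as you propose, by taking the objects of $\Arc{\Sigma,\eta}$ to be tuples $(\Gamma,\iota,\mathfrak{g})$ with $\Gamma$ an abstract ribbon graph so that $\MCG_{\gr}(\Sigma)$ acts only on the realisation data $(\iota,\mathfrak{g})$. Nothing further is needed.
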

\noindent In particular, if $A$ is a homologically smooth graded gentle algebra, then $\MCG_{\gr}(\Sigma_A)$ acts on $\Perf(A)=\cT_A$. In the next step we like extend the result to proper graded gentle algebras. First, we note that the mapping class group of the surface of $\Bbbk[x]/(x^2)$, $|x| \in \mathbb{Z}$, is trivial and hence we will exclude this case in the discussion that follows. 

Let $B$ be a proper graded gentle algebra and let $A=B^{\invex}$ be the corresponding homologically smooth graded gentle algebra. By \Cref{TheoremExistenceGroupAction}, $\MCG_{\gr}(\Sigma_B)=\MCG_{\gr}(\Sigma_{A})$ acts on $\Perf(A)$ and as we saw in the proof of \Cref{TheoremGeometrisationHomomorphismSmoothCase} this restricts to an action on $\Perf(B) \simeq \Perf(A^{!}) \subseteq \Perf(A)$. Because $A$ is reflexive by \Cref{TheoremProperGentleReflexive}, it therefore extends to an action on $\cT_B=\Dfd{B}$ which restricts to the action on $\Perf(B)$ we just constructed. By taking into account how we defined the map $\Psi^{\infty}_B$ we have therefore proved the following.
	\begin{cor}\label{CorollaryEquivalenceSplittingProperty} Let $B$ be a proper graded gentle algebra. Then there exist a group action by $\MCG_{\gr}(\Sigma_B)$ on $\Dfd{B}$ and $\Perf(B)$. Moreover, if $\alpha_{B}$ and $\alpha_{B^{\invex}}$ denote the corresponding action homomorphisms $\alpha_{(-)}\colon \MCG(\Sigma_{(-)}) \rightarrow \cT_{(-)}$, then $\Psi_B \circ \alpha_B = \Psi_{B^{\invex}} \circ \alpha_{B^{\invex}}$.
	\end{cor}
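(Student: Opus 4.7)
The plan is to bootstrap from the smooth case (already handled by \Cref{TheoremExistenceGroupAction}) to the proper case via Koszul duality and reflexivity, and then to extract the compatibility statement essentially from how $\Psi_B^{\infty}$ was defined in the proof of \Cref{TheoremGeometrisationHomomorphismSmoothCase}. Set $A \coloneqq B^{\invex}$, so that $\Sigma_B \cong \Sigma_A$ as graded marked surfaces, and hence $\MCG_{\gr}(\Sigma_B) = \MCG_{\gr}(\Sigma_A)$. Throughout, I would first dispose of the trivial case $B \simeq \Bbbk[x]/(x^2)$, whose graded mapping class group is trivial, and assume from now on that $\Sigma_B$ is not the cylinder with a single marked interval on each boundary component.

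The main body of the proof will proceed in three steps. First, \Cref{TheoremExistenceGroupAction} provides an action homomorphism $\alpha_A\colon \MCG_{\gr}(\Sigma_A)\to \Aut^{\infty}(\Perf(A))$, and by restriction an action on $\cT_A^{\fin}=\Dfd{A}$. Second, the orthogonal decomposition from \Cref{rem: block decomposition},
\begin{displaymath}
\cT_A^{\fin} \simeq \Perf(A^!) \times \prod_{C} \Dfd{\Bbbk[x,x^{-1}]},
\end{displaymath}
together with the observation in the proof of \Cref{TheoremGeometrisationHomomorphismSmoothCase} that $\Perf(A^!)\simeq \Perf(B)$ is the unique indecomposable factor not quasi-equivalent to $\Perf({\Bbbk[t]}^{!})$ under our standing assumption on $\Sigma_B$, forces every autoequivalence in the image of $\alpha_A$ to preserve this distinguished block. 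Hence $\alpha_A$ restricts to a homomorphism $\MCG_{\gr}(\Sigma_B)\to \Aut^{\infty}(\Perf(B))$. Third, invoking reflexivity of $B$ and $\Dfd{B}$ from \Cref{TheoremProperGentleReflexive} and the isomorphism \eqref{EquationIsomorphismPicardGroupsReflexive}, this action extends uniquely to an action on $\cT_B=\Dfd{B}$ whose restriction to $\Perf(B)\subseteq \Dfd{B}$ is the one just constructed; this produces the desired $\alpha_B$.

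For the compatibility identity, I would simply unravel definitions. The geometrisation homomorphism $\Psi_B$ was defined as $\Psi_{B^{\invex}}=\Psi_A$ composed with the identifications $\Aut(\Dfd{B})\cong \Aut(\Perf(B))\cong \Aut(\Perf(A^!))$ of \Cref{TheoremProperGentleReflexive} and the restriction from $\Aut(\cT_A^{\fin})$ to the summand $\Perf(A^!)$. Since by construction $\alpha_B$ is obtained from $\alpha_{B^{\invex}}=\alpha_A$ by precisely these identifications and restriction, the equality $\Psi_B\circ \alpha_B = \Psi_{B^{\invex}}\circ \alpha_{B^{\invex}}$ is automatic.

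The only subtle point is really step two: verifying that each element of the $\MCG_{\gr}(\Sigma_A)$-action on $\Perf(A)$ preserves the subcategory $\Perf(A^!)$. This would fail if some graded mapping class could swap the indecomposable block $\Perf(A^!)$ with one of the homogeneous tube blocks $\Dfd{\Bbbk[x,x^{-1}]}$, but the argument in the proof of \Cref{TheoremGeometrisationHomomorphismSmoothCase} rules this out using \Cref{prop: description homs} together with the exclusion of the cylinder case. Assuming that observation, everything else reduces to bookkeeping.
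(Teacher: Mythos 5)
Your proposal is correct and follows essentially the same route as the paper: transport the $\MCG_{\gr}(\Sigma_A)$-action on $\Perf(A)$ from \Cref{TheoremExistenceGroupAction} to the distinguished block $\Perf(A^!)\simeq\Perf(B)$ using the orthogonal decomposition from the proof of \Cref{TheoremGeometrisationHomomorphismSmoothCase}, extend to $\Dfd{B}$ by reflexivity, and observe that the compatibility with $\Psi_B$ is built into how $\Psi_B$ was defined. Your explicit handling of the exceptional case $B\simeq\Bbbk[x]/(x^2)$ and the spelled-out block-preservation argument match the paper's (more tersely stated) reasoning.
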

\noindent It follows in particular that the mapping class group action on $\Dfd{B}$ provides a section to $\Psi_B$ if the corresponding action on $\Perf(B^{\invex})$ provides a section to $\Psi_{B^{\invex}}$.

	\subsection{The functor associated to a pair of arc systems}\label{SectionFunctorPairOfArcSystems}\ \medskip

	\noindent If $\cA, \cB \subseteq \Sigma$ are full, formal arc systems on a graded marked surface $\Sigma$, we will use the notation $\varphi_{\cA}^{\cB}\colon  \Tw^+ \cF(\cA) \rightarrow \Tw^+ \cF(\cB)$ for the functor induced by \Cref{LemmaGroupActionGeneralLemma}. We compare these functors with explicit dg functors $T_{\cA}^{\cB}\colon  \Tw^+ \cF(\cA) \rightarrow \Tw^+ \cF(\cB)$ which we define in an ad-hoc manner for every pair of \textit{formal} arcs systems. To define them, we give a slightly generalised notion of a \textit{string complex} in the sense of \cite{Bobinski} and \textit{singleton maps} and \textit{graph maps} from \cite{ArnesenLakingPauksztello}. 
\subsubsection{String complexes and singleton maps}
For $\bA$ an $A_\infty$-category, let $\mathbb{Z}\bA$ denote the $\Bbbk$-quiver with objects $\sqcup_{n \in \mathbb{Z}}\Ob{\bA}[n]$ and $\mathbb{Z}\bA(X[n], Y[m]) \coloneqq \bA(X, Y)[m-n]$ for all $X, Y \in \Ob{\bA}$.
\begin{definition}
	Let $\bA$ be an $A_\infty$-category. A \textbf{string complex} over $\bA$ is an object $(X, \delta) \in \Tw^+(\bA)$ with the property that there exists a finite quiver $\cP$ of type $A$ (with any orientation of the arrows) and a map of $\Bbbk$-quivers $\chi\colon  \Bbbk\cP \rightarrow \mathbb{Z}\bA$ with the property that $X = \bigoplus_{p \in \cP_0}  \chi(p)$, $\delta=\sum_{\alpha \in \cP_1}\chi(\alpha)$ and such that for all paths $q=\alpha_1 \cdots \alpha_l$ in $\cP$, $\alpha_i \in \cP_1$, we have $\mu_{\bA}^l(\alpha_1, \dots, \alpha_l)=0$.
\end{definition}
\noindent Every string complex is uniquely encoded in a diagram of the form
\begin{displaymath}
	\begin{tikzcd}
		\bullet \arrow[dash]{r}{\chi_1} &\bullet \arrow[dash]{r}{\chi_2} & \cdots \arrow[dash]{r}{\chi_l} & \bullet,
	\end{tikzcd}
\end{displaymath}
\noindent whose edges are oriented, that is arrows, and where each node encodes an object in $\bA$ and an arrow with label $\chi_i$ encodes an element in morphism space between the objects corresponding to its start and end point.  We will also consider special maps between these objects. Every map $f\colon  X \rightarrow Y$ in $\Tw^+(\bA)$ between string complexes $X$ and $Y$ can be encoded in terms of a collection of arrows from vertices of $X$ to vertices of $Y$ which are labeled by maps between the corresponding objects of $\bA$. 

\begin{definition}Let $X$ and $Y$ be string complexes over $\mathbb{A}$. A \textbf{graph map} of degree $n \in \mathbb{Z}$ is a cohomology class $f \in \HH^0(\Hom_{\Tw^+(\bA)}^n(X,Y))$ which has a cocycle representative of the following form:
	\noindent if $X$ and $Y$ are modeled by pairs $(\cP_X, \chi_X)$ and $(\cP_Y, \chi_Y)$, then there exists a finite quiver $\mathcal{Q}$ of type $A$ which is embedded inside $\cP_X$ and $\cP_Y$ as a full subquiver via maps $\iota_X$ and $\iota_Y$ such that $f=\sum_{x \in \mathcal{Q}^0}f_x$, where 
	\begin{displaymath}
		f_x \in \Hom_{\Tw^+(\bA)}(\chi_X \circ \iota_X(x), \chi_Y \circ \iota_Y(x)\big)
	\end{displaymath}
	\noindent is a multiple of the identity for at least one vertex of $\mathcal{Q}$ and all inner vertices of $\cQ$, that is, vertices of $\cQ$ which have two distinct adjacent vertices. Similarly, a (boundary) \textbf{singleton single map} is defined in the same way by requiring that $\mathcal{Q}$ has a single vertex $z$, the only component $f_x$ is not invertible, and $\iota_X(z)$ and $\iota_Y(z)$ are both outer vertices of $P_X$ and $P_Y$ respectively, that is, they have at most adjacent vertex.
\end{definition}
\noindent Graph and singleton maps can represented diagrammatically as in \Cref{fig: graph map and singleton map}.
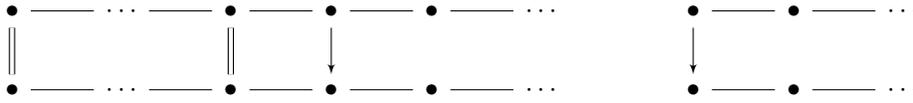
\begin{figure}
	\begin{displaymath}\arraycolsep=2em
		\begin{array}{lr}
			{
				\begin{tikzcd}
					\bullet \arrow[dash]{r}{} \arrow[equal]{d} &  \cdots \arrow[dash]{r}{} & \bullet \arrow[dash]{r}{} \arrow[equal]{d} & \bullet \arrow[dash]{r}{} \arrow{d} & \bullet \arrow[dash]{r}{} & \cdots \\
					\bullet \arrow[dash]{r}{}  &  \cdots \arrow[dash]{r}{} & \bullet \arrow[dash]{r}{}  & \bullet \arrow[dash]{r}{} & \bullet \arrow[dash]{r}{} & \cdots 
				\end{tikzcd}
			} 
			&
		{
		\begin{tikzcd}
			\bullet \arrow[dash]{r}{} \arrow{d}{} &\bullet \arrow[dash]{r}{} & \cdots   \\
			\bullet \arrow[dash]{r}{} &\bullet \arrow[dash]{r}{} & \cdots 
		\end{tikzcd}
	}
	\end{array}
	\end{displaymath}
\caption{A diagrammatic representation of a graph map (left) with identities indicated by double lines and a boundary singleton single map (right).} \label{fig: graph map and singleton map}
\end{figure}

\subsubsection{The functors} \ \medskip 

Given two formal arc systems $\cA$ and $\cB$ on $\Sigma$, every graded arc $\gamma \in \cA$ determines a unique sequence of graded arcs $\gamma_1, \dots, \gamma_l$ which are shifts of arcs in $\cB$ together with a unique sequence $f_1, \dots, f_{l-1}$ of non-constant flows $f_i \in \Flow(\gamma_i, \gamma_{i+1})$ such that $\gamma$ is homotopic to the concatenation of $\gamma_1, f_1, \gamma_2, f_2, \dots, f_{l-1}, \gamma_l$ in the given order and such the canonically induced grading on the concatenation agrees with the given one on $\gamma$ under homotopy. The above data determines a unique string complex $Y_{\gamma}$ in $\cF_{\cB}$  obtained by replacing $\gamma_i$ by its corresponding object in $\cF_{\cB}$ and regarding the morphisms associated to the flows $f_i$ in $\cF(\cA)$ as the components of the differential. It is not difficult to check that every $f \in \Flow(\gamma, \gamma')$, $\gamma' \in \cA$, determines a canonical morphism of twisted complexes $\hat{f}\colon Y_{\gamma} \rightarrow Y_{\gamma'}$ of the same degree. More precisely, if $\gamma_1 \neq \gamma_1'$, then $\alpha_f$ is simply the singleton single map with component induced by $f$. Otherwise, $\alpha_f$ is the canonical graph map whose overlap corresponds to the maximal index $i \geq 1$ with the property that $(\gamma_1, \dots, \gamma_i)=(\gamma_1', \dots, \gamma_i')$. The construction is compatible with composition in the sense that if $g \in \Flow(\gamma'',\gamma''')$ such that $f$ and $g$ are composable flows, then $\alpha_g \circ \alpha_f=\alpha_{g \ast f}$, where $g \ast f$ denotes the concatenation. In \cite{OpperPlamondonSchroll}, the reader can find more details on how one associates a morphism of complexes to a flow, or equivalently an intersection of curves at a marked point on the boundary.  The correspondence 
	\begin{displaymath}
		\begin{tikzcd}
			X_\gamma \arrow[mapsto]{r} & Y_{\gamma}
		\end{tikzcd}
	\end{displaymath}
	\noindent therefore extends canonically to a dg functor $\cF(\cA) \rightarrow \Tw^+ \cF(\cB)$ and hence a dg functor 
	\begin{displaymath}
		T_{\cA}^{\cB}\colon \Tw^+ \cF(\cA) \rightarrow \Tw^+ \cF(\cB)	
	\end{displaymath}
	\noindent after totalisation. The functors $T_{\cA}^{\cB}$ satisfy the following ``base change'' property.
	
	\begin{lem}\label{LemmaCompositonArcSystemChange} Let $(\cA, \cB, \cC)$ be a triple of proper formal arc systems. Then
		\begin{displaymath}
			T_{\cB}^{\cC}T_{\cA}^{\cB} \cong T_{\cA}^{\cC}.
		\end{displaymath}
	\end{lem}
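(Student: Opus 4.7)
The plan is to evaluate both functors on a graded arc $\gamma \in \cA$ and exhibit a natural isomorphism between the resulting twisted complexes. Let $\gamma \simeq \delta_1 * f_1 * \delta_2 * \cdots * f_{l-1} * \delta_l$ be the $\cB$-decomposition of $\gamma$, and for each $i$ let $\delta_i \simeq \eta_{i,1} * g_{i,1} * \cdots * g_{i,k_i-1} * \eta_{i,k_i}$ be the $\cC$-decomposition of $\delta_i$. Then $T_\cB^\cC(T_\cA^\cB(X_\gamma))$ is, by construction, the totalisation of a twisted complex whose underlying graded object is $\bigoplus_{i,j} X_{\eta_{i,j}}$, whose intra-$\cB$-piece differential components are the $g_{i,j}$, and whose inter-$\cB$-piece components are the images $\widehat{f_i}$ of the flows $f_i$ under the dg functor $T_\cB^\cC$, obtained by the singleton/graph-map recipe. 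On the other hand, $T_\cA^\cC(X_\gamma)$ is obtained from the direct $\cC$-decomposition of $\gamma$, say $\gamma \simeq \zeta_1 * h_1 * \zeta_2 * \cdots * h_{m-1} * \zeta_m$.

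The first step is a purely topological comparison of the two $\cC$-level descriptions of $\gamma$. By uniqueness of the $\cC$-decomposition, the concatenation obtained by plugging the $\cC$-decompositions of the $\delta_i$ into the original $\cB$-decomposition of $\gamma$ must be homotopic, as a concatenation in $\cC$, to $\zeta_1 * h_1 * \cdots * \zeta_m$. The two may differ only at the junctions: if the flow $f_i$ is ``short'' in the sense that the last $\cC$-piece of $\delta_i$ coincides (up to orientation/shift) with the first $\cC$-piece of $\delta_{i+1}$, then on $\gamma$ these two identical pieces cancel in pairs against an initial identity segment of $\widehat{f_i}$. Tracking which flows this applies to will be the main combinatorial bookkeeping, and it is here that the distinction between singleton and graph maps enters.

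In the singleton case the decompositions match on the nose: the non-identity component of $\widehat{f_i}$ is precisely the flow in $\cC$ joining the last $\cC$-piece of $\delta_i$ to the first of $\delta_{i+1}$, and so the summands and differential components of $T_\cB^\cC(T_\cA^\cB(X_\gamma))$ enumerate exactly the data defining $T_\cA^\cC(X_\gamma)$. In the graph-map case the identity components of $\widehat{f_i}$ sit inside the differential of the composed twisted complex, and I will eliminate them by the standard Gaussian elimination (``cancellation'') procedure for one-sided twisted complexes, which produces a homotopy equivalent and hence isomorphic twisted complex of strictly smaller size. A short check shows that the result of performing all such cancellations recovers precisely the twisted complex $T_\cA^\cC(X_\gamma)$, because each cancelled pair of summands corresponds exactly to a pair $(\eta_{i,k_i-s}, \eta_{i+1,s+1})$ that would have been identified in the concatenation above.

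Finally, one checks naturality with respect to an arbitrary flow $f \in \Flow(\gamma, \gamma')$ for $\gamma, \gamma' \in \cA$. Both $T_\cB^\cC(T_\cA^\cB(\widehat{f}))$ and $T_\cA^\cC(\widehat{f})$ are singleton or graph maps whose identity overlap is computed from the shared initial $\cB$- (respectively $\cC$-) pieces of $\gamma$ and $\gamma'$, and one verifies by inspection that the explicit isomorphism constructed above intertwines them. I expect the main obstacle to be the graph-map / cancellation case: keeping track of signs, gradings, and the precise extent of the overlap after multiple successive eliminations requires care, but it is a finite combinatorial computation and does not involve any essentially new ideas beyond those already present in the construction of $T_\cA^\cB$.
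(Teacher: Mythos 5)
Your proposal is correct and its core mechanism is exactly the paper's: evaluate both composites on the string objects $X_\gamma$, observe that $T_{\cB}^{\cC}T_{\cA}^{\cB}(X_\gamma)$ is a ``generalised'' string complex whose differential contains identity components coming from the graph-map overlaps, and remove these by Gaussian elimination (in the paper, an explicit change of basis $\Theta_\gamma=\operatorname{Id}+\theta$ splitting off acyclic two-term summands), after which the surviving component is the string complex of $T_{\cA}^{\cC}(X_\gamma)$. The one genuine difference is that the paper first reduces to the case where $\cB$ and $\cC$ differ by a single arc, i.e.\ $\cB\sqcup\{\delta\}=\cB\cup\cC=\cC\sqcup\{\varepsilon\}$ (plus regradings), and obtains the general case by induction along a chain of such elementary moves. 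In that situation only the occurrences of the single arc $\varepsilon$ in $Y_\gamma$ get expanded, every graph map in sight has at most one identity component, and the cancellations are local and pairwise --- which disposes of precisely the bookkeeping of ``multiple successive eliminations'' and of the global topological claim that plugging the $\cC$-decompositions of the $\delta_i$ into the $\cB$-decomposition recovers, after cancellation, the reduced $\cC$-decomposition of $\gamma$, which you correctly identify as the main obstacle of the direct route. Your approach works, but if you want to avoid that combinatorics you should prove the elementary case first and compose.
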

	\begin{proof}
		It is sufficient to prove this in the case, where $\cB \cup \cC$ is an arc system and $\cB \sqcup \{\delta\}=\cB \cup \cC=\cC \sqcup \{\varepsilon\}$ for arcs $\delta \in \cC$ and $\varepsilon \in \cB$. It then follows by induction that every base change functor is the composition of such special ones and such associated to a change of grading. From this the general case follows immediately. We set $S \coloneqq T_{\cB}^{\cC} T_{\cA}^{\cB}$. Let $\gamma \in \cA$ and denote by $\gamma_1, \dots, \gamma_l \in \cB$ and $p_1, \dots, p_{l-1}$ the associated sequence of arcs and intersections appearing in the definition of $T_{\cA}^{\cB}$. Likewise we denote by $\delta_1, \dots, \delta_m \in \cC$ and $q_1, \dots, q_{m-1}$ the sequences for $\delta$ and by $Z_{\delta}$ the associated string complex.  The functor $S$ maps the object $X_{\gamma}$ to the twisted complex over $\cC$ which is obtained from $Y_{\gamma}$ in the following way. Every occurence of $\delta$ as a vertex $v$ of the underlying string diagram of $Y_{\gamma}$ is replaced by the string diagram of $Z_{\delta}$ and every component of the differental going in or out of $v$ is replaced by the corresponding component of the singleton single map or graph map which has a single component in the given situation. The result is a ``generalised'' string complex which potentially features subdiagrams of the form 
		
		\begin{equation}\label{EquationGeneralizedStringComplex}
			\begin{tikzcd}
				\text{} &\arrow[dash, dashed]{l}	\bullet & \bullet \arrow{l}{u} \arrow{r}{\operatorname{Id}} & \bullet  &  \arrow{l}{v} \bullet  \arrow[dashed, dash]{r} & \text{}
			\end{tikzcd}
		\end{equation}
		\noindent with edges labeled by identity maps. A simple calculation (similar to \cite[Lemma 2.1]{CanakciPauksztelloSchrollNewArxiv}) shows that such a string diagram $C_{\gamma}$ is isomorphic to the direct sum $C_{\gamma}^{\prime}$ of string complexes obtained by replacing each subdiagram of the form \eqref{EquationGeneralizedStringComplex} by the disconnected diagram
		\begin{equation*}
			\begin{tikzcd}
				\text{} &\arrow[dash, dashed]{l}	\bullet & \bullet  \arrow{r}{\operatorname{Id}} & \bullet  &  \arrow[bend right, swap]{lll}{uv} \bullet  \arrow[dashed, dash]{r} & \text{}
			\end{tikzcd}
		\end{equation*}
	 \noindent Since such subdiagrams arise from a graph map with a single component, the path $uv$ is non-zero in $A$. Explicitly, $C_{\gamma}^{\prime}$ is obtained from $C_{\gamma}$ by conjugation of its differential with a base change $\Theta_{\gamma}\colon C_{\gamma} \rightarrow C_{\gamma}$. It is given by $\Theta_\gamma=\operatorname{Id}_{C_{\gamma}} + \theta$, where $\theta=\sum_D \theta_D$ is the sum of $\Bbbk$-linear endomorphisms $\theta_D\colon C_{\gamma} \rightarrow C_{\gamma}$ of the underlying graded vector space of $C_{\gamma}$. The sum is taken over all subdiagrams $D$ of the form \eqref{EquationGeneralizedStringComplex} and for every such $D$, the only two non-trivial components of $\theta_D$ are given by the two dotted arrows in \eqref{EquationBaseChangeComponents} inside of $D$.
		\begin{equation}\label{EquationBaseChangeComponents}
			\begin{tikzcd}
				\text{} &\arrow[dash, dashed]{l}	\bullet & \bullet \arrow{l}{u} \arrow{r}{\operatorname{Id}} & \bullet \arrow[dotted, bend left, red]{ll}{u}  &  \arrow{l}{v} \bullet \arrow[dotted, bend right, red, swap]{ll}{v}  \arrow[dashed, dash]{r} & \text{}
			\end{tikzcd}
		\end{equation}
		\noindent Because $\theta_D \circ \theta_{D'}=0$ for all $D \neq D'$, the maps $\rho_D=\operatorname{Id}_{C_{\gamma}} + \theta_D$, commute pairwise and their composition agrees with $\Theta_{\gamma}$.
		
		We then observe that the only connected component of the underlying diagram of $C_{\gamma'}$ which does not represent an acyclic complex is the string diagram of $T_{\cA}^{\cC}(X_{\gamma})$. By pre- and postcomposing with the projection and inclusion maps associated with the identification of $T_{\cA}^{\cC}(X_{\gamma})$ as a direct summand of $C_{\gamma}^{\prime}$, the functor $T=T_{\cA}^{\cC}$ induces a dg functor $T' \cong T$ with $T'(X_{\gamma})=C_{\gamma}^{\prime}$ for all $\gamma \in \cA$. It is then not difficult to see that $S$ agrees with $T'$ after conjugation with $\Theta_{\gamma}$.
	\end{proof}
	
	\subsection{Explicit description of the mapping class group action and section property}\ \medskip
	
	\noindent We prove the following auxiliary result.
	\begin{prp}\label{PropositionExplicitDescriptionGroupAction}Let $\cA \subseteq \Sigma$ be a full formal graded arc system. For all $F \in \MCG_{\gr}(\Sigma, \eta)$, $\varphi_F \coloneqq \varphi_{\cA}^{F(\cA)}=T_{\cA}^{F(\cA)}$ in the Morita category.
	\end{prp}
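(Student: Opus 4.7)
The approach is to factor both $\varphi_F$ and $T_{\cA}^{F(\cA)}$ through a common intermediate arc system and compare them there. Concretely, I would choose a full graded arc system $\cC \subseteq \Sigma$ that contains both $\cA$ and $F(\cA)$ as subsystems (after isotoping representatives so that all the arcs are pairwise disjoint); such $\cC$ need not be formal, and in general may have to be built by iterating common refinements, but this is permissible because the composition laws for $\varphi$ (built into Lemma \ref{LemmaGroupActionGeneralLemma}) and for $T$ (Lemma \ref{LemmaCompositonArcSystemChange}) reduce the general case to a single refinement step. By \Cref{PropositionInclusionGivesMoritaEquivalences} and the remark following it, the inclusion functors $\iota_{\cA}^{\cC}\colon \cF(\cA) \hookrightarrow \cF(\cC)$ and $\iota_{F(\cA)}^{\cC}\colon \cF(F(\cA)) \hookrightarrow \cF(\cC)$ are strict Morita equivalences. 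Applied to the zig-zag $\cA \subseteq \cC \supseteq F(\cA)$ in $\Arc{\Sigma, \eta}$, Lemma \ref{LemmaGroupActionGeneralLemma} gives
\[
\varphi_{\cA}^{F(\cA)} = \bigl(\iota_{F(\cA)}^{\cC}\bigr)^{-1} \circ \iota_{\cA}^{\cC}
\]
in the Morita category. The proposition therefore reduces to verifying the equality $\iota_{F(\cA)}^{\cC} \circ T_{\cA}^{F(\cA)} \simeq \iota_{\cA}^{\cC}$ of $A_\infty$-functors $\cF(\cA) \to \Tw^+ \cF(\cC)$ in the Morita category.

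The core of the argument is the pointwise claim that for every $\gamma \in \cA$, the twisted complex $Y_\gamma \in \Tw^+ \cF(F(\cA))$, viewed inside $\Tw^+ \cF(\cC)$ via $\iota_{F(\cA)}^{\cC}$, is quasi-isomorphic to the single-object complex $X_\gamma \in \cF(\cC)$ corresponding to $\gamma \in \cA \subseteq \cC$. I would prove this by induction on the length $l$ of the decomposition $\gamma \simeq \gamma_1 \ast f_1 \ast \cdots \ast f_{l-1} \ast \gamma_l$ used in the definition of $Y_\gamma$. The base case $l = 1$ is tautological, since then $\gamma \in F(\cA)$ and $Y_\gamma = X_\gamma$. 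For the inductive step, I would exhibit $Y_\gamma$ as a mapping cone $\mathrm{Cone}(Y_{\gamma'} \to X_{\gamma_l})$, where $\gamma'$ is the shorter concatenation of length $l-1$ obtained by removing the last flow and arc; this matches the geometric picture that concatenating $\gamma'$ with $\gamma_l$ along the flow $f_{l-1}$ produces an extension triangle in $\cF(\cC)$, with connecting morphism induced by $f_{l-1}$ and the disk sequences of $\cC$ meeting the relevant region. The pointwise quasi-isomorphisms can then be assembled into a natural $A_\infty$-quasi-isomorphism by checking compatibility with the singleton and graph maps representing irreducible flows; once the pointwise identifications are set up, this verification amounts to a direct diagrammatic check that applying $T_{\cA}^{F(\cA)}$ followed by inclusion into $\cC$ produces the same image on morphisms as the direct inclusion from $\cA$ into $\cC$.

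The main technical obstacle is the mapping cone identification in $\Tw^+ \cF(\cC)$, since $\cC$ is typically non-formal and the higher multiplications of $\cF(\cC)$ could a priori obstruct both the Maurer-Cartan equation for $Y_\gamma$ and the quasi-isomorphism with $X_\gamma$. However, because the components of the differential of $Y_\gamma$ are flows between arcs of $F(\cA)$ and the inclusion $\cF(F(\cA)) \hookrightarrow \cF(\cC)$ is a strict $A_\infty$-functor, the higher multiplications of $\cF(\cC)$ restricted to $\cF(F(\cA))$ coincide with those of $\cF(F(\cA))$, which vanish by formality; this rescues the mapping-cone argument. A secondary issue---the possible need to choose $\cC$ iteratively when no single full arc system contains compatible representatives of both $\cA$ and $F(\cA)$---is handled by reducing to a finite chain of elementary refinements using the base-change identity of Lemma \ref{LemmaCompositonArcSystemChange}.
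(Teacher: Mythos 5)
Your proposal is correct and follows essentially the same route as the paper: both reduce, via the composition law built into \Cref{LemmaGroupActionGeneralLemma} and the base-change identity of \Cref{LemmaCompositonArcSystemChange}, to comparing the two inclusion functors into a common enlarged arc system, and both rest on identifying the string complex $Y_\gamma$ with the object $X_\gamma$ there. The only difference is one of granularity: the paper passes through a chain of elementary moves exchanging one arc at a time and cites \cite[Lemma 6.5]{BocklandtMirrorSymmetryPuncturedSurfaces} for the key isomorphism $X_{\gamma'} \cong Y_{\gamma'}$, whereas you work with a (possibly iterated) common refinement and sketch an inductive mapping-cone proof of the same fact.
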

	\begin{proof}
		As before there exists a sequence $\cA=\cA_0, \ldots, \cA_m=F(\cA)$ of proper formal graded arc systems such that for all $i \in [0,m)$, either $\cA_i$ and $\cA_{i+1}$ only differ by their grading, or $\cA_i \cup \cA_{i+1}$ is an arc system and $|\cA_i|+1=\cA_i \cup \cA_{i+1}=|\cA_{i+1}|+1$. According to Lemma \ref{LemmaCompositonArcSystemChange}, we have
		\begin{displaymath}
			T_{\cA}^{F(\cA)} \cong T_{\cA_{m-1}}^{\cA_m}\cdots  \circ T_{\cA_0}^{\cA_1}.
		\end{displaymath}
		\noindent Likewise and by construction, $\varphi_F$ is equal in the Morita category to  the composition of the functors $\varphi_{\cA_i}^{\cA_{i+1}}$ defined at the end of Section \ref{SectionSimplyConnectedCategoriesGroupActions}. Thus, it suffices to show that $\varphi_{\cB}^{\cB'}=T_{\cB}^{\cB'}$ whenever $\cB$ and $\cB'$ are graded proper formal arc systems such that $\cB \cup \cB'$ is an arc system and $\cB \sqcup \{\gamma'\}=\cB \cup \cB^{\prime}=\cB' \sqcup \{\gamma\}$ for arcs $\gamma \in \cB$ and $\gamma' \in \cB'$. As in \cite[Lemma 6.5]{BocklandtMirrorSymmetryPuncturedSurfaces} one proves that  $X_{\gamma'}$ is $A_\infty$-isomorphic in $\Tw^+ \cF_{\cB'}$ to $Y_{\gamma'} \in \Tw^+ \cF_{\cB}$. If $\iota_{\ast}\colon \cF(\ast) \hookrightarrow \cF(\cB \cup \cB')$, $\star \in \{\cB, \cB'\}$, denote the inclusions, then using this isomorphism between $X_{\gamma'}$ and $Y_{\gamma'}$,  one can build a weak equivalence between $\iota_{\cB}\approx \iota_{\cB'} \circ f\phi_B^{B'}$ and the restriction of $T_{\cB}^{\cB'}$ to $\cF(\cB) \subseteq \Tw^+(\cF(\cB))$. Because $\iota_{\cB'}$ is a Morita equivalence and hence invertible in $\Hmo$, we this provides us with the desired result.
	\end{proof}

	\noindent With the help of the previous proposition, we are prepared to prove the main result of this section.
\begin{proof}
		Let $\gamma \subseteq \Sigma$ be a graded essential arc and let $F \in \MCG_{\gr}(\Sigma, \eta)$. By Proposition \ref{PropositionExplicitDescriptionGroupAction}, $\varphi(F)(X_{\gamma}) \cong T_{\cA}^{F(\cA)}(X_{\gamma})=X_{F(\gamma)}$ and hence $\left(\Psi \circ \varphi\right)(F)=F$. This finishes the proof.
\end{proof}

\begin{proof}[Proof of Theorem \ref{TheoremActionSplitsProjectionMaps}] By Corollary \ref{CorollaryEquivalenceSplittingProperty}, we may restrict ourselves to the case when $A$ is a homologically smooth graded gentle algebra. Let $\alpha\colon \MCG_{\gr}(\Sigma_A) \rightarrow \DPic(A)$ denote the action homomorphism of the group action from Theorem \ref{TheoremExistenceGroupAction}. For any full, formal arc system $\cA \subseteq \Sigma_A$ and any mapping class $F \in \MCG_{\gr}(\Sigma_A)$, Proposition \ref{PropositionExplicitDescriptionGroupAction} shows that $\Psi_A \circ \alpha(F))\cong T_{\cA}^{F(\cA)})$ and hence $\alpha(F)(X_{\gamma})\cong X_{F(\gamma)}$ for all $\gamma \in \cA$. As in the proof of Lemma \ref{LemmaRigidGentleKernel}, the fact that $\cA$ is an Alexander system implies that $\Psi_A(\alpha(F))=F$. This finishes the proof. 
\end{proof}
	
	\section{Integration of Hochschild cohomology via exponential maps }\label{SectionIntegrationHochschild}
	\noindent We recall the integration map from Hochschild cohomology to the derived Picard group from the author's previous work \cite{OpperIntegration}.
	
	\subsection{The Hochschild exponential}\medskip
	
\noindent	As before, we denote by $\star$ the composition product on the Hochschild complex of an $A_\infty$-category on an $A_\infty$-category $\bA$, cf.~Section \ref{SectionBraceAlgebra}. We also recall the definition of the weight filtration $(W_iC(\bA))_{i \geq 0}$ on the Hochschild complex, cf.~\Cref{SectionBraceAlgebra}. Over a field of characteristic $0$, the composition product defines an exponential map 
	\begin{equation}\label{EquationPreLieExponential}
	\begin{tikzcd}[row sep=0.5em]
		\exp\colon W_2C^0(\bA) \arrow{r} & C^0(\bA), \\
		c \arrow[mapsto]{r} & \sum_{i=0}^{\infty} \frac{1}{i!}{c^i},
	\end{tikzcd}
	\end{equation}
	\noindent where $c^i \in W_{i+1}C^1(A)$ is inductively defined as $c^{i+1}=c^i \star c$ and $c^0\coloneqq \operatorname{Id}_{\bA}$. The assumption on $c$ guarantees that the sum converges in the topology on $C(A)$. The latter is a Banach space with a basis of neighbourhoods around $0$ given by the the weight filtration. The map $\exp$ is injective and its image is the set of \textit{group-like} elements
	\begin{displaymath}
		\operatorname{Id}_{\bA} + W_2C^0(\bA) = \{\operatorname{Id}_{\bA} + c \, | \, c \in W_2C^0(\bA)\}.
	\end{displaymath}
	\noindent The inverse $\log\colon \operatorname{Id}_{\bA} + W_2C^1(\bA) \rightarrow W_2C^1(\bA)$ is given by the pre-Magnus expansion, cf.~\cite{Ebrahimi-FardManchon}.
	
	\subsection{Exponential groups and integration}\medskip
	
	\noindent  The weight filtration  turns  $\cL \coloneqq W_2C^1(\bA)$ (and similarly $\HHH^1_+(\bA, \bA)=W_2\HHH^1(\bA, \bA)$) into a \textit{complete (dg) Lie algebra} in the sense that $F_i\cL \coloneqq W_{i+2}C^1(\bA)$ defines a descending filtration by subvector spaces such that $[F_i\cL, F_j\cL] \subseteq F_{i+j}\cL$ for all $i,j \geq 1$ and such that $$\cL \cong \lim_{n \geq 1} \quotient{\cL}{F_i\cL}$$ as dg Lie algebras. In particular, the Lie bracket is a continuous bilinear map $\cL \times \cL \rightarrow \cL$. Over a field of characteristic zero, any complete Lie algebra $\cL$ admits a strictly associative product, the \textbf{Baker-Campbell-Hausdorff product}
	\begin{equation}\label{EquationBCHProduct}
		\BCH{u}{v} \coloneqq  u+v + \sum_{i=1}^{\infty} {\frac{{(-1)}^{i-1}}{i} \sum_{\underline{m},\underline{n} \in \mathbb{N}^i\colon m_j+n_j > 0}{ \frac{1}{|\underline{m}+\underline{n}|} \frac{1}{\underline{m}! \underline{n}!} \cdot [u^{m_1}, v^{n_1}, u^{m_2}, \dots, u^{m_i}, v^{n_i}]},}
	\end{equation}
	\noindent for all $u, v \in \cL$.  Here, we used the following shorthand notation:
	\begin{itemize}
		\item For $\underline{p} \in \mathbb{N}^i$, $\underline{p}!=p_1!p_2! \cdots p_i!$ and $|\underline{p}|=\sum_{j=1}^i p_j$;
		\item $[u_1^{r_1}, u_2^{r_2}, \dots, u_i^{r_i}]=\ad_{u_1}^{r_1} \circ \ad_{u_2}^{r_2} \cdots \circ \ad_{u_{i-1}}^{a_{i-1}} \circ \ad_{u_i}^{r_i-1}(u_i)$.
	\end{itemize}
\noindent The pair $\exp(\cL) \coloneqq (\cL, \mathrm{BCH})$ forms a group with neutral element $0 \in \cL$ and is called the \textbf{exponential group} of $\cL$. Note that if $[u,v]=0$, then $\BCH{u}{v}=u+v$ recovers the usual addition and the classic formula $\exp(u+v)=\exp(u) \circ \exp(v)$ which shows that $-l$ is the inverse to $l \in \cL$. One of the main results of \cite{OpperIntegration} is the following.
	\begin{thm}[{\cite[Theorem A \& Section 6.5]{OpperIntegration}}]\label{TheoremIntegrationHochschildCohomology} Let $\bA$ be an $A_\infty$-category over a field of characteristic zero. Then, the map $\exp$ from \eqref{EquationPreLieExponential} descends to a group homomorphism
	\begin{displaymath}
		\begin{tikzcd}
			\exp_{\bA}\colon \big(\HHH^1_+(\bA, \bA), \operatorname{BCH}\big) \arrow{r}  & \DPic(\bA),
		\end{tikzcd}
	\end{displaymath}
\noindent with image $\Aut^{\infty}_+(\bA)$. If $\bA$ is a graded $\Bbbk$-linear category, then $\exp_{\bA}$ is injective.
	\end{thm}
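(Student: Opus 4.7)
The plan is to construct $\exp_{\bA}$ at the chain level and then verify, in order, (i) that the image lies in $\Aut^{\infty}_+(\bA)$, (ii) that the construction descends to cohomology, (iii) that it intertwines the BCH product with composition of isotopies, and finally (iv) injectivity in the graded $\Bbbk$-linear case.

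First, given a cocycle representative $c$ in $W_2 C^{\bullet}(\bA)$ of a class in $\HHH^1_+(\bA,\bA)$, I would set $F_+ \coloneqq \sum_{i \geq 1} \tfrac{1}{i!} c^{\star i}$ and $F \coloneqq \operatorname{Id}_{\bA} + F_+$; convergence in the Banach space $C(\bA)$ is automatic because $c^{\star i} \in W_{i+1} C^{\bullet}(\bA)$ by the weight-filtration compatibility of the braces. To verify $F \in \Aut^{\infty}_+(\bA)$, I must check the functor equation
$$0 = [\mu_{\bA}, F_+] + \sum_{r \geq 2} \mu_{\bA}\{F_+, \dots, F_+\}_r$$
from Lemma \ref{LemmaFormulaCompositionIsotopies}. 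The key technical step is to rearrange the iterated braces, using the pre-Lie structure of $\star$ and its interaction with the brace operations, into an expression governed by $\ad$-actions of $c$ on $\mu_{\bA}$; the cocycle condition $[\mu_{\bA}, c] = 0$ then causes everything to collapse. Morally, a group-like element arising as a pre-Lie exponential of a $\mu_{\bA}$-commuting cocycle is itself $\mu_{\bA}$-commuting in the appropriate sense.

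Second, to see that $\exp_{\bA}$ is well-defined on cohomology, I would replace $c$ by a coboundary shift $c + [\mu_{\bA}, h]$ with $h$ of appropriate weight, and exhibit an explicit weak equivalence between $\exp(c)$ and $\exp(c+[\mu_{\bA}, h])$ as a ``conjugation by $\exp(h)$'' natural transformation built formally inside $(C(\bA), \star)$. The homomorphism property then reduces to the pre-Lie identity
$$\exp(u) \odot \exp(v) = \exp(\BCH{u}{v}),$$
where $\odot$ is the composition law for isotopies from Lemma \ref{LemmaFormulaCompositionIsotopies} and the Lie bracket underlying $\operatorname{BCH}$ is the graded commutator of $\star$. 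This is the standard Dynkin/BCH identity for pre-Lie algebras, and follows once one verifies that the pre-Magnus expansion $\log$ is a two-sided inverse of $\exp$ on the Banach group $\operatorname{Id}_{\bA} + W_2 C^{\bullet}(\bA)$.

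For surjectivity onto $\Aut^{\infty}_+(\bA)$, given an isotopy $F$ I would set $c \coloneqq \log(F_+)$; reversing the computation of step one, the Maurer-Cartan equation for $F_+$ translates into the cocycle condition for $c$, and $[c]$ maps back to $[F]$. For injectivity in the graded $\Bbbk$-linear case $\bA = \cC$, I would invoke the fact recalled in the paper after Lemma \ref{LemmaFormulaCompositionIsotopies} that weak equivalence and homotopy of isotopies coincide in this setting; a homotopy between $\exp_{\cC}(c)$ and $\operatorname{Id}_{\cC}$ yields, via the same pre-Lie manipulations as in the well-definedness step, an equation $c = [\mu_{\cC}, h]$, so $[c]=0$ in $\HHH^1_+(\cC, \cC)$. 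I expect the principal obstacle to be step one: the cocycle condition $[\mu_{\bA}, c] = 0$ controls only the linear term of the functor equation, whereas that equation involves all higher braces $\mu_{\bA}\{F_+, \dots, F_+\}_r$. Showing they reorganise into a single commutator demands the precise combinatorial identity relating pre-Lie exponentials, iterated braces, and $\ad$-actions — this is where characteristic zero is indispensable, since one needs $i!$ to be invertible for every $i$.
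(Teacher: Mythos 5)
First, a point of reference: the paper does not prove this statement at all — it is imported verbatim from \cite{OpperIntegration} (Theorem A and Section 6.5) — so there is no internal proof to compare against. Judged on its own terms, your proposal reproduces the correct architecture one expects from the cited source: pre-Lie exponential at the chain level, identification of $A_\infty$-isotopies with Maurer--Cartan elements of weight $\geq 2$, gauge equivalence for descent to cohomology, the BCH identity for the homomorphism property, and the coincidence of weak equivalence with homotopy for graded linear categories to get injectivity. But the two statements that constitute the actual mathematical content of the theorem are asserted rather than proved.

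The first is your ``step one'': that $F=\operatorname{Id}_{\bA}+\sum_{i\geq 1}c^{\star i}/i!$ satisfies the functor equation whenever $[\mu_{\bA},c]=0$. Your ``moral'' justification (a group-like element exponentiating a $\mu_{\bA}$-commuting cocycle is itself $\mu_{\bA}$-commuting) is misleading: $[\mu_{\bA},-]$ is \emph{not} a derivation of $\star$ — its failure to be one is measured by the graded commutator of the cup product, by Gerstenhaber's identity — so $[\mu_{\bA},F_+]$ is genuinely nonzero already in weight $3$, where $\tfrac12[\mu_{\bA},c\star c]$ contributes $\pm\, c\cup c$, and this must cancel against the weight-$3$ part of $\mu_{\bA}\{F_+,F_+\}_2$. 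The required cancellation in all weights is a nontrivial brace-algebra identity, not a consequence of the pre-Lie structure or of the cocycle condition alone, and you give no argument for it; you correctly flag it as the principal obstacle, which is another way of saying it is the theorem. The second gap is the identity $\exp(u)\odot\exp(v)=\exp(\BCH{u}{v})$: since $\odot$ is built from \emph{all} the braces $G_+\{F_+,\dots,F_+\}_r$ and not from $\star$ alone, this is not ``the standard Dynkin/BCH identity for pre-Lie algebras'' but an Oudom--Guin-type statement for brace algebras, which needs its own proof or citation. Finally, in the descent step you should verify that the primitive $h$ with $c-c'=[\mu_{\bA},h]$ can be chosen of weight $\geq 2$; otherwise $\exp(h)$ need not converge and the proposed conjugation is not defined. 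As it stands, the proposal is a correct road map with the destination assumed.
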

\noindent Representatives of elements of $\DPic(\bA)$ are interpreted as elements in $C^0(\bA)$ by means of the Taylor coefficients of their associated $A_\infty$-functor. The injectivity result is a special instance of a general criterion and one of several big classes of $A_\infty$-categories with injective exponential, cf.~\cite[Proposition 6.15]{OpperIntegration} and its subsequent paragraph.

 As $[u,u]=0$ for all $u \in \HHH^1_+(\bA, \bA)$, every non-zero $u$ yields a $1$-parameter subgroup  $\lambda \mapsto \exp(\lambda u)$, $\lambda \in \Bbbk$, and hence a copy of $\mathbb{G}_a=(\Bbbk, +)$ inside $\DPic(\bA)$. More generally, if $u_1, \dots, u_n \in \HHH^1_+(\bA, \bA)$ commute pairwise, that is, $[u_i, u_j]=0$ for all $i,j=1,\dots, n$, the exponential determines an embedding 
\begin{equation}\label{eq: embedding additive groups}
\begin{tikzcd}
	\mathbb{G}_a^n \arrow[hookrightarrow]{r} &\DPic(\bA).
\end{tikzcd}
\end{equation}
\noindent This type of contribution appears in our discussion of derived Picard groups of graded gentle algebras. A second and more complicated contribution which we encounter is related to the Witt algebra.

\subsection{Exponential group of the Witt algebra}\ \medskip

\noindent The \textbf{complete positive Witt algebra} is the complete Lie algebra $(\overline{\cW}^+, [-,-])$ with underlying $\Bbbk$-vector space $\prod_{i \geq 1} \Bbbk w_n$ equipped with the filtration $F_j\overline{\cW}^+= \prod_{i \geq j} \Bbbk w_i$. Its Lie bracket is the unique continuous Lie bracket such that for all $m, n \geq 1$,
	\begin{displaymath}
		[w_m, w_n] = (m-n) w_{n+m}.
	\end{displaymath} 

\noindent As is well-known, $\overline{\cW}^+$ can be realised as a Lie algebra of differential operators on $\Bbbk \llbracket x \rrbracket$ with $\omega_i$ corresponding to the derivation $-x^{i+1} \frac{d}{d x}$. Given any derivation $D$ of $\Bbbk\llbracket x \rrbracket$, then because $\Bbbk\llbracket x \rrbracket$ is complete, we may define the exponential $\exp(D)$ by the usual power series with multiplication being interpreted as composition of power series. It is well-known that $\exp(D)$ is a continuous $\Bbbk$-linear  automorphism of $\Bbbk\llbracket x \rrbracket$ and $\exp$ induces an isomorphism between the exponential group of $\overline{\cW}^+$ and the subgroup $\Aut_1(\Bbbk \llbracket x \rrbracket)$ of continuous automorphisms $\phi$ of $\Bbbk \llbracket x \rrbracket$ with $\phi(x) \in x + (x^2) \in \Bbbk \llbracket x \rrbracket$. We note that $\phi$ is uniquely determined by $\phi(x)$ which we may write as $\phi(x)=x\cdot \zeta_{\phi}$ for some unit $\zeta_{\phi} \in 1 + x\Bbbk \llbracket x \rrbracket \subseteq {(\Bbbk \llbracket x \rrbracket)}^{\times}$. This identifies the \emph{set} of such continuous automorphisms with the set $1+x\Bbbk \llbracket x \rrbracket$. We note however that the group structure on the two sets are \emph{different}. Since $\Bbbk$ is a (discrete) field, every algebra automorphism of $\Bbbk\llbracket x \rrbracket$ is continuous which implies the following.
\begin{cor}\label{CorollaryExponentialGroupWittAlgebraWittVectors}
	Assume that $\operatorname{char} \Bbbk=0$. Then, there exist an isomorphism
	\begin{displaymath}
		\mathbb{W}^+\coloneqq \exp\big(\overline{\cW}^+\big) \cong \Aut_1\!\big(\Bbbk\llbracket x\rrbracket\big).
	\end{displaymath}
\end{cor}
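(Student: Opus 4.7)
The strategy is to combine two ingredients that the text has just assembled. On the one hand, the exponential series yields an isomorphism
\begin{equation*}
\exp\colon \mathbb{W}^+ = \exp(\overline{\cW}^+) \xrightarrow{\ \sim\ } \Aut^{\mathrm{cont}}(\Bbbk\llbracket x \rrbracket)
\end{equation*}
onto the subgroup of \emph{continuous} $\Bbbk$-algebra automorphisms of $\Bbbk\llbracket x \rrbracket$, via $D \mapsto \exp(D)$, with the group-like data encoded by $\phi(x) = x\zeta_\phi$. On the other hand, the corollary reduces at once to the claim that $\Aut^{\mathrm{cont}}(\Bbbk\llbracket x\rrbracket) = \Aut(\Bbbk\llbracket x\rrbracket)$, i.e.\ that automatic continuity holds. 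Granting this, the corollary follows by composition.

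For automatic continuity, the plan is to exploit that $R = \Bbbk\llbracket x \rrbracket$ is a complete Noetherian local ring whose unique maximal ideal $\mathfrak{m} = (x)$ admits the purely ring-theoretic description as the set of non-units (equivalently, the Jacobson radical). Consequently, any $\Bbbk$-algebra automorphism $\phi$ of $R$ must satisfy $\phi(\mathfrak{m}) = \mathfrak{m}$, and then by multiplicativity $\phi(\mathfrak{m}^n) = \mathfrak{m}^n$ for every $n \geq 1$. Since the ideals $\{\mathfrak{m}^n\}_{n \geq 1}$ form a fundamental system of open neighbourhoods of $0$ in the $\mathfrak{m}$-adic topology, this is precisely the statement that $\phi$ is continuous. (One may alternatively invoke Chevalley's theorem on the ideal-topology of complete local rings, but the direct argument above suffices.)

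Putting the two steps together produces the desired isomorphism $\mathbb{W}^+ \cong \Aut(\Bbbk\llbracket x \rrbracket)$, and one may moreover identify the underlying set of this group with $1 + x\Bbbk\llbracket x \rrbracket$ through the assignment $\phi \mapsto \zeta_\phi$, as already noted in the text. There is no real obstacle in this argument — the continuity check is entirely formal — and characteristic zero enters only through the well-definedness of the $\exp$–$\log$ correspondence on $\overline{\cW}^+$. The main conceptual point to emphasise is that the algebraic group structure on $\mathbb{W}^+$ (transported from the BCH product) is strictly different from the additive structure on $1 + x\Bbbk\llbracket x\rrbracket$; the bijection provided by the corollary is only a set-theoretic parametrisation of the group of continuous automorphisms.
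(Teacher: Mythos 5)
Your argument is correct and follows essentially the same route as the paper: the text likewise identifies $\exp\big(\overline{\cW}^+\big)$ with the group of continuous automorphisms of $\Bbbk\llbracket x\rrbracket$ and then concludes by observing that, $\Bbbk$ being discrete, every $\Bbbk$-algebra automorphism is automatically continuous. Your explicit justification of that automatic continuity — any automorphism preserves the maximal ideal $(x)$ as the set of non-units, hence preserves each $(x)^n$, hence is continuous for the adic topology — merely spells out the one-line assertion the paper leaves implicit.
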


		\section{Hochschild cohomology of graded gentle algebras}\label{SectionLieAlgebraStructureGradedGentle}
		\noindent We present some of the results on the Hochschild cohomology of a graded gentle algebra in terms of the boundary of its surface. The computations of the entire Hochschild cohomology along with applications will appear in \cite{OpperHochschildCohomologyGentle}. Throughout the entire section we fix a graded gentle quiver $(Q,I)$ along with its associated graded gentle algebra $A$ which is assumed to be smooth or proper.
		
For $d \in \mathbb{Z}$ and $n \geq 0$, we denote by $\phi_A(n,d)$ the number of boundary components $B$ of $\Sigma_A$ with exactly $n$ boundary segments and winding number $\omega\coloneqq n-d$. In particular, $\phi_A(0,0)$ is the number of fully marked components with vanishing winding number and $\phi_A(1,0)$ describes the number of degenerating boundary components. If $A$ is proper, this is the  Avella-Alaminos-Geiss invariant $\phi_A\colon \mathbb{N} \times \mathbb{Z} \rightarrow \mathbb{N}$ from \cite{AvellaAlaminosGeiss}, and for $A$ homologically smooth, its generalisation from \cite{LekiliPolishchukGentle}. For proper $A$ and from a categorical perspective $\phi_A(m,n)$ counts isomorphism classes up to shift of special collections of $(m,m-n)$-fractional Calabi-Yau objects, that is, objects $X \in \Perf(A)$ such that $\mathbb{S}^mX \cong X[m-n]$, where $\mathbb{S}$ denotes a Serre functor. This interpretation of $\phi_A$ is found originally in \cite{AvellaAlaminosGeiss} in the ungraded case and follows from \cite[Proposition 2.16]{OpperDerivedEquivalences} for arbitrary gradings. Next, we describe the first Hochschild cohomology of $A$. We use the notation $\cW^+$ for the Lie subalgebra $\bigoplus_{i \geq 1} \Bbbk w_n \subseteq \prod_{i \geq 1} \Bbbk w_n=\overline{\cW}^+$, that is, the positive portion of the Witt algebra.
\begin{prp}[{\cite{OpperHochschildCohomologyGentle}}]\label{CorollaryLieAlgebraStructureFirstHochschild}
Let $A$ be a graded gentle algebra over a field of characteristic different from $2$. Then there exists an isomorphism of Lie algebras 
	\begin{displaymath}
	\HHH^1(A,A) \cong \left(\Bbbk^{\phi_A(1,1)} \times  {\cL}^{\phi_A(0,0)}\right) \rtimes \HH_1(\Sigma_A, \Bbbk),
\end{displaymath}
	\noindent where $\cL=\cW^+$ if  $A$ is homologically smooth and $\cL=\overline{\cW}^+$ if $A$ is proper, and where $\Bbbk$ and $\HH_1(\Sigma_A, \Bbbk)$ are abelian Lie algebras. All copies of $\overline{\cW}^+$ lie in $\HHH^1_+(A,A)$ but neither do $\HH_1(\Sigma_A, \Bbbk)$ nor do the copies of $\cW^+$.
\end{prp}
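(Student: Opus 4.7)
The plan is to compute $\HHH^1(A,A)$ directly at the cochain level using a graded refinement of the Bardzell resolution for the gentle algebra $A = \Bbbk Q/R$, in which Hochschild cochains are indexed by pairs $(\alpha, p)$ consisting of an arrow $\alpha \in Q_1$ and a parallel path $p$ of the same internal degree as $\alpha$ and compatible with the gentle relations. Through the geometric model of $\Sigma_A$, parallel paths correspond to sequences of composable irreducible flows tracing portions of the boundary of $\Sigma_A$, so that the entire combinatorics is organised by the boundary components $B$ and their winding numbers.

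Given this model, I would extract three families of internal-degree-zero $1$-cohomology classes. First, for each boundary component $B$ with $n$ segments and winding $\omega_\eta(B)$, wrapping $m$ times around $B$ defines a cochain of internal degree $-m\omega_\eta(B)$; the internal-degree-zero condition forces $\omega_\eta(B)=0$, which produces a single class per degenerating boundary ($n=1$, $\omega=0$), contributing the factor $\Bbbk^{\phi_A(1,1)}$, and a family $\{w_m^B\}_{m \geq 1}$ per fully marked boundary of vanishing winding ($n=0$, $\omega=0$), each assembling into one copy of $\cL$. Second, arrow-supported cocycles coming from abelianised cycles in $Q$ produce $\HH_1(\Sigma_A,\Bbbk)$ via the standard identification of the dual ribbon graph with a deformation retract of $\Sigma_A$. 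A direct cocycle count in the Bardzell model then shows these classes exhaust $\HHH^1(A,A)$. The Gerstenhaber brackets are read off from the brace formula \eqref{EquationDefinitionBracesHochschildComplex}: the relation $[w_m^B, w_n^B]=(n-m)w_{m+n}^B$ within a fully marked summand is a direct computation by insertion of composable wrappings; cocycles supported on distinct boundary components commute by disjointness of supports; and the $\HH_1(\Sigma_A,\Bbbk)$-action on the remaining summands is realised, up to scalars, via the algebraic intersection pairing of $1$-cycles with $B$, giving precisely the stated semi-direct factorisation.

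The main obstacle will be the smooth-versus-proper dichotomy in the shape of $\cL$, combined with the weight filtration assertion. In the smooth case the loop around a fully marked boundary is non-nilpotent in $A$, and the wrapping cocycles $w_m^B$ can be realised as ordinary degree-$0$ derivations (arity-$1$ Hochschild cochains), placing them in $W_1 \setminus W_2$ and forcing $\cL = \cW^+$ to be a discrete Lie algebra; the same arity-$1$ representation applies to $\HH_1(\Sigma_A,\Bbbk)$, which explains why neither lies in $\HHH^1_+(A,A)$. In the proper case the loop is nilpotent in the finite-dimensional algebra $A$, so wrappings around a fully marked boundary are only representable by Hochschild cochains of arity $\geq 2$ encoding $A_\infty$-deformations; this both places the entire Witt summand into $W_2 C^1(A) = \HHH^1_+(A,A)$ and, via the completeness of the Hochschild space \eqref{EquationHochschildSpace}, admits convergent infinite combinations of the $w_m^B$, yielding the completion $\overline{\cW}^+$. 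Reconciling this dichotomy with the block decomposition of \Cref{rem: block decomposition} and the Koszul-duality correspondence between smooth and proper graded gentle algebras, while keeping track of the Gerstenhaber structure on both sides, is where the most delicate bookkeeping is required.
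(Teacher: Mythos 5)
The paper does not actually prove this proposition: it is imported verbatim from the forthcoming work \cite{OpperHochschildCohomologyGentle}, so there is no in-text argument to measure your proposal against. Judged on its own terms, your outline is consistent with the fragments of that computation which the present paper does disclose. The proof of \Cref{PropositionFormalityHochschildComplex} exhibits exactly the cochain basis you describe, indexed by maximal antipaths $p$ with a nonzero parallel path $\para{p}$, and Section \ref{SectionToriDerivedPicardGroups} together with the remark after the proposition confirms that the $\HH_1(\Sigma_A,\Bbbk)$ summand is the diagonal/rescaling part coming from $\pi_1(A)\cong\pi_1(\Sigma_A)$. Your identification of the three families of classes, the arity dichotomy explaining $\cW^+$ versus $\overline{\cW}^+$ (polynomial derivations in arity one for the smooth case, arity $\geq 2$ deformation cochains for the proper case), and the role of the product topology on \eqref{EquationHochschildSpace} in producing the completion are all the right ideas and match the weight-filtration assertions in the statement.

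However, the two steps that constitute the actual content are asserted rather than carried out. First, exhaustiveness: the sentence ``a direct cocycle count in the Bardzell model then shows these classes exhaust $\HHH^1(A,A)$'' is precisely the theorem being cited; without performing that count — including the degenerate configurations where an arrow lies simultaneously on a cycle of $Q$ and on a maximal antipath, which is what forces a semidirect rather than direct product — the proposal remains a plan rather than a proof. Second, the Gerstenhaber bracket does not live on the Bardzell complex. To ``read off'' brackets from the brace formula \eqref{EquationDefinitionBracesHochschildComplex} you must first produce explicit bar-complex cocycle representatives of every class, or construct comparison morphisms between the Bardzell and bar resolutions; this is a genuinely delicate step (the acknowledgments of this paper thank colleagues for help with exactly these comparison maps), and it is also needed to justify the weight-filtration claims, since membership in $\HHH^1_+(A,A)$ is a statement about bar-complex representatives. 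Finally, the closing appeal to Koszul duality and \Cref{rem: block decomposition} cannot reconcile the two cases for you: the functor $\Perf(A^{\invex})\rightarrow\Dfd{A}$ fails to be an equivalence precisely when $\phi_A(0,0)>0$, which is exactly when the answers differ ($\cW^+$ versus $\overline{\cW}^+$), so Hochschild cohomology is not transported between the smooth and proper algebras in the relevant cases and the two computations must be done separately.
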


\begin{rem}The image of $\HH_1(\Sigma_A, \Bbbk)$ inside $\HHH^1(A,A)$ from \Cref{CorollaryLieAlgebraStructureFirstHochschild} is related to the fundamental group $\pi_1(A)$ of $A$ in the sense of \cite{MartinezVillaDeLaPenaFundamentalGroup} which can be generalised to graded algebras in the natural way. In general, the fundamental group depends on a chosen presentation of an algebra but is an invariant of the isomorphism class for monomial algebras in which case it is free abelian. There is a natural identification $\pi_1(A) \cong \pi_1(\Sigma_A)$ and following \cite{DeLaPenaSaorin}, for any ordinary algebra $B$ there is an explicit embedding $\Hom(\pi_1(B), \Bbbk) \hookrightarrow \HHH^1(B,B)$. This can be generalised here in a straightforward way and interpreted via the bigraded Hochschild cohomology of $A$. The work \cite{BriggsRubioYDegrassiMaximalTori} shows that if $B$ is monomial, then the rank of $\pi_1(B)$ agrees with the rank of any maximal torus in $\HHH^{1}(B, B)$. If $\Bbbk$ is algebraically closed, then every maximal torus in $\HHH^1(B,B)$ corresponds to a maximal torus in the outer automorphism group of $B$. For $A$ the situation is more complicated due to the fact that the outer automorphism group of a graded algebra is no longer a derived invariant \cite[Example 4.7]{OpperIntegration}.
\end{rem}
		
		\section{The kernel of the geometrisation homomorphism}\label{SectionKernel}
		\noindent We generalise the results from \cite{OpperDerivedEquivalences} and give a description of the kernel of 
		\begin{displaymath}
		\Psi^{\infty}_A\colon \DPic(A) \rightarrow \MCG_{\gr}(\Sigma)
		\end{displaymath}
		\noindent for any graded gentle algebra $A$ which is homologically smooth or proper. A key ingredient in characteristic zero is the Hochschild exponential from \Cref{SectionIntegrationHochschild}. In positive characteristic, we exploit a formality result for the Hochschild complex to describe the kernel under mild assumptions. 
		\subsection{Tori in the derived Picard group and action by invertible local systems}\label{SectionToriDerivedPicardGroups}\ \medskip
		
		\noindent As before we fix a graded gentle quiver $(Q,I)$, its associated gentle algebra $A$ and $\Sigma=\Sigma_A$ its graded marked surface.  We describe the subgroup of $\Out(A)$ corresponding to the Lie subalgebra $\HH_1(\Sigma_A, \Bbbk) \subseteq \HHH^1(A,A)$. Because $I \subseteq \Bbbk Q$ is generated by monomial relations, every tuple $\Lambda=(\lambda_{\alpha})_{\alpha \in Q_1} \in {(\Bbbk^{\times})}^{Q_1}$ determines an outer automorphism $\cO(\Lambda) \in \Out(A)$ which maps each vertex idempotent to itself and every arrow $\alpha \in Q_1$ to $\lambda_{\alpha} \alpha$. As in \cite{OpperDerivedEquivalences} we refer to such outer automorphism as \textbf{rescaling automorphisms}. They form a subgroup $\cR \subseteq \Out(A)$ which by \cite[Lemma 8.3.]{OpperDerivedEquivalences} fits into a short exact sequence of groups
		\begin{equation}\label{SesRescalingEquivalences}
			\begin{tikzcd}\mathbf{1} \arrow{r} & \Bbbk^{\times} \arrow{r}{\Delta} & (\Bbbk^{\times})^{Q_0} \arrow{r} & (\Bbbk^{\times})^{Q_1} \arrow{r}{\mathcal{O}} & \mathcal{R} \arrow{r} & \mathbf{1}.\end{tikzcd} 
		\end{equation}
		\noindent with $\Delta$ denoting the diagonal embedding. The middle homomorphism is given by ${(\mu_x)}_{x \in Q_0} \mapsto (\mu_{t(\alpha)} \mu_{s(\alpha)}^{-1})_{\alpha \in Q_1}$. In particular, we have the following.
		\begin{prp}[{\cite[Proposition 8.4.]{OpperDerivedEquivalences}}]
			There exists a group isomorphism $\cR \cong \HH^1(\Sigma_{\reg}, \Bbbk^{\times})$.
		\end{prp}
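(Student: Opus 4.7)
The plan is to read the isomorphism off the short exact sequence \eqref{SesRescalingEquivalences}, which immediately yields
\[
\cR \;\cong\; \coker\bigl(\delta\colon (\Bbbk^{\times})^{Q_0} \longrightarrow (\Bbbk^{\times})^{Q_1}\bigr), \qquad \delta(\mu)_{\alpha}=\mu_{t(\alpha)}\mu_{s(\alpha)}^{-1}.
\]
I would next observe that $\delta$ is precisely the cellular coboundary $C^0(|Q|,\Bbbk^{\times})\to C^1(|Q|,\Bbbk^{\times})$ associated to the geometric realisation $|Q|$ of the quiver $Q$, viewed as a one-dimensional CW complex with $0$-cells $Q_0$ and $1$-cells $Q_1$. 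Since no higher-dimensional cells are present, $\coker(\delta)=\HH^1(|Q|,\Bbbk^{\times})$, so $\cR\cong\HH^1(|Q|,\Bbbk^{\times})$.

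The remaining task is to match $|Q|$ with $\Sigma_{\reg}$ up to homotopy. My plan is to fix a full (or finitely-full) formal graded arc system $\Gamma\subseteq \Sigma_A$ whose associated gentle quiver is $(Q,I)$ and realise $|Q|$ as an embedded subspace of $\Sigma_A$: place the vertex corresponding to $\gamma\in\Gamma$ at the midpoint of $\gamma$, and draw the edge corresponding to an irreducible flow $f\in\Flow_{\mathrm{irr}}(\gamma,\gamma')$ as a short transverse arc crossing the marked interval on which $f$ lives. This realises $|Q|$ as the ribbon graph dual to $\Gamma$ inside $\Sigma_A$, and the polygon decomposition $\Sigma_A\setminus\Gamma$ then supplies a deformation retraction $\Sigma_{\reg}\to|Q|$: each disk component retracts onto its portion of $|Q|$ (contributing nothing to $H^1$), while each cylinder component adjacent to a fully marked boundary retracts onto the corresponding loop(s) of $|Q|$ produced by the irreducible flows wrapping the fully marked $S^1$. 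Combined with the first step this yields $\HH^1(\Sigma_{\reg},\Bbbk^{\times})\cong\cR$.

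I expect the main obstacle to lie in handling the two regimes uniformly. In the homologically smooth case, arcs of $\Gamma$ may end on fully marked components, so loops around those $S^1$'s appear in $Q$ and the corresponding cylinders in $\Sigma_A\setminus\Gamma$ must genuinely retract to those loops. In the proper case, fully marked components are disjoint from $\Gamma$ and must be removed (or collapsed) in the passage to $\Sigma_{\reg}$, so that spurious $H^1$-contributions from them do not appear. Keeping the construction of $\Sigma_{\reg}$ and of the deformation retraction compatible with these two situations, so that one obtains a single clean isomorphism $\cR\cong\HH^1(\Sigma_{\reg},\Bbbk^{\times})$, is where the technical bookkeeping of the argument would go.
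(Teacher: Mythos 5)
Your argument is correct and is exactly the intended one: the paper gives no proof beyond citing \cite[Proposition 8.4]{OpperDerivedEquivalences}, but the short exact sequence \eqref{SesRescalingEquivalences} stated immediately beforehand is there precisely so that $\cR$ can be read off as the cokernel of the cellular coboundary of the ribbon graph of the arc system, which is a deformation retract of the surface. One small remark on your final paragraph: deleting a fully marked boundary circle from a compact surface with boundary does not change the homotopy type, so there are no ``spurious'' $\HH^1$-contributions to worry about in the proper case.
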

		\noindent Equivalently, elements of $\cR$  can be realised as an action by invertible local systems on $\Sigma_A$, that is, the invertible objects in the symmetric monoidal category of local systems. Because these are exactly the $1$-dimensional local systems, one obtains a string of  isomorphisms 
		\begin{displaymath}
			\Pic(\Loc_{\Sigma_A})\cong\Hom_{\mathbb{Z}}(\pi_1(\Sigma_A, x), \Bbbk^{\times})\cong \HH^1(\Sigma_A, \Bbbk^{\times}),
		\end{displaymath}
		\noindent where $\Pic(\Loc_{\Sigma_{A}})$ denotes the Picard group of $\Loc_{\Sigma_A}$. To make this explicit, every isomorphism class $[\cL] \in \Pic(\Loc_{\Sigma_A})$ induces a canonical element $R_{[\cL]} \in \cR$ as follows. Up to isomorphism we may and will assume that $\cL(x)=\Bbbk$ for all $x \in \Sigma_A$ and that $\cL(\gamma)=\operatorname{Id}_{\Bbbk}$ for all arcs $\gamma$ in a fixed projective arc system. This allows us to identify $\cL(f)$ of a flow $f$ with a non-zero scalar and by definition, $\cR_{[\cL]}$ maps an arrow $\alpha \in Q_1$ to $\cL(f_{\alpha}) \cdot \alpha$, where $f_{\alpha}$ denotes the flow which is associated with $\alpha$. The following proposition sums up our previous discussion. 	
		\begin{prp}
			Let $A$ be a graded gentle algebra. The assignment $[\cL] \mapsto R_{[\cL]}$ induces an isomorphism
			\begin{displaymath}
				\begin{tikzcd}
					\HH^1(\Sigma_A, \Bbbk^{\times})\cong \Pic(\Loc_{\Sigma_A}) \arrow{r}{\sim} & \cR \subseteq \Out(A) \subseteq \Aut^{\infty}(A).
				\end{tikzcd}
			\end{displaymath}
		\end{prp}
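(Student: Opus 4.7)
The plan is to verify directly that the assignment $[\cL] \mapsto R_{[\cL]}$ defines a well-defined group homomorphism, and then to conclude bijectivity by matching it with the already-established abstract isomorphism $\cR \cong \HH^1(\Sigma_A, \Bbbk^\times)$ coming from Proposition~8.4 of \cite{OpperDerivedEquivalences} together with the short exact sequence \eqref{SesRescalingEquivalences} and the standard identification $\Pic(\Loc_{\Sigma_A}) \cong \HH^1(\Sigma_A, \Bbbk^\times)$.

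For well-definedness, I would observe that any two trivializations satisfying $\cL(x) = \Bbbk$ on vertices and $\cL(\gamma) = \operatorname{Id}_\Bbbk$ on the projective arcs differ by a gauge transformation specified by a tuple $(\mu_x)_{x \in Q_0} \in (\Bbbk^\times)^{Q_0}$. Under such a regauge, the scalar $\cL(f_\alpha)$ is rescaled to $\mu_{t(\alpha)} \mu_{s(\alpha)}^{-1} \cL(f_\alpha)$, which by \eqref{SesRescalingEquivalences} lies in the kernel of the surjection $\mathcal{O}\colon (\Bbbk^\times)^{Q_1} \to \cR$; hence $R_{[\cL]}$ is independent of choices. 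The homomorphism property is immediate since $(\cL_1 \otimes \cL_2)(f_\alpha) = \cL_1(f_\alpha) \cL_2(f_\alpha)$, matching the abelian group structure on $\cR$ inherited from $(\Bbbk^\times)^{Q_1}$.

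For bijectivity, I would argue directly. Surjectivity: given any $(\lambda_\alpha) \in (\Bbbk^\times)^{Q_1}$, define a local system $\cL$ which is trivial on each projective arc and has holonomy $\lambda_\alpha$ along the flow $f_\alpha$; this extends consistently over each disk in the complement of the projective arc system because each such disk produces a cyclic relation among the flows which is automatically satisfied modulo coboundaries, and hence the prescribed data glues to a global local system representing some class $[\cL]$ with $R_{[\cL]} = \mathcal{O}((\lambda_\alpha))$. Injectivity: if $R_{[\cL]}$ is trivial in $\cR$, then $(\cL(f_\alpha))$ lies in the image of $(\Bbbk^\times)^{Q_0}$, so after a regauge $\cL$ is trivial on the union of projective arcs and flows, which is a deformation retract of the relevant punctured model of $\Sigma_A$; hence $[\cL]$ is trivial in $\Pic(\Loc_{\Sigma_A})$.

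The main technical obstacle is the cellular matching between the exact sequence \eqref{SesRescalingEquivalences} and the singular cohomology of $\Sigma_A$ with coefficients in $\Bbbk^\times$. Concretely, this requires a CW-structure whose $0$-cells are the marked intervals (or punctures), whose $1$-cells are the projective arcs together with the flow segments, and whose $2$-cells are the disks of the complement (plus cylinders around fully marked boundary components in the finitely-full case). Verifying that the resulting cellular coboundary agrees precisely with the map $(\Bbbk^\times)^{Q_0} \to (\Bbbk^\times)^{Q_1}$ of \eqref{SesRescalingEquivalences} is what identifies our concrete map with the abstract isomorphism and completes the proof.
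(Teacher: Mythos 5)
Your overall strategy is sound, and it is in fact more self-contained than what the paper does: the paper offers no separate argument for this proposition at all, stating only that it ``sums up the previous discussion'', i.e.\ it leans on the short exact sequence \eqref{SesRescalingEquivalences} and the abstract isomorphism $\cR \cong \HH^1(\Sigma,\Bbbk^{\times})$ imported from \cite[Lemma 8.3, Proposition 8.4]{OpperDerivedEquivalences}, together with the explicit description of $R_{[\cL]}$. Your well-definedness and homomorphism checks are exactly the right normalisations to record, and your injectivity argument is correct: it rests on the observation that the graph formed by the projective arcs and the irreducible flows (equivalently, the underlying graph of $Q$) is a deformation retract of $\Sigma_A$, so a local system that is trivialised on that graph is trivial. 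If you carry the direct surjectivity and injectivity arguments through, the ``cellular matching'' you flag at the end is not actually needed to prove the proposition as stated; it would only be needed to compare your concrete isomorphism with the abstract one of \cite[Proposition 8.4]{OpperDerivedEquivalences}, which the statement does not require.

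The one step that does not survive scrutiny as written is the justification of surjectivity. You say the prescribed holonomies extend over each complementary disk ``because each such disk produces a cyclic relation among the flows which is automatically satisfied modulo coboundaries''. If the complementary regions were honest $2$-cells attached along their entire boundary, the cyclic relation would be a genuine constraint (the product of the holonomies around the attaching circle must equal $1$), and it is certainly not satisfied by an arbitrary tuple $(\lambda_{\alpha})$; ``modulo coboundaries'' cannot repair a failure of a holonomy condition on a fixed $2$-cell. The correct reason no constraint arises is that each complementary region contains a free piece of $\partial\Sigma_A$ in its boundary (a boundary segment for the disks, a fully marked component for the cylinders in the finitely full case), hence deformation retracts onto the remaining part of its boundary lying on the arcs and flows and imposes no condition on the local system. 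Equivalently: this is precisely the deformation retract fact you already invoke for injectivity, and once you use it, \emph{any} assignment of holonomies to the edges of the retract extends uniquely up to isomorphism, giving surjectivity with no case analysis. With that substitution the argument is complete.
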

\begin{rem}\label{RemarkLocalSystemsPuncturedCase}The same definitions still make sense for the case when $A$ is the $A_\infty$-algebra (or category) associated to a full arc system on any marked surface $\Sigma$, including punctured surfaces. It is straightforward to see that $R_{[\mathcal{L}]}$ still defines a strict $A_\infty$-functor of $A$ in these cases. Indeed, if $\alpha_1, \dots, \alpha_n$ is a disc sequence and $R_{[\mathcal{L}]}(\alpha_i)=\lambda_i \alpha_i$, then $\prod_{i=1}^n \lambda_i=1$ because $\mathcal{L}$ maps contractible curves to the identity and hence $\cR_{[\cL]}$ respects the defining $A_\infty$-relations.
\end{rem}

		\subsection{The kernel in characteristic zero via the Hochschild exponential}\ \medskip
		
		\begin{lem}\label{LemmaImageIntegrableClasses}
			Let $A$ be a rigid graded gentle algebra
			If $A$ is proper, then $\Aut^{\infty}_{\circ}(A) \subseteq \ker \Psi^{\infty}$. If $A$ is homologically smooth and its projective arc systems on $\Sigma_A$ consists of finite and semi-infinite arcs, then $\Aut_+^{\infty}(A) \subseteq \ker \Psi^{\infty}$.
		\end{lem}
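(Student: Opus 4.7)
The plan is to reduce both assertions to \Cref{LemmaRigidGentleKernel}, which under either hypothesis on $A$ characterises $\ker \Psi_A$ as those autoequivalences of $\cT_A$ that fix all direct summands of $A$ in $\Perf(A)$ up to isomorphism. Since $A$ is basic, the indecomposable such summands are the representable modules $P_i$, one per vertex, and they are pairwise non-isomorphic. Thus, for a representative $A_\infty$-endofunctor $F$ of $A$ in the prescribed subgroup, it suffices to show that the canonical extension $\tilde{F} \in \DPic(A)$ afforded by the embedding \eqref{EquationEmbeddingAutomorphismDerivedPicardGroups} satisfies $\tilde{F}(P_i) \cong P_i$ in $\cD(A)$ for every vertex $i$.

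For the homologically smooth case, the defining condition $F^1 = \operatorname{Id}_A$ for $\Aut^{\infty}_+(A)$ specialises, in degree zero, to $F^0 = \operatorname{Id}$ on objects. Since the canonical extension $\tilde{F}$ restricted to the essential image of the Yoneda embedding agrees with $\iota \circ F$, the functor $\tilde{F}$ sends each $P_i$ to itself. Invoking \Cref{LemmaRigidGentleKernel} then places $F$ in $\ker \Psi^{\infty}_A$.

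For the proper case, I will exploit the semidirect product decomposition $\Aut^{\infty}_{\circ}(A) \cong \Aut^{\infty}_+(A) \rtimes \OutO(A)$ recalled in Section~\ref{SectionSpecialSubgroups}. As $\ker \Psi^{\infty}_A$ is a subgroup of $\DPic(A)$, it suffices to treat the two factors separately. The $\Aut^{\infty}_+(A)$ factor is handled exactly as in the previous paragraph. For the $\OutO(A)$ factor, properness of $A$ ensures that its category algebra is finite-dimensional, so that $\Out(A)$ carries the structure of an algebraic group, acting algebraically on the finite discrete set of isomorphism classes of indecomposable direct summands of $A$. The identity component $\OutO(A)$ is connected, and any action of a connected algebraic group on a finite discrete set is trivial; consequently every $F_0 \in \OutO(A)$ fixes each class $[P_i]$, so $\tilde{F}_0(P_i) \cong P_i$, and a further application of \Cref{LemmaRigidGentleKernel} concludes.

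I do not expect any serious obstacle. The only point requiring mild care is the verification that the action of the canonical extension $\tilde{F}$ on representable modules is governed by $F^0$; this is immediate from the construction of \eqref{EquationEmbeddingAutomorphismDerivedPicardGroups}, which extends an $A_\infty$-endofunctor $F$ of $A$ by composing it with the Yoneda embedding and then extending uniquely up to weak equivalence to $\Perf(A)$.
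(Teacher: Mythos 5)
Your proposal is correct and follows essentially the same route as the paper: reduce via \Cref{LemmaRigidGentleKernel} to checking that the indecomposable summands of $A$ are fixed up to isomorphism, observe this is immediate for $\Aut^{\infty}_+(A)$ since $F^1=\operatorname{Id}$, and handle the $\OutO(A)$ factor of the semidirect product by a connectedness argument. The only difference is that where you invoke the general fact that a connected algebraic group acts trivially on a finite discrete set (which tacitly requires knowing the action on isomorphism classes of projectives is algebraic), the paper makes this concrete by noting that $\Out(A)\rightarrow\Out(A/\rad A)$ is a morphism of algebraic groups with discrete target, so $\OutO(A)$ maps to the identity.
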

		\begin{proof}
			By Lemma \ref{LemmaRigidGentleKernel}, $F \in \DPic(A)$ lies in $\ker \Psi^{\infty}$ if and only if $\gamma_{F(P)} \simeq \gamma_P$ for all indecomposable direct summands $P$ of $A$. In particular, $\Aut^{\infty}_+(A) \subseteq \ker \Psi^{\infty}$. In the proper case, the projection of $A$ onto its radical quotient induces a map of algebraic groups $\pi\colon\Out(A) \rightarrow \Out(A/\rad A)=\Aut_{\gr}(A/\rad A)$. The latter is discrete and isomorphic to the symmetric group over $Q_0$. Thus, $\pi(\OutO(A))=\{\operatorname{Id}\}$ and hence for every $f \in \OutO(A)$, $\Psi^{\infty}(f)$ preserves the homotopy classes of every projective arc which proves $\OutO(A) \subseteq \ker \Psi^{\infty}$. 
		\end{proof}

		\begin{thm}\label{TheoremIntersectionPicardGroupKernel} Assume that $\operatorname{char}\Bbbk=0$ and let $A$ be a rigid graded gentle algebra which is proper or such that the projective arc system on $\Sigma_A$ consists of finite and semi-infinite arcs.
			\noindent If $A$ is derived equivalent to the path algebra of the Kronecker quiver, then  $\ker\Psi^{\infty}_A \cong \PGL_2(\Bbbk)$. If not and
			\begin{enumerate}
				\item $A$ is proper, then
				\begin{displaymath}
				\ker \Psi^{\infty}_A  \cong \Aut^{\infty}_{\circ}(A) \cong \Big(\mathbb{G}_a^{\phi_A(1,1)} \times  {\mathrm{Aut}_1(\Bbbk\llbracket x \rrbracket)}^{\phi_A(0,0)} \Big) \rtimes \HH^1(\Sigma_A, \Bbbk^{\times}),
				\end{displaymath}
					\item $A$ is homologically smooth, then 
						\begin{displaymath}
					\ker \Psi^{\infty}_A  \cong \Big(\mathbb{G}_a^{\phi_A(1,1)} \times  {\mathbb{G}_a}^{\phi_A(0,0)} \Big) \rtimes \HH^1(\Sigma_A, \Bbbk^{\times}),
					\end{displaymath}
			\end{enumerate}
			\noindent where $\phi_A$ denotes the Avella-Alaminos-Geiss invariant.
		\end{thm}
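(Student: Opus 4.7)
The plan is to first reduce, in the proper case, to computing the group $\Aut^{\infty}_{\circ}(A)$, and then to apply the Hochschild exponential of \Cref{TheoremIntegrationHochschildCohomology} to the positive part of $\HHH^1(A,A)$ and identify the toric complement. By the Corollary following \Cref{LemmaRigidGentleKernel}, every $F \in \ker \Psi^{\infty}_A$ is representable by an $A_\infty$-endofunctor of $A$ whose action on direct summands of $A$ fixes isomorphism classes. In the proper case this forces the first Taylor coefficient to induce the identity permutation on vertices of the underlying quiver, placing it in the identity component $\OutO(A)$; combined with the inclusion from \Cref{LemmaImageIntegrableClasses} this yields the equality $\ker \Psi^{\infty}_A = \Aut^{\infty}_{\circ}(A)$.

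With this reduction in place, the semi-direct decomposition $\Aut^{\infty}_{\circ}(A) \cong \Aut^{\infty}_+(A) \rtimes \OutO(A)$ from \Cref{SectionSpecialSubgroups} splits the computation into two parts. For the unipotent factor I apply the exponential isomorphism $\exp_A \colon (\HHH^1_+(A,A), \operatorname{BCH}) \xrightarrow{\sim} \Aut^{\infty}_+(A)$ of \Cref{TheoremIntegrationHochschildCohomology}. By \Cref{CorollaryLieAlgebraStructureFirstHochschild}, the proper-case positive part decomposes as
\[
\HHH^1_+(A,A) \cong \Bbbk^{\phi_A(1,1)} \times {\overline{\cW}^+}^{\phi_A(0,0)},
\]
a direct product of pairwise commuting Lie subalgebras; since the Baker-Campbell-Hausdorff formula reduces to ordinary addition on commuting elements, $\exp_A$ respects this direct product. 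The abelian factor then integrates to $\mathbb{G}_a^{\phi_A(1,1)}$ via the embedding \eqref{eq: embedding additive groups}, while each completed positive Witt summand integrates to $\mathrm{Aut}(\Bbbk\llbracket x \rrbracket)$ by \Cref{CorollaryExponentialGroupWittAlgebraWittVectors}. The toric complement $\OutO(A)$ is identified with the rescaling group $\cR \cong \HH^1(\Sigma_A, \Bbbk^{\times})$ of \Cref{SectionToriDerivedPicardGroups}, whose Lie algebra $\HH_1(\Sigma_A, \Bbbk)$ is precisely the complement appearing in \Cref{CorollaryLieAlgebraStructureFirstHochschild}, so the semi-direct product structure of the theorem lifts from the Lie-algebraic level to the group level.

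The main obstacle is the homologically smooth case: here the positive Witt summands $\cW^+$ contribute to $\HHH^1(A,A)$ but \emph{not} to $\HHH^1_+(A,A)$ (per \Cref{CorollaryLieAlgebraStructureFirstHochschild}), so the $\mathrm{Aut}(\Bbbk[x])$ factors of the theorem are not accessible through $\exp_A$. To handle this I would proceed via Koszul duality: by \Cref{TheoremProperGentleReflexive} the proper Koszul dual $A^{\invex}$ of \Cref{PropositionKoszulFunctors} satisfies $\DPic(A^{\invex}) \cong \DPic(\Dfd{A^{\invex}})$, and the orthogonal block decomposition of \Cref{rem: block decomposition} presents $\Dfd{A^{\invex}}$ as the product of $\Perf(A)$ with one copy of $\Dfd{\Bbbk[x, x^{-1}]}$ for each fully marked boundary component of vanishing winding number. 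Applying the proper-case computation to $A^{\invex}$ and isolating the contribution of the $\Perf(A)$-block yields the $\mathrm{Aut}(\Bbbk[x])^{\phi_A(0,0)}$ factors in the smooth case, using the assumption that the projective arc system consists of finite and semi-infinite arcs to control how $\OutO(A)$ sees the completed variables at fully marked ends.

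Finally, the Kronecker exception arises because in that case $\Perf(A)$ is equivalent under Beilinson's theorem to $\Perf(\mathbb{P}^1)$, where $\Aut(\mathbb{P}^1) \cong \PGL_2(\Bbbk)$ acts transitively on the family of non-rigid tube objects described in \Cref{ExampleImperfectionsKronecker}. Here the $\overline{\cW}^+$-exponential embeds only as the upper-triangular Borel subgroup of $\PGL_2(\Bbbk)$ stabilising the boundary arcs, and the extra generators promoting this to the full $\PGL_2(\Bbbk)$ come precisely from the transitive action on the central tube, so that $\ker\Psi^{\infty}_A\cong \PGL_2(\Bbbk)$ in this case.
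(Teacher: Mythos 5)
Your overall strategy for the proper case — reducing to $\Aut^{\infty}_{\circ}(A)$ via the corollary to \Cref{LemmaRigidGentleKernel}, splitting off $F^1$, and exponentiating $\HHH^1_+(A,A)$ — is the one the paper uses, but two of your intermediate claims are wrong. The decomposition $\HHH^1_+(A,A)\cong \Bbbk^{\phi_A(1,1)}\times{\overline{\cW}^+}^{\phi_A(0,0)}$ is not what \Cref{CorollaryLieAlgebraStructureFirstHochschild} provides: only the copies of $\overline{\cW}^+$ lie in $\HHH^1_+(A,A)$, while the classes spanning $\Bbbk^{\phi_A(1,1)}$ are weight-one classes realised by honest algebra automorphisms of the form $\operatorname{Id}+h$ (the affine coordinate transformations of \cite{OpperDerivedEquivalences}), hence live in $\OutO(A)$ and not in $\Aut^{\infty}_+(A)=\exp\big(\HHH^1_+(A,A)\big)$. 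Correspondingly, your identification of the complement $\OutO(A)\cap\ker\Psi^{\infty}$ with the torus $\cR\cong\HH^1(\Sigma_A,\Bbbk^{\times})$ is too small: the paper shows, via the explicit structure of $\Out(A)$ for gentle algebras, that this group is generated by $\cR$ together with the affine coordinate transformations, and that is where the $\mathbb{G}_a^{\phi_A(1,1)}$ factor actually comes from. The final formula survives this misattribution, but the step ``$F^1$ fixes the vertices, hence lies in $\OutO(A)$'' also relies on that structure theory and is not automatic.

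The homologically smooth case is a genuine gap. The operation $(-)^{\invex}$ is defined on proper algebras, and for smooth $A$ the category $\Perf(A)$ is not an orthogonal block of anything in sight; the relevant comparison is the fully faithful but generally non-essentially-surjective embedding $\Perf(A)\hookrightarrow\Dfd{A^!}$ of \Cref{PropositionKoszulFunctors}, which fails to be an equivalence precisely when $\phi_A(0,0)>0$, i.e.\ precisely when the $\mathrm{Aut}(\Bbbk[x])$ factors occur. So ``isolating the contribution of the $\Perf(A)$-block'' is not available, and the passage from $\mathrm{Aut}(\Bbbk\llbracket x\rrbracket)$ to $\mathrm{Aut}(\Bbbk[x])$ is asserted rather than proved. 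The paper instead runs the same weight-one analysis directly on $A$: in the smooth case the copies of $\cW^+$ also sit outside $\HHH^1_+(A,A)$, so these contributions again come from genuine automorphisms detected by the analysis of $\Out(A)\cap\ker\Psi^{\infty}$, yielding $\mathrm{Aut}(\Bbbk[x])$ rather than its completion. Finally, your Kronecker discussion is heuristic and partly incorrect: $\cK$ is concentrated in degree zero, so $\Aut^{\infty}_+(\cK)=0$ and there is no ``$\overline{\cW}^+$-exponential embedding as a Borel subgroup''; the paper simply notes that a rigid graded gentle algebra derived equivalent to $\cK$ is isomorphic to it and quotes $\ker\Psi^{\infty}_{\cK}=\Out(\cK)=\PGL_2(\Bbbk)$ from \cite{OpperDerivedEquivalences}.
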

		\begin{proof}Let $F \in \ker \Psi^{\infty}_A$. Then because $A$ is rigid, it follows from \Cref{TheoremgeometrisationHomomorphism} and \Cref{TheoremGeometrisationHomomorphismSmoothCase}, that $F$ maps each indecomposable direct summand of $A$ to itself up to isomorphism.
			Because $F \circ (F^1)^{-1} \in \Aut_{+}^{\infty}(A) \subseteq \ker \Psi^{\infty}$, it follows that $F^1 \in \ker \Psi^{\infty}$. Again, because $A$ is rigid, it now follows in the same way as in the proof of \cite[Lemma 5.9]{OpperDerivedEquivalences} that every element in $\ker \Psi^{\infty} \cap \Out(A)$ is a composition of an element of $\cR$ and an algebra automorphism of the form $\operatorname{Id} + h$ which corresponds to a copy of $\Bbbk$ in \Cref{CorollaryLieAlgebraStructureFirstHochschild}. Such automorphism were called \textit{affine coordinate transformations} in \cite{OpperDerivedEquivalences}. If $A$ is proper, this proves $F^1 \in \OutO(A)$. By composing $F$ with the inverse of $F^1$, we may reduce to the case $F \in \Aut^{\infty}_+(A)$. Since $\operatorname{char} \Bbbk=0$, Theorem \ref{TheoremIntegrationHochschildCohomology} implies that $\Aut^{\infty}_+(A)$ is isomorphic to the exponential group of $\HHH^1_+(A,A)$. The structure of the latter is a consequence of \Cref{CorollaryExponentialGroupWittAlgebraWittVectors} and \Cref{CorollaryLieAlgebraStructureFirstHochschild}. It follows from \Cref{TheoremgeometrisationHomomorphism} that every rigid graded gentle algebra which is derived equivalent to the Kronecker algebra $\cK$ is isomorphic to it. Hence the result is already contained in \cite{OpperDerivedEquivalences} and  $\PGL_2(\Bbbk)=\Out(\cK)=\ker \Psi_{\cK}^{\infty}$. 
		\end{proof}

	\begin{cor}\label{cor: Theorem A}
	Assume that $\operatorname{char} \Bbbk=0$ and let $A$ be a graded gentle algebra which is homologically smooth or proper. If $A$ is derived equivalent to the path algebra of the Kronecker quiver, then $\DPic(A) \cong \PGL_2(\Bbbk) \rtimes \MCG_{\gr}(\Sigma_A)$. If not and
	\begin{enumerate}
		\item $A$ is proper, then $$\DPic(A) \cong \Big(\big(\mathbb{G}_a^{\phi_A(1,1)} \times  {\mathrm{Aut}_1(\Bbbk\llbracket x \rrbracket)}^{\phi_A(0,0)} \big) \rtimes \HH^1(\Sigma_A, \Bbbk^{\times})\Big) \rtimes \MCG_{\gr}(\Sigma_A),$$
		\item $A$ is homologically smooth, then $$\DPic(A) \cong \Big(\big(\mathbb{G}_a^{\phi_A(1,1)} \times  {\mathbb{G}_a}^{\phi_A(0,0)} \big) \rtimes \HH^1(\Sigma_A, \Bbbk^{\times})\Big) \rtimes \MCG_{\gr}(\Sigma_A),$$
	\end{enumerate}
\noindent where $\phi_A$ denotes the Avella-Alaminos-Geiss invariant.
	\end{cor}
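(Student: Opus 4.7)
My plan is to deduce \Cref{cor: Theorem A} by combining the semi-direct product decomposition of \Cref{CorollarySemiDirectProductPicardGroup} with the explicit description of $\ker \Psi_A^\infty$ obtained in \Cref{TheoremIntersectionPicardGroupKernel}. The section $\alpha\colon \MCG_{\gr}(\Sigma_A) \to \DPic(A)$ provided by \Cref{TheoremActionSplitsProjectionMaps} splits the short exact sequence
\[
1 \longrightarrow \ker \Psi_A^\infty \longrightarrow \DPic(A) \xrightarrow{\Psi_A^\infty} \MCG_{\gr}(\Sigma_A) \longrightarrow 1,
\]
so the identification $\DPic(A) \cong \ker \Psi_A^\infty \rtimes \MCG_{\gr}(\Sigma_A)$ from \Cref{CorollarySemiDirectProductPicardGroup} reduces the problem to computing the kernel, and the three clauses of \Cref{TheoremIntersectionPicardGroupKernel} then read off the three displayed formulas of the corollary.

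The step requiring care is that both \Cref{CorollarySemiDirectProductPicardGroup} and \Cref{TheoremIntersectionPicardGroupKernel} assume that $A$ is rigid and, in the homologically smooth case, that the projective arc system of $A$ consists of finite and semi-infinite arcs, whereas \Cref{cor: Theorem A} is stated for arbitrary graded gentle algebras. My first move would therefore be to replace $A$ by a Morita-equivalent graded gentle algebra $A'$ satisfying these hypotheses. \Cref{PropositionGentleRigidity} supplies a rigid Morita representative, and I expect its inductive construction (absorbing the boundary segment on a degenerating boundary component into an adjacent arc of the arc system) to admit a parallel refinement in the homologically smooth case that similarly absorbs every infinite arc into an adjacent arc along a flow on a fully marked component without breaking formality or finite-fullness. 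Once $A$ has been replaced by $A'$, the statement transfers verbatim because $\DPic(-)$, $\Sigma_{(-)}$, $\MCG_{\gr}(\Sigma_{(-)})$, and the invariant $\phi_{(-)}$ are all Morita invariants by \Cref{prop: Morita invariance gentle algbebras} and its consequences.

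The Kronecker case must be flagged separately because its kernel is $\PGL_2(\Bbbk)$ rather than the exponential/Witt group appearing in the general formula; this is already the exceptional clause of \Cref{TheoremIntersectionPicardGroupKernel}, so after applying the semi-direct product decomposition one obtains the first formula of the corollary directly. I expect the main obstacle to be the arc-system refinement in the homologically smooth case: one has to check that every graded marked surface supports a rigid full formal arc system without infinite arcs, and this may require a case analysis around fully marked components of high winding number or surfaces whose only full arc systems have infinite arcs. If the refinement fails in some configuration, the fallback is to prove a strengthening of \Cref{LemmaRigidGentleKernel} that removes the finite/semi-infinite hypothesis, for instance by replicating the Alexander-method argument directly on an arc system containing infinite arcs and using \Cref{TheoremGeometrisationHomomorphismSmoothCase} to pin down the mapping class on the infinite arcs as well.
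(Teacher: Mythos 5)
Your proposal is correct and follows essentially the same route as the paper: reduce via Morita equivalence to a rigid gentle algebra satisfying the hypotheses of \Cref{TheoremIntersectionPicardGroupKernel} (using \Cref{PropositionGentleRigidity} in the proper case and an inductive arc-system construction in the homologically smooth case), then apply \Cref{CorollarySemiDirectProductPicardGroup} together with the kernel computation. The paper likewise leaves the homologically smooth arc-system refinement as a brief inductive construction, so your flagged concern is the same gap the paper elects not to spell out.
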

\begin{proof}
Follows from \Cref{PropositionGentleRigidity}, \Cref{CorollarySemiDirectProductPicardGroup} and the fact that every graded gentle algebra is Morita equivalent to a graded gentle algebra satisfying the assumptions of \Cref{TheoremIntersectionPicardGroupKernel}. If $A$ is proper, this is \Cref{PropositionGentleRigidity}. If $A$ is homologically smooth, we may replace it by virtue of \Cref{prop: Morita invariance gentle algbebras} with a derived equivalent rigid graded gentle algebra which has a projective arc system consisting of finite and semi-infinite arcs. To find such an algebra, one inductively constructs an arc system on $\Sigma_A$ with the desired properties.
\end{proof}
	
	\noindent In the subsequent section, we prove an analogue of the previous theorem in positive characteristic. Due to the absence of the exponential map, we exploit a different feature, namely the formality of the dg algebra structure of the Hochschild complex.

	\begin{rem}
		Because the Kronecker algebra $\cK$ is concentrated in degree $0$, $\Aut_{+}^{\infty}(\cK)=0$.  The surface of the Kronecker quiver is the only surface where distinct boundary components are homotopic as curves and are gradable.
	\end{rem}		
		
		\section{Formality and derived Picard groups in positive characteristic}\label{SectionFormalityHochschildComplex}
		\noindent In a large number of cases we show that the dg algebra structure on Hochschild complex of a graded gentle algebra is formal. We then use this result to show that the group $\Aut^{\infty}_+(A)$ in those cases is independent of the characteristic of the underlying field. Ultimately, this will lead to the proof of Theorem \ref{IntroThmA} in positive characteristic. We fix a graded gentle quiver $(Q,I)$ and denote by $A$ its associated graded gentle algebra.
		\subsection{Formality of the Hochschild dg algebra}
		\noindent We recall that an \textit{anti-path} (resp.~\textit{path}) in $A$ is a path $p=\alpha_1 \cdots \alpha_l$, $\alpha_i \in Q_1$ such that $\alpha_i \alpha_{i+1} \in I$ (resp.~$\alpha_i \alpha_{i+1} \not \in I$) for all $1 \leq i < l$. Moreover, $p$ is \textit{maximal} if it is not properly contained in another anti-path (resp.~path) in $A$. If $p$ is a maximal anti-path, then by the types of relations in a gentle quiver, there exists at most one parallel path $\para{p}$, that is, $(s(p), t(p))=(s(\para{p}, t(\para{p}))$, such that $\para{p} \not \in I$ and such that $\para{p}$ and $p$ do not share the same first arrow and do not share the same last arrow. If $\para{p}$ does not exist, we indicate this by writing $\para{p}=\emptyset$. If $\para{p}$ exists, then it is automatically maximal due to the gentle relations and the maximality of $p$.
		
		\begin{prp}\label{PropositionFormalityHochschildComplex}
			Suppose that $A$ is a homologically smooth and proper graded gentle algebra. Then the Hochschild dg algebra of $A$ is formal.
		\end{prp}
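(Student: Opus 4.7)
The plan is to produce a quasi-isomorphism of dg algebras $\HHH^{\bullet}(A,A) \xrightarrow{\simeq} C(A)$, where the source carries the zero differential. Equivalently, using Kadeishvili's homotopy transfer theorem, I would place a minimal $A_\infty$-algebra structure on $H \coloneqq \HHH^{\bullet}(A,A)$ quasi-isomorphic to $C(A)$ and argue that all higher operations $m_n$ with $n \geq 3$ vanish. Before attacking the transfer itself, I would first extract the geometric consequences of the hypothesis: if $A$ is both homologically smooth and proper, then $\Sigma_A$ has \emph{no} fully marked boundary components. Indeed, a fully marked component can meet no endpoint of any arc in a full arc system by properness (\Cref{PropositionPropertiesFukayaCategories}), but the complement of a full arc system near such a component would be an annulus rather than a disk, contradicting fullness. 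In particular the invariants $\phi_A(0,0)$ and $\phi_A(1,0)$ vanish, and the $\overline{\cW}^+$- and $\cW^+$-summands of \Cref{CorollaryLieAlgebraStructureFirstHochschild} are absent.

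Next, I would equip $C(A)$ with an auxiliary \emph{weight grading} arising from the path-length grading on $A$, obtained after replacing $A$, if necessary, by a graded Morita-equivalent model (the gentle relations are monomial so such a model exists). Because the differential and multiplication on $C(A)$ are constructed from the $A_\infty$-operations on $A$, which are themselves weighted homogeneous of weight zero with respect to path length, this second grading is preserved by the dg algebra structure. Hence the transferred operations $m_n$ on $H$ are also weight homogeneous. The key assertion would then be a \textbf{purity statement}: under smoothness plus properness, each cohomological degree $\HHH^k(A,A)$ is concentrated in a single weight. Combining the computation of $\HHH^{\bullet}(A,A)$ from \cite{OpperHochschildCohomologyGentle} with the vanishing of $\phi_A(0,0)$ and $\phi_A(1,0)$ above, the weight of a cohomology class should be forced to depend only on its cohomological degree, since the classes that obstruct purity in general are exactly those supplied by the Witt-algebra contributions associated to fully marked and degenerating boundary components.

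Once purity is in place, formality follows by a Deligne-style argument: any nontrivial operation $m_n\colon H^{\otimes n} \to H$ ($n \geq 3$) would necessarily shift the weight grading by a fixed non-trivial amount (determined by the combinatorics of the associahedron), but purity together with the bigrading constraint forces the target component to vanish; a short induction on $n$ then kills all higher operations. The main obstacle I anticipate is verifying the purity claim in full generality -- it requires inspecting the explicit generators of $\HHH^{\bullet}(A,A)$ from \cite{OpperHochschildCohomologyGentle} weight-by-weight and confirming that the smooth-and-proper hypothesis removes precisely the classes that mix weights. Once this bookkeeping is carried out, the formality of the Hochschild dg algebra of $A$ is immediate.
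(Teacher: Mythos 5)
Your opening geometric observation is correct: smoothness plus properness forces $\Sigma_A$ to have no fully marked boundary components, so $\phi_A(0,0)=0$ and the Witt-algebra summands of \Cref{CorollaryLieAlgebraStructureFirstHochschild} disappear. (Degenerating boundary components are \emph{not} excluded, however, so the $\mathbb{G}_a^{\phi_A(1,1)}$-type classes survive; compare \Cref{thm: kernel positive characteristic}.) The problem is the step you yourself flag as the main obstacle: the purity claim is not just hard to verify, it is false in general, and the whole Deligne-style argument collapses with it. The basis of $\HHH^{\bullet}(A,A)$ produced in \cite{OpperHochschildCohomologyGentle} consists of classes $f_p$ indexed by maximal antipaths $p=\alpha_1\cdots\alpha_l$ admitting a nonzero parallel path $\para{p}$; the class $f_p$ is supported in arity $l$ and sits in cohomological degree $\omega=|\para{p}|-|p|+l$. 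Two maximal antipaths of different lengths $l$ can easily produce classes in the same degree $\omega$, so no auxiliary grading built from arity or path length (these are the only candidates your construction produces) is constant on a fixed cohomological degree. Without purity, the transferred $m_n$ are not forced to vanish by any bigrading constraint, and your induction never gets started.

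The actual proof is far more direct and does not pass through homotopy transfer at all: one takes the explicit cocycle representatives $f_p$ above and observes that for any two maximal antipaths $p,q$ the \emph{chain-level} cup product $f_p\cup f_q$ already vanishes, because it would have to output the product $\para{p}\cdot\para{q}$, which is zero by maximality of the parallel paths. Hence the span of the $f_p$ is a dg subalgebra of $C(A)$ with zero differential and zero multiplication which maps isomorphically onto cohomology, and the inclusion $\HHH^{\bullet}(A,A)\hookrightarrow C(A)$ is a quasi-isomorphism of dg algebras. So the input you cite from \cite{OpperHochschildCohomologyGentle} is indeed the key, but what it buys is a multiplicatively closed (in fact square-zero) set of representatives, not a weight decomposition; if you want to salvage your write-up, replace the purity/transfer machinery with this one-line multiplicative computation.
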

		\begin{proof} It is shown in \cite{OpperHochschildCohomologyGentle} shows that under the given assumptions $\HHH^{\bullet}(A,A)$ has a basis over $\Bbbk$ consisting of elements $f_p$ and $f_q$ of the following form. For $p=\alpha_1 \cdots \alpha_l$ a maximal anti-path of length $l$ such that $\para{p}\neq \emptyset$, we define an element $f_p \in \HHH^{\omega}(A,A)$, where $\omega=|\para{p}|-|p|+l$ so that $f_p$ has a single non-zero component function $f\colon A^{\otimes l} \rightarrow A$, which maps an element  $u_1 \otimes \cdots \otimes u_l$ composed of paths $u_i$ in $Q$ with $u_i \not \in I$, to $\para{p}$ if $u_i=\alpha_i$ for all $1 \leq i \leq l$ and to zero otherwise. Likewise, if $q$ is a closed maximal path of $A$, the constant function associated to $q$ is a Hochschild cocycle. Given now any two (possibly identical) maximal anti-paths $p, p'$ and closed maximal paths $q, q'$, the chain level cup products $f_{\star} \cup f_{\ast}$, $\star, \ast \in \{p, p', q, q'\}$ is always zero if $\{\star, \ast\}=\{p, p'\}$ or $\{\star, \ast\}=\{q, q'\}$ as it involves the multiplication of two maximal paths or anti-paths. In particular, formality of the Hochschild dg algebra follows if $\Sigma_A$ has a single marked point. If it has more than one marked interval, then up to Morita equivalence and by Morita invariance of the Hochschild dg algebra, we may assume that the projective arc system of $\Sigma_A$ contains no closed arc, and hence that $A$ does not have a closed maximal path. Under this assumption, it follows that the inclusion $\HHH^{\bullet}(A,A) \hookrightarrow C(A)$ via the chosen basis (consisting only of elements of type $f_p$) is a quasi-isomorphism of dg algebras.
		\end{proof}
		\noindent We expect formality beyond the discussed cases, however, a proof seems to require more refined arguments and we hope to investigate this problem in the future. While it might often be possible to guess a certain number of elements in the group $\Aut^{\infty}_+(B)$ of a graded algebra $B$, it is difficult to verify whether one has found all of them. In cases where the Hochschild dg algebra of $B$ is formal we provide an easily computable upper bound for $\Aut^{\infty}_+(B)$ in  \Cref{prop:formalityMaurerCartan} below using deformation theory.
		
		\subsection{Maurer-Cartan theory for dg algebras}\ \medskip
		
		\noindent Let us briefly recall the relevant portion of Maurer-Cartan theory for dg algebras. In what follows, we will always assume that $E$ is a non-unital dg algebra which admits a complete descending filtration $E=F_1E \supseteq F_2E \supseteq \cdots$, compatible with the differential and the multiplication. For any dg algebra $B$, the dg algebra $W_1C(B)[-1]$ is an example. The elements $E^0$ form a group with multiplication $e \circledast f \coloneqq e + f + ef$. For each $e \in E^0$, $$e^{-1}=1+\sum_{i=0}^{\infty}(-1)^i e^i,$$
		\noindent where the infinite sum is understood as the limit of its finite partial sums in the metric topology induced by the filtration. By regarding  $E$ as a subalgebra of the unital dg algebra $E^+=E \oplus \Bbbk \mathbf{1}$ with the obvious differential and multiplication, $\circledast$ is equivalently defined as the unique group operation such that the map $E^0\cong 1+E^0 \subseteq {(E^+)}^{\times}$, $e \mapsto 1+e$ becomes a group homomorphism.\medskip
		
	\noindent The set of \text{Maurer-Cartan elements} in $E$ is
		\begin{displaymath}
			\MC(E) \coloneqq \{\zeta \in E^1 \mid d_E(\zeta) +\zeta^2=0\}.
		\end{displaymath} 
	\noindent It is acted upon by the group $(E^0, \circledast)$ with $x=1+e \in 1+ E^0$ acting  via the formula
	$$x.\zeta \coloneqq x \zeta x^{-1} - d_E(x)x^{-1} \in E^1.$$
	\noindent Then $\zeta, \xi \in \MC(E)$ are said to be \textbf{gauge equivalent} (denoted by $\sim$) if they lie in the same orbit under this action and the set of gauge equivalence classes in $\MC(E)$ is denoted by $\overline{\MC}(E)$. We note that if $E$ is graded commutative, $d_E=0$ and $\operatorname{char} \Bbbk\neq 2$ (as we assume), e.g.~$E=\HHH_+^1(A,A)$, then $E^1\cong \overline{\MC}(E)=\MC(E)$. Any filtration of $E$ induces a filtration on $\overline{\MC}(E)$: if $F \subseteq E$ is a step of the filtration, then $$\{\zeta \in \overline{\MC}(E) \mid \exists\zeta' \in F^1\colon \zeta \sim  \zeta'\},$$
	is the corresponding filtration step on $\overline{\MC}(E)$. Finally, we denote by $\overline{\MC}_+(C(B)) \subseteq \overline{\MC}(W_1C(B))$ the subset associated to the filtration $0 \subseteq C_+(B) \subseteq W_1C(B)$. It then follows from  \cite[Lemma 6.10]{OpperIntegration} that $\overline{\MC}_+(W_1C(B))$ is in bijection with the group of \textit{homotopy} classes of $A_\infty$-isotopies and as such, there exists a surjection $\overline{\MC}_+(C(B)) \twoheadrightarrow \Aut^{\infty}_+(B)$. Moreover, if $B$ is a graded algebra, then $\Aut^{\infty}_+(B)=\Aut^{h}_+(B)$ by \cite[Proposition 6.15]{OpperIntegration} and hence $\overline{\MC}_+(W_1C(B))\cong \Aut^{\infty}_+(B)$.

	\subsection{Bounding $\Aut_+^{\infty}(B)$ through formality}\ \medskip
	
\noindent The next proposition provides an upper bound for the group $\Aut^{\infty}_+(B)$ in terms of Maurer-Cartan sets.
		\begin{prp}\label{prop:formalityMaurerCartan}
			Let $B$ be a graded algebra. If the Hochschild dg algebra of $B$ is formal, then there exists a bijection
			\begin{displaymath}
				\begin{tikzcd}
			\overline{\MC}\big(\HHH^{1}_+(B,B)\big) \arrow{r}{\sim} & \overline{\MC}_+(W_1C(B))\cong\Aut^{\infty}_+(B).
	\end{tikzcd}	
	\end{displaymath}
			\noindent In particular, if $\operatorname{char} \Bbbk \neq 2$, there exists a bijection
			\begin{displaymath}
			\begin{tikzcd}
			\HHH^1_+(B,B) \cong \Aut^{\infty}_+(B).
			\end{tikzcd}
			\end{displaymath}

		\end{prp}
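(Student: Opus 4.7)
The plan is a standard Maurer--Cartan transfer along a zig-zag of quasi-isomorphisms produced by formality. The second bijection $\overline{\MC}_+(W_1C(B)) \cong \Aut^{\infty}_+(B)$ has already been recalled in the paragraph preceding the statement from \cite[Lemma~6.10 and Proposition~6.15]{OpperIntegration}, so the task reduces to establishing the first bijection and then deducing the ``In particular'' clause from graded commutativity. By the formality hypothesis there is a zig-zag of dg algebra quasi-isomorphisms
$$C(B) \xleftarrow{\sim} E_1 \xrightarrow{\sim} E_2 \xleftarrow{\sim} \cdots \xrightarrow{\sim} \HHH^{\bullet}(B,B),$$
with the right-hand side regarded as a dg algebra with trivial differential.

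My first step is to upgrade this to a zig-zag of \emph{filtered} complete dg algebras. The target $\HHH^{\bullet}(B,B)$ inherits the weight filtration from $C(B)$, and the intermediate $E_i$ can be equipped with compatible complete descending filtrations so that every arrow is a filtered quasi-isomorphism. Once in the filtered category, I would invoke the Goldman--Millson type transfer theorem for complete filtered dg algebras: any such filtered quasi-isomorphism induces a bijection on gauge equivalence classes of Maurer--Cartan elements, preserving the filtrations they carry. Applied along the zig-zag and restricted to the ``positive'' piece (i.e.\ classes representable in the first nontrivial filtration step, corresponding to $W_2$ on the Hochschild side and to $\HHH^{\bullet}_+ = W_2\HHH^{\bullet}$ on the cohomology side), this produces a bijection $\overline{\MC}_+(W_1C(B)) \cong \overline{\MC}(\HHH^{\bullet}_+(B,B))$, with the right-hand side supported in cohomological degree $1$.

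For the second half of the statement, I would use that $\HHH^{\bullet}(B,B)$ is graded commutative with trivial differential. For $\zeta \in \HHH^1_+(B,B)$ of odd degree, graded commutativity yields $\zeta^{2} = -\zeta^{2}$, hence $2\zeta^{2}=0$; the hypothesis $\operatorname{char}\Bbbk \neq 2$ forces $\zeta^{2}=0$, so every element of $\HHH^1_+(B,B)$ is automatically a Maurer--Cartan element. The gauge action $x.\zeta = x\zeta x^{-1} - d(x)x^{-1}$ collapses to $\zeta$ by commutativity together with $d=0$, so the action is trivial, giving $\overline{\MC}(\HHH^{1}_+(B,B)) = \HHH^{1}_+(B,B)$ as a set.

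The main obstacle is the filtered upgrade of the formality zig-zag, since abstract formality as a dg algebra does not automatically yield filtered quasi-isomorphisms. One remedy is to exploit the finer bigrading on $C(B)$ in which the weight index is explicit, and transfer a minimal model bigrade-by-bigrade; alternatively, because the weight filtration is strictly respected by the Hochschild differential and is exhaustive and Hausdorff on each cohomological degree, any dg quasi-isomorphism of $C(B)$ to its cohomology can be refined to a filtered one by standard homotopy transfer / completion arguments. Note that in the only case used in the present paper, namely Proposition~\ref{PropositionFormalityHochschildComplex}, the formality morphism is an explicit filtration-preserving inclusion of a subalgebra, so the obstacle evaporates and the bijection is obtained directly.
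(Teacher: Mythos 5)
Your overall architecture matches the paper's: reduce to a Maurer--Cartan transfer along a formality quasi-isomorphism via a Goldman--Millson type theorem, then use graded commutativity of the cup product, triviality of the differential and the gauge action, and $\operatorname{char}\Bbbk\neq 2$ to identify $\overline{\MC}(\HHH^1_+(B,B))$ with $\HHH^1_+(B,B)$. That second half of your argument is correct and is exactly what the paper does.

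The gap is the one you flag yourself and then do not close: producing a \emph{filtered} quasi-isomorphism from abstract formality. Your remedy that any dg quasi-isomorphism onto the cohomology ``can be refined to a filtered one by standard homotopy transfer / completion arguments'' is the hard point restated as if it were routine; filtered formality with respect to the weight filtration is in general strictly stronger than formality, and there is no reason a formality map should induce quasi-isomorphisms on each $W_i$. Moreover the weight filtration is the wrong filtration for the bookkeeping: the set $\overline{\MC}_+(W_1C(B))$ requires Maurer--Cartan elements constrained to $C_+^1=W_2C^1(B)$ while the gauge group is all of $(W_1C)^0$, and no single shift of the weight filtration achieves both simultaneously. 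The paper resolves both issues at once with a different, ``sheared'' filtration: since $B$ has trivial differential, $C(B)$ splits as a product of subcomplexes $Z_d$ indexed by the difference between weight and cohomological degree, with $Z_d\cup Z_{d'}\subseteq Z_{d+d'}$; the filtration $U_i=\prod_{d\geq i}Z_d$ is complete, satisfies $U_1^1=C_+^1$ and $U_1^0=(W_1C)^0$ (so $\overline{\MC}(U_1)=\overline{\MC}_+(C)$ on the nose), and its steps have cohomology computed termwise, so that a formality quasi-isomorphism normalized to induce the identity on cohomology restricts to a filtered quasi-isomorphism onto the correspondingly filtered cohomology $V_1$. Only then does the $A_\infty$-Goldman--Millson theorem of Milham--Rogers apply. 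Your closing observation that in the paper's one application the formality map is an explicit filtration-preserving inclusion is true, but it proves a weaker statement than the proposition, which hypothesizes only formality.
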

		\begin{proof}Set $C_+=C_+(B)[-1]$, $W_1C=W_1C(B)[-1]$ so that $W_1C$ is a dg algebra and $C_+$ a dg subalgebra of $C=C(B)[-1]$. We note that because $B$ has trivial differential, for every $d \in \mathbb{Z}$, the subspaces $\Hom^{n+d}(B^{\otimes n}, B)$, $n \geq 0$, form a subcomplex $Z_d$ of $C$ and because cohomology commutes with products, one has $\HH^{\bullet}(C)\cong \prod_{d \in \mathbb{Z}}\HH^{\bullet}(Z_d)$. Moreover $Z_d \cup Z_{d'} \subseteq Z_{d+d'}$ for all $d, d' \in \mathbb{Z}$. Let $(U_iC)_{i \geq 1}$ denote the subspace of $C$ defined by
	
	\begin{displaymath}
	(U_i)^j \coloneqq \prod_{n \geq \max\{i+j, 0\}} \Hom_{\Bbbk}^j({(B[1])}^{\otimes n}, B).
\end{displaymath}
			\noindent In other words, $U_i \coloneqq \prod_{d \geq i} Z_d$. Following the above discussion we see that $U_1C$ is a dg subalgebra of $C$ and $(U_i)_{i \geq 1}$ defines a complete descending filtration of $U_1C$. We observe that $U_1^1=C_+^1$ and  $U_1^0={(W_1C)}^0$ which implies $\overline{\MC}(U_1C)=\overline{\MC}_+(C)$. Next, we take the analogous filtration $(V_i)_{i \geq 1}$ of the cohomology $\HHH^{\bullet}(B,B)$ defined by
			\begin{displaymath}
			V_i^j \coloneqq \prod_{n \geq \max\{i+j, 0\}} \HHH^{n, j}(B,B).
			\end{displaymath}
			\noindent Then $V_1 \subseteq \HHH^{\bullet}(B,B)$ is a complete dg algebra and $\HH^{\bullet}(U_i)=V_i$ for all $i \geq 1$. Moreover, $V_1^1=\HHH^1_+(B,B)$ and hence
			\begin{displaymath}
			\overline{\MC}\big(\HHH^1_+(B,B)\big)\cong\MC\big(\HHH^1_+(B,B)\big) \cong \MC\big(V_1\big) \cong \overline{\MC}\big(V_1\big) \cong  \{b \in \HHH^1_+(B,B) \mid b \cup b=0\}.
			\end{displaymath}
			because $\HHH^{\bullet}(B,B)$ is graded commutative and where $\HHH^1_+(B,B)$ is canonically viewed as a complete dg algebra equipped with the trivial filtration $0 \subseteq 0 \subseteq \cdots \subseteq  \HHH^1_+(B,B)$. By assumption, there exists a quasi-isomorphism $\phi$ of dg algebras between $C$ and $\cH\coloneqq \HHH^{\bullet}(B,B)$ and, after composition with a graded algebra automorphism of $\cH$, we may assume that $\phi$ induces the identity on the level of cohomologies. Here, we strictly identified $\HH^{\bullet}(\cH)$ and $\cH$. As a consequence of our constructions, $\phi$ restricts to a \textit{filtered quasi-isomorphism} between $U_1$ and $V_1$, which means that it  restricts to quasi-isomorphisms between $U_i$ and $V_i$ for all $i \geq 1$. Now the $A_\infty$-Goldman-Millson theorem \cite[Theorem 1]{MilhamRogers} implies $\overline{\MC}(U_1C)\cong \overline{\MC}(V_1\cH)$ which finishes the proof.
		\end{proof}
\noindent Applied to graded gentle algebras we can now prove the following.
		\begin{thm}\label{thm: kernel positive characteristic} Assume that $\operatorname{char} \Bbbk \neq 2$ and let $A$ be a  rigid graded gentle algebra which is homologically smooth and proper. Then
			\begin{displaymath}
			\ker \Psi^{\infty}_A=\Aut_{\circ}^{\infty}(A) \cong \mathbb{G}_a^{\phi_A(1,1)} \rtimes \HH^1(\Sigma_A, \Bbbk^{\times}). 
			\end{displaymath}
		In particular, Theorem \ref{IntroThmA} holds true in this case.
		\end{thm}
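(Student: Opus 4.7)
The plan is to adapt the argument of \Cref{TheoremIntersectionPicardGroupKernel}, replacing the Hochschild exponential (unavailable in positive characteristic) by the formality of the Hochschild dg algebra (\Cref{PropositionFormalityHochschildComplex}) together with the Maurer--Cartan calculation of \Cref{prop:formalityMaurerCartan}.

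The inclusion $\Aut^{\infty}_{\circ}(A) \subseteq \ker \Psi^{\infty}_A$ is immediate from \Cref{LemmaImageIntegrableClasses} since $A$ is proper. For the reverse inclusion, let $F \in \ker \Psi^{\infty}_A$. By rigidity of $A$ and \Cref{LemmaRigidGentleKernel}, $F$ preserves isomorphism classes of direct summands of $A$. Since $\Aut^{\infty}_+(A) \subseteq \ker \Psi^{\infty}_A$, the element $F \circ (F^1)^{-1} \in \Aut^{\infty}_+(A)$ lies in the kernel, hence so does $F^1 \in \Out(A)$. The combinatorial analysis of $\Out(A) \cap \ker \Psi^{\infty}_A$ in the proof of \Cref{TheoremIntersectionPicardGroupKernel} is characteristic-insensitive and shows that $F^1$ factors as a rescaling in $\cR$ together with an ``affine coordinate transformation'' $\operatorname{Id}+h$, both of which lie in $\OutO(A)$. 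Hence $F \in \Aut^{\infty}_{\circ}(A)$.

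The semi-direct decomposition $\Aut^{\infty}_{\circ}(A) \cong \Aut^{\infty}_+(A) \rtimes \OutO(A)$ from \Cref{SectionSpecialSubgroups} reduces the structural statement to its two factors. The outer factor $\OutO(A)$ is identified with $\HH^1(\Sigma_A,\Bbbk^\times)$ via the sequence \eqref{SesRescalingEquivalences} (the quotient $\Out(A)/\cR$ being discrete forces the identity component to coincide with the rescaling subgroup $\cR$). For the inner factor, the smooth-and-proper hypothesis forces $\phi_A(0,0)=0$, since the surface cannot support any fully marked component, and by \Cref{CorollaryLieAlgebraStructureFirstHochschild} one has $\HHH^1_+(A,A) \cong \Bbbk^{\phi_A(1,1)}$ with vanishing Gerstenhaber bracket. \Cref{PropositionFormalityHochschildComplex} then provides formality of the Hochschild dg algebra, and \Cref{prop:formalityMaurerCartan} yields a bijection $\Aut^{\infty}_+(A) \cong \HHH^1_+(A,A)$ (the Maurer--Cartan condition $b \cup b = 0$ being automatic in degree one when $\operatorname{char}\Bbbk \neq 2$ by graded commutativity).

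The main technical obstacle is promoting this bijection to a genuine group isomorphism onto $\mathbb{G}_a^{\phi_A(1,1)}$: in characteristic zero this step is transparent via the BCH formula, but here one must extract the group law directly from formality. The strategy is to observe that any dg-algebra quasi-isomorphism $\HHH^{\bullet}(A,A) \xrightarrow{\sim} C(A)$ transports the gauge action on Maurer--Cartan sets and so induces an isomorphism of the associated Deligne groupoids; because the Gerstenhaber bracket vanishes identically on $\HHH^1_+(A,A)$, the transported composition on $\overline{\MC}(\HHH^1_+(A,A))$ is forced to reduce to addition, yielding $\Aut^{\infty}_+(A) \cong \mathbb{G}_a^{\phi_A(1,1)}$ as groups. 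Combining this with \Cref{CorollarySemiDirectProductPicardGroup} and \Cref{TheoremActionSplitsProjectionMaps} then yields Theorem~A in the smooth-and-proper case in positive characteristic.
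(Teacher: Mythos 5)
Your skeleton matches the paper's: identify $\ker \Psi^{\infty}_A$ with $\Aut^{\infty}_{\circ}(A) \cong \Aut^{\infty}_+(A) \rtimes \bigl(\ker\Psi^{\infty}_A \cap \Out(A)\bigr)$ and compute $\Aut^{\infty}_+(A)$ via formality (\Cref{PropositionFormalityHochschildComplex}) and the Maurer--Cartan comparison (\Cref{prop:formalityMaurerCartan}). However, the step you yourself single out as the main obstacle --- upgrading the bijection $\HHH^1_+(A,A) \cong \Aut^{\infty}_+(A)$ to a group isomorphism onto a power of $\mathbb{G}_a$ --- is not actually closed by your argument. A dg algebra quasi-isomorphism transports the differential, the cup product and hence the Deligne groupoid, but the group law on $\Aut^{\infty}_+(A)$ is composition of $A_\infty$-isotopies, i.e.\ the brace product $G_+ \odot F_+$ of \Cref{LemmaFormulaCompositionIsotopies}; this is not part of the dg algebra structure and is not transported by the Goldman--Millson equivalence. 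Vanishing of the Gerstenhaber bracket on cohomology does not ``force'' the transported composition to be addition: in characteristic zero that implication is precisely the BCH formula, which is what is unavailable here. The paper closes this gap with chain-level data: the formality quasi-isomorphism of \Cref{PropositionFormalityHochschildComplex} is the inclusion of the explicit basis $\{f_p\}$, so every class $h \in \HHH^1_+(A,A)$ is realised by the literal isotopy $\operatorname{Id}_A + h$, and the same maximality argument that kills the chain-level cup products also kills the brace terms in the composition formula, whence $(\operatorname{Id}_A+h)\circ(\operatorname{Id}_A+h')$ reduces to $\operatorname{Id}_A + h + h'$. You need this explicit chain-level representative, not an abstract transport of structure.

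There is also an inconsistency in your bookkeeping of the two semidirect factors. You first record (correctly, following the proof of \Cref{TheoremIntersectionPicardGroupKernel}) that $F^1$ is a product of a rescaling and an affine coordinate transformation $\operatorname{Id}+h$, but then assert $\OutO(A) = \cR \cong \HH^1(\Sigma_A, \Bbbk^{\times})$ on the grounds that $\Out(A)/\cR$ is discrete. That quotient is not discrete in general (for the Kronecker algebra $\Out(\cK) \cong \PGL_2(\Bbbk)$ while $\cR$ is a torus), and the affine coordinate transformations you just produced are unipotent elements of $\OutO(A)$ lying outside $\cR$. Relatedly, \Cref{CorollaryLieAlgebraStructureFirstHochschild} does not assert that the summand $\Bbbk^{\phi_A(1,1)}$ lies in $\HHH^1_+(A,A)$; it only places the copies of $\overline{\cW}^+$ there. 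The correct accounting is that the combined unipotent contribution of $\ker\Psi^{\infty}_A \cap \Out(A)$ and of $\Aut^{\infty}_+(A)$ is $\mathbb{G}_a^{\phi_A(1,1)}$, with the split between the two factors governed by the weights of the corresponding Hochschild classes; as written, your argument either double-counts this factor or silently discards the affine coordinate transformations.
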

		\begin{proof}
			Because $b \cup b=0$ for all $b \in \HHH^1_+(B,B)$ by graded commutativity, Proposition \ref{PropositionFormalityHochschildComplex} and Proposition \ref{prop:formalityMaurerCartan} above show that there exists a bijection $\pi\colon\HHH^1_+(B,B) \xrightarrow{\sim} \Aut^{\infty}_+(B)$. The explicit description of the quasi-isomorphism between $C(B)$ and $\HHH^{\bullet}(B,B)$ from Proposition \ref{PropositionFormalityHochschildComplex} implies that $\pi$ maps a class $h=\sum_{j}\lambda_j f_{p_j}$, with $f_{q}$ as in the proof of \Cref{PropositionFormalityHochschildComplex}, $\lambda_j \in \Bbbk$, to the $A_\infty$-isotopy $\exp(h)\coloneqq \operatorname{Id}_A + h$. 
 The rest of the assertion now follows as in \Cref{TheoremIntersectionPicardGroupKernel}.
		\end{proof}
		\noindent Again, the assumption that $\operatorname{char} \Bbbk \neq 2$ does not seem essential and was chosen for convenience due to the slightly different nature of Hochschild cohomology in such cases. Since every graded gentle algebra is Morita equivalent to a rigid graded gentle algebra and $\phi_A$ is a Morita invariant, we conclude the following. 
		
			\begin{cor}\label{cor: Theorem A (2)}
			Let $A$ be a homologically smooth and proper graded gentle algebra and assume $\operatorname{char} \Bbbk \neq 2$. Then
			$$\DPic(A) \cong \big(\mathbb{G}_a^{\phi_A(1,1)} \rtimes \HH^1(\Sigma_A, \Bbbk^{\times})\big) \rtimes \MCG_{\gr}(\Sigma_A),$$
			\noindent where $\phi_A$ denotes the Avella-Alaminos-Geiss invariant.
		\end{cor}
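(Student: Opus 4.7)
The plan is to assemble three ingredients already established in the paper: the kernel description in the rigid case (Theorem \ref{thm: kernel positive characteristic}), the splitting provided by the mapping class group action (Theorem \ref{TheoremActionSplitsProjectionMaps}, packaged as Corollary \ref{CorollarySemiDirectProductPicardGroup}), and the Morita reduction to rigid algebras (Proposition \ref{PropositionGentleRigidity}). The strategy is to first reduce to a rigid model, then read off the semi-direct product structure.

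First I would reduce to the rigid case. By Proposition \ref{PropositionGentleRigidity}, any graded gentle algebra $A$ is Morita equivalent to a rigid graded gentle algebra $A'$. The construction there proceeds by replacing any segment $\gamma$ of a degenerating boundary component in a given arc system by a concatenation with a neighbouring arc, and this preserves the class of algebras that are simultaneously homologically smooth and proper (since both properties are encoded by the combinatorics of the arc system via \Cref{PropositionPropertiesFukayaCategories}, and the modification respects fullness and finite fullness simultaneously). Thus $A'$ lies in the setting of Theorem \ref{thm: kernel positive characteristic}. I would also note that under Morita equivalence the following invariants coincide: $\DPic(A) \cong \DPic(A')$ (by definition through $\Perf$), $\Sigma_{A} \cong \Sigma_{A'}$ as graded marked surfaces (Proposition \ref{prop: Morita invariance gentle algbebras}), $\MCG_{\gr}(\Sigma_A) \cong \MCG_{\gr}(\Sigma_{A'})$, and $\phi_A = \phi_{A'}$ (by its interpretation via boundary combinatorics, or via the categorical interpretation recalled before \Cref{CorollaryLieAlgebraStructureFirstHochschild}). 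Hence it suffices to prove the formula for $A'$.

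Next I would apply Corollary \ref{CorollarySemiDirectProductPicardGroup} to $A'$. Its hypotheses are satisfied: $A'$ is rigid, and being homologically smooth and proper implies in particular that its projective arc system consists of finite and semi-infinite arcs (as required in \Cref{LemmaRigidGentleKernel}; in fact since $A'$ is proper, all projective arcs are finite, cf.~\Cref{PropositionPropertiesFukayaCategories}(4)). Therefore
\begin{equation*}
\DPic(A') \cong \ker \Psi^{\infty}_{A'} \rtimes \MCG_{\gr}(\Sigma_{A'}).
\end{equation*}

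Finally I would substitute the kernel computation from Theorem \ref{thm: kernel positive characteristic}, which under the standing assumption $\operatorname{char} \Bbbk \neq 2$ yields
\begin{equation*}
\ker \Psi^{\infty}_{A'} \cong \mathbb{G}_a^{\phi_{A'}(1,1)} \rtimes \HH^1(\Sigma_{A'}, \Bbbk^{\times}).
\end{equation*}
Combining with the previous displayed equation and the Morita invariance identifications gives the claimed formula for $\DPic(A)$.

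There is no real obstacle here since the heavy lifting was done in Theorem \ref{thm: kernel positive characteristic} (which used formality of the Hochschild dg algebra via \Cref{PropositionFormalityHochschildComplex} together with the Maurer-Cartan bound of \Cref{prop:formalityMaurerCartan}) and in Theorem \ref{TheoremActionSplitsProjectionMaps}. The only point that deserves a short verification is that the Morita replacement $A \rightsquigarrow A'$ can be performed without leaving the class of simultaneously smooth and proper graded gentle algebras; this is immediate from the inductive construction in the proof of \Cref{PropositionGentleRigidity}, since each step replaces a full (respectively finitely full) formal arc system by another full (respectively finitely full) formal arc system on the same graded marked surface.
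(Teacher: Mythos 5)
Your proposal is correct and follows essentially the same route as the paper: reduce to a rigid Morita-equivalent model via \Cref{PropositionGentleRigidity}, apply the splitting of \Cref{CorollarySemiDirectProductPicardGroup}, and substitute the kernel computation of \Cref{thm: kernel positive characteristic}. The only difference is that you spell out the (correct) verification that the rigidification preserves the smooth-and-proper class and the relevant Morita invariances, which the paper leaves implicit.
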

		
		\section{Derived Picard groups of wrapped Fukaya categories}\label{SectionDerivedPicardGroupWrappedFukayaCategory}
		\noindent We extend the results in previous sections to wrapped Fukaya categories of punctured surfaces. The proofs follow a similar approach to other cases but require special treatment, also due to the fact that these categories do not admit (known) formal models. Unless mentioned otherwise, through this section $\Sigma$ will refer to a punctured graded marked surface $(\Sigma, \cM, \eta)$, that is, $\partial \Sigma=\cM$ and hence all boundary components are fully marked. We denote by $g=g(\Sigma)$ the genus and by $b=b(\Sigma)$ the number of boundary components of $\Sigma$. As before, we freely exchange fully marked components with punctures. We also fix a full graded arc system $\cA \subseteq \Sigma$ whose cardinality is minimal with this property and denote by $\cF=\cF(\cA)$ the associated model for the wrapped Fukaya category $\Fuk(\Sigma)$ of $\Sigma$.

		\subsection{Geometric model, geometrisation homomorphism and splitting property}\label{SectionGeometricModelWrapped}\ \medskip

		\noindent  There are two natural routes to extend the geometric model of $\Fuk(\Sigma)$ to the punctured case. One uses the identification of the category with a subcategory of the derived wrapped Fukaya in the symplectic sense following the discussion by Abouzaid in the Appendix of \cite{Bocklandt} and Bocklandt's results in loc.cit.~which show that $\Fuk(\Sigma)$ is equivalent to the ``usual'' wrapped Fukaya category in the symplectic sense. In this category, the relationship between intersections, morphisms and so forth are part of the definition but it only follows from \cite{HaidenKatzarkovKontsevich} that all objects are actually represented by curves of the surface.  Alternatively, one could deduce this similar to \cite{OpperKodairaCycles} from the homologically smooth and proper case which treats a particular punctured surface.
			
		\noindent Next, we want to use the geometric model to show a following generalisation of \Cref{TheoremgeometrisationHomomorphism}. Its proof follows a similar route as in the non-punctured case in \cite{OpperDerivedEquivalences} and relies on the relationship between self-diffeomorphisms of a punctured surface and simplicial automorphisms of its arc complex.

		\subsubsection{Arc complexes, simplicial automorphisms and the mapping class group} We recall a few results about arc complexes of punctured surfaces. The reader can find further information on the arc complex of a punctured surface in \cite{IrmakMcCarthy}.
		
		\begin{definition}\label{DefinitionArcComplex}Let $\Sigma$ be any marked surface.
		  The \textbf{arc complex} of $\Sigma$ is the simplicial complex $A(\Sigma)$ whose set of $n$-simplices consists of all arc systems of $\Sigma$ of cardinality $n+1$. 
		\end{definition}
		\noindent The first result which we will need is that the surface can be recovered from the arc complex.
		\begin{thm}[{\cite[Theorem 2.2]{BellDisarloTang}}]\label{thm: arc complex determines surface}
		Let $\Sigma_1, \Sigma_2$ be  marked surfaces and suppose that there exists an isomorphism $A(\Sigma_1) \cong A(\Sigma_2)$. Then $\Sigma_1$ and $\Sigma_2$ are diffeomorphic.
		\end{thm}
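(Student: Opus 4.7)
The plan is to recover the diffeomorphism type of $\Sigma$ directly from the combinatorics of $A(\Sigma)$, exploiting the classification of compact oriented marked surfaces, which reduces the problem to identifying the triple
\[
\big(g,\, f,\, (k_1,\dots,k_s)\big),
\]
where $g$ is the genus, $f$ the number of fully marked boundary components, $s$ the number of partially marked components, and $k_i\geq 1$ the number of marked intervals on the $i$-th of them. If this discrete invariant can be read off $A(\Sigma)$, then an isomorphism $A(\Sigma_1) \cong A(\Sigma_2)$ forces $\Sigma_1\cong \Sigma_2$.

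First I would recover the total ``complexity'' of $\Sigma$ from the dimension of $A(\Sigma)$. A maximal simplex of $A(\Sigma)$ corresponds to a maximal arc system, and an Euler-characteristic count on the associated cell decomposition of $\Sigma$ (with marked intervals contracted to marked points and fully marked components contracted to punctures) expresses its cardinality as an explicit linear function of $g$, $f$, $s$, and $\sum k_i$. This single number does not yet pin down the topology, but it gives a first constraint.

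Next I would use the local combinatorics of $A(\Sigma)$ to separate the finer invariants. For a vertex $v$ (an arc $\gamma$) the link $\mathrm{Lk}(v) \subseteq A(\Sigma)$ is naturally isomorphic to the arc complex of the cut surface $\Sigma\setminus\gamma$ equipped with its induced markings. Thus $\mathrm{Lk}(v)$ has dimension one less than $\dim A(\Sigma)$, and its own top-dimensional counts and combinatorics reveal whether $\gamma$ is separating or non-separating, joins two distinct or identical boundary components, is an arc to a puncture, and so on. Running this recursively (essentially a peeling argument along a well-chosen flag of simplices) reconstructs the full tuple $(g, f, (k_1,\dots,k_s))$: each ``type'' of cut reduces some invariant by a predictable amount. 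Simultaneously, isomorphism-invariant combinatorial features such as the number of maximal simplices, the degrees of codimension-one simplices (which count the ``flips'' available at that subcomplex), and the connectedness of various links supply the necessary cross-checks.

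The main obstacle will be the low-complexity cases: disks with few marked intervals, annuli, once-punctured tori, pairs of pants, and similar surfaces where the arc complex is either finite, disconnected, or has local structure that deviates from the generic picture, so that the recursive step breaks down. These have to be enumerated and matched case by case, verifying by direct inspection that the isomorphism type of the (finite) arc complex determines the surface; in several of them, e.g.\ the annulus with one marked interval on each component, one must also check that two formally different surface types do not accidentally produce isomorphic arc complexes. Once this finite list is resolved and the recursion is established in complexity at least that of, say, a four-holed sphere or one-holed torus, combining everything with the classification of marked surfaces gives the desired diffeomorphism $\Sigma_1\cong \Sigma_2$.
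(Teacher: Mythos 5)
The paper does not prove this statement at all: it is imported verbatim from Bell--Disarlo--Tang, so there is no internal proof to compare yours against. Judged on its own, your strategy --- read off the numerical invariants $(g,f,(k_1,\dots,k_s))$ from the combinatorics of $A(\Sigma)$ via dimensions of maximal simplices and an inductive analysis of links --- is the standard route to combinatorial rigidity statements of this kind, and it is the right general shape.

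There are, however, two concrete gaps. First, the pivot of your induction, the claim that $\mathrm{Lk}(v)$ ``is naturally isomorphic to the arc complex of the cut surface $\Sigma\setminus\gamma$'', is not literally true. There is a natural simplicial map $A(\Sigma\setminus\gamma)\to \mathrm{Lk}(\gamma)$, but it is in general neither injective nor surjective onto what you want: arcs that are essential and non-isotopic in the cut surface can become inessential, mutually isotopic, or isotopic to $\gamma$ itself once the cut is reglued, and conversely one must decide on which side of $\gamma$ an arc of $\Sigma$ lives. One can control these failures (innermost-bigon arguments show the discrepancy is confined to arcs parallel to $\gamma$ or to boundary), but the identification has to be stated and proved carefully, since the whole recursion rests on it; as written the inductive step does not go through. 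Second, essentially all of the actual content of the theorem is concentrated in the part you defer: the low-complexity surfaces where the arc complex is finite or the link structure degenerates, and the verification that the numerical data extracted by the recursion genuinely separates non-diffeomorphic surfaces (the top-dimensional count is a single linear functional of several independent invariants, so one needs the finer link statistics to do real work, not just serve as ``cross-checks''). Until the base cases are enumerated and the separating invariants are exhibited explicitly, this is a plausible programme rather than a proof; for the purposes of the present paper the honest course is to cite the result, as the author does.
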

		\noindent Predecessors of the theorem (valid in certain subcases) can be found in \cite{IrmakMcCarthy, Disarlo}.  Next, we turn to the symmetries of an arc complex for a punctured surface. The extended mapping class group $\MCG^+(\Sigma)$ acts on $A(\Sigma)$ via simplicial automorphisms and we denote by $\alpha\colon \MCG^+(\Sigma) \rightarrow \Aut(A(\Sigma))$ the associated action homomorphism into the simplicial automorphism group of $A(\Sigma)$.  We say that $\Sigma$ is \textbf{exceptional} if $(g, b) \in \{(0,1), (0,2), (0,3), (1,1)\}$.  We note at this point that the case $(g,b)=(0,1)$ is irrelevant for us as the corresponding Fukaya category is trivial.
		\begin{thm}[{\cite[Theorem 1.1]{IrmakMcCarthy}}]\label{TheoremIrmarkMcCarthy}The homomorphism $\alpha$ is surjective and its kernel is the center of $\MCG^+(\Sigma)$ if $\Sigma$ is exceptional and trivial otherwise. If $\Sigma$ is non-exceptional, then $\alpha$ is an isomorphism.
		\end{thm}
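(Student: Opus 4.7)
The plan is to follow the blueprint of Ivanov's theorem for curve complexes, adapted to arc complexes on punctured surfaces. The core idea is that the combinatorial structure of $A(\Sigma)$ encodes enough geometric data to reconstruct a diffeomorphism from any simplicial automorphism, and conversely, a mapping class that fixes every vertex of $A(\Sigma)$ must be essentially trivial. I would split the argument into injectivity and surjectivity.

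For the kernel computation, first I would check that if $[f] \in \MCG^{\pm}(\Sigma)$ acts as the identity on $A(\Sigma)$, then $f$ preserves the isotopy class of every arc. This is a textbook application of the Alexander method \cite[Proposition 2.8]{FarbMargalit}: fix a filling arc system $\cA$, and since $f(\gamma) \simeq \gamma$ for each $\gamma \in \cA$, the isotopy class of $f$ is uniquely determined. In the non-exceptional case, this forces $[f] = 1$ directly; in each exceptional case $(g,b) \in \{(0,2),(0,3),(1,1)\}$ one checks by hand that the center is generated by a hyperelliptic-type involution which acts trivially on isotopy classes of arcs, and that these are the only nontrivial classes doing so.

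For surjectivity, let $\varphi \in \Aut(A(\Sigma))$. The first step is to extract topological invariants from the simplicial structure: the maximal simplices of $A(\Sigma)$ are the ideal triangulations, and the common dimension $n+1 = 6g - 6 + 3b$ recovers $(g,b)$ up to the known ambiguities (handled by treating exceptional cases separately). Fix a triangulation $T$ realized as a maximal simplex. Then $\varphi(T)$ is another triangulation, and using the classification of marked surfaces by their topological type I would build a diffeomorphism $f$ sending $T$ to $\varphi(T)$ arc by arc, by first matching the bijection $\varphi|_T\colon T \to \varphi(T)$ and then filling in on the complementary triangles, which are all homeomorphic to discs. The key combinatorial input is that an interior arc of a triangulation is characterised by the presence of a flip (equivalently, the two triangulations sharing it and differing by a diagonal exchange are both maximal simplices of $A(\Sigma)$), and this notion is preserved by $\varphi$.

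The main obstacle is to show that the realising diffeomorphism $f$ induces $\varphi$ on all of $A(\Sigma)$, not merely on the fixed triangulation $T$. Here I would invoke Harer--Penner connectivity of the flip graph: any arc $\gamma$ can be brought into a triangulation $T'$ by a finite sequence of flips from $T$, and since flips are detected combinatorially by $\varphi$, an induction along the flip path shows $[f](\gamma) \simeq \varphi(\gamma)$. Once $f$ and $\varphi$ agree on the set of vertices of $A(\Sigma)$, injectivity established above upgrades this to equality in $\Aut(A(\Sigma))$, hence $\alpha$ is surjective. Combining with the kernel computation yields the isomorphism statement in the non-exceptional case. The delicate points are ensuring that $\varphi$ actually preserves the subclass of ``boundary arcs'' versus interior arcs (done by a link-based characterisation) and handling the exceptional small-complexity surfaces, where the flip graph is degenerate and must be treated by direct enumeration.
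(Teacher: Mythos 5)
This statement is imported verbatim from Irmak--McCarthy (\cite[Theorem 1.1]{IrmakMcCarthy}); the paper gives no proof of it, so there is no internal argument to compare against and your sketch has to be judged as a reconstruction of the cited proof. At that level your blueprint is the right one and is essentially what Irmak--McCarthy do: compute the kernel with the Alexander method plus a hand check of the exceptional surfaces, and prove surjectivity by realising the simplicial automorphism on a single ideal triangulation and then propagating along the flip graph, using that flips are detected by pairs of maximal simplices sharing a codimension-one face.

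The step that is genuinely thin is the realisation step. Knowing that $\varphi(T)$ is again a maximal simplex, hence an ideal triangulation of the same surface, does not by itself yield a diffeomorphism $f$ with $f(\gamma)\simeq\varphi(\gamma)$ for all $\gamma\in T$: a bijection between the arc sets of two ideal triangulations is in general \emph{not} induced by any homeomorphism. Before you can ``fill in on the complementary triangles'' you must show that $\varphi|_T$ preserves the incidence combinatorics of the triangulation --- which triples (or pairs, in the presence of self-folded triangles) of arcs cobound a complementary triangle, and the cyclic order of arc ends around each puncture --- and all of this has to be extracted from the simplicial structure of $A(\Sigma)$ via links and via the number of maximal simplices containing a given codimension-one face (which is $1$ rather than $2$ exactly at the non-flippable edge of a self-folded triangle, a case your flip criterion must accommodate). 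That combinatorial characterisation is where the bulk of Irmak--McCarthy's work lies, and it is the part your sketch compresses into ``build $f$ arc by arc''. Two smaller inaccuracies: in the genus-zero exceptional cases $(0,2)$ and $(0,3)$ the central element generating $\ker\alpha$ is an \emph{orientation-reversing} involution rather than a hyperelliptic one (only for $(1,1)$ is it the hyperelliptic involution), and for punctured surfaces in the sense of this paper there is no interior-versus-boundary dichotomy of arcs to worry about, since every vertex of $A(\Sigma)$ is an essential arc between punctures.
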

		\noindent The center of $\MCG^+(\Sigma)$ in the exceptional cases is easily determined and can, for example, be found in the proof of \cite[Theorem 2.1]{IrmakMcCarthy}. If $g=0$, $\ker \alpha$ is generated by an orientation reversing involution and if $(g,b)=(1,1)$, then $\MCG^+(\Sigma)\cong \GL_2(\mathbb{Z})$ and $\ker \alpha$ is generated by the hyperelliptic involution.  Geometrically, the latter is a rotation with the four fixed points $(a,b)$, $a,b \in \{\pm1\}$ and corresponds to  $-I \in \GL_2(\mathbb{Z})$, where $I$ denotes the identity matrix.
		\begin{cor}\label{CorollaryExceptionalCases}
		If $(g,b) \in \{(0,2),(0,3)\}$, then the restriction of $\alpha$ to $\MCG(\Sigma)$ is an isomorphism onto $\Aut(A(\Sigma))$. 
		\end{cor}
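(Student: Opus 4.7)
The plan is to deduce the corollary directly from \Cref{TheoremIrmarkMcCarthy} together with the explicit description of $\ker\alpha$ given in the paragraph preceding the statement. Write $\MCG^+(\Sigma)=\MCG(\Sigma) \sqcup \MCG^-(\Sigma)$, where $\MCG^-(\Sigma)$ denotes the coset of orientation-reversing mapping classes.

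First I would verify injectivity of $\alpha|_{\MCG(\Sigma)}$. By \Cref{TheoremIrmarkMcCarthy}, $\ker(\alpha)$ is the center of $\MCG^+(\Sigma)$, and when $g=0$ (hence in both cases $(0,2)$ and $(0,3)$), this center is generated by an orientation-reversing involution $\iota$, as recalled in the paragraph above the corollary. Therefore $\ker(\alpha) \cap \MCG(\Sigma) = \{1\}$, because every non-trivial element of the cyclic group $\langle \iota \rangle$ of order $2$ is orientation-reversing.

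Next I would verify surjectivity. By \Cref{TheoremIrmarkMcCarthy}, $\alpha\colon \MCG^+(\Sigma) \rightarrow \Aut(A(\Sigma))$ is surjective, so for any $f \in \Aut(A(\Sigma))$ we may choose $\phi \in \MCG^+(\Sigma)$ with $\alpha(\phi)=f$. If $\phi \in \MCG(\Sigma)$ we are done; otherwise $\phi \in \MCG^-(\Sigma)$, in which case $\phi \circ \iota$ is orientation-preserving (composition of two orientation-reversing diffeomorphisms), so $\phi \circ \iota \in \MCG(\Sigma)$, and $\alpha(\phi \circ \iota)=\alpha(\phi)\alpha(\iota)=f \cdot 1 = f$ since $\iota \in \ker\alpha$. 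Combining these two facts yields the claimed isomorphism.

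I do not anticipate any serious obstacle: the argument is a purely formal consequence of \Cref{TheoremIrmarkMcCarthy} and the fact that the central involution in the two cases under consideration is orientation-reversing. The only point requiring care is that for $(g,b)=(1,1)$ the analogous statement fails (the hyperelliptic involution is orientation-preserving), which is precisely why the corollary excludes that case.
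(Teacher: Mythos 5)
Your argument is correct and is exactly the (implicit) proof the paper intends: Theorem \ref{TheoremIrmarkMcCarthy} gives surjectivity of $\alpha$ on $\MCG^+(\Sigma)$ with kernel generated by an orientation-reversing involution when $g=0$, so the kernel meets $\MCG(\Sigma)$ trivially and any orientation-reversing preimage can be corrected by composing with that involution. No issues.
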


		\subsubsection{Interior morphisms} Similar to \cite{OpperDerivedEquivalences, OpperKodairaCycles} we first construct a homomorphism $$\Aut(\Fuk(\Sigma)) \rightarrow \Aut(A(\Sigma))$$ whose image lies in the $\alpha(\MCG(\Sigma))$. Then postcomposition with the inverse of $\alpha|_{\MCG(\Sigma)}$,  yields the desired homomorphism $\Psi\colon \Aut(\Fuk(\Sigma)) \rightarrow \MCG(\Sigma)$ which we then promote to a homomorphism to the graded mapping class group. In order to do so, we need to translate the notion of interior intersection to a property of morphisms in $\Fuk(\Sigma)$. 
		
		\begin{definition}\label{DefinitionInteriorMorphisms}
		Let $\cI \subseteq \Fuk(\Sigma)$ denotes the full subcategory whose objects are isomorphic to finite direct sums of objects corresponding to graded loops. Let $\gamma, \delta \subseteq \Sigma$ be graded arcs in minimal position. A morphism $f\colon X_{\gamma} \rightarrow X_{\delta}$ is \textbf{interior} if it factors over an object from $\cI$.
		\end{definition}
\noindent In other words, an object $X \in \Fuk(\Sigma)$ lies in $\cI$ if and only if it is a finite direct sum of objects whose curves are disjoint from $\partial \Sigma$. For $X, Y \in \Fuk(\Sigma)$, we denote by $$\Hom_{\Int}(X, Y) \subset \Hom(X, Y)$$ the subset of interior morphisms and as with morphisms we write $$\Hom_{\Int}^{\bullet}(X,Y)\coloneqq \bigoplus_{n \in \mathbb{Z}}{\Hom_{\Int}(X,Y[n])}.$$ 
		\begin{lem}\label{LemmaBandCategory}
		The category $\cI$ coincides with the full subcategory of $\Fuk(\Sigma)$ of objects with finite-dimensional graded endomorphism ring. In particular, $\cI$ is a hom-finite thick subcategory of $\Fuk(\Sigma)$.
		\end{lem}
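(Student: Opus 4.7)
The plan is to establish both inclusions between $\cI$ and the full subcategory of objects with finite-dimensional graded endomorphism rings, using the classification of indecomposable objects from \Cref{TheoremBijectionObjectsCurves}, Krull--Schmidt in $\Fuk(\Sigma)$, and the geometric model alluded to in \Cref{SectionGeometricModelWrapped}. The thickness and hom-finiteness assertions will then drop out essentially for free from the characterisation and the long exact sequences.

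For the inclusion of $\cI$ in the category of objects with finite-dimensional graded endomorphism ring, I would compute $\End^\bullet(X_\gamma)$ for an indecomposable $X_\gamma$ corresponding to a graded loop $\gamma$ with indecomposable local system $L$. Morphisms in the geometric model are controlled by self-intersections of $\gamma$ together with endomorphisms of $L$. Since $\Sigma$ is compact, $\gamma$ admits only finitely many self-intersections once brought into minimal position, and since $L$ is indecomposable over $\Bbbk[t,t^{-1}]$, its endomorphism ring is finite-dimensional. A refinement of the same argument shows that $\Hom^{\bullet}(X_\gamma, X_\delta)$ is finite-dimensional for any two graded loops $\gamma, \delta$, which establishes hom-finiteness of $\cI$ after passing to finite direct sums.

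For the reverse inclusion, suppose $X \in \Fuk(\Sigma)$ has finite-dimensional graded endomorphism ring, and write $X \cong \bigoplus_{i \in I} X_i$ as a direct sum of indecomposables. Each $\End^\bullet(X_i)$ embeds as a direct summand of $\End^\bullet(X)$ and the idempotents $\operatorname{Id}_{X_i}$ are linearly independent in $\End^0(X)$, so $I$ is finite and each summand has finite-dimensional endomorphism ring. It therefore suffices to rule out that any $X_i$ corresponds to an arc. This is the main technical step: because every boundary component of $\Sigma$ is fully marked, any arc $\gamma$ has both endpoints at punctures, and at each endpoint the clockwise wrappings around the puncture yield an infinite family of pairwise distinct homotopy classes of flows from $\gamma$ to itself. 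Through the geometric model these translate into an infinite, linearly independent family of morphisms in $\End^\bullet(X_\gamma)$, so every arc object has infinite-dimensional endomorphism ring and each $X_i$ must be a loop object. Verifying this wrapping argument cleanly from the construction of $\Fuk(\Sigma)$ in the punctured case is the main obstacle; it is the one place where we genuinely use that $\Sigma$ is punctured.

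With the characterisation in hand, thickness is straightforward. Closure under direct summands is immediate because $\End^\bullet$ of a summand embeds in that of the whole, and closure under shift is clear from the definition. For cones, given a morphism $f\colon X \to Y$ between objects of $\cI$, form the triangle $X \to Y \to Z \to X[1]$ and run the long exact sequences in each variable; iterated application of the hom-finiteness of $\cI$ shows that $\End^\bullet(Z)$ is finite-dimensional, whence $Z \in \cI$ by the characterisation just proved.
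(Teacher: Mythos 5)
Your proof is correct and follows the same route the paper takes: the paper's own proof is the one-line citation ``follows from the relationship between morphisms and intersections'' in the geometric model of Section \ref{SectionGeometricModelWrapped}, and your argument is exactly the expansion of that citation (loops in minimal position have finitely many self-intersections and carry finite-dimensional indecomposable local systems, while every arc on a punctured surface ends at fully marked components and so acquires the infinite family of wrapping flows $\mathbb{N}_{\geq 0}\subseteq \Flow(\gamma,\gamma)$ already noted in Section \ref{SectionFukayaCategories}). The closing long-exact-sequence argument for thickness is also the standard one and is fine.
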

	\begin{proof}
	Follows from the relationship between morphisms and intersections, cf.~Section \ref{SectionGeometricModelWrapped}.
	\end{proof}
	
\begin{cor}\label{CorollaryEquivalencesRestrictToBoundary} Every triangle auto-equivalence of $\Fuk(\Sigma)$ restricts to an auto-equivalence of $\cI$ and induces a triangle auto-equivalence on the Verdier quotient $\cQ\coloneqq\Fuk(\Sigma)/\cI$.
	\end{cor}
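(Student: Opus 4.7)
The key observation is that by \Cref{LemmaBandCategory}, the subcategory $\cI \subseteq \Fuk(\Sigma)$ admits an intrinsic categorical characterization: it consists precisely of those objects $X$ for which the graded endomorphism ring $\End^{\bullet}(X) = \Hom^{\bullet}(X,X)$ is finite dimensional over $\Bbbk$. Since this property is a purely invariant of the triangulated structure, my first step is to note that any triangle auto-equivalence $F\colon \Fuk(\Sigma) \to \Fuk(\Sigma)$ induces graded isomorphisms $\End^{\bullet}(X) \cong \End^{\bullet}(F(X))$ for every object $X$, so that $X \in \cI$ if and only if $F(X) \in \cI$. Applying the same argument to a quasi-inverse $F^{-1}$, the restriction $F|_{\cI}\colon \cI \to \cI$ is essentially surjective and fully faithful, hence a triangle auto-equivalence of $\cI$.

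For the second assertion I would invoke the standard universal property of Verdier quotients. Since $\cI$ is a thick triangulated subcategory of $\Fuk(\Sigma)$ by \Cref{LemmaBandCategory}, the localisation functor $\pi\colon \Fuk(\Sigma) \to \cQ$ is an exact functor which annihilates the objects of $\cI$ and is universal with this property among triangulated functors. Given a triangle auto-equivalence $F$ of $\Fuk(\Sigma)$, the composition $\pi \circ F$ is a triangulated functor which sends every object of $\cI$ to an object isomorphic to zero in $\cQ$ (because $F(\cI) \subseteq \cI$ by the previous step). By the universal property, $\pi \circ F$ factors uniquely as $\bar{F} \circ \pi$ for some triangulated endofunctor $\bar{F}$ of $\cQ$. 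Applying the same construction to a quasi-inverse $F^{-1}$ yields $\overline{F^{-1}}$, and uniqueness of the factorisation together with the relations $F \circ F^{-1} \simeq \operatorname{Id} \simeq F^{-1} \circ F$ forces $\bar{F} \circ \overline{F^{-1}} \simeq \operatorname{Id}_{\cQ} \simeq \overline{F^{-1}} \circ \bar{F}$, so that $\bar{F}$ is a triangle auto-equivalence of $\cQ$.

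There is no real obstacle in this argument; it is a formal consequence of \Cref{LemmaBandCategory} together with elementary properties of Verdier localisations. The only subtle point is the verification in \Cref{LemmaBandCategory} itself that $\cI$ has the claimed intrinsic description, but that lemma is assumed as input. One could record as a remark that the induced equivalence $\bar{F}$ is well-defined only up to natural isomorphism, since Verdier quotients are universal only up to a unique natural isomorphism, but this suffices for the purposes of the geometrisation homomorphism to be constructed on $\cQ$ in the next section.
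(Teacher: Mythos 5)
Your argument is correct and is exactly the one the paper intends: the corollary is stated without proof as an immediate consequence of \Cref{LemmaBandCategory}, whose intrinsic characterisation of $\cI$ (finite-dimensional graded endomorphism rings, thickness) gives preservation under any triangle auto-equivalence, and the descent to $\cQ$ is the standard universal property of the Verdier quotient. Nothing to add.
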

\begin{cor}\label{CorollaryInteriorMorphismIsIdeal}For all $X, Y \in \Fuk(\Sigma)$, $\Hom_{\Int}(X,Y)$ coincides with the kernel of the canonical map 
	\begin{displaymath}
	\begin{tikzcd}
	\Hom_{\Fuk(\Sigma)}(X,Y) \arrow{r} & \Hom_{\cQ}(X,Y),
	\end{tikzcd}
	\end{displaymath}
	\noindent induced by the $\Fuk(\Sigma) \rightarrow \cQ$. Thus, $\{\Hom_{\Int}(X,Y)\}_{X,Y \in \Fuk(\Sigma)}$ defines a shift closed ideal of the underlying additive category of $\Fuk(\Sigma)$ which is invariant under triangle auto-equivalences of $\Fuk(\Sigma)$, that is, for every auto-equivalence $F$ of $\Fuk(\Sigma)$ and all $X, Y \in \Fuk(\Sigma)$, the canonical isomorphism $\Hom(X,Y) \rightarrow \Hom(F(X), F(Y))$ restricts to an isomorphism $\Hom_{\Int}(X,Y) \rightarrow \Hom_{\Int}(F(X), F(Y))$.
\end{cor}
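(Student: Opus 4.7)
The plan is to deduce the corollary directly from the calculus of fractions for the Verdier quotient by a thick subcategory, combined with the preceding results. By \Cref{LemmaBandCategory}, $\cI$ is a thick subcategory of $\Fuk(\Sigma)$, so the localisation functor $\pi\colon \Fuk(\Sigma) \to \cQ$ is a well-defined triangulated additive functor. The key input I would invoke is the standard fact that, for a thick subcategory $\cI$ of a triangulated category $\cT$, a morphism $f\colon X \to Y$ is annihilated by the localisation $\cT \to \cT/\cI$ if and only if $f$ factors through some object of $\cI$. One direction is immediate, since $\pi$ sends objects of $\cI$ to zero. For the other, if $\pi(f) = 0$ then by the calculus of fractions there exists $s\colon X' \to X$ whose cone lies in $\cI$ and such that $fs = 0$ in $\Fuk(\Sigma)$; completing $s$ to an exact triangle $X' \xrightarrow{s} X \xrightarrow{u} \operatorname{cone}(s) \to X'[1]$, the triangle axiom produces a factorisation $f = f' \circ u$ through $\operatorname{cone}(s) \in \cI$. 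Applied to our situation, this identifies $\Hom_{\Int}(X,Y)$ with the kernel of $\pi$ on morphism groups.

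With this identification in hand, the ideal property is purely formal, as the kernel of an additive functor is closed under pre- and post-composition with arbitrary morphisms and under addition: for $f \in \Hom_{\Int}(X,Y)$ and composable $g, h$ in $\Fuk(\Sigma)$, one has $\pi(gfh) = \pi(g)\pi(f)\pi(h) = 0$, and sums of interior morphisms are interior. Shift closure follows because $\pi$ is triangulated and commutes with the suspension functor, so $\pi(f[n]) = \pi(f)[n] = 0$ whenever $\pi(f) = 0$.

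For invariance under triangle auto-equivalences I would invoke \Cref{CorollaryEquivalencesRestrictToBoundary}: any such $F$ restricts to an auto-equivalence of $\cI$. Consequently, if $f\colon X \to Y$ factors as $X \to Z \to Y$ with $Z \in \cI$, then $F(f)$ factors as $F(X) \to F(Z) \to F(Y)$ with $F(Z) \in \cI$, which proves the claim. Equivalently and perhaps more conceptually, $F$ descends to a triangle auto-equivalence of $\cQ$ compatible with the localisation functors, and therefore preserves the kernel of $\pi$ on each $\Hom$-group.

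No step in this argument is a serious obstacle; the corollary is essentially a formal consequence of \Cref{LemmaBandCategory} and \Cref{CorollaryEquivalencesRestrictToBoundary} together with standard properties of Verdier quotients. The only point requiring a bit of care is the identification of the kernel of the Verdier localisation with the space of morphisms factoring through the thick subcategory, which is entirely standard once thickness has been verified.
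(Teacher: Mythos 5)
Your proposal is correct and is exactly the formal argument the paper leaves implicit (the corollary is stated without proof): the identification of $\Hom_{\Int}(X,Y)$ with the kernel of the localisation via the standard fact that a morphism dies in a Verdier quotient by a thick subcategory if and only if it factors through that subcategory, followed by the formal ideal, shift-closure, and invariance properties using \Cref{LemmaBandCategory} and \Cref{CorollaryEquivalencesRestrictToBoundary}. No gaps.
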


	\noindent The following is an equivalent characterisation of interior morphisms. 
\begin{prp}\label{PropositionInteriorMorphismsInteriorIntersections}Let $\gamma, \delta$ be graded arcs on $\Sigma$ and $f\colon X_{\gamma} \rightarrow X_{\delta}$ be a morphism in $\Fuk(\Sigma)$. Then $f$ is interior if and only if its support consists of interior intersections. 
\end{prp}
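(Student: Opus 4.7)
The strategy is to match the subspace $\Hom_{\Int}(X_\gamma, X_\delta)$ of interior morphisms with the closed $\Bbbk$-span of the intersection morphisms at interior intersections of $\gamma$ and $\delta$. This uses the fact, recalled in Section \ref{SectionGeometricModelWrapped}, that the morphism spaces in $\Fuk(\Sigma)$ between arc objects admit a Schauder basis indexed by the intersections of the underlying curves, and that this basis splits into interior intersections and boundary intersections (those at the punctures).

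For the forward inclusion, by $\Bbbk$-linearity and \Cref{CorollaryInteriorMorphismIsIdeal} it suffices to treat morphisms of the form $f=g\circ h$ with $h\colon X_\gamma\to X_\ell$ and $g\colon X_\ell\to X_\delta$, where $\ell$ is an indecomposable graded loop so that $X_\ell \in \cI$. Since $\ell$ is disjoint from $\partial\Sigma$, every intersection of $\ell$ with $\gamma$ or with $\delta$ is interior, so $h$ and $g$ are combinations of basis elements supported at interior intersections. The composition $g\circ h$ is computed as a sum over disk sequences, i.e.~immersed polygons with three consecutive edges along $\gamma$, $\ell$, $\delta$ whose output corner lies on $\gamma\cap\delta$. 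Since the $\ell$-edge is disjoint from the punctures, the output corner cannot itself lie at a puncture: otherwise the disk would approach that puncture via its two arc-edges while its third edge remained bounded away, which is topologically impossible. Hence only interior intersections appear in the support of $g\circ h$, proving $\Hom_{\Int}(X_\gamma,X_\delta)$ is contained in the closed span of interior intersection morphisms.

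For the reverse inclusion, given an interior intersection $p\in\gamma\cap\delta$ with corresponding basis morphism $\phi_p$, we exhibit an explicit factorization of $\phi_p$ through an object of $\cI$. The construction chooses a simple essential loop $\ell_p$ on $\Sigma$ which meets $\gamma$ transversely at a single interior point $q$ and meets $\delta$ transversely at a single interior point $r$, such that there is a unique immersed triangle with corners $q, r, p$ and edges running along $\gamma, \ell_p, \delta$. The associated basis morphisms $\phi_q\colon X_\gamma\to X_{\ell_p}$ and $\phi_r\colon X_{\ell_p}\to X_\delta$ then compose to $\phi_p$ up to a nonzero scalar. Such an $\ell_p$ exists in a small neighbourhood of $p$ after using the topology of the punctured surface to make $\ell_p$ non-contractible; if $\gamma$ and $\delta$ have other intersections obstructing the uniqueness of the triangle, one replaces the single loop by a small twisted complex of loops whose differentials cancel the unwanted summands, leaving $\phi_p$ as the composition.

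The main obstacle is the geometric construction in the reverse direction: one must carry out a case analysis depending on the placement of the endpoints of $\gamma, \delta$ relative to $p$ and on the other intersections of $\gamma$ with $\delta$, and verify in each case that an appropriate $\ell_p$ (or twisted complex of loops) exists and that the disk-sequence formula from \Cref{DefinitionPreFukayaCategory} indeed produces $\phi_p$ as the sole surviving contribution. The forward direction, by contrast, is a purely local argument about where punctures can appear as corners of disks involving a loop edge.
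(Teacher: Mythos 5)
Your forward direction matches the paper's (which simply appeals to the geometric description of compositions: a basis morphism factoring through a loop object must be supported at an interior intersection), and your reduction to compositions of basis morphisms through indecomposable loop objects is fine. The reverse direction, however, has a genuine gap.

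The central difficulty in factoring $\phi_p$ through an object of $\cI$ is that the auxiliary loop must actually define an object of $\Fuk(\Sigma)$, and a loop $\ell$ admits a grading only if its winding number $\omega_\eta(\ell)$ vanishes. You never address this. A ``small simple loop near $p$'' is contractible (and would in any case meet each of $\gamma$ and $\delta$ in two points, not one), and an arbitrary essential simple loop obtained ``using the topology of the punctured surface'' has no reason to satisfy $\omega_\eta(\ell)=0$. This is exactly where the paper's construction does real work: the loop $\gamma_{\operatorname{loop}}$ is built from segments of $\gamma$ and $\delta$ through $p$ together with the $n_A$-th and $n_B$-th powers of the boundary loops around the components $A$ and $B$ containing the endpoints, with $n_A,n_B$ chosen so that $n_A\omega_\eta(A)=-n_B\omega_\eta(B)$; only then is the resulting loop gradable. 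One must also rule out that this loop is contractible, which the paper does by reducing to the case $(g,b)=(0,2)$, where the unique arc has no interior self-intersections, so the situation never arises. Your fallback of ``a small twisted complex of loops whose differentials cancel the unwanted summands'' does not resolve the gradability issue either, since each summand of such a twisted complex must itself be a gradable loop. Without an argument producing a gradable, non-contractible loop through which $\phi_p$ factors, the reverse inclusion is not established.
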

\begin{proof}
	The proof is similar to \cite[Lemma 7.35]{OpperKodairaCycles}. The description of compositions in $\Fuk(\Sigma)$, implies that every basis morphism which factors over $\cI$ must be supported at an interior intersection, cf.~\cite[Lemma 4.19]{OpperKodairaCycles}. Thus, the support of any interior morphism consists of interior intersections. For the converse, let $p$ be an interior intersection between arcs $\gamma$ and $\delta$ and let $f=f_p\colon X_{\gamma} \rightarrow X_{\delta}$ denote the associated morphism. Consider a closed curve $\gamma_{\operatorname{loop}}$ as defined in \cite[Remark 6.6]{OpperKodairaCycles} where in the construction one replaces the clockwise primitive closed loops around the boundary components $A$ and $B$ in \cite[Figure 15]{OpperKodairaCycles} by their $n_A$-th and $n_B$-th powers for $n_A, n_B \in \mathbb{Z}$ such that $n_A \omega_{\eta}(A)=-n_B \omega_{\eta}(B)$ is the smallest common multiple of $|\omega_{\eta}(A)|$ and $|\omega_{\eta}(B)|$. Then one argues as in \cite[Remark 6.6]{OpperKodairaCycles} to show that the resulting loop is primitive and gradable as long as it is not contractible. By construction this would require $A$ and $B$ to be homotopic in the sense that their boundary curves $\gamma_A, \gamma_B\colon S^1 \rightarrow \partial\Sigma \subseteq \Sigma$ are homotopic as maps with target $\Sigma$. However, this happens only if $(g, b)=(0,2)$ and $\gamma$ is the up to homotopy unique arc on $\Sigma$. But $\gamma$ does not have self-intersections outside the boundary in this case, so we were never in this situation to begin with. Finally, it follows from the geometric description of compositions that $f$ factors over $X_{\gamma_{\mathrm{loop}}}$.
\end{proof}

\subsubsection{The geometrisation homomorphism} We are ready to show the existence of the geometrisation homomorphism.
\begin{prp}\label{PropositionIntermediategeometrisationHomomorphism}
There exist a homomorphism $\Psi'\colon\Aut(\Fuk(\Sigma)) \rightarrow \MCG^+(\Sigma)$. In case $(g, b) \in \{(0,2), (0,3)\}$, it has image in $\MCG(\Sigma)$.
\end{prp}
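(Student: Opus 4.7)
The plan is to construct $\Psi'$ as the composition of a geometric homomorphism $\rho\colon \Aut(\Fuk(\Sigma)) \to \Aut(A(\Sigma))$ into the simplicial automorphism group of the arc complex with a (partial) inverse of the map $\alpha$ from \Cref{TheoremIrmarkMcCarthy}.

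To build $\rho$, I would first identify vertices of $A(\Sigma)$ with isomorphism classes of indecomposable arc-type objects of $\Fuk(\Sigma)$ modulo shift. The wrapped version of \Cref{TheoremBijectionObjectsCurves} gives a bijection between indecomposable objects and homotopy classes of graded curves with indecomposable local system; on arcs the local system is forced to be trivial, so the residual datum after forgetting the grading (i.e.~quotienting by the $\mathbb{Z}$-action of the shift) is precisely a vertex of $A(\Sigma)$. Loops are separated from arcs purely categorically by \Cref{LemmaBandCategory}, which identifies them as the indecomposables with finite-dimensional graded endomorphism ring. Any triangle autoequivalence $F$ therefore permutes arc-type objects and so descends to a bijection $\rho(F)$ on the zero-skeleton of $A(\Sigma)$.

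Next, I would verify that $\rho(F)$ is simplicial, i.e.~sends arc systems to arc systems. Pairwise non-isotopy of images is automatic from the injectivity of $F$ on isomorphism classes. The non-trivial point is that $\rho(F)$ preserves interior disjointness. By \Cref{PropositionInteriorMorphismsInteriorIntersections}, two arcs $\gamma, \delta$ in minimal position are interior-disjoint if and only if $\Hom_{\Int}^\bullet(X_\gamma, X_\delta) = \Hom_{\Int}^\bullet(X_\delta, X_\gamma) = 0$. \Cref{CorollaryInteriorMorphismIsIdeal} shows that the ideal $\Hom_{\Int}$ is preserved under autoequivalences, so this vanishing is carried over by $F$. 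Applying the same argument to $F^{-1}$ shows $\rho(F) \in \Aut(A(\Sigma))$, and functoriality of the construction makes $\rho$ a group homomorphism.

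Finally, \Cref{TheoremIrmarkMcCarthy} allows one to lift $\rho$ to $\MCG^+(\Sigma)$. In the non-exceptional case $\alpha$ is an isomorphism and we set $\Psi' \coloneqq \alpha^{-1} \circ \rho$. In the cases $(g,b) \in \{(0,2), (0,3)\}$, \Cref{CorollaryExceptionalCases} gives an isomorphism $\alpha|_{\MCG(\Sigma)}$, so $\Psi' \coloneqq \alpha|_{\MCG(\Sigma)}^{-1} \circ \rho$ automatically takes values in $\MCG(\Sigma)$. The case $(0,1)$ is vacuous. The genuinely subtle case is the once-punctured torus $(1,1)$, where $\ker \alpha$ is the central $\mathbb{Z}/2$ generated by the hyperelliptic involution; here any set-theoretic section of $\alpha$ must be promoted to a homomorphism across this central extension, and this is where I expect the main technical difficulty to lie. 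A natural remedy is to exploit the additional structure carried by $F$—its action on the loop subcategory $\cI$ or on refined geometric invariants beyond the underlying arc complex—in order to pin down a canonical lift, since the hyperelliptic involution acts non-trivially on such finer data. Once a consistent choice of lift is made, the resulting map $\Psi'$ is a well-defined group homomorphism with the required target.
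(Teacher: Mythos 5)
Your construction of $\rho$ and the lifting through \Cref{TheoremIrmarkMcCarthy} and \Cref{CorollaryExceptionalCases} is exactly the route the paper takes: loops are isolated categorically via \Cref{LemmaBandCategory}, arc systems are detected by vanishing of interior morphisms via \Cref{PropositionInteriorMorphismsInteriorIntersections}, and the resulting simplicial automorphism is converted to a mapping class. The non-exceptional case and the cases $(0,2),(0,3)$ are handled as you describe.

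The one genuine gap is the once-punctured torus, which you correctly flag but do not resolve. Your proposed remedy is the right idea but stops short of an argument: you need a concrete piece of categorical data on which the hyperelliptic involution acts non-trivially \emph{and} which is demonstrably preserved by every triangle autoequivalence. The paper supplies this via \Cref{PropositionEquivalencePreserveIrreducibleFlows}: for non-homotopic simple arcs $\delta,\gamma$, an autoequivalence sends the morphism associated to an irreducible flow to a scalar multiple of the morphism of an irreducible flow. This in turn rests on the explicit computation of $\End^{\bullet}(X_\gamma)$ and of $\Hom^{\bullet}(X_\delta,X_\gamma)$ as a module over it (\Cref{LemmaEndomorphismsAndModuleStructure}), which pins down the automorphisms of these objects and shows generators go to scalar multiples of generators. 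For $(g,b)=(1,1)$ the minimal arc system consists of two arcs $\gamma,\gamma'$ with two irreducible flows in each direction; the hyperelliptic involution fixes both arcs but swaps the two irreducible flows, so the action of $F$ on these flow-morphisms singles out a \emph{unique} extended mapping class inducing $\cA(F)$, and uniqueness also gives the homomorphism property across the central $\mathbb{Z}/2$. Without establishing that autoequivalences preserve irreducible flows (or some equivalent rigidity statement), your lift in the $(1,1)$ case is only a set-theoretic section and the proof is incomplete. Note also that your appeal to the action on the loop subcategory $\cI$ alone would not suffice, since the hyperelliptic involution could a priori act trivially on the relevant isomorphism classes of loop objects; the paper's choice of invariant (boundary morphisms between the two arcs) is what makes the argument close.
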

\begin{proof} Any triangle autoequivalence $F$ of $\Fuk(\Sigma)$ preserves indecomposablitity of objects as well as their endomorphism ring. Thus by Lemma \ref{LemmaBandCategory} and the correspondence between indecomposable objects and curves implies that $F$ induces a permutation $\cA(F)$ on the set of arcs on $\Sigma$. By Proposition \ref{PropositionInteriorMorphismsInteriorIntersections}, a collection of arcs $\{\gamma_0, \dots, \gamma_n\} \subseteq \Sigma$ constitutes an $n$-simplex of $A(\Sigma)$ if and only if $\Hom_{\Int}^{\bullet}(X_{\gamma_i}, X_{\gamma_j})=0$ for all $i, j \in \{0, \dots, n\}$. Consequently, $\cA(F)$ restricts to a bijection of the set of vertices of $A(\Sigma)$ and defines a simplicial automorphism. If $\Sigma$ is exceptional but $(g, b)\neq (1,1)$, then Corollary \ref{CorollaryExceptionalCases} implies that $\cA(F)$ corresponds to a unique (orientation preserving) mapping class of $\Sigma$ which we denote by $\Psi(F)$. Similar, if $\Sigma$ is non-exceptional, then by Theorem \ref{TheoremIrmarkMcCarthy} $\cA(F)$ corresponds to a unique extended mapping class $\Psi(F)$. Finally, if $(g,b)=(1,1)$, then $|\cA|=2$. We identify $\Sigma$ and its puncture with $S^1 \times S^1$ and the point $(1,1)$. Since $\MCG(\Sigma)$ acts transitively on the simple closed curves we may assume that $\cA$ contains the simple arc $\gamma$ with image $S^1 \times \{1\}$. Then up to homotopy, the other arc of $\cA$ must be the simple arc $\gamma'$ with image $\{1\} \times S^1$. Thus there are two irreducible flows from $\gamma$ to $\gamma'$ and two irreducible flows from $\gamma'$ to $\gamma$. 
The hyperelliptic involution fixes the arcs $\gamma$ and $\gamma'$ but permutes the irreducible flows. By Proposition \ref{PropositionEquivalencePreserveIrreducibleFlows} below, there therefore exists a \emph{unique} extended mapping class $\Psi(F)$ such that  $\cA(F)$ is the simplicial automorphism induced by $\Psi(F)$ and such that $\Psi(F)$ maps each irreducible flow to its image under $F$. 
\end{proof}
\noindent Next, we show that the image of the homomorphism from the previous proposition always has image in the mapping class group. Because of the slightly different different definition of the Fukaya category in this case, we will exclude the case that $\Sigma$ is an annulus with two fully marked components from some of the discussion below. As a preparation we analyse the structure of endomorphism algebras and the corresponding module structure on the morphism spaces in $\Fuk(\Sigma)$.

	\begin{lem}\label{LemmaEndomorphismsAndModuleStructure}Assume $(g,b)\neq (0,2)$.
	Let $\gamma \subseteq \Sigma$ be a simple graded arc with endpoint in boundary components $B_0 \ni \gamma(0)$ and $B_1 \ni \gamma(1)$ and let $E_{\gamma}\coloneqq\End^{\bullet}(X_{\gamma})$ denotes its graded endomorphism algebra in $\Fuk(\Sigma)$. Then the following hold.
	
	\begin{enumerate}
		\item There exist an isomorphism of graded $\Bbbk$-algebras 
			\begin{displaymath}
			E_{\gamma} \cong \begin{cases}\Bbbk\langle x,y\rangle /(x^2, y^2), & \text{if $B_0=B_1$}; \\ \Bbbk[x,y]/(xy), & \text{if $B_0 \neq B_1$}.\end{cases}
			\end{displaymath}
			\noindent with $|x|=\omega_{\eta}(p)$ and $|y|=\omega_{\eta}(q)$. The isomorphism identifies the irreducible flow which starts at $\gamma(0)$ with $x$ and the irreducible flow which starts at $\gamma(1)$ with $y$. The group of algebra automorphisms of $E_{\gamma}$ is generated by the involution which exchanges $x$ and $y$ and the automorphisms of the form $(x,y) \mapsto (\alpha x, \beta y)$ for $\alpha, \beta \in \Bbbk^{\times}$. 
			
		\item \label{ItemModuleStructure} For any simple graded arc $\delta \subseteq \Sigma$ such that $\{\gamma, \delta\}$ is an arc system and with endpoints in boundary components $C_0 \ni \delta(0) $ and $C_1 \ni \delta(1)$, let $H(\delta, \gamma)\coloneqq \Hom^{\bullet}_{\Fuk(\Sigma)}(X_{\delta}, X_{\gamma})$. Then there exists an isomorphism of graded left $E_{\gamma}$-modules
		\begin{displaymath}
		 H(\delta, \gamma) \cong \begin{cases}(x,y), & \text{if $\{B_0,B_1\}=\{C_0,C_1\}$}; \\ 
		(x,y)/(z_y),  & \text{if $\{B_0,B_1\}\neq\{C_0,C_1\}$ and $B_0 \in \{C_0,C_1\}$}; \\
			(x,y)/(z_x),  & \text{if $\{B_0,B_1\}\neq\{C_0,C_1\}$ and $B_1 \in \{C_0,C_1\}$}; \\
			0 & \text{otherwise,}\end{cases}
		\end{displaymath}
	\noindent where $z_x=x$ and $z_y=y$ if $B_0 \neq B_1$ and $z_x=yx$ and $z_y=xy$ if $B_0=B_1$. The isomorphism identifies the irreducible flow with end point $\gamma(0)$ with $x$ and the irreducible flow with end point $\gamma(1)$ with $y$. Every  $E_{\gamma}$-module automorphism of $H(\gamma, \delta)$ is of the form $x \mapsto \lambda x$ and $y \mapsto \mu y$, $\lambda, \mu \in \Bbbk^{\times}$.
	
	\item There are analogous results for the right $E_{\delta}$-module structure of $H(\delta, \gamma)$.
	\end{enumerate}  

	\end{lem}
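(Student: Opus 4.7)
The strategy is to exploit the correspondence between morphisms in $\Fuk(\Sigma)$ and flows between arcs on $\Sigma$ (as reviewed in Section~\ref{SectionGeometricModelWrapped} via the identification with Bocklandt's wrapped model and the HKK model). Since $\gamma$ is simple, the only intersections of $\gamma$ with itself occur at its endpoints; accordingly, a $\Bbbk$-basis of $E_\gamma=\End^\bullet(X_\gamma)$ consists of the identity together with the homotopy classes of flows from $\gamma$ to $\gamma$, and compositions are given by concatenation of flows. I would split into the two cases according to whether $B_0\neq B_1$ or $B_0=B_1$.

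For $B_0\neq B_1$, the irreducible flow $x$ starting at $\gamma(0)$ wraps once clockwise around $B_0$ back to $\gamma(0)$, and $y$ starting at $\gamma(1)$ wraps once clockwise around $B_1$ back to $\gamma(1)$. Powers $x^k$ and $y^k$ give all further self-flows at each puncture, no non-trivial composition $xy$ or $yx$ exists (they are supported at different punctures and cannot be concatenated), so the composition law yields an isomorphism $E_\gamma\cong\Bbbk[x,y]/(xy)$. For $B_0=B_1$, the first endpoint of $\gamma$ that is reached going clockwise from $\gamma(0)$ along $B_0$ is $\gamma(1)$, so the irreducible flow starting at $\gamma(0)$ is a path $x\colon\gamma(0)\to\gamma(1)$, and similarly $y\colon\gamma(1)\to\gamma(0)$; now $x^2$ and $y^2$ fail to be composable whereas $xy,yx,xyx,yxy,\dots$ are all non-trivial and correspond to successive clockwise wraps around $B_0$, giving $E_\gamma\cong\Bbbk\langle x,y\rangle/(x^2,y^2)$. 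The degree statement $|x|=\omega_\eta(p)$, $|y|=\omega_\eta(q)$ follows by applying the HKK grading formula \cite[Section~3.2]{HaidenKatzarkovKontsevich} to each irreducible flow with respect to the line field $\eta$ and the chosen grading on $\gamma$.

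For the automorphism group, I note that $E_\gamma$ has a unique maximal graded ideal (its radical in the appropriate sense) and that $x$ and $y$ generate this ideal and are characterised, up to scalar, as the only two elements supported at a single endpoint of $\gamma$ (equivalently, the two non-units annihilated on one side by each other in the $B_0=B_1$ case, and analogously in the other case). Any graded algebra automorphism must therefore send $\{x,y\}$ to $\{\alpha x,\beta y\}$ or $\{\alpha y,\beta x\}$, giving exactly $(\Bbbk^\times)^2\rtimes\mathbb{Z}_2$. For part~\eqref{ItemModuleStructure}, I would use that $\{\gamma,\delta\}$ being an arc system means the only intersections between $\gamma$ and $\delta$ are at the boundary, so $H(\delta,\gamma)$ has a basis indexed by flows from $\delta$ to $\gamma$; these only exist when an endpoint of $\delta$ lies on a boundary component shared with $\gamma$. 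Then I would enumerate: if $\{C_0,C_1\}=\{B_0,B_1\}$, there are two families of flows, one ending at $\gamma(0)$ and one at $\gamma(1)$, each a tower under post-composition by $xy,yx$ (resp.\ $x,y$), yielding $H(\delta,\gamma)\cong(x,y)\subseteq E_\gamma$; if only one endpoint of $\delta$ lies on $\{B_0,B_1\}$, exactly one of the two towers survives, removing from $(x,y)$ the generator at the missing endpoint --- the precise generator removed being $x$ or $y$ when $B_0\neq B_1$ and $yx$ or $xy$ when $B_0=B_1$, because in the latter case a flow ending at $\gamma(i)$ factors through the wrap-around contributions. The module automorphism statement is now immediate since any $E_\gamma$-linear endomorphism is determined by its values on the two generators of $(x,y)$, and the graded assumption forces scalar multiplication. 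Part~(3) follows by the symmetric argument for $E_\delta$ acting on the right.

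The main obstacle is the last case analysis: properly identifying which generator ($z_x$ or $z_y$, and whether it equals $x,y$ or $xy,yx$) is annihilated by the quotient requires a careful verification that a flow arriving at $\gamma(i)$ from $\delta$, post-composed with $x$ or $y$, either lands in the image of a surviving flow or must be zero. This can be settled by drawing the local picture of the boundary circle containing the shared endpoint and tracking how the clockwise concatenation of an incoming flow from $\delta$ with a wrap-around flow of $\gamma$ reproduces another flow in $H(\delta,\gamma)$ --- the only subtle point being the difference between $B_0=B_1$, where a single wrap requires composing $x$ and $y$, and $B_0\neq B_1$, where the wrap is already achieved by $x$ alone. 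Once this local bookkeeping is done, the isomorphism types of the modules in the statement are forced, and the description of their automorphism groups follows from the fact that both $x$ and $y$ remain, up to scalar, the unique minimal generators of each surviving tower.
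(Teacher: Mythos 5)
Your description of $E_\gamma$ and of the left module $H(\delta,\gamma)$ via flows and their concatenations is exactly how the paper obtains these structures (the paper dispatches this part in one sentence as "a direct consequence of the correspondence between basis morphisms and flows"), and your flagged subtlety about which generator is killed in the $B_0=B_1$ versus $B_0\neq B_1$ cases is the right local bookkeeping. The place where your argument does not go through as written is the computation of $\Aut(E_\gamma)$, which is in fact the only part the paper proves in detail. You characterise $x$ and $y$ "up to scalar as the only two elements supported at a single endpoint of $\gamma$"; this is a statement about a chosen basis, not an intrinsic algebraic property, so it is not a priori preserved by an automorphism --- establishing that automorphisms preserve the span of $\{x,y\}$ (equivalently the ideal $(x,y)$) is precisely what has to be proved. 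Your parenthetical fallback is also false as stated: in $\Bbbk\langle x,y\rangle/(x^2,y^2)$ one has $xy\neq 0\neq yx$, so $x$ and $y$ do not annihilate each other, and elements such as $xyx$ are likewise square-zero, so square-zero non-units do not single out $\Bbbk x\cup\Bbbk y$. Finally, the lemma concerns \emph{all} algebra automorphisms, not just graded ones, and appealing to the grading fails outright when $|x|=|y|=0$ (which occurs for vanishing winding numbers); the same caveat applies to your justification of the module-automorphism claim in part (2), where the correct reason is a polynomial-degree argument in the towers $x\Bbbk[x]$ and $y\Bbbk[y]$, not the grading.

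The paper closes this gap by direct computation. For $B_0\neq B_1$ it writes $E_\gamma$ as the (non-direct) sum of the subalgebras $\Bbbk[x]$ and $\Bbbk[y]$, expands $F(x)=\lambda+P_x+P_y$ and $F(y)=\mu+Q_x+Q_y$, and uses the single relation $xy=0$ together with surjectivity to force $\lambda=\mu=0$, $P_y=Q_x=0$ (after possibly applying the swap involution) and then linearity of $P_x$ and $Q_y$. For $B_0=B_1$ it identifies $E_\gamma$ with the category algebra of the free $\Bbbk$-linear category on two objects $U_0,U_1$ with one generating arrow in each direction, whence automorphisms correspond to automorphisms of that free category and are visibly generated by the swap and the rescalings. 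Either of these arguments (or an equivalent one) is needed to complete your proof; the rest of your proposal matches the paper's route.
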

\begin{proof}
Because $\gamma$ and $\delta$ are simple, the description of the ring $E_{\gamma}$ and the $E_{\gamma}$-module $H(\delta, \gamma)$ is a direct consequence of the correspondence between basis morphisms and flows between $\gamma$ and $\delta$ and their compositions. For the description of algebra automorphisms of $E_{\gamma}$, we distinguish between two cases.

 First, if $B_0 \neq B_1$, there exist injective algebra homomorphisms $\iota\colon\Bbbk[x] \hookrightarrow E_{\gamma}$ and $\kappa\colon\Bbbk[y] \hookrightarrow E_{\gamma}$  where $\iota(x)=x$ and $\kappa(y)=y$. Their images intersect in the subalgebra generated by the identity and jointly span $E_{\gamma}$ as a vector space. In particular, every automorphism of $E_{\gamma}$ is uniquely determined by its restrictions $F_{\iota}\coloneqq F \circ \iota$ and $F_{\kappa}\coloneqq F \circ \kappa$. Now if $F\colon E_{\gamma} \rightarrow E_{\gamma}$ is an automorphism, we may write $F_{\iota}(x)=\lambda  + P_x + P_y$ for polynomials $P_{\star} \in \Bbbk[\star]$ and a scalar $\lambda \in \Bbbk$. Similarly, we can decompose $F_{\kappa}(y)$ as a sum $\mu + Q_{x} + Q_{y}$. Because $F$ is an automorphism and because $xy=0$, it follows $P_x Q_y=0=P_y Q_x$. Hence $0=\lambda \mu + P_xQ_x + P_yQ_y$ which implies $\lambda \mu=0$ as well as $P_x Q_x=0$ and $P_y Q_y=0$. Moreover, if $P_x=0$ (resp.~$Q_x=0$), then $P_y \neq 0$ (resp.~$Q_y \neq 0$) as otherwise the image of $F_{\iota}$ would be finite-dimensional in contradiction to $F$ being an isomorphism. Similarly, if $P_x=0$, then $Q_x \neq 0$ since $F_{\iota}$ and $F_{\kappa}$ are jointly surjective. Thus upon exchanging $x$ and $y$ via the obvious involution, we may assume $P_y=0$, $Q_x=0$. But this means that $F$ restricts to an isomorphism of $\Bbbk[x]$ and $\Bbbk[y]$ (considered as subalgebras) and are hence of the form $F_{\iota}(x)=\lambda + \alpha x$ and $F_{\kappa}(y)=\mu + \beta y$, where $\alpha, \beta \in \Bbbk^{\times}$. Again, the relation $xy=0$ then implies $0=F(x)F(y)=F_{\iota}(x) F_{\kappa}(y)$ and hence $\lambda=\mu=0$. Hence $F$ is of the form $F(x)=\alpha x$ and $F(y)=\beta y$, $\alpha, \beta \in \Bbbk^{\times}$.

Next, in the case $B_0 = B_1$, we observe that $E_{\gamma}$ is isomorphic to the category algebra of the graded free $\Bbbk$-linear category $\cU$ generated by two objects $U_0, U_1$ and a single morphism $f_i\colon U_i \rightarrow U_{1-i}$ for each $i \in \{0,1\}$. It follows that automorphisms of $E_{\gamma}$ are in bijection with automorphisms of $\cU$. It is easy to see that all such automorphisms are generated by the involution which exchanges $U_1$ and $U_2$ and the automorphisms which act trivially on the set of objects but multiply $f_i$ by a non-zero scalar. Finally, the assertion about automorphisms of the $E_{\gamma}$-module $H(\delta, \gamma)$ follows from very similar arguments.
\end{proof}
\noindent The previous lemma is now used to show the following.
\begin{prp}\label{PropositionEquivalencePreserveIrreducibleFlows}
Let $\delta, \gamma$ be non-homotopic graded simple arcs, $F$ a triangle auto-equivalence of $\Fuk(\Sigma)$ and $f\colon  X_{\delta} \rightarrow X_{\gamma}$ be a morphism corresponding to an irreducible flow $\alpha$. Then $F(f)$ corresponds to a scalar multiple of the morphism associated to the irreducible flow $F \circ \alpha$.
\end{prp}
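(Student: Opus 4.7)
The plan is to translate the statement into the classification of graded bimodule isomorphisms provided by \Cref{LemmaEndomorphismsAndModuleStructure}. Setting $\delta' \coloneqq \cA(F)(\delta)$ and $\gamma' \coloneqq \cA(F)(\gamma)$, fix isomorphisms $\phi_\delta \colon F(X_\delta) \xrightarrow{\sim} X_{\delta'}$ and $\phi_\gamma \colon F(X_\gamma) \xrightarrow{\sim} X_{\gamma'}$. Note that $\{\delta',\gamma'\}$ is again an arc system by \Cref{PropositionInteriorMorphismsInteriorIntersections} and the compatibility of $F$ with the subcategory $\cI$. Conjugation by $\phi_\delta$ and $\phi_\gamma$ converts the action of $F$ on morphism spaces into a graded $\Bbbk$-linear isomorphism
\[
\bar F \colon H(\delta, \gamma) \xrightarrow{\sim} H(\delta', \gamma'),
\]
intertwined with the induced graded algebra isomorphisms $F_\gamma \colon E_\gamma \xrightarrow{\sim} E_{\gamma'}$ and $F_\delta \colon E_\delta \xrightarrow{\sim} E_{\delta'}$. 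In other words, $\bar F$ is an isomorphism of graded $E_{\gamma'}$-$E_{\delta'}$-bimodules.

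Next, I would use part (1) of \Cref{LemmaEndomorphismsAndModuleStructure} to normalise $F_\gamma$ and $F_\delta$: after modifying $\phi_\gamma$ and $\phi_\delta$ by suitable algebra automorphisms of the target and, when an arc has both endpoints on a single boundary component, possibly pre-composing with the canonical involution exchanging the two irreducible loops of the endomorphism algebra, one may assume that both $F_\gamma$ and $F_\delta$ are of the diagonal scaling form $x \mapsto \lambda x$, $y \mapsto \mu y$ on the generators corresponding to irreducible flows at the respective endpoints. With this normalisation, part (2) of the same lemma, applied simultaneously to the left module structure over $E_{\gamma'}$ and to the right module structure over $E_{\delta'}$, implies that the only graded bimodule isomorphisms between $H(\delta, \gamma)$ and $H(\delta', \gamma')$ compatible with such scaling algebra isomorphisms act by independent non-zero scaling on each distinguished generator, i.e.\ on each morphism corresponding to an irreducible flow. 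Applied to the generator $f$ associated with $\alpha$, this yields that $\bar F(f)$ is a non-zero scalar multiple of the morphism associated to a unique irreducible flow from $\delta'$ to $\gamma'$, which by definition is what we denote $F \circ \alpha$.

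The principal obstacle is the ambiguity introduced by the involution swap in the algebra automorphism groups, which may in principle send the generator $x$ of $E_\gamma$ to a scalar multiple of $y$ in $E_{\gamma'}$ (and similarly for $\delta$). A priori this would allow $\bar F$ to permute the generators of $H(\delta, \gamma)$. However, any such swap only reflects the freedom to relabel the two endpoints of $\gamma$ (respectively $\delta$) and is absorbed into the choice of the identifications $\phi_\gamma$ (respectively $\phi_\delta$) up to reversing the arc's orientation; once a consistent labelling is fixed on both sides, the bimodule compatibility forces the irreducible-flow generator $f$ to be mapped to a scalar multiple of a single irreducible-flow generator, completing the argument.
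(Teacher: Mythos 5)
Your proposal is correct and follows essentially the same route as the paper: both reduce the claim to the classification of algebra automorphisms of $E_{\gamma}$ and of module automorphisms of $H(\delta,\gamma)$ in \Cref{LemmaEndomorphismsAndModuleStructure}, concluding that a compatible module isomorphism must send each irreducible-flow generator to a scalar multiple of a single irreducible-flow generator. Your explicit handling of the involution ambiguity is a careful elaboration of what the paper leaves implicit, not a different argument.
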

\begin{proof}
Irreducible flows correspond to the generators $x$ and $y$ of $H(\delta, \gamma)$ and $H(F(\delta), F(\gamma))$ respectively. Since $F$ induces isomorphisms of $E_{\gamma}\cong E_{F(\gamma)}$ and $E_{\gamma}$-isomorphisms between $H(\delta,\gamma)$ and $H(F(\delta), F(\gamma))$, it follows from Lemma \ref{LemmaEndomorphismsAndModuleStructure} (\ref{ItemModuleStructure}) and the description of automorphisms of $E_{\gamma}$ that $F$ must map a generator of $H(\delta, \gamma)$ to the scalar multiple of the generator of $H(F(\delta), F(\gamma))$.
\end{proof}
\noindent Suppose now that we are given a finite set of disjoint simple arcs $\{\gamma_1, \dots, \gamma_m\}$, where $\gamma_i\colon[0,1] \rightarrow \Sigma$ and where we now consider $\Sigma$ as a surface with fully marked boundary. Then on every component $B \subseteq \partial \Sigma$ the set of end points of the arcs divide $B$ into a set intervals $\cI=\cI_B$ each of which corresponds to an irreducible flow between them and each of which follows the induced orientation on $B$. The set $\cI$ inherits a cyclic order from $B$ where $g \in \cI$ is the successor of $f \in \cI$ if and only if the end of $f$ coincides with the start of $g$. We also define a label function $l\colon  \cI \rightarrow \{1,\dots, m\}$ which assigns to an irreducible flow $f$ the unique index $i$ such that $f(1) \in \gamma_i$. By assigning to element of $\cI$ its label, we obtain a cyclic sequence $\mathbbm{m}_B$ with values in the set $\{1, \dots, m\}$.   Via the correspondence with generators in the $E_{\gamma_j}$-modules $H(\gamma_i, \gamma_j)$, the cyclic order and the label function are encoded categorically as follows: a generator corresponding to an irreducible flow $g$ is the successor of the generator of an irreducible flow $f$ if and only if $g \circ f \neq 0$. The label of a morphism is simply the index $i$ of its codomain $\gamma_i$.  Here we note again that the choice of generators for $H(\gamma_i, \gamma_j)$ as an $E_{\gamma_j}$-module is unique up to scalar multiples which implies that the cyclic sequence $\mathbbm{m}_B$ is well-defined. 

 If now $F$ is a triangle auto-equivalence, we can now compare $\cI$ and $l$ to the cyclically ordered set $\cI_{F(B)}^F$ and its label function $l^F\colon  \cI_{F(B)}^F \rightarrow \{1,\dots, m\}$ obtained from the arcs $F(\gamma_1), \dots, F(\gamma_m)$ and the boundary component $F(B)$. We denote by $\mathbbm{m}_{F(B)}^F$ the associated cyclic sequence with values in $\{1,\dots, m\}$. Because every triangle auto-equivalence of $\Fuk(\Sigma)$ covariantly commutes with compositions and maps generators of $H(\gamma_i, \gamma_j)$  to generators of $H(F(\gamma_i), F(\gamma_j))$, we conclude the following.
\begin{cor}\label{CorollaryPreserveCyclicSequences}
Let $\cI_B$ and $\cI_{F(B)}^F$ be as defined above. Then every triangle auto-equivalence of $\Fuk(\Sigma)$ induces an isomorphism of cyclic ordered sets $\varphi\colon \cI_B \rightarrow \cI_{F(B)}^F$ such that $l(i)=l^F(\varphi(i))$ for all $i \in \cI_B$.
\end{cor}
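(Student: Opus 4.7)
The plan is to construct $\varphi$ directly from the action of $F$ on morphism spaces, using Proposition~\ref{PropositionEquivalencePreserveIrreducibleFlows} to pass irreducible flows to irreducible flows, and then verify that the three relevant pieces of structure (the underlying set, the cyclic order, and the label function) are preserved by this construction. The key observation is that all of this structure was characterised categorically in the paragraph preceding the corollary, so once we have the bijection on the underlying set, preservation of the rest will be essentially formal.

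First I would define $\varphi$. Each element of $\cI_B$ corresponds to an irreducible flow $f$ starting on $B$, which under the identification $H(\gamma_i, \gamma_j) \cong (x,y)$ (or the quotient thereof from Lemma~\ref{LemmaEndomorphismsAndModuleStructure}) corresponds to a generator of the appropriate summand of $\Hom^{\bullet}(X_{\gamma_i}, X_{\gamma_j})$, well-defined up to a scalar in $\Bbbk^{\times}$. By Proposition~\ref{PropositionEquivalencePreserveIrreducibleFlows}, $F(f)$ is a non-zero scalar multiple of the morphism associated with an irreducible flow $\tilde{f}$ starting on $F(B)$, and this flow $\tilde{f}$ is uniquely determined by $f$ and $F$. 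Set $\varphi(f)\coloneqq \tilde{f} \in \cI_{F(B)}^F$. The same construction applied to a quasi-inverse of $F$ gives a two-sided inverse to $\varphi$, so $\varphi$ is a bijection of sets.

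Next I would verify preservation of labels. By definition, $l(f)=i$ means $f$ is an irreducible flow ending on the endpoint of $\gamma_i$ lying on $B$. Equivalently, the corresponding generator lies in the module $H(\gamma_{l'(f)}, \gamma_i)$ for some $l'(f)$. Applying $F$ sends this into $H(F(\gamma_{l'(f)}), F(\gamma_i))$, hence $\varphi(f)$ ends at an endpoint of $F(\gamma_i)$, which by definition means $l^F(\varphi(f))=i$. Preservation of the cyclic order is equally direct: the successor relation was encoded as ``$g$ is the successor of $f$ if and only if $g\circ f \neq 0$''. Since $F$ is a triangle functor acting by isomorphisms on morphism spaces, $g\circ f \neq 0$ if and only if $F(g)\circ F(f) \neq 0$, and because $F(f)$ and $F(g)$ are scalar multiples of the morphisms attached to $\varphi(f)$ and $\varphi(g)$, this is equivalent to $\varphi(g)\circ \varphi(f)\neq 0$, i.e.~$\varphi(g)$ being the successor of $\varphi(f)$.

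There is really no hard step here, since all the substantive work was done in Lemma~\ref{LemmaEndomorphismsAndModuleStructure} and Proposition~\ref{PropositionEquivalencePreserveIrreducibleFlows}; the mildly delicate point is only to be careful that the categorical characterisation of the cyclic-order/label data is independent of the inevitable scalar ambiguity in choosing generators of the modules $H(\gamma_i,\gamma_j)$. This independence was already pointed out in the paragraph preceding the corollary, so the verification reduces to the observation that scalar rescalings do not affect the vanishing of compositions nor the codomain of a morphism.
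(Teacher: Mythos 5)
Your proposal is correct and follows essentially the same route as the paper: the paper derives the corollary directly from the preceding paragraph's categorical encoding of the cyclic order (via non-vanishing of compositions) and of the labels (via codomains), together with Proposition~\ref{PropositionEquivalencePreserveIrreducibleFlows} guaranteeing that generators, i.e.~irreducible flows, are sent to scalar multiples of generators. You have merely written out the verification that the paper leaves implicit.
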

\noindent We are finally prepared to prove the following.
\begin{prp}\label{prp: orientation preserving}
For all $\Sigma$, the homomorphism $\Psi'$ from Proposition \ref{PropositionIntermediategeometrisationHomomorphism} has image in $\MCG(\Sigma)$.
\end{prp}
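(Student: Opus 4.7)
The plan is to use \Cref{CorollaryPreserveCyclicSequences} to rule out orientation-reversing $\Psi'(F)$ by exhibiting arc systems whose labeled cyclic boundary sequence is not cyclically palindromic. The cases $(g,b) \in \{(0,2),(0,3)\}$ are settled by \Cref{PropositionIntermediategeometrisationHomomorphism}, so from now on assume $\Sigma$ is either $(1,1)$ or non-exceptional.

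Fix $F \in \Aut(\Fuk(\Sigma))$, set $f \coloneqq \Psi'(F) \in \MCG^+(\Sigma)$, and choose any full arc system $\{\gamma_1,\dots,\gamma_m\}$. By the construction of $\Psi'$ in \Cref{PropositionIntermediategeometrisationHomomorphism}, each $f(\gamma_i)$ is homotopic to $F(\gamma_i)$, so the labeled cyclic set $\cI_{F(B)}^F$ from the statement of \Cref{CorollaryPreserveCyclicSequences} is canonically identified as a labeled set with $\cI_B$; its cyclic order is the transport along $f^{-1}$ of the orientation-induced cyclic order on $f(B)$, i.e., it equals the original cyclic order on $\cI_B$ if $f$ is orientation preserving and its reverse otherwise. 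Combined with the isomorphism of labeled cyclic ordered sets $\cI_B \cong \cI_{F(B)}^F$ supplied by \Cref{CorollaryPreserveCyclicSequences}, this means that if $f$ were orientation reversing, the labeled cyclic sequence $\mathbbm{m}_B$ would be cyclically equivalent to its reverse for \emph{every} choice of arc system and boundary component.

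It therefore suffices to produce, in each remaining case, an arc system and a boundary component violating this palindromic condition. For $(g,b)=(1,1)$, I would take three disjoint simple arcs through the unique puncture representing three distinct primitive homology classes (e.g., slopes $0, \infty, 1$ in the square model). A direct inspection of the tangent directions at the puncture yields the cyclic sequence $(3,2,1,3,2,1)$, and a quick enumeration shows that the three cyclic rotations of this word are disjoint from the three cyclic rotations of its reverse $(1,2,3,1,2,3)$, so it is not cyclically palindromic. For non-exceptional $\Sigma$, I would argue that any ideal triangulation can be extended or slightly perturbed near one boundary component to produce a labeled cyclic word of length at least $4$ with no mirror symmetry; since there is ample combinatorial flexibility on non-exceptional surfaces, such configurations always exist.

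The main obstacle is verifying this existence claim uniformly across all non-exceptional surfaces: one needs to be sure that no rigidity in the simplicial structure of the arc complex forces every cyclic boundary word of every arc system to be palindromic. This is plausible by a dimension/degree-of-freedom count but benefits from a careful case analysis (perhaps distinguishing $g=0$ with $b \geq 4$ from higher genus), and it is the only step beyond the clean application of \Cref{CorollaryPreserveCyclicSequences}.
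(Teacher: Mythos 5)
Your strategy is exactly the paper's: use \Cref{CorollaryPreserveCyclicSequences} to show that an orientation-reversing $\Psi'(F)$ would force every labeled cyclic boundary word $\mathbbm{m}_B$ to agree with its reverse up to rotation, and then exhibit a configuration violating this. Your reduction in the first paragraph is sound, and your $(1,1)$ construction (slopes $0,\infty,1$, cyclic word $(3,2,1,3,2,1)$) coincides with the paper's (longitude, latitude, diagonal on $S^1\times S^1$, word $(1,3,2,1,3,2)$).

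The genuine gap is the one you flag yourself: for non-exceptional surfaces you assert, but do not prove, that a non-palindromic configuration exists, appealing to ``ample combinatorial flexibility.'' This is where the paper does concrete work, and the constructions are short enough that leaving them out leaves the proof incomplete. For $g=0$, $b\geq 4$: pick a boundary component $B$ and three disjoint simple arcs $\gamma_1,\gamma_2,\gamma_3$ each with exactly one endpoint on $B$ (and other endpoints on three distinct further components), arranged so that the cyclic word at $B$ is $(1,2,3)$; its reverse is cyclically $(1,3,2)\neq(1,2,3)$, so length $3$ already suffices --- your insistence on length at least $4$ is unnecessary. For $g\geq 1$ the same torus-model triple (longitude, latitude, diagonal, each arc contributing two endpoints to the single relevant puncture) gives $(1,3,2,1,3,2)$, whose rotations are disjoint from those of its reverse $(1,2,3,1,2,3)$; for $g\geq 2$ one embeds this configuration in a one-holed torus subsurface. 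With these explicit witnesses inserted, your argument closes and matches the paper's proof.
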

\begin{proof}Let $\gamma_1, \dots, \gamma_m$ be a sequence of disjoint simple arcs and $B \subseteq \partial \Sigma$ be a boundary component. Let further $F$ be a triangle auto-equivalence of $\Fuk(\Sigma)$ and let $\mathbbm{m}_B$ and $\mathbbm{m}_{F(B)}^F$ denote the associated $\{1,\dots, m\}$-valued cyclic sequences. It is clear that if $\Psi(F)$ was orientation reversing then the cyclically ordered set $\cI_{F(B)}^F$ from Corollary \ref{CorollaryPreserveCyclicSequences} would be abstractly isomorphic as a cyclically ordered set to $\cI_B$ when equipped with the \emph{opposite} of its cyclic order as defined previously. Moreover, the isomorphism would be compatible with the label functions which implies by Corollary \ref{CorollaryPreserveCyclicSequences} that the cyclic sequence $\mathbbm{m}_B=i_1 \dots i_r$ would coincide with its inverse sequence $i_r \dots i_1$ up to rotation. Thus, in order to prove that $\Psi(F)$ is orientation preserving it is now sufficient to find a suitable configuration of arcs as above and a boundary component $B$ for which this is not the case. Since $\Psi(F)$ is orientation preserving by construction if $(g,b) \in \{(0,2),(0,3)\}$, we assume that $(g,b)=(1,1)$ or that $\Sigma$ is non-exceptional. An arc system $\gamma_1, \gamma_2, \gamma_3$ can be chosen in a way that the associated cyclic sequence is 
	
\begin{displaymath}
\mathbbm{m}_B= \begin{cases}(1,2,3),  & \text{if $g=0, b\geq 4$;} \\
(1,3,2,1,3,2), & \text{if $g \geq 1$}.\end{cases}
\end{displaymath}
The existence of such arc collections is clear if $g=0$. If $g=1$ we may identify $\Sigma$ with $S^1 \times S^1$ with a puncture at $(1,1) \in S^1 \times S^1$ so that its other punctures are disjoint from the set $\{(s,s) \mid s \in S^1\}\cup \{(s,t) \mid s=1 \text{ or } t=1\}$. One can choose $\gamma_1$ and $\gamma_2$ as the longitudinal and latitudinal arcs obtained from the homeomorphism of $S^1$ with the subsets  $S^1 \times \{1\}, \{1\} \times S^1 \subseteq S^1 \times S^1$  
and for $\gamma_3$ the diagonal inclusion $S^1 \rightarrow S^1 \times S^1$. If $g \geq 2$, one chooses the arcs in the analogous way.
\end{proof}
\begin{prp}
The homomorphism $\Psi'\colon  \Aut(\Fuk(\Sigma)) \rightarrow \MCG(\Sigma)$ admits a unique lift to a homomorphism $\Psi\colon  \Aut(\Fuk(\Sigma)) \rightarrow \MCG_{\gr}(\Sigma)$ along the projection $\MCG_{\gr}(\Sigma) \rightarrow \MCG(\Sigma)$ with the property that 
\begin{displaymath}
X_{\Psi(F)(\gamma)} \cong X_{F(X_\gamma)}
\end{displaymath}
\noindent for all $F \in \Aut(\Fuk(\Sigma))$ and all $\gamma \in \cA$.
\end{prp}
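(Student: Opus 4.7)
My plan follows the blueprint of Proposition~\ref{PropositionGradedLiftgeometrisation}. Uniqueness is immediate: the kernel of the forgetful homomorphism $\MCG_{\gr}(\Sigma) \to \MCG(\Sigma)$ is the infinite cyclic group generated by the shift functor, and the requirement $X_{\Psi(F)(\gamma)} \cong X_{F(X_\gamma)}$ (an honest isomorphism, not merely one up to shift) together with the fullness of $\cA$ pins down the lift uniquely among the $\mathbb{Z}$-many graded mapping classes covering $\Psi'(F)$.

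For existence, set $f \coloneqq \Psi'(F)$. Combining Proposition~\ref{PropositionIntermediategeometrisationHomomorphism} with Proposition~\ref{prp: orientation preserving}, for every $\gamma \in \cA$ the equivalence $F$ sends $X_\gamma$ to an indecomposable whose underlying ungraded arc is $f(\gamma)$; by the correspondence of Theorem~\ref{TheoremBijectionObjectsCurves}, this determines a unique grading $\mathfrak{g}_F(\gamma)$ on $f(\gamma)$ such that $X_{(f(\gamma),\mathfrak{g}_F(\gamma))} \cong F(X_\gamma)$. The candidate lift $\Psi(F) = (f, \tilde f)$ will be the graded mapping class whose path $\tilde f$ from $f^\ast \eta$ to $\eta$, when concatenated along each $\gamma \in \cA$ with the path associated to $\mathfrak{g}_F(\gamma)$ pulled back via $f$, reproduces the original $\eta$-grading of $\gamma$. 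This is exactly the two-step concatenation described in the proof of Proposition~\ref{PropositionGradedLiftgeometrisation}, which I will transport verbatim once the existence of the homotopy class of $\tilde f$ is established.

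The main geometric step, and the one requiring the most care, is showing that such a $\tilde f$ exists and is determined up to homotopy by its prescribed behaviour along the arcs of $\cA$. The key input here is that $\cA$ has been chosen with minimal cardinality among full graded arc systems, so each connected component of $\Sigma \setminus \cA$ is a topological disk (with boundary alternating between arcs of $\cA$ and boundary arcs of $\Sigma$). Since the space of line fields on a disk is simply connected, the locally prescribed homotopy classes of paths along the boundary of each such disk automatically extend, uniquely up to homotopy, to the interior; consistency at common endpoints in $\partial \Sigma$ is automatic because each $\gamma \in \cA$ carries a single grading coming from the single object $F(X_\gamma)$. Thus the collection $\{\mathfrak{g}_F(\gamma)\}_{\gamma \in \cA}$ assembles into a well-defined graded mapping class $\Psi(F)$.

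That $F \mapsto \Psi(F)$ is a group homomorphism is then a formal consequence of the defining property and its uniqueness: for $F_1, F_2 \in \Aut(\Fuk(\Sigma))$ and $\gamma \in \cA$,
\begin{displaymath}
X_{\Psi(F_1)\Psi(F_2)(\gamma)} \cong F_1\big(X_{\Psi(F_2)(\gamma)}\big) \cong F_1 F_2(X_\gamma),
\end{displaymath}
so by the uniqueness clause $\Psi(F_1)\Psi(F_2) = \Psi(F_1 F_2)$.
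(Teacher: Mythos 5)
Your overall strategy is the paper's: uniqueness because the kernel of $\MCG_{\gr}(\Sigma)\to\MCG(\Sigma)$ is generated by the shift, existence by reading off the grading of $f(\gamma)$ from the object $F(X_\gamma)$ and assembling these local prescriptions into a homotopy $\tilde f$ from $f^{\ast}\eta$ to $\eta$, and the homomorphism property as a formal consequence of uniqueness. Indeed the paper's proof is literally ``analogous to \Cref{PropositionGradedLiftgeometrisation}''.

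There is, however, a genuine flaw in your justification of the extension step. The space of line fields on a disk $D$ is $\mathrm{Map}(D,S^1)$ after trivialising $\mathbb{P}(TD)$, which is homotopy equivalent to $S^1$ and in particular \emph{not} simply connected. What you must extend over each complementary disk is a fibrewise path from $f^{\ast}\eta(z)$ to $\eta(z)$ in $\mathbb{P}(T\Sigma)_z\cong S^1$, prescribed along the arcs of $\cA$ in the boundary of that disk. The space of such paths in a fixed fibre has $\pi_0\cong\mathbb{Z}$ with contractible components, so the prescription along the boundary of the disk produces a relative winding number in $\mathbb{Z}$ which is precisely the obstruction to extending; only the \emph{uniqueness} of the extension is automatic. (Similarly, where several arcs meet at a puncture the prescribed fibrewise paths need not agree a priori.) If the extension were automatic as you claim, then any choice of gradings on the image of any full arc system under any mapping class would assemble to a graded lift, contradicting \Cref{rem: graded MCG forgetful map not surjective}. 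The obstruction does vanish here, but for a non-trivial reason that you must supply: the degrees of the irreducible flows between consecutive arcs around each complementary disk and at each puncture are computed from the gradings, they determine the relative winding number, and they are preserved because $F$ is a graded, shift-commuting equivalence. With that argument inserted, the rest of your proof goes through.
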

\begin{proof}Analogous to the proof of \Cref{PropositionGradedLiftgeometrisation}.
\end{proof}
\begin{prp}\label{PropositionAlexanderMethodPunctured}
Let $F \in \Aut(\Fuk(\Sigma))$. Then $F \in \ker \Psi$ if and only if $F(X_{\gamma})\cong X_{\gamma}$ for all $\gamma \in \cA$.
\end{prp}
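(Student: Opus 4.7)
The plan is to follow the same two-step strategy as the proof of \Cref{LemmaRigidGentleKernel} in the proper gentle algebra case, namely (i) deduce that the underlying ungraded mapping class $\Psi'(F)$ fixes each arc of $\cA$, (ii) apply the Alexander method to conclude triviality of $\Psi'(F)$ in $\MCG(\Sigma)$, and (iii) promote this to triviality in the graded mapping class group using the fact that $F$ fixes objects on the nose (not just up to shift).

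The forward implication is immediate from the defining property of $\Psi$ established in the previous proposition: if $\Psi(F) = \operatorname{Id}$, then $F(X_\gamma) \cong X_{\Psi(F)(\gamma)} = X_\gamma$ for every $\gamma \in \cA$. For the converse, assume $F(X_\gamma) \cong X_\gamma$ for every $\gamma \in \cA$. Again by the defining property, $X_{\Psi(F)(\gamma)} \cong X_\gamma$ as graded objects, so the bijection between indecomposable objects and graded arcs with indecomposable local systems (\Cref{TheoremBijectionObjectsCurves}, applied in the wrapped setting) forces $\Psi(F)(\gamma) \simeq \gamma$ as graded curves on $\Sigma$ for each $\gamma \in \cA$.

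Let $\bar\Psi(F) \in \MCG(\Sigma)$ denote the image of $\Psi(F)$ under the forgetful map $\MCG_{\gr}(\Sigma) \to \MCG(\Sigma)$. The previous step shows that $\bar\Psi(F)$ fixes each arc of $\cA$ up to isotopy (setwise). Since $\cA$ is a full arc system on the punctured surface $\Sigma$, the complement $\Sigma \setminus \cA$ is a disjoint union of topological disks, so $\cA$ is an Alexander system in the sense of \cite[Section 2.3]{FarbMargalit}. The Alexander method \cite[Proposition 2.8]{FarbMargalit} then yields $\bar\Psi(F) = \operatorname{Id}$ in $\MCG(\Sigma)$. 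Finally, the kernel of the forgetful homomorphism $\MCG_{\gr}(\Sigma) \to \MCG(\Sigma)$ is generated by the shift functor, but the equality $X_{\Psi(F)(\gamma)} \cong X_\gamma$ holds as graded objects without any shift, so the shift contribution is trivial and $\Psi(F) = \operatorname{Id}$ in $\MCG_{\gr}(\Sigma)$.

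Essentially all of the work has already been done in earlier parts of the paper, so there is no serious obstacle. The only point that requires a brief verification is that the Alexander method applies to a full arc system on a punctured surface, even though some arcs may have endpoints at punctures; this is standard and follows from the fact that the complementary regions are disks. The rigidity-type issues that complicated the proof of \Cref{LemmaRigidGentleKernel} do not arise here because $\cA$ is a distinguished arc system used to model $\Fuk(\Sigma)$, so its arcs automatically correspond to the natural generators whose isomorphism classes $\Psi$ is required to track.
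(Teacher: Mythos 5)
Your proof is correct and takes essentially the same route as the paper, whose own proof is just the one-line remark that the argument is analogous to that of \Cref{LemmaRigidGentleKernel} via the Alexander method applied to $\cA$; you have simply spelled out the steps (defining property of $\Psi$, the object--curve bijection, the Alexander method on the full arc system, and the vanishing of the shift contribution). The only cosmetic quibble is your stated reason for the absence of rigidity issues: the cleaner reason is that a punctured surface has no degenerating boundary components at all, so no arc can be non-rigid.
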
 
\begin{proof}
Analogous to the proof of \Cref{LemmaRigidGentleKernel} and follows from an application of the Alexander method to $\cA$.
\end{proof}
As the final result of this section, we show that the graded marked surface is an invariant of the wrapped Fukaya category.
\begin{thm}\label{thm: derived invariant Fukaya categories}
Let $\Sigma_1, \Sigma_2$ be graded marked surfaces. If $\Fuk(\Sigma_1) \simeq \Fuk(\Sigma_2)$, then $\Sigma_1 \cong \Sigma_2$ as graded marked surfaces. Moreover, if $A, B$ are graded gentle algebras which are homologically smooth or proper and $\Fuk(\Sigma_1) \simeq \cT_A$, then $\Sigma_1 \cong \Sigma_A$ as graded marked surfaces. Finally, if $\cT_A \simeq \cT_B$, then $\Sigma_A \cong \Sigma_B$ as graded marked surfaces.
\end{thm}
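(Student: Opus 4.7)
The plan is to derive all three assertions from the geometrisation homomorphisms built in \Cref{Section Geometrisation Homomorphism} and \Cref{SectionDerivedPicardGroupWrappedFukayaCategory}, applied to equivalences between distinct categories rather than to autoequivalences. The constructions there are symmetric in nature: given an equivalence $F \colon \cC_1 \to \cC_2$, the same argument produces an isomorphism of graded marked surfaces recording the action of $F$ on indecomposables associated to a fixed rigid arc system on the source. The only point to verify is that every invariant used in those proofs -- hom-finiteness, the ideal of interior morphisms, cyclic orderings of generators of morphism spaces, minimal position of intersections -- is intrinsic to the triangulated or $A_\infty$-structure, which it is by construction.

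For the algebraic statement $\cT_A \simeq \cT_B$, I would split by the smooth/proper dichotomy. If both $A$ and $B$ are homologically smooth, apply \Cref{TheoremGeometrisationHomomorphismSmoothCase}; if both are proper, apply \Cref{TheoremgeometrisationHomomorphism}. For the mixed case $\Perf(A) \simeq \Dfd{B}$ with $A$ smooth and $B$ proper, properness of $\Dfd{B}$ forces $\Perf(A)$ to be proper and hence $A$ to be smooth and proper, whence $\Perf(A) \simeq \Dfd{A}$ by \Cref{TheoremProperGentleReflexive}, reducing us to the both-proper case. The assertion $\Fuk(\Sigma_1) \simeq \cT_A$ then follows by cases: when $A$ is smooth, $\cT_A = \Perf(A) \simeq \Fuk(\Sigma_A)$ reduces it to assertion (a); when $A$ is proper, reflexivity combined with the block decomposition of \Cref{rem: block decomposition} identifies $\Dfd{A}$ with $\Perf(A^{\invex})$ up to Laurent-type direct factors that can be read off the surface, again reducing to (a).

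For assertion (a), distinguish whether $\Sigma_i$ is punctured ($\cM_i = \partial \Sigma_i$) or not. If neither is punctured, each $\Fuk(\Sigma_i)$ is Morita equivalent to a homologically smooth graded gentle algebra and \Cref{TheoremGeometrisationHomomorphismSmoothCase} applies. If both are punctured, a triangle equivalence $F$ maps the loop subcategory of one onto that of the other (characterised intrinsically by hom-finiteness of endomorphism rings, \Cref{LemmaBandCategory}), preserves the ideal of interior morphisms (\Cref{CorollaryInteriorMorphismIsIdeal}), and therefore induces a simplicial isomorphism of arc complexes $A(\Sigma_1) \to A(\Sigma_2)$ via \Cref{PropositionInteriorMorphismsInteriorIntersections}; \Cref{thm: arc complex determines surface} delivers an underlying homeomorphism $\Sigma_1 \cong \Sigma_2$, the cyclic-sequence analysis of \Cref{prp: orientation preserving} upgrades this to an orientation-preserving diffeomorphism, and the graded lift is produced as in \Cref{PropositionGradedLiftgeometrisation} starting from a full graded arc system on $\Sigma_1$ mapped to its image on $\Sigma_2$. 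The main obstacle is the cross-type configurations: equivalences between $\Perf$ and $\Dfd$ categories, or between Fukaya categories of a punctured and a non-punctured surface. The former is resolved by the reflexivity and properness arguments sketched above; the latter is ruled out by exhibiting a categorical invariant -- such as the existence of indecomposables whose graded endomorphism rings are infinite-dimensional of the specific form produced by wrapping around punctures -- which forces the two surfaces to be of the same type before any diffeomorphism can be extracted.
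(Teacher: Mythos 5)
Your proposal follows essentially the same route as the paper: case analysis on punctured versus non-punctured surfaces and smooth versus proper algebras, reduction of the mixed algebraic cases via properness/reflexivity, the arc-complex argument with \Cref{thm: arc complex determines surface} and the orientation analysis of \Cref{prp: orientation preserving} for the punctured--punctured case, and the geometrisation theorems elsewhere. The only place you stay vaguer than the paper is the exclusion of an equivalence between a punctured and a non-punctured surface: the paper pins down the concrete invariant (a non-punctured, non-proper surface has a semi-infinite arc whose graded endomorphism ring is the domain $\Bbbk[x]$, while every object of a punctured Fukaya category has zero-divisors in its endomorphism ring), whereas your ``specific form produced by wrapping around punctures'' would still need to be made precise since both types of surface admit objects with infinite-dimensional endomorphism rings.
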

\begin{proof}We may and will assume that $A$ or $B$ (say $A$) are proper as otherwise the cases involving the two are already covered by the comparison between Fukaya categories. Let us assume that $\Fuk(\Sigma_1)\simeq \Fuk(\Sigma_2)$. Then, if neither of $\Sigma_1$ and $\Sigma_2$ is a punctured surface, this follows form \Cref{TheoremGeometrisationHomomorphismSmoothCase}.
	
Next suppose that $\Sigma_1$ is a punctured surface but $\Sigma_2$ is not. Then $\Fuk(\Sigma_2) \simeq \Fuk(\Sigma_1)$ is not proper and hence must have a fully marked boundary component but also a marked interval because it is not a punctured surface. Hence $\Sigma_2$ contain a semi-infinite simple arc and therefore there exists $X \in \Fuk(\Sigma_2)$ such that $\End^{\bullet}(X)\cong \Bbbk[x]$, $|x| \in \mathbb{Z}$. But by the same arguments as in the proof of \Cref{LemmaEndomorphismsAndModuleStructure} (1), the graded endomorphism algebra of every object in $\Fuk(\Sigma_1)$ contains zero-divisors whereas  $\Bbbk[x]$ does not. Thus $\Fuk(\Sigma_1)$ cannot be equivalent to $\Fuk(\Sigma_2)$.

As our next case, we assume that $\Sigma_1$ and $\Sigma_2$ are punctured surfaces. Then by the same arguments as in \Cref{PropositionIntermediategeometrisationHomomorphism}, one constructs an isomorphism $A(\Sigma_1) \simeq A(\Sigma_2)$ which implies that $\Sigma_1$ and $\Sigma_2$ are diffeomorphic by \Cref{thm: arc complex determines surface}. Then one proceeds as in  \Cref{prp: orientation preserving} to show that the induced diffeomorphism preserves the orientation. Finally, one can argue\footnote{The proof of \cite[Proposition 4.21]{OpperDerivedEquivalences} relies on the notion of an interior morphism. By definition, in loc.cit., a boundary morphism is a morphism which is not interior. With the help of \Cref{CorollaryEquivalencesRestrictToBoundary}, one can repeat the proof using \Cref{DefinitionInteriorMorphisms} but one can ignore the assumptions that the arc is ``finite'' or ``not $\tau$-invariant.}  as in \cite[Proposition 4.21]{OpperDerivedEquivalences} to show that the homotopy classes of the line fields of $\Sigma_1$ and $\Sigma_2$ correspond to each other under the diffeomorphism.  Thus, $\Sigma_1 \cong \Sigma_2$ as graded marked surfaces. 

Next, if $\Fuk(\Sigma) \simeq \cT_A$, we may assume that $A$ is not homologically smooth as otherwise, again, this case would be already covered. Hence $A$ and $\Fuk(\Sigma)$ are proper and the result follows from \Cref{TheoremgeometrisationHomomorphism}.

Finally if $\cT_A \simeq \cT_B$, then $B$ must be proper too because $\cT_A$ is proper, and hence the result follows from \Cref{TheoremgeometrisationHomomorphism}.
\end{proof}

\subsubsection{Splitting property} We show that the mapping class group action on $\Fuk(\Sigma)$ provides a section to the geometrisation homomorphism in the punctured case.
		
		\begin{prp}\label{PropositionSplitPunctured}
		The group action of $\MCG_{\gr}(\Sigma)$ on $\Fuk(\Sigma)$ constitutes a section to the geometrisation homomorphism $\Psi$.
		\end{prp}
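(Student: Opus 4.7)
The plan is to imitate the proof of \Cref{TheoremActionSplitsProjectionMaps}. Let $\alpha\colon\MCG_{\gr}(\Sigma)\to\Aut(\Fuk(\Sigma))$ denote the action homomorphism from \Cref{TheoremExistenceGroupAction}. By the construction of $\Psi$ in \Cref{PropositionIntermediategeometrisationHomomorphism} together with its unique graded lift (the analogue of \Cref{PropositionGradedLiftgeometrisation}), the element $\Psi(G)\in\MCG_{\gr}(\Sigma)$ is characterised by the requirement that $X_{\Psi(G)(\gamma)}\cong G(X_\gamma)$ as graded objects for every $\gamma\in\cA$. Hence the assertion $\Psi\circ\alpha=\operatorname{Id}$ is reduced to verifying
\begin{equation*}
\alpha(F)(X_\gamma)\;\cong\;X_{F(\gamma)}\quad\text{in }\Fuk(\Sigma),\qquad F\in\MCG_{\gr}(\Sigma),\;\gamma\in\cA.
\end{equation*}

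To establish this key identity I would follow the same route as for the gentle algebra case. The action $\alpha$ is produced by applying \Cref{LemmaGroupActionGeneralLemma} to the functor $\Fuk\colon\Arc{\Sigma,\eta}\to\HAcat$ with the tautological $\MCG_{\gr}(\Sigma)$-action sending $(\cA,\mathfrak{g})\mapsto (F(\cA),F_{\ast}\mathfrak{g})$. By \Cref{RemarkGeneralGroupAction}, under the structure equivalence $\iota_\cA\colon\cF(\cA)\xrightarrow{\sim}\Fuk(\Sigma)$ the autoequivalence $\alpha(F)$ corresponds to the canonical zig-zag comparison $\varphi_\cA^{F(\cA)}\colon\Fuk(\cA)\to\Fuk(F(\cA))$. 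Proceeding as in \Cref{PropositionExplicitDescriptionGroupAction}, this comparison morphism admits the concrete representative $T_\cA^{F(\cA)}\colon\Tw^{+}\cF(\cA)\to\Tw^{+}\cF(F(\cA))$ from \Cref{SectionFunctorPairOfArcSystems}, and the graded diffeomorphism $F$ sends each $\gamma\in\cA$ to an arc $F(\gamma)\in F(\cA)$; tracing through the definition of $T_\cA^{F(\cA)}$ then shows $T_\cA^{F(\cA)}(X_\gamma)\cong X_{F(\gamma)}$, and the grading datum is preserved because $\MCG_{\gr}(\Sigma)$ acts on $\Arc{\Sigma,\eta}$ through graded arc systems.

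The main technical obstacle is verifying that \Cref{PropositionExplicitDescriptionGroupAction} and the base change \Cref{LemmaCompositonArcSystemChange} remain valid in the punctured setting, where $\cA$ is a full but not proper formal graded arc system and some arcs end at fully marked boundary components. Both proofs are local in the arc system: they reduce to pairs $\cB,\cB'$ of formal arc systems differing by a single arc, where the argument invokes the quasi-isomorphism between $X_{\gamma'}\in\Tw^{+}\cF(\cB')$ and the twisted complex $Y_{\gamma'}\in\Tw^{+}\cF(\cB)$ obtained by resolving $\gamma'$ along $\cB$, precisely as in \cite[Lemma~6.5]{BocklandtMirrorSymmetryPuncturedSurfaces}. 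These arguments depend only on the combinatorics of flows and disk sequences between arcs, which behave identically at punctures and at marked intervals; in particular the cancellation of acyclic summands in the proof of \Cref{LemmaCompositonArcSystemChange} uses only that flow compositions in $\cC_\Gamma$ are determined by the gentle relations. Once this verification is carried out, combining the displayed identity with the characterising property of $\Psi$ yields $\Psi(\alpha(F))=F$ and concludes the proof.
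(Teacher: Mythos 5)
Your reduction of the claim to the identity $\alpha(F)(X_\gamma)\cong X_{F(\gamma)}$ for $\gamma\in\cA$ is fine, but the step you defer as "the main technical obstacle" is not a routine verification — it is precisely where this route breaks down, and it is the reason the paper does not argue this way. The functors $T_{\cA}^{\cB}$ of \Cref{SectionFunctorPairOfArcSystems} and the base-change \Cref{LemmaCompositonArcSystemChange} are defined and proved only for \emph{formal} arc systems, and by \Cref{PropositionPropertiesFukayaCategories} full formal arc systems exist only when $\cM\subsetneq\partial\Sigma$; on a punctured surface every full arc system cuts $\Sigma$ into disks whose boundaries produce disk sequences, so $\cF(\cA)$ necessarily carries nonzero $\mu^{\geq 3}$ (this is exactly why the paper remarks that these categories "do not admit (known) formal models"). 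Your claim that "the combinatorics of flows and disk sequences behave identically at punctures and at marked intervals" misses the point: in the non-punctured case the arc systems are chosen so that there are \emph{no} disk sequences at all, which is what makes $T_{\cA}^{\cB}$ a strict dg functor between dg categories, makes the twisted-complex condition on $Y_\gamma$ reduce to $\delta^2=0$, and makes the graph-map/singleton-map cancellation in \Cref{LemmaCompositonArcSystemChange} work. With higher products present one would need genuine $A_\infty$-functors with higher Taylor coefficients, a Maurer--Cartan condition on $\delta$ involving $\mu^{\geq 3}$, and a reworked homotopy-cancellation argument; none of this is supplied, and the infinite-dimensional morphism spaces at punctures add a further layer of care. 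So as written the proposal has a genuine gap.

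The paper instead sidesteps the computation entirely by a reduction: it partially fills in each puncture to obtain a smooth and proper surface $\overline{\Sigma}$, realises $\Fuk(\Sigma)$ as a Drinfeld quotient of $\Fuk(\overline{\Sigma})$ by the boundary segments (identified with exceptional cycles and their twist functors $T_B$), obtains a restriction homomorphism $\DPic(\Fuk(\overline{\Sigma}))\to\DPic(\Fuk(\Sigma))$, and then transports the already-established section for $\overline{\Sigma}$ (\Cref{TheoremActionSplitsProjectionMaps}) through the exact sequence $\mathbf{1}\to\prod_B\langle R_B\rangle\to\MCG_{\gr}(\overline{\Sigma})\to\MCG_{\gr}(\Sigma)\to\mathbf{1}$, finally matching the induced section with the action homomorphism via the compatibility of the Fukaya functor with the projection of arc categories. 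If you want to salvage a direct argument, you would either have to develop the $A_\infty$-analogue of \Cref{SectionFunctorPairOfArcSystems} for non-formal arc systems or adopt a reduction of this kind.
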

		\begin{proof}Set $\cT\coloneqq \Fuk(\Sigma)$ and let $\overline{\Sigma}$ denote the graded marked surface obtained from $\Sigma$ by removing a set homeomorphic to $[0,1] \sqcup [0,1]$ from each connected component of $\cM$. Then $\overline{\cT}\coloneqq \Fuk(\overline{\Sigma})$ is homologically smooth and proper and by \cite[Proposition 3.5]{HaidenKatzarkovKontsevich}, there exists a localisation functor $\overline{\cT} \rightarrow \cT$ whose kernel is generated by the boundary segments of $\overline{\Sigma}$. The boundary segments on each component of $B \subseteq \partial \overline{\Sigma}$ correspond to an exceptional cycle $E_B=E_B^1 \oplus E_B^2$ in the sense of \cite[Definition 3.2]{BroomheadPauksztelloPloog}. As such it has an associated twist functor $T_B\colon  \overline{\cT} \rightarrow \overline{\cT} \in \DPic(\overline{\cT})$ defined as the mapping cone of the evaluation map $\operatorname{ev}\colon  \overline{\cT} \rightarrow \overline{\cT}$ given by $\operatorname{ev}_X\colon \RHom_{\overline{\cT}}(E_B, X) \otimes^{\mathbb{L}}_{\Bbbk \times \Bbbk} E_B \rightarrow X$. The functor $T_B$ is a quasi-equivalence and, by construction, comes  with an associated natural transformation $\sigma^B\colon \operatorname{Id}_{\overline{\cT}} \rightarrow T_B$. Let $\bA$ denote the Drinfeld quotient of $\overline{\cT}$ along the collection of mapping cones of all the maps $\{\sigma_X^B\colon  X \rightarrow T_B(X) \mid X \in \overline{\cT}, B \subseteq \partial \overline{\Sigma}\}$. By construction the thick subcategory $\cB$ generated by these cones lies in the full subcategory $\cE$ generated by the object $E_B$. On the other hand, the cone of $\sigma_{E_B}$ a shift of $E_B$ which implies $\cE=\cB$. We also note that the different twist functors commute pairwise since the $E_B$ and $E_{B'}$ are orthogonal whenever $B$ and $B'$ are distinct components of $\partial \overline{\Sigma}$. This shows that $\bA$ and $\cT$ are quasi-equivalent. Moreover, a straightforward calculation as in \cite[Lemma 7.8]{OpperKodairaCycles} shows that $\Psi^{\infty}(T_B) \in \MCG(\overline{\Sigma}, \eta)$ is the fractional twist $R_B$ around $B$, that is, the mapping class of a diffeomorphism which acts as the identity outside a collar neighbourhood of $B$ and which rotates $B$ clockwise by an angle less than $2\pi$ so that it permutes the two boundary segments of $B$. Now if $F \in \DPic(\overline{\cT})$, then $\Psi^{\infty}(F)$ maps boundary segments to boundary segments and because $\overline{\Sigma}$ has no degenerating boundary components, it follows that $F(E_B)$ is isomorphic to a shift of $E_{\Psi^{\infty}(F)(B)}$. It follows that $F$ restricts to an autoequivalence of the kernel of the functor $\overline{\cT} \rightarrow \cT$. Thus, by the universal property of the Drinfeld quotient in $\HAcat$ \cite{TabuadaDrinfeldQuotient}, there exists a restriction homomorphism 
			\begin{displaymath}
				\operatorname{res}\colon \DPic(\overline{\cT}) \rightarrow \DPic(\cT).
			\end{displaymath} 
			\noindent  Each natural transformation $\sigma^B\colon  \operatorname{Id}_{\overline{F}} \rightarrow T_B$ descends to an isomorphism on $\cT$ which implies $T_B \in \ker \operatorname{res}$.  
We obtain the following commutative diagram of groups and group homomorphisms 
		
			\begin{displaymath}
			\begin{tikzcd}
			 \mathbf{1} \arrow{r} &\prod_{B \subseteq \overline{\Sigma}}{\langle R_B\rangle}  \arrow{r} \arrow{d}   & \MCG_{\gr}(\overline{\Sigma}) \arrow{r} \arrow{d}{\overline{\alpha}} & \MCG_{\gr}(\Sigma) \arrow{r} \arrow[dashed]{d}{\exists ! \, \alpha} & \mathbf{1}\\
			  & \prod_{B \subseteq \overline{\Sigma}}{\langle T_B \circ \varphi_B \rangle} \arrow{r} \arrow{d} & \DPic(\overline{\cT}) \arrow{d}{\Psi^{\infty}_{\overline{\cT}}}\arrow{r}{\operatorname{res}} & \DPic(\cT) \arrow{d}{\Psi^{\infty}_{\cT}} \\
			  \mathbf{1} \arrow{r} &\prod_{B \subseteq \overline{\Sigma}}{\langle R_B\rangle}  \arrow{r}  & \MCG_{\gr}(\overline{\Sigma}) \arrow{r}  & \MCG_{\gr}(\Sigma) \arrow{r} & \mathbf{1}.
			\end{tikzcd}
			\end{displaymath}			
			\noindent Its first and last row are exact. Here, $\overline{\alpha}$ denotes the action homomorphism of the mapping class group action and $\varphi_B \in \ker \Psi^{\infty}_{\overline{\cT}}$  the element  determined by the equality   $\overline{\alpha}(R_B)  \circ T_B^{-1}\cong \varphi_B$. In particular, $\overline{\alpha}$ is a section to $\Psi^{\infty}_{\overline{\cT}}$ and by  \Cref{thm: kernel positive characteristic}, we have $\ker \Psi^{\infty}_{\overline{\cT}}\cong \HH^1(\overline{\Sigma}, \Bbbk^{\times})$.  From the commutativity and exactness of the first row we now conclude that $\operatorname{res} \circ \overline{\alpha}$ induces a group homomorphism $\alpha\colon  \MCG_{\gr}(\Sigma) \rightarrow \DPic(\cT)$ which is a section to $\Psi^{\infty}_{\cT}$. That $\alpha$ in fact agrees with the action homomorphism of the $\MCG_{\gr}(\Sigma)$-action follows from the following observation: the precomposition of the functor $\Fuk_{\Sigma, \eta}$ from \eqref{EquationFukayaFunctor} with the natural projection $\Arc{\overline{\Sigma}, \eta} \rightarrow \Arc{\Sigma, \eta}$ that identifies an arc on $\overline{\Sigma}$ with its homotopy class on $\Sigma$ is naturally isomorphic to the functor with assigns to an arc system $\cA \in \Arc{\overline{\Sigma}, \eta}$ the Verdier quotient of $\Fuk_{\overline{\Sigma}, \eta}(\cA)$ by the boundary segments. 
		\end{proof}
\noindent	Combining all our previous results we have proved the following generalisation of \Cref{TheoremgeometrisationHomomorphism}.
	
		\begin{thm}\label{TheoremgeometrisationWrappedFukaya}
		Let $\Sigma$ be a graded punctured surface. Then there exists a split surjective group homomorphism
		\begin{displaymath}
		\begin{tikzcd}
		\Psi^{\infty}\colon \DPic(\Fuk(\Sigma)) \arrow{r} & \MCG_{\gr}(\Sigma),
		\end{tikzcd}
		\end{displaymath}
		\noindent which factors through a split surjective homomorphism $\Psi\colon	\Aut(\Fuk(\Sigma)) \rightarrow  \MCG_{\gr}(\Sigma)$. A section to $\Psi$ and $\Psi^{\infty}$ is given by the action homomorphism of the action of $\MCG_{\gr}(\Sigma)$ on $\Fuk(\Sigma)$ from Section \ref{SectionSplit}.
	\end{thm}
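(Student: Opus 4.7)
The proof will essentially be a synthesis of the constructions and results established throughout Section \ref{SectionDerivedPicardGroupWrappedFukayaCategory}, so the plan is to assemble the pieces in the correct order rather than to produce genuinely new arguments.

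First, I would construct $\Psi$ on $\Aut(\Fuk(\Sigma))$. By \Cref{PropositionIntermediategeometrisationHomomorphism} there is a homomorphism $\Psi'\colon \Aut(\Fuk(\Sigma)) \to \MCG^{\pm}(\Sigma)$ built from the action of triangulated autoequivalences on the arc complex $A(\Sigma)$ via \Cref{thm: arc complex determines surface} and \Cref{TheoremIrmarkMcCarthy}. Next, \Cref{prp: orientation preserving}, which is based on the invariance of the cyclic sequences $\mathbbm{m}_B$ from \Cref{CorollaryPreserveCyclicSequences}, shows that $\Psi'$ has image in the (unextended) mapping class group $\MCG(\Sigma)$. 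Finally, I would lift $\Psi'$ along the forgetful map $\MCG_{\gr}(\Sigma) \to \MCG(\Sigma)$ to a homomorphism $\Psi\colon \Aut(\Fuk(\Sigma)) \to \MCG_{\gr}(\Sigma)$, following verbatim the argument of \Cref{PropositionGradedLiftgeometrisation}: on the fixed full graded arc system $\cA$ the equivalence determines $\Psi(F)(\gamma) \simeq \gamma_{F(X_\gamma)}$ up to grading, and the grading is forced by requiring that the path $\tilde\varphi$ intertwines pullback gradings with the originally chosen ones on the arcs of $\cA$.

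Then I would define $\Psi^{\infty}\colon \DPic(\Fuk(\Sigma)) \to \MCG_{\gr}(\Sigma)$ as the composite of $\Psi$ with the comparison homomorphism \eqref{EquationComparisonMapDerivedPicardGroupAutoequivalenceGroup}. Clearly $\Psi^{\infty}$ factors through $\Psi$ by construction.

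For surjectivity and splitting it suffices to exhibit a section of $\Psi^{\infty}$, since any section of $\Psi^{\infty}$ composed with the (inverse of the factorization through) $\Aut(\Fuk(\Sigma))$ also sections $\Psi$. This is exactly the content of \Cref{PropositionSplitPunctured}: starting from the surface $\overline{\Sigma}$ obtained by cutting each puncture into two marked intervals, the homologically smooth and proper Fukaya category $\Fuk(\overline{\Sigma})$ has a $\MCG_{\gr}(\overline{\Sigma})$-action which sections $\Psi^\infty_{\overline{\cT}}$, and the spherical twists around the boundary exceptional cycles $E_B$ descend trivially under the Drinfeld localization $\overline{\cT} \to \cT = \Fuk(\Sigma)$; this produces a compatible section $\alpha\colon \MCG_{\gr}(\Sigma) \to \DPic(\Fuk(\Sigma))$. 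That this $\alpha$ coincides with the action homomorphism $\MCG_{\gr}(\Sigma) \to \DPic(\Fuk(\Sigma))$ constructed in \Cref{SectionSplit} follows from the naturality of the functor $\Fuk_{(-)}$ from \eqref{EquationFukayaFunctor} with respect to the projection $\Arc{\overline{\Sigma},\eta} \to \Arc{\Sigma,\eta}$, together with the identification of the Verdier quotient of $\Fuk_{\overline{\Sigma},\eta}(\cA)$ by boundary segments with $\Fuk_{\Sigma,\eta}(\cA)$.

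All genuinely nontrivial input has already been dealt with in the earlier propositions; the only delicate point I would double-check at this stage is that the lift of $\Psi'$ to $\MCG_{\gr}(\Sigma)$ is compatible with the $\MCG_{\gr}(\Sigma)$-action provided by $\alpha$, i.e.\ that $\Psi \circ \alpha = \operatorname{Id}$. This is forced because, by \Cref{PropositionExplicitDescriptionGroupAction} transported through the localization square in the proof of \Cref{PropositionSplitPunctured}, $\alpha(F)$ sends $X_{\gamma}$ to $X_{F(\gamma)}$ as graded objects for every $\gamma \in \cA$, which by the defining property of the lift pins down $\Psi(\alpha(F)) = F$. Combined with \Cref{PropositionAlexanderMethodPunctured} (the Alexander-method argument on $\cA$), this completes the proof.
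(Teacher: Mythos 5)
Your proposal is correct and follows essentially the same route as the paper, which itself proves this theorem simply by combining \Cref{PropositionIntermediategeometrisationHomomorphism}, \Cref{prp: orientation preserving}, the graded lift modelled on \Cref{PropositionGradedLiftgeometrisation}, and the section from \Cref{PropositionSplitPunctured}. The only point worth stating slightly more cleanly is that a section $\alpha$ of $\Psi^{\infty}=\Psi\circ c$ (with $c$ the comparison map \eqref{EquationComparisonMapDerivedPicardGroupAutoequivalenceGroup}) yields the section $c\circ\alpha$ of $\Psi$, which is what you mean by the parenthetical remark.
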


		\subsection{The kernel of the geometrisation homomorphism} \ \medskip

		\noindent By \ref{PropositionAlexanderMethodPunctured}, it follows that $G \in \ker \Psi^{\infty}$ if and only if $G(X)\cong X$ for all $X \in \Ob{\cF}$. Hence by \Cref{LemmaImageOfInclusion}, $G \in \ker \Psi^{\infty}$ is weakly equivalent to the canonical extension to perfect complexes of an $A_\infty$-functor $G\colon  \cF \rightarrow \cF$ with $G^0=\operatorname{Id}_{\Ob{\cF}}$ and we may and will henceforth view $\ker \Psi^{\infty}$ as a subset of $\Aut^{\infty}(\cF)$ consisting of functors which act trivially on objects. 
		\begin{lem}\label{LemmaAutomorphismsAreRescalingAuotmorphisms}
		Suppose $F=G^1$ for some $G \in \ker \Psi^{\infty}$. If $(g,b)\neq(0,2)$,  then $F$ is a rescaling automorphism. In particular, $F$ defines a strict $A_\infty$-automorphism of $\cF$ and $F \in \ker \Psi^{\infty}$. If $(g,b)=(0,2)$, then $\Fuk(\Sigma)$ has a single object up to shift and $F$ corresponds to an automorphism of $\Bbbk[x, x^{-1}]$.
		\end{lem}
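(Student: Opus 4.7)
The plan is to exploit the fact that $G \in \ker \Psi^{\infty}$ is representable by an endofunctor of $\cF$ that acts trivially on objects, so that its first Taylor coefficient $F = G^1$ is actually a strict $\Bbbk$-linear endofunctor of the graded category $\cF$. Concretely, by \Cref{LemmaImageOfInclusion} together with \Cref{PropositionAlexanderMethodPunctured} we may realise $G$ as an $A_\infty$-endofunctor of $\cF$ with $G^0 = \operatorname{Id}_{\Ob \cF}$. Because $\cF$ is minimal, the two-input $A_\infty$-functor equation reduces to $\mu_{\cF}^2(G^1(f), G^1(g)) = G^1(\mu_{\cF}^2(f,g))$, so $F$ is a strict graded endofunctor of $\cF$; weak invertibility of $G$ upgrades $F$ to an automorphism.

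The next step is to show that $F$ multiplies every irreducible flow by a nonzero scalar. For an irreducible flow $\alpha$ between two \emph{distinct} objects $X_\delta, X_\gamma \in \cF$ (i.e.~non-homotopic simple arcs), I would apply \Cref{PropositionEquivalencePreserveIrreducibleFlows} to $G$ viewed as a triangle autoequivalence of $\Fuk(\Sigma)$: since $G$ fixes $X_\delta$ and $X_\gamma$, the flow $G \circ \alpha$ is $\alpha$ itself, forcing $F(f_\alpha) = \lambda_\alpha f_\alpha$ for some $\lambda_\alpha \in \Bbbk^\times$. For irreducible flows living inside a single endomorphism algebra $E_\gamma$, I would invoke \Cref{LemmaEndomorphismsAndModuleStructure}, which identifies $\Aut(E_\gamma) \cong (\Bbbk^\times)^2 \rtimes \mathbb{Z}_2$; the $(\Bbbk^\times)^2$-part is exactly rescaling of the two generators $x, y$ corresponding to the irreducible flows at the two endpoints of $\gamma$, while the $\mathbb{Z}_2$-part interchanges them.

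The main obstacle is excluding the $\mathbb{Z}_2$-swap when $(g,b) \neq (0,2)$. The natural tool is \Cref{CorollaryPreserveCyclicSequences}: $G$ must preserve the cyclic sequences $\mathbbm{m}_B$ around each boundary component, together with their label functions. The swap $x \leftrightarrow y$ of $E_\gamma$ would interchange the two irreducible flows incident to $\gamma$, which is only compatible with preserving every cyclic sequence if the endpoints of $\gamma$ are cyclically indistinguishable with respect to all other arcs of $\cA$. Because $\cA$ is chosen to have minimal cardinality, a case analysis (treating separately the arc case $B_0 \neq B_1$ and the loop case $B_0 = B_1$) will show that this indistinguishability forces $\cA$ to consist of a single arc whose endpoints lie on boundary components with no other arcs ending on them — a configuration which, by the classification of minimal full arc systems on punctured surfaces, is possible only when $\Sigma$ is the cylinder with two punctures, i.e.~$(g,b) = (0,2)$. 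In the excluded case, $\cF$ has a single object up to shift and the statement reduces to the bare fact that $F|_{E_\gamma}$ is some automorphism of $E_\gamma \cong \Bbbk[x,y]/(xy)$. In all other cases, $F$ is a rescaling automorphism, hence a strict $A_\infty$-automorphism of $\cF$, and the membership $F \in \ker \Psi^\infty$ is then automatic from $F^0 = \operatorname{Id}$ via \Cref{PropositionAlexanderMethodPunctured}.
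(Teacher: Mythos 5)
Your first half is sound and matches the paper: realising $G$ as an endofunctor of $\cF$ acting trivially on objects, observing that minimality of $\cF$ makes $F=G^1$ a strict graded automorphism, and pinning down $F$ on irreducible flows between non-homotopic arcs (the paper does this via the mapping cone of such a flow rather than by citing \Cref{PropositionEquivalencePreserveIrreducibleFlows}, but for a $G$ fixing all objects the two routes are interchangeable). The $(0,2)$ case and the final appeal to \Cref{PropositionAlexanderMethodPunctured} are also fine.

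The gap is in your exclusion of the $\mathbb{Z}_2$-swap on $E_\gamma$, which is the crux of the lemma. First, the conclusion you claim for your case analysis is false: it is not true that compatibility of the swap with all cyclic sequences forces $\cA$ to be a single arc. On the once-punctured torus the minimal full arc system has two arcs, each with both endpoints at the puncture, the cyclic sequence is $(1,2,1,2)$, and the hyperelliptic involution -- an orientation-preserving diffeomorphism fixing both arcs -- realises exactly the swap on each $E_{\gamma_i}$ while preserving every cyclic sequence together with its labels. So \Cref{CorollaryPreserveCyclicSequences} cannot see the swap there (nor, more generally, whenever $\mathbbm{m}_B$ has a rotational symmetry). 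Second, the generators $x,y$ of $E_\gamma$ are full loops around the punctures at the two endpoints of $\gamma$; when other arcs of $\cA$ end on those components these are \emph{compositions} of elements of $\cI_B$ rather than elements of $\cI_B$, so the swap does not even translate directly into a permutation of the cyclically ordered sets your corollary controls. The paper's actual mechanism is different and is what you need: having shown $F(\alpha)=\lambda\alpha$ for an irreducible flow $\alpha\colon X_{\gamma_1}\to X_{\gamma_2}$, one uses \Cref{LemmaEndomorphismsAndModuleStructure}~(2), i.e.\ the left (resp.\ right) $E_{\gamma_i}$-module structure of $\Hom^{\bullet}(X_{\gamma_1},X_{\gamma_2})$: since $F$ commutes with $\mu^2_{\cF}$, it induces a $\varphi_i$-twisted module automorphism that merely rescales the module generators, and this is incompatible with $\varphi_i$ exchanging $x$ and $y$ (the two generators annihilate, respectively act nontrivially on, a given module generator in an asymmetric way). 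Connectedness of the arc system then propagates the conclusion to every $E_\gamma$. Without this (or an equivalent) multiplicativity argument, your proof does not establish that $F$ is a rescaling automorphism when $(g,b)\neq(0,2)$.
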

		\begin{proof} First, let us assume $(g,b)=(0,2)$. The only arc $\gamma$ on $\Sigma$ is the one which connects the two boundary components. Thus the strict morphism $F$ is simply an automorphism of the endomorphism ring of $X_{\gamma}$ which is $\Bbbk[x,x^{-1}]$ for some degree of $x$. For the remainder of the proof, let us assume that  $(g,b) \neq (0,2)$. Then any full arc collection $\cA \subseteq \Sigma$ contains at least two distinct arcs, say $\gamma_1$ and $\gamma_2$ corresponding to objects $X_1, X_2 \in \Ob{\cF}$. Furthermore because $\Sigma$ is connected we may also assume that there exists an irreducible flow $f$ from $\gamma_1$ to $\gamma_2$. The mapping cone $C_f$ of the corresponding morphism $\alpha\colon X_{1} \rightarrow X_2$ is represented by the \emph{simple} arc $\gamma_f$ obtained by concatenating $\gamma_1$ and $\gamma_2$ via $f$. Consequently, $G \in \ker \Psi^{\infty}$ preserves the isomorphism class of $C_f$. Because the homotopy class of $\gamma_f$  determines $f$ uniquely among the irreducible flows, $F$ must map $\alpha$ to a non-zero multiple of itself. $F$ further induces automorphisms $\varphi_i$ of $E_{\gamma_i}\coloneqq\End^{\bullet}(X_{\gamma_i})$ and by Lemma \ref{LemmaEndomorphismsAndModuleStructure}, each is a composition of an involution which exchanges the respective generators $x,y \in E_{\gamma_i}$ and scalar multiplications of the generators with non-zero elements of $\Bbbk$. Suppose now that $\varphi_i$, $i \in \{1,2\}$ maps the ordered pair $(x,y)$ to the ordered pair $(\mu y, \lambda x)$ for some $\lambda, \mu \in \Bbbk^{\times}$. But because $\cF$ is minimal, $F$ commutes with the composition operation $\mu_2$ and hence this yields a contradiction to Lemma \ref{LemmaEndomorphismsAndModuleStructure} \ref{ItemModuleStructure} which says that that every $E_{\gamma_i}$-module automorphism of $H(\gamma_1, \gamma_2)$ simply multiplies each generator by a non-zero scalar. It follows that $F$ must be of the claimed form. Because $F$ acts trivially on objects it lies in $\ker \Psi^{\infty}$.
		 	\end{proof}
	 	\noindent From the previous lemma it follows that the projection $\ker \Psi^{\infty} \rightarrow \Out(\cF)$, $G \mapsto G^1$ has image in $\cR$ if $(g,b)\neq (0,2)$. Otherwise $G^1$ is an automorphism of $\Bbbk[x,x^{-1}]$, where $|x|$ is given by the winding number of any of the two boundary components of $\Sigma$. By regarding $G^1$ as a strict $A_\infty$-automorphism of $\cF$, this yields the following result. 
		 \begin{cor}If $(g,b)\neq (0,2)$ or $\Sigma$ has a boundary component with non-vanishing winding number, then 
		 $\ker \Psi^{\infty} \cong \cR \rtimes \Aut^{\infty}_+(\cF) \cong \HH^1(\Sigma, \Bbbk^{\times}) \rtimes  \Aut^{\infty}_+(\cF)$. Otherwise, $\ker \Psi^{\infty} \cong (\Bbbk^{\times} \rtimes \mathbb{Z}_2) \rtimes \Aut^{\infty}_+(\cF) \cong (\HH^1(\Sigma, \Bbbk^{\times}) \rtimes \mathbb{Z}_2) \rtimes  \Aut^{\infty}_+(\cF)$.
		 \end{cor}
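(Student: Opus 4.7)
The argument rests on the projection
\[
\pi\colon \ker\Psi^\infty \longrightarrow \Out(\cF),\qquad G\longmapsto [G^1],
\]
whose kernel equals $\Aut^\infty_+(\cF)$ by the very definition of an $A_\infty$-isotopy; conversely any such isotopy belongs to $\ker\Psi^\infty$ by \Cref{PropositionAlexanderMethodPunctured}, since $F^1=\operatorname{Id}$ forces $F$ to act trivially on every object. Once the image of $\pi$ is identified and lifted to $\ker\Psi^\infty$ through strict $A_\infty$-endofunctors of $\cF$, the claimed semidirect product decomposition follows formally from the resulting short exact sequence
\[
1 \longrightarrow \Aut^\infty_+(\cF) \longrightarrow \ker\Psi^\infty \xrightarrow{\ \pi\ } \Img\pi \longrightarrow 1.
\]

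\Cref{LemmaAutomorphismsAreRescalingAuotmorphisms} already constrains $\Img\pi$ to lie in $\cR$ when $(g,b)\neq (0,2)$, and in $\Aut(E_\gamma)\cong(\Bbbk^\times)^2\rtimes\mathbb{Z}_2$ when $(g,b)=(0,2)$. A section over $\cR$ is supplied by \Cref{RemarkLocalSystemsPuncturedCase}, which assigns to each $R_{[\cL]}\in\cR$ a strict $A_\infty$-endofunctor of $\cF$ that acts trivially on objects and hence lies in $\ker\Psi^\infty$ by \Cref{PropositionAlexanderMethodPunctured}. Combined with the identification $\cR\cong\HH^1(\Sigma_{\reg},\Bbbk^\times)$ recalled in \Cref{SectionToriDerivedPicardGroups}, this simultaneously establishes $\Img\pi=\cR$ in the non-exceptional case and yields the first stated isomorphism.

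For $(g,b)=(0,2)$ the surface is a cylinder whose minimal full arc system $\{\gamma\}$ has a single element, so $\cF$ is governed by $E_\gamma\cong\Bbbk[x,y]/(xy)$ together with the higher products arising from disk sequences of $\gamma$ on $\Sigma$. The cylinder carries an orientation-preserving involution $\sigma$ swapping the two boundary components while fixing $\gamma$ setwise; the ensuing $\mathbb{Z}_2$-symmetry of the collection of disk sequences promotes the algebraic involution $x\leftrightarrow y$ of $E_\gamma$ to a strict $A_\infty$-autofunctor of $\cF$. Since this functor fixes the unique object $X_\gamma$ it again lies in $\ker\Psi^\infty$ by \Cref{PropositionAlexanderMethodPunctured}, and together with the rescaling section it realises the whole of $\Aut(E_\gamma)\cong(\Bbbk^\times)^2\rtimes\mathbb{Z}_2$ inside $\ker\Psi^\infty$, giving the second isomorphism. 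The principal obstacle is precisely this last step: one has to verify both that the $\mathbb{Z}_2$-symmetry is compatible with every higher product $\mu^n$ of the minimal model $\cF$, and that the resulting autofunctor genuinely sits in $\ker\Psi^\infty$ in spite of being informally induced by a non-trivial mapping class of the cylinder---the latter being forced by the weakened object-level criterion of \Cref{PropositionAlexanderMethodPunctured} when the arc system has only one element.
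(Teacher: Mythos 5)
Your proposal is correct and follows essentially the same route as the paper: the split exact sequence $1 \to \Aut^{\infty}_+(\cF) \to \ker\Psi^{\infty} \to \Img\pi \to 1$ induced by $G \mapsto G^1$, with the image pinned down by \Cref{LemmaAutomorphismsAreRescalingAuotmorphisms} and the splitting supplied by the strict $A_\infty$-automorphisms of \Cref{RemarkLocalSystemsPuncturedCase} together with the object-level kernel criterion of \Cref{PropositionAlexanderMethodPunctured}. You are in fact slightly more careful than the paper in the $(g,b)=(0,2)$ case, where the paper leaves implicit the realisation of the $\mathbb{Z}_2$-involution $x\leftrightarrow y$ by a strict autofunctor coming from the boundary-swapping symmetry of the cylinder.
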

	 \noindent As before and over a field of characteristic $0$,  the previous corollary reduces the study of $\ker \Psi^{\infty}$ to the structure of the Lie algebra $\HHH^1_+(\cF,\cF)$ as well as the kernel of the exponential map. The entire Hochschild cohomology of $\cF$ was computed by Bocklandt and van de Kreeke in \cite{BocklandtVanDeKreeke}.
	 \begin{prp}
	 The subspace $\HHH^1_+(\cF,\cF)$ is trivial. Thus, if $\operatorname{char} \Bbbk=0$, then $\Aut_{+}^{\infty}(\cF)$ is trivial.
	 \end{prp}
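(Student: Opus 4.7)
The strategy is to reduce the vanishing of $\HHH^1_+(\cF,\cF) = W_2\HHH^1(\cF,\cF)$ to the explicit computation of the Hochschild cohomology of the wrapped Fukaya category of a punctured surface carried out in \cite{BocklandtVanDeKreeke}, and then to pass to $\Aut^\infty_+(\cF)$ via the exponential map of Theorem \ref{TheoremIntegrationHochschildCohomology}. The plan is as follows.

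First, I would recall from \cite{BocklandtVanDeKreeke} an explicit (small) model $K^\bullet$ for the Hochschild cochain complex of $\cF$ together with a quasi-isomorphism $K^\bullet \xrightarrow{\sim} C(\cF)[-1]$, and identify inside it the classes contributing to $\HHH^1(\cF,\cF)$. Geometrically, these classes come in two families: the first-homology classes of $\Sigma$ (integrating, via Section \ref{SectionToriDerivedPicardGroups}, to the rescaling automorphisms $\cR\cong \HH^1(\Sigma,\Bbbk^{\times})$) and genuine derivations of the underlying graded category $\cC_{\cA}$. Both families are visibly representable by Hochschild cocycles of arity $\leq 1$: the rescaling part acts on single arrows, and derivations are by definition arity-one cochains. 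Consequently, under the quasi-isomorphism $K^\bullet \simeq C(\cF)[-1]$, every cohomology class in $\HHH^1(\cF,\cF)$ lifts to a cocycle lying outside $W_2 C(\cF)$.

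The key step is then to upgrade this arity-one representability into the vanishing of the weight-$2$ part. Concretely, I would argue that any cocycle $f=(f^p)_{p\ge 2}\in (W_2 C(\cF))^0$ is cohomologous, in $C(\cF)$, to a cocycle supported in arity $\le 1$; since the weight filtration is compatible with the differential (we have $[\mu_{\cF},W_2]\subseteq W_2$, using $\mu_{\cF}\in W_1$), the difference lies in $W_2$, and since no non-zero cohomology class admits both representations, $f$ is already a coboundary in $C(\cF)$. This gives $W_2\HHH^1(\cF,\cF)=0$. The same conclusion can alternatively be organised as in the proof of Proposition \ref{PropositionFormalityHochschildComplex}: exhibit an explicit basis of $\HHH^1$ whose elements are supported in arity $\le 1$ and check that the image of the $W_2$-filtration step is zero under the basis identification.

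The main obstacle I anticipate is bookkeeping: matching the weight filtration of Section \ref{SectionBraceAlgebra} with whatever natural grading is implicit in the Bocklandt--van de Kreeke model. Unlike the homologically smooth and proper case treated in Proposition \ref{PropositionFormalityHochschildComplex}, the minimal-cardinality arc system $\cA$ is not formal, so $\cF$ has genuine higher operations coming from disc sequences, and one must argue that none of these produces a class in degree $1$ supported in arity $\ge 2$. The cleanest way to do this is presumably to show that the only degree-$0$ cocycles of $C(\cF)$ in arities $\ge 2$ that are not coboundaries would have to arise from fully marked components of a type that simply does not occur in the punctured setting (the analogues of $\phi_A(1,1)$ and $\phi_A(0,0)$ from Proposition \ref{CorollaryLieAlgebraStructureFirstHochschild} both vanish when $\cM=\partial\Sigma$).

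Once $\HHH^1_+(\cF,\cF)=0$ is established, the second claim is immediate: in characteristic zero, Theorem \ref{TheoremIntegrationHochschildCohomology} provides a surjective group homomorphism $\exp_{\cF}\colon (\HHH^1_+(\cF,\cF),\operatorname{BCH})\to \Aut^\infty_+(\cF)$, so the triviality of the source forces $\Aut^\infty_+(\cF)$ to be trivial as well.
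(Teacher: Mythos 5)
Your overall route is the same as the paper's: both reduce to the Hochschild cohomology computation of \cite{BocklandtVanDeKreeke} and then kill $\Aut^{\infty}_+(\cF)$ via the surjectivity of $\exp_{\cF}$ from \Cref{TheoremIntegrationHochschildCohomology}. The final step is correct as you state it.

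However, the key step contains a genuine gap. Knowing that every class in $\HHH^1(\cF,\cF)$ is \emph{representable} by a cocycle of arity $\leq 1$ does not imply $W_2\HHH^1(\cF,\cF)=0$: by definition $W_2\HHH^1$ is the image of $\HH^0(W_2C(\cF))$ in $\HH^0(C(\cF))$, so a priori a nonzero class could admit both a representative in $W_2$ and one of arity $\leq 1$. Your two bridging assertions do not close this. The claim that ``the difference lies in $W_2$'' is false: if $f\in W_2C(\cF)$ and $g$ is supported in arities $0,1$, then $f-g$ has arity-$0$ and arity-$1$ components equal to $-g^0,-g^1$, so it lies in $W_2$ only when $g=0$ --- which is what you are trying to prove. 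And the assertion that ``no non-zero cohomology class admits both representations'' is precisely the statement that needs justification, not something that follows from compatibility of the filtration with the differential. What you actually need --- and what the paper invokes from \cite[Section 3.2]{BocklandtVanDeKreeke} --- is the \emph{stronger} statement that the cohomology class of a cocycle $c$ is \emph{determined by} its components $c^0$ and $c^1$; equivalently, a cocycle with $c^0=c^1=0$ is a coboundary. With that input the proposition is immediate: any cocycle in $W_2C^0(\cF)$ has vanishing components in arities $0$ and $1$, hence is a coboundary, hence $\HHH^1_+(\cF,\cF)=0$. (A side remark: your heuristic appeal to \Cref{CorollaryLieAlgebraStructureFirstHochschild} is only suggestive here, since $\cF$ is built from a non-formal arc system on a punctured surface and is not a graded gentle algebra in the paper's sense; the vanishing must come from the punctured-case computation itself.)
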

	 \begin{proof}
	 As shown in \cite[Section 3.2]{BocklandtVanDeKreeke}, the cohomology class of a Hochschild cocycle $c \in C(\cF)$ is determined by its components $c^0$ and $c^1$. In particular, $c$ is a coboundary if $c^0$ and $c^1$ are trivial which readily implies that every class in $\HHH_+^{1}(\cF,\cF)$ is trivial by definition. But over a field of characteristic $0$, the group $\Aut^{\infty}_+(\cF)$ is exactly the image of the integration map $\exp_{\cF}$.
	 \end{proof}
\noindent As a corollary we obtain the punctured case of \Cref{IntroThmA}.
		\begin{thm}\label{TheoremDerivedPicardGroupWrappedCase}
		Let $\Sigma$ be a graded punctured surface of genus $g$ and with $b$ punctures. Suppose $\operatorname{char} \Bbbk=0$. Then, there exists an isomorphism of groups
		\begin{displaymath}
			\DPic\big(\Fuk(\Sigma)\big) \cong Z \rtimes \MCG_{\gr}(\Sigma),
		\end{displaymath}
		\noindent where $Z=\Bbbk^{\times} \rtimes \mathbb{Z}_2$ if $(g,b)=(0,2)$ and the winding numbers of the boundary components of $\Sigma$ vanish. Otherwise, $Z=\HH^1(\Sigma, \Bbbk^{\times})$.
		\end{thm}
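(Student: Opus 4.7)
The plan is to assemble the pieces that have already been set up in the preceding subsections, so almost no new work is required. First I would invoke \Cref{TheoremgeometrisationWrappedFukaya}, which produces a split short exact sequence
\begin{displaymath}
\mathbf{1} \longrightarrow \ker \Psi^{\infty} \longrightarrow \DPic(\Fuk(\Sigma)) \xrightarrow{\Psi^{\infty}} \MCG_{\gr}(\Sigma) \longrightarrow \mathbf{1},
\end{displaymath}
with section given by the $\MCG_{\gr}(\Sigma)$-action on $\Fuk(\Sigma)$ from \Cref{SectionSplit}. This immediately reduces the theorem to the identification $\ker \Psi^{\infty} \cong Z$ with $Z$ as stated.

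Next I would identify $\ker \Psi^{\infty}$ with a concrete group of $A_\infty$-endofunctors of the minimal model $\cF = \cF(\cA)$. By \Cref{PropositionAlexanderMethodPunctured}, any $G \in \ker \Psi^{\infty}$ satisfies $G(X_\gamma)\cong X_\gamma$ for every $\gamma \in \cA$, so \Cref{LemmaImageOfInclusion} lets us represent $G$ by a weakly invertible $A_\infty$-functor $\cF \to \cF$ acting as the identity on objects. Projecting $G \mapsto G^1$ to $\Out(\cF)$ and applying \Cref{LemmaAutomorphismsAreRescalingAuotmorphisms} then pins down its image: for $(g,b)\neq (0,2)$ the image is the rescaling subgroup $\cR \cong \HH^1(\Sigma_{\reg},\Bbbk^{\times}) = \HH^1(\Sigma,\Bbbk^{\times})$ (see \Cref{RemarkLocalSystemsPuncturedCase}), whereas for $(g,b)=(0,2)$ there is a single arc and the image is $\Aut(E_\gamma)\cong (\Bbbk^{\times})^2\rtimes \mathbb{Z}_2$. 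Since every rescaling (resp.~automorphism of $E_\gamma$) lifts canonically to a strict $A_\infty$-endofunctor of $\cF$ which still lies in $\ker\Psi^{\infty}$, this projection splits, giving
\begin{displaymath}
\ker \Psi^{\infty} \cong Z_0 \rtimes \Aut^{\infty}_+(\cF),
\end{displaymath}
with $Z_0 = \HH^1(\Sigma,\Bbbk^{\times})$ or $(\Bbbk^{\times})^2 \rtimes \mathbb{Z}_2$ in the respective cases.

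It remains to show $\Aut^{\infty}_+(\cF)$ is trivial in characteristic zero. This is where the integration machinery of \Cref{SectionIntegrationHochschild} enters: by \Cref{TheoremIntegrationHochschildCohomology} the exponential map produces a surjection $\HHH^{1}_+(\cF,\cF) \twoheadrightarrow \Aut^{\infty}_+(\cF)$, so it suffices to check that $\HHH^1_+(\cF,\cF)=0$. For this I would appeal directly to the computation of Hochschild cohomology for wrapped Fukaya categories of punctured surfaces in \cite{BocklandtVanDeKreeke}: the cited vanishing uses the fact that a Hochschild cocycle is determined up to coboundary by its $c^0$ and $c^1$ components, both of which vanish by definition on $W_2 C^1(\cF)$. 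Combining these three steps yields the claimed description of $\DPic(\Fuk(\Sigma))$.

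The main obstacle, as always in these arguments, is the low-genus/low-puncture exceptional cases, and in particular the discussion around $(g,b)=(0,2)$. There the endomorphism ring of the unique arc has the extra $\mathbb{Z}_2$-symmetry exchanging the two irreducible flows, which cannot be detected by the action on isomorphism classes of objects; this is precisely why the kernel picks up the additional factor $\mathbb{Z}_2$ and why the formula for $Z$ differs from the generic one. All the other input — the geometrisation homomorphism and its section, the identification of rescaling automorphisms with $\HH^1(\Sigma,\Bbbk^{\times})$, and the Hochschild computation — has already been established in the preceding sections, so the proof is essentially a matter of wiring these results together carefully in the two cases.
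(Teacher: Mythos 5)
Your proposal is correct and follows essentially the same route as the paper: the split exact sequence from \Cref{TheoremgeometrisationWrappedFukaya}, the reduction of $\ker\Psi^{\infty}$ to object-preserving $A_\infty$-endofunctors of $\cF$ via \Cref{PropositionAlexanderMethodPunctured} and \Cref{LemmaImageOfInclusion}, the identification of the linear part via \Cref{LemmaAutomorphismsAreRescalingAuotmorphisms}, and the triviality of $\Aut^{\infty}_+(\cF)$ from the vanishing of $\HHH^1_+(\cF,\cF)$ combined with the exponential map. Your remark on the extra $\mathbb{Z}_2$ in the $(g,b)=(0,2)$ case matches the paper's treatment as well.
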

		
		\begin{rem}
		 \Cref{TheoremDerivedPicardGroupWrappedCase} is in line with the results of \cite[Corollary 8.41, Remark 8.42]{OpperKodairaCycles} which describe the autoequivalence groups of the Kodaira $n$-cycle $C_n$ of projective lines, $n \geq 1$. The corresponding surface is the torus $\cT^n$ with $n$ punctures. One has $\HH^1(\cT^n, \Bbbk^{\times}) \cong (\Bbbk^{\times})^{n+1}$, $\MCG(\cT^1)\cong \operatorname{SL}_2(\mathbb{Z})$ and $\Pic^{\circ}(C_n)\cong \Bbbk^{\times}$ as well as $\MCG_{\gr}(\Sigma)\cong \widetilde{\operatorname{SL}}_2(\mathbb{Z})$ in the notation of \cite{SeidelThomas}. Thus, \Cref{TheoremDerivedPicardGroupWrappedCase} in particular recovers \cite[Theorem D]{OpperKodairaCycles}.
		\end{rem}
		
		\bibliography{Bibliography}{}
		\bibliographystyle{alpha}
	\end{document}